\definecolor{shadecolor}{rgb}{0.88,0.91,0.95}       
\newcommand{\Z}{\mathbb{Z}}
\newcommand{\N}{\mathbb{N}}
\newcommand{\R}{\mathbb{R}}
\newcommand{\C}{\mathbb{C}}
\newcommand{\Ker}{\operatorname{Ker}}
\newcommand{\diag}{\operatorname{diag}}
\renewcommand{\Im}{\operatorname{Im}}
\newcommand{\End}{\operatorname{End}}
\newcommand{\Hom}{\operatorname{Hom}}
\newcommand{\Aff}{\operatorname{Aff}}
\newcommand{\Iso}{\operatorname{Iso}}
\newcommand{\Sym}{\operatorname{Sym}}
\newcommand{\Aut}{\operatorname{Aut}}
\newcommand{\Hol}{\operatorname{Hol}}
\newcommand{\Rm}{\operatorname{Rm}}
\newcommand{\GL}{\operatorname{GL}}
\newcommand{\SU}{\operatorname{SU}}
\newcommand{\SO}{\operatorname{SO}}
\renewcommand{\O}{\operatorname{O}}
\newcommand{\so}{\mathfrak{so}}
\newcommand{\U}{\operatorname{U}}
\newcommand{\PU}{\operatorname{PU}}
\newcommand{\Lie}{\operatorname{Lie}}
\newcommand{\CP}{\mathbb{CP}}
\newcommand{\RP}{\mathbb{RP}}
\newcommand{\Ad}{\operatorname{Ad}}
\newcommand{\tr}{\operatorname{tr}}
\newcommand{\Id}{\operatorname{Id}}
\renewcommand{\|}[1]{\left| \left| #1 \right| \right|}
\newcommand{\<}{\langle}
\renewcommand{\>}{\rangle}
\newcommand{\mmod}{{/ \!\! / \!\! / \!}}
\newcommand{\tensor}{\otimes}
\newcommand{\ddt}{\frac{d}{dt}}
\newcommand{\vol}{\operatorname{vol}}
\newcommand{\inj}{\operatorname{inj}}
\newcommand{\Isom}{\operatorname{Isom}}
\newcommand{\del}{\partial}
\newcommand{\delbar}{\overline{\partial}}
\newcommand{\loc}{\operatorname{loc}}
\newcommand{\fix}{\operatorname{fix}}
\newcommand{\orb}{\operatorname{orb}}
\newcommand{\asd}{\operatorname{asd}}
\newcommand{\ind}{\operatorname{ind}}
\renewcommand{\d}{\operatorname{d} \!}
\renewcommand{\tilde}[1]{\widetilde{#1}}
\newcommand{\Fr}{\operatorname{Fr}}
\newcommand{\XEH}{X_{\text{EH}}}
\newcommand{\XEHh}{\hat{X}_{\text{EH}}}
\newcommand\Item[1][]{%
  \ifx\relax#1\relax  \item \else \item[#1] \fi
  \abovedisplayskip=0pt\abovedisplayshortskip=0pt~\vspace*{-\baselineskip}}
\author{Daniel Platt}
\date{\today}
\title{$G_2$-instantons on resolutions of $G_2$-orbifolds}
\newcommand{\undefinedlabel}{\color{red}[?]}
\patchcmd{\@@setcref}         {??}{\undefinedlabel}{}{}
\patchcmd{\@@setcref}         {??}{\undefinedlabel}{}{}
\patchcmd{\@@setcrefrange}    {??}{\undefinedlabel}{}{}
\patchcmd{\@@setcrefrange}    {??}{\undefinedlabel}{}{}
\patchcmd{\@@setcrefrange}    {??}{\undefinedlabel}{}{}
\patchcmd{\@@setcrefrange}    {??}{\undefinedlabel}{}{}
\patchcmd{\@@setcrefrange}    {??}{\undefinedlabel}{}{}
\patchcmd{\@@setcrefrange}    {??}{\undefinedlabel}{}{}
\patchcmd{\@@setnamecref}     {??}{\undefinedlabel}{}{}
\patchcmd{\@@setnamecref}     {??}{\undefinedlabel}{}{}
\patchcmd{\@@setcpageref}     {??}{\undefinedlabel}{}{}
\patchcmd{\@@setcpageref}     {??}{\undefinedlabel}{}{}
\patchcmd{\@@setcpagerefrange}{??}{\undefinedlabel}{}{}
\patchcmd{\@@setcpagerefrange}{??}{\undefinedlabel}{}{}
\patchcmd{\@@setcpagerefrange}{??}{\undefinedlabel}{}{}
\patchcmd{\@@setcpagerefrange}{??}{\undefinedlabel}{}{}
\patchcmd{\@@setcpagerefrange}{??}{\undefinedlabel}{}{}
\patchcmd{\@@cref}            {??}{\undefinedlabel}{}{}
\numberwithin{equation}{section}
\newtheorem{proposition}[equation]{Proposition}
\crefname{proposition}{Proposition}{Propositions}
\newtheorem{lemma}[equation]{Lemma}
\crefname{lemma}{Lemma}{Lemmas}
\newtheorem{corollary}[equation]{Corollary}
\crefname{corollary}{Corollary}{Corollaries}
\newtheorem*{corollary*}{Corollary}
\crefname{corollary*}{Corollary}{Corollaries}
\newtheorem{theorem}[equation]{Theorem}
\crefname{theorem}{Theorem}{Theorems}
\newtheorem*{theorem*}{Theorem}
\crefname{theorem*}{Theorem}{Theorems}
\crefname{claim}{Claim}{Claims}
\theoremstyle{remark}
\crefname{question}{Question}{Questions}
\newtheorem{definition}[equation]{Definition}
\crefname{definition}{Definition}{Definitions}
\newtheorem{example}[equation]{Example}
\crefname{example}{Example}{Examples}
\newtheorem{remark}[equation]{Remark}
\crefname{remark}{Remark}{Remarks}
\newtheorem{assumption}[equation]{Assumption}
\crefname{assumption}{Assumption}{Assumptions}
\begin{document}
\maketitle

\begin{abstract}
We explain a construction of $G_2$-instantons on manifolds obtained by resolving $G_2$-orbifolds.
This includes the case of $G_2$-instantons on resolutions of $T^7/\Gamma$ as a special case.
The ingredients needed are a $G_2$-instanton on the orbifold and a Fueter section over the singular set of the orbifold which are used in a gluing construction.
In the general case, we make the very restrictive assumption that the Fueter section is pointwise rigid.
In the special case of resolutions of $T^7/\Gamma$, improved control over the torsion-free $G_2$-structure allows to remove this assumption.
As an application, we construct a large number of $G_2$-instantons on the simplest example of a resolution of $T^7/\Gamma$ with hundreds of distinct ones among them.
We also construct one new example of a $G_2$-instanton on the resolution of $(T^3 \times \text{K3})/\Z^2_2$.
\end{abstract}

\vspace{5cm}

\begin{figure}[bhpt]
\centering
\begin{minipage}{0.32 \textwidth}
\centering
\includegraphics[width=4.5cm]{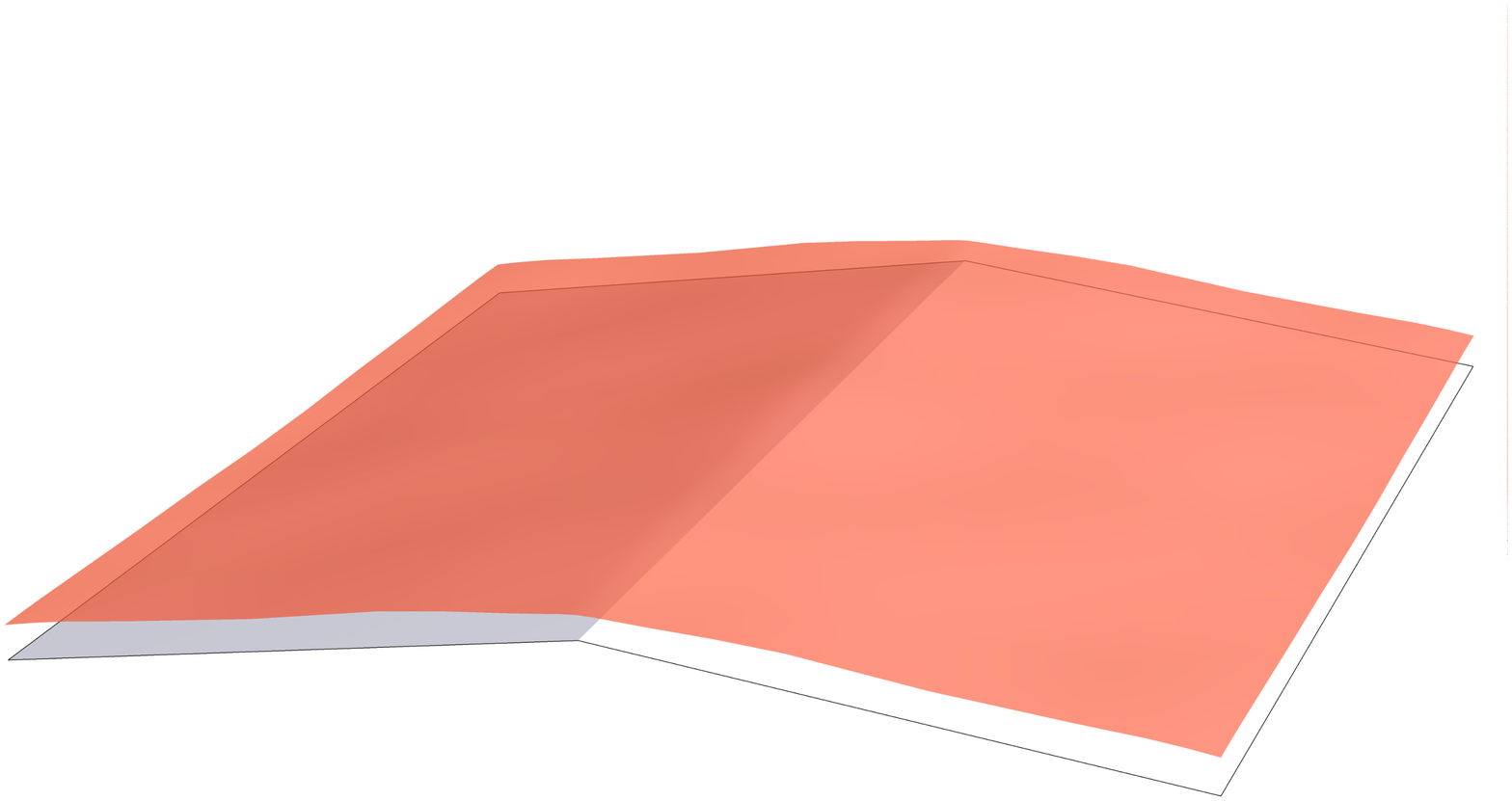}
\end{minipage}
\begin{minipage}{0.32 \textwidth}
\centering
\includegraphics[width=4.5cm]{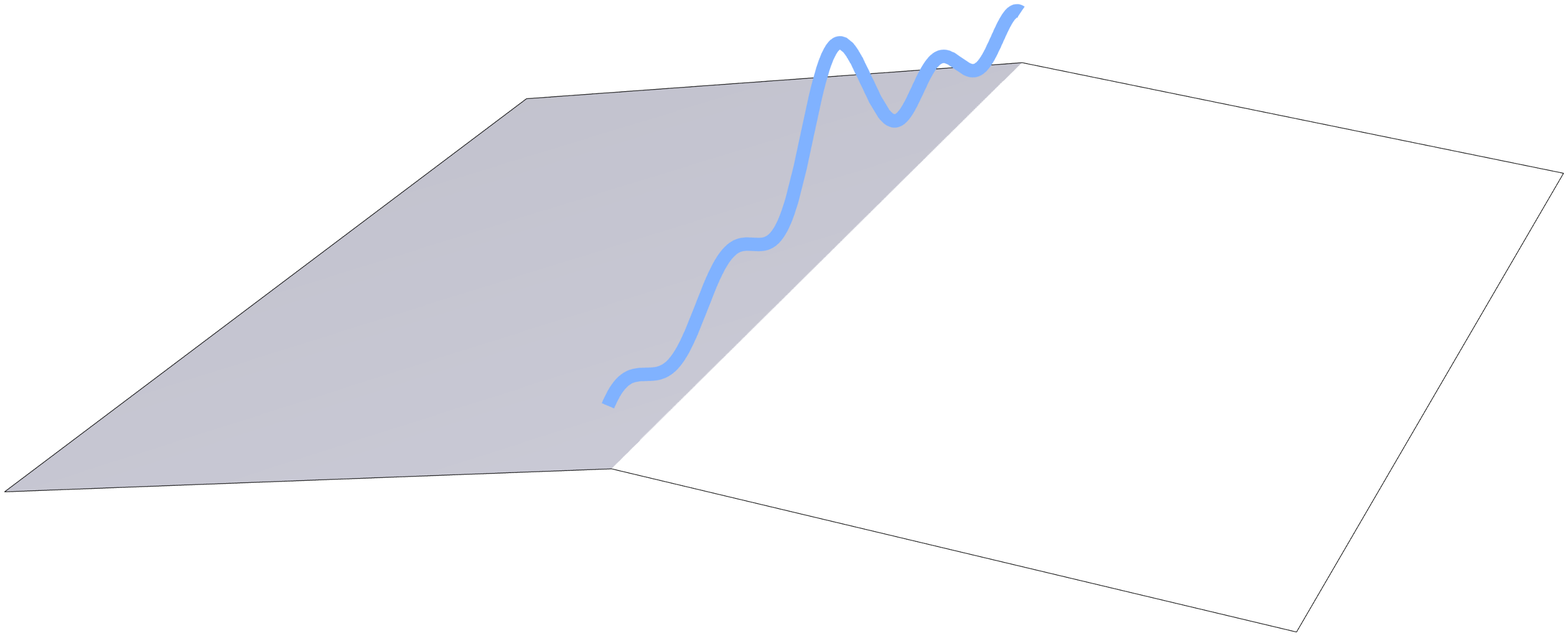}
\end{minipage}
\begin{minipage}{0.32 \textwidth}
\centering
\includegraphics[width=4.5cm]{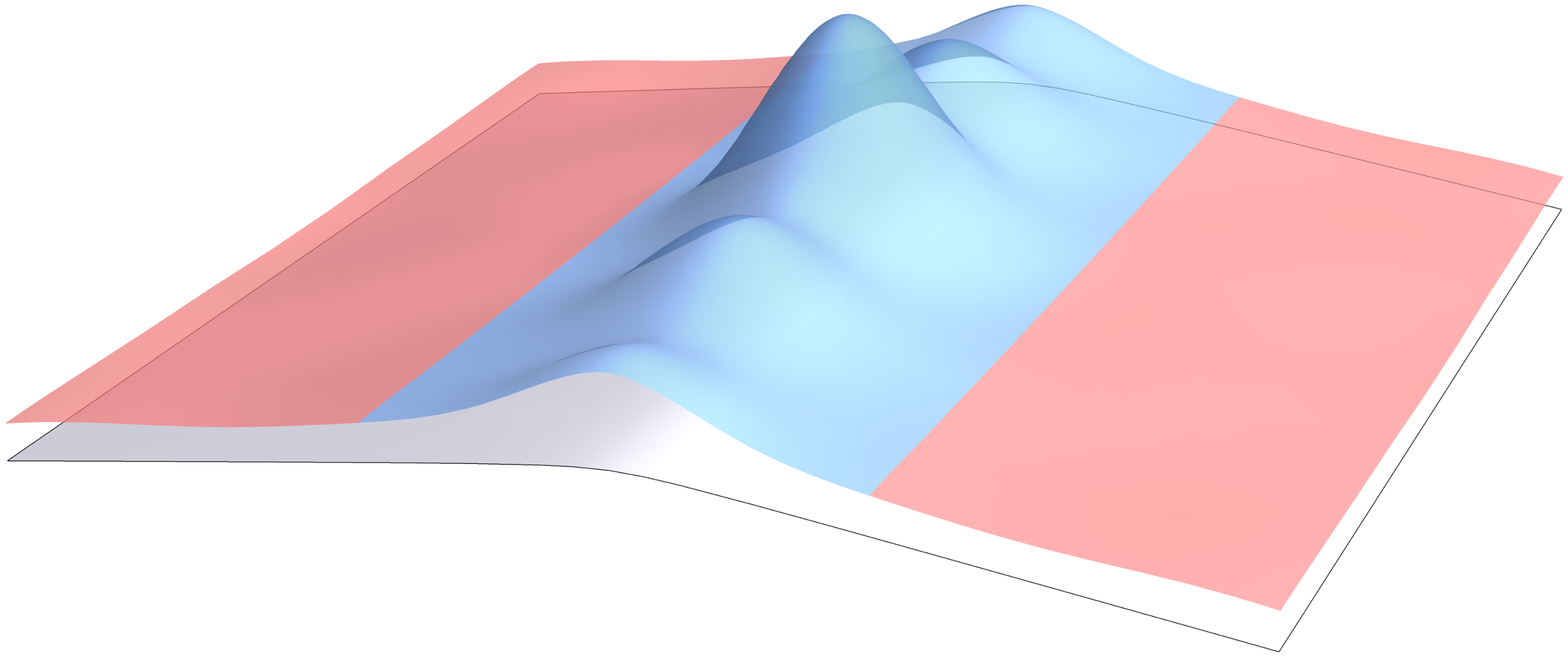}
\end{minipage}
\caption{Illustration of the gluing construction. Left: a $G_2$-instanton $\theta$ (in red) over a $G_2$-orbifold (in gray). Middle: a Fueter section $s$ (in blue) over the singular set of the orbifold. Right: the underlying smooth manifold is a resolution of the orbifold from the beginning. The gluing construction provides a connection over this smooth manifold which looks like $s$ close to the resolution locus, and looks like $\theta$ far away from the resolution locus.}
\label{figure:gluing-construction-title-graphic}
\end{figure}

\pagebreak

\setcounter{tocdepth}{2}
\tableofcontents

\setlength{\parskip}{0.3cm}

\section{Introduction}

In \cite{Berger1955}, Berger presented a list of groups which can possibly occur as the holonomy groups of Riemannian manifolds.
However, constructing manifolds which realise these holonomy groups remained a wide-open problem for decades.
A milestone in this direction was the formulation and proof of the Calabi conjecture in \cite{Calabi1954,Calabi1957} and \cite{Yau1977,Yau1978} respectively.
Among other things, the proof of this conjecture gives a powerful characterisation of manifolds admitting a metric with holonomy $\SU(n)$, giving rise to a wealth of examples of such manifolds.
Another entry on Berger's list is the exceptional holonomy group $G_2$.
The first compact examples of Riemannian manifolds with holonomy equal to $G_2$ were constructed in \cite{Joyce1996} by resolving an orbifold of the form $T^7/\Gamma$, where $\Gamma$ is a finite group of isometries of $T^7$.
In \cite{Joyce2017}, this construction was extended to resolutions of orbifolds of the form $Y/\Gamma$, where $Y$ is a manifold with holonomy contained in $G_2$, but not necessarily flat, and $\Gamma$ is a finite group of $G_2$-involutions.
However, an analogue of the Calabi conjecture for the holonomy group $G_2$ remains out of reach, and not much is known about which $7$-manifolds admit torsion-free $G_2$-structures, and if they do, how many.

In the seminal article \cite{Donaldson1983}, the moduli space of anti-self-dual connections was used to define invariants of smooth $4$-manifolds.
Following this, a rich theory of gauge theoretical invariants and their relations to other manifold invariants in $4$ dimensions was developed.
The article \cite{Donaldson1998} then recognised some of the $4$-dimensional phenomena in dimension $7$, for example the existence of a functional whose critical points are instantons.
With great optimism, one may hope to recreate the four-dimensional success story in dimension $7$, and use the moduli space of $G_2$-instantons to define invariants of $G_2$-manifolds that do not change when the $G_2$-structure is deformed.
This may shed some light on how many $G_2$-structures a $7$-manifold admits. 
For example, if two $G_2$-structures on the same manifold with different gauge theoretical invariants exist, one cannot be deformed into the other.

There are analytic difficulties present in dimension $7$ that were not there in dimension $4$, and therefore the study of $G_2$-instantons has mainly focused on the construction of examples.
The examples that have appeared in the literature so far are \cite{Walpuski2013a}, using a gluing construction on Joyce's Generalised Kummer construction, \cite{SaEarp2015,Menet2015,Walpuski2016} using a gluing construction on the Twisted Connected Sum construction, and \cite{Guenaydin1995,Lotay2018,Lotay2020} using cohomogeneity one methods.
We add to this by generalising the results from \cite{Walpuski2013a}:
we prove a gluing theorem that can be used to construct $G_2$-instantons on the $G_2$-manifolds from \cite{Joyce2017}.
Such a manifold is a resolution of a $G_2$-orbifold, obtained by taking the quotient of a $G_2$-manifold $Y$ by a finite group of $G_2$-involutions $\Gamma$.
The resolution $N$ is obtained by gluing Eguchi-Hanson spaces over the singular set of $Y/\Gamma$.
Given a $G_2$-instanton $\theta$ on $Y/\Gamma$ one may be able to construct from it a $G_2$-instanton on $N$.
To do this, one needs a connection over the glued in part.
One way to get such a connection is by taking a suitable family of anti-self-dual instantons over Eguchi-Hanson space, say $s$.
If $\theta$ and $s$ satisfy a simple topological compatibility condition, one can glue them together to a one-parameter family of connections $A_t$.
This is in general not a $G_2$-instanton, but our main result is that one can perturb $A_t$ and obtain a genuine $G_2$-instanton if $s$ consists of rigid instantons (cf. \cref{theorem:instanton-existence}):

\begin{theorem*}
 Assume that the section $s$ is given by a rigid ASD-instanton in every point $x \in L$, and assume that the connection $\theta$ used to define the approximate $G_2$-instanton $A_t$ is infinitesimally rigid.
 
 There exists $c>0$ such that for small $t$ there exists $\underline{a}_t=(a_t, \xi_t) \in C^{1,\alpha}(\Omega^0 \oplus \Omega^1(\Ad E_t))$ such that $\tilde{A}_t := A_t + a_t$ is a $G_2$-instanton.
 Furthermore, $\underline{a}_t$ satisfies $\|{ \underline{a}_t }_{C^{1,\alpha}_{-1,\delta;t}} \leq ct^{1/18}$.
\end{theorem*}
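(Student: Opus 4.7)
The plan is a standard implicit function theorem argument, treating $A_t$ as an approximate solution and solving for a small correction $\underline{a}_t=(a_t,\xi_t)$. Combining the $G_2$-instanton equation $\pi_7 F_A = 0$ with a Coulomb gauge-fixing condition, I package everything into a single nonlinear equation $F_{A_t}(\underline{a}) = 0$ and Taylor-expand
\begin{equation*}
F_{A_t}(\underline{a}) = e(A_t) + L_t \underline{a} + Q_t(\underline{a},\underline{a}),
\end{equation*}
where $e(A_t) := F_{A_t}(0)$ is the initial error of the approximate instanton, $L_t$ is the linearised operator, and $Q_t$ is the quadratic term arising from the bracket $[a \wedge a]$. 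The goal is to solve the fixed-point equation $\underline{a} = -L_t^{-1}\bigl(e(A_t) + Q_t(\underline{a},\underline{a})\bigr)$ by Banach contraction in a ball of radius comparable to $t^{1/18}$ in the weighted Hölder space $C^{1,\alpha}_{-1,\delta;t}$.

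Three estimates are required. First, an \emph{error estimate}: since $A_t$ is a genuine instanton away from the neck region where $\theta$ and the Fueter section $s$ are interpolated, a careful expansion of both objects in that region and matching of their leading-order behaviour should yield a bound $\lVert e(A_t) \rVert_{C^{0,\alpha}_{-2,\delta-1;t}} \leq C t^{\kappa}$ for some explicit $\kappa > 0$. Second, a \emph{linear estimate}: $L_t$ admits a right inverse bounded by $\lVert L_t^{-1} \rVert \leq C t^{-\lambda}$ uniformly in small $t$. Third, a \emph{quadratic estimate} of the form $\lVert Q_t(\underline{a},\underline{b}) \rVert \leq C t^{-\mu} \lVert \underline{a} \rVert \lVert \underline{b} \rVert$, with the $t^{-\mu}$ loss coming from multiplication in weighted spaces.

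The linear estimate is the heart of the argument, and this is where both rigidity hypotheses enter. The plan is to construct an approximate right inverse $P_t$ by parametrix-style patching of model inverses on the two pieces of the glued geometry. Away from $L$ on $Y/\Gamma$ the operator limits to the linearisation at $\theta$, which by infinitesimal rigidity of $\theta$ is invertible in the appropriate weighted space. On each Eguchi-Hanson fibre over $x \in L$ the operator limits to the ASD linearisation at $s(x)$ coupled with a Dirac-type operator in transverse directions; pointwise rigidity of $s$ provides invertibility fibrewise, and a spectral-gap argument converts this into a uniform $t^{-\lambda}$ bound. Cutting off these two inverses, summing, and verifying that $L_t P_t - \Id$ has operator norm $\leq 1/2$ for small $t$ then upgrades $P_t$ to a genuine inverse.

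Combining the three estimates, the contraction argument closes provided $t^{-\lambda} t^{\kappa} \ll t^{1/18}$ and $t^{-\lambda} t^{-\mu} t^{1/18} \ll 1$, and the exponent $1/18$ is dictated by the optimal balance of these inequalities given the specific values of $\kappa$, $\lambda$, and $\mu$ that the neck geometry produces. I expect the main obstacle to be the linear estimate: getting a uniform-in-$t$ inverse of $L_t$ in weighted Hölder spaces with compatible weights on both model pieces is analytically delicate, and it is precisely here that the somewhat unpleasant pointwise rigidity hypothesis on $s$ shows its necessity.
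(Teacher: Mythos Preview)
Your overall architecture (error + linear + quadratic estimates, then contraction) matches the paper exactly, but two specifics diverge from what actually happens.

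First, the exponent $1/18$ is \emph{not} the result of balancing $\kappa$, $\lambda$, $\mu$. In the paper the linear estimate carries \emph{no} $t$-loss at all ($\lambda = 0$), and the quadratic loss is only $t^{-\alpha}$ with $\alpha$ tiny. The $t^{1/18}$ comes entirely from the pregluing error $\|e_t\|_{C^{0,\alpha}_{-2,0;t}} \leq ct^{1/18}$, and that exponent in turn is inherited from the estimate $\|\tilde{\psi}^N_t - \psi^N_t\|_{C^{0,\alpha}_{0,0;t}} \leq ct^{1/18}$ for the torsion-free $G_2$-structure on the Joyce--Karigiannis resolution. So the $1/18$ is a geometric input, not an analytic optimisation.

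Second, the reason the linear estimate is uniform is more direct than you indicate. Pointwise rigidity of $s$ means the moduli fibre $(V\mathfrak{M})_{s(x)}$ is zero-dimensional, so the projection $\pi_t$ onto Fueter-type deformations is identically zero and $\eta_t = \Id$. The complicated composite norms $\mathfrak{X}_t, \mathfrak{Y}_t$ collapse to the plain weighted H\"older norms, and the Schauder estimate plus the key injectivity proposition give $\|\underline{a}\|_{C^{1,\alpha}_{-1,\delta;t}} \leq c\|L_t\underline{a}\|_{C^{0,\alpha}_{-2,\delta;t}}$ directly. The paper proves this injectivity not by parametrix patching but by a blow-up contradiction argument: assuming a sequence violating the bound, one rescales near $L$, on the neck, and away from $L$, and extracts limits on the model spaces $\R^3 \times X_{\mathrm{EH}}$, $\R^3 \times \C^2/\{\pm 1\}$, and $Y/\langle\iota\rangle$ respectively, deriving a contradiction from rigidity on each model. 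Your parametrix approach is plausible and is how such estimates are often proved, but it is not what the paper does.
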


Here, $\alpha \in (0,1)$ must be a small number and $\|{ \; \cdot \; }_{C^{1,\alpha}_{-1,\delta;t}}$ denotes a weighted Hölder norm.
We use this theorem to construct two types of examples of $G_2$-instantons.
First, on the Generalised Kummer Construction (cf. \cref{corollary:g2-instantons-on-resolution-of-T7}):

 \begin{corollary*}
  Let $\Gamma$ act on $T^7$ as defined in \cref{equation:alpha-beta-gamma-t7} and let $N'_t$ denote the one parameter family of resolutions of $T^7/\Gamma$ from \cref{section:torsion-free-on-resolution-of-T7}.
  Then, for $t$ small enough, there are at least $246$ non-flat, irreducible $G_2$-instantons with structure group $\SO(3)$ over $N'$ with the property that no one of them is mapped onto another one of them under a smooth bijective bundle map covering an isometry $N' \rightarrow N'$ that preserves the $G_2$-structure on $N'$.
 \end{corollary*}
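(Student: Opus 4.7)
The strategy has three steps: produce a large explicit family of admissible input pairs $(\theta, s)$ satisfying the hypotheses of the preceding theorem, apply the theorem to each, and use a symmetry count to extract at least $246$ pairwise inequivalent $G_2$-instantons.

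For the inputs I would fix the orbifold data as follows. The singular set of $T^7/\Gamma$ is a disjoint union of flat $3$-tori $L_1, \dots, L_k$ each with a tubular neighbourhood of the form $T^3 \times (\R^4/\Z_2)$. For the bulk connection $\theta$ I would take a $\Gamma$-invariant flat $\SO(3)$-connection on $T^7$, i.e.\ a representation $\rho \colon \pi_1(T^7) \rtimes \Gamma \to \SO(3)$; these are parametrised discretely modulo conjugation, and infinitesimal rigidity of $\theta$ reduces to the vanishing of the twisted first cohomology $H^1(\pi_1(T^7) \rtimes \Gamma, \so(3)_{\Ad\rho})$, a finite linear-algebra check once $\rho$ is specified. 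For the Fueter section $s$ over each $L_i$ I would take a \emph{constant} section whose value is the charge-$1$ anti-self-dual $\SO(3)$-instanton on $\XEH$, whose pointwise rigidity is standard from the Eguchi-Hanson moduli computation. A topological compatibility condition matching the $\Z_2$-holonomy of $\rho$ transverse to $L_i$ with the asymptotic holonomy of the model ASD instanton has to hold for each $L_i$ separately, and this cuts the admissible combinations down to a finite but still large list.

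For each admissible pair the main theorem immediately supplies a $G_2$-instanton $\tilde A_t = A_t + a_t$ on $N'_t$ for all sufficiently small $t$. These are non-flat because the curvature concentrated near the exceptional loci is non-zero, and they are irreducible because the ASD instantons glued in over the Eguchi-Hanson factors are irreducible $\SO(3)$-connections whose structure group does not reduce.

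The main obstacle is the third step: distinguishing the constructed instantons up to the finite symmetry group. I would first bound $|\Isom(N'_t, \varphi_t)|$ uniformly as $t \to 0$; for small $t$ this group maps faithfully into the orbifold isometry group of $T^7/\Gamma$, which is finite and combinatorially computable in terms of the lattice and $\Gamma$. Next, using the estimate $\|{ a_t }_{C^{1,\alpha}_{-1,\delta;t}} \leq c t^{1/18}$, on any fixed compact region of the bulk the instanton $\tilde A_t$ converges to $\theta$ as $t \to 0$, and after rescaling near each $L_i$ it converges to the model Eguchi-Hanson instanton determined by $s$. Consequently any bundle equivalence between two constructed instantons $\tilde A_t, \tilde A'_t$ must, after passing to $t \to 0$, induce an equivalence of the underlying data $(\theta, s)$ and $(\theta', s')$ via some isometry of $T^7/\Gamma$. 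Dividing the enumerated number of admissible pairs $(\theta, s)$ by the maximal orbit size of this symmetry action then yields the claimed lower bound of $246$; the arithmetic here is where the concrete number is fixed, and the conceptual difficulty is precisely this limiting argument that descent of an equivalence to the orbifold is unobstructed by the perturbation $a_t$.
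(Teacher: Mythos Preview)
Your three-step strategy matches the paper's, and your limiting argument in the third step is actually more explicit than what the paper writes down. The genuine gap is in the middle: your choice of model ASD instanton and your source of irreducibility are both wrong.

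The paper does \emph{not} glue in an irreducible $\SO(3)$-instanton on $\XEH$. For each singular $T^3$ it uses either the trivial flat $\SO(3)$-connection or the rigid $\U(1)$-instanton of \cref{proposition:gocho-asd-instanton} viewed as a \emph{reducible} $\SO(3)$-connection; in both cases the framed moduli space is a single point (\cref{proposition:gocho-asd-infinitesimally-rigid}), so the Fueter section is trivially constant and rigid. Which of the two to use at a given component is not a free choice but is forced by the compatibility condition \cref{assumption:gluing-topological-compatibility}: it depends on whether $\rho$ sends the relevant involution $\alpha$, $\beta$, or $\gamma$ to the identity. There is no rigid irreducible ``charge-$1$'' $\SO(3)$-instanton on $\XEH$ available here, and even if there were, gluing in irreducible local pieces does not by itself make the global connection irreducible.

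Irreducibility of $\tilde A_t$ instead comes from irreducibility of the flat orbifold connection $\theta$, verified via \cref{proposition:criterion-for-flat-connection-irreducible-rigid}: $\theta$ is irreducible and infinitesimally rigid precisely when at least two of $\rho(\tau_1),\dots,\rho(\tau_7)$ are distinct non-identity elements of the Klein four-group $\langle a,b,c\rangle\subset\SO(3)$. Non-flatness is handled separately by excluding the representations with $\rho(\alpha)=\rho(\beta)=\rho(\gamma)=\Id$, since those glue in only trivial connections. The number $246$ then falls out of explicit arithmetic: $4^{10}$ representations into $\langle a,b,c\rangle$, minus those failing the rigidity criterion, minus those yielding flat instantons, divided by the order of the orbifold automorphism group (computed with computer aid) times the four bundle automorphisms covering each isometry.
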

 
Second, on the resolution of $(T^3 \times \text{K3})/\Z_2^2$ (cf. \cref{corollary:non-trivial-g2-instanton}):

\begin{corollary*}
 Let $N_t$ denote the one parameter family of resolutions of $(T^3 \times X)/\Gamma$ from \cref{subsubsection:branched-double-cover}.
 Then, for $t$ small enough, there exists an irreducible $G_2$-instanton with structure group $\SO(3)$ over the resolution $N_t$.
\end{corollary*}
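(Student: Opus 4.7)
The plan is to apply the main gluing theorem above to the orbifold $(T^3 \times X)/\Z_2^2$, where $X$ is the K3 surface from \cref{subsubsection:branched-double-cover}. Doing so requires producing three inputs: (a) a $\Z_2^2$-invariant $G_2$-instanton $\theta$ on $T^3 \times X$ with structure group $\SO(3)$, infinitesimally rigid when viewed on the orbifold; (b) a Fueter section $s$ over the singular set $L$ of $(T^3 \times X)/\Z_2^2$ whose value at every point is a rigid ASD-instanton on Eguchi--Hanson space; and (c) the topological compatibility matching the bundles of $\theta$ and $s$ across the resolution locus.

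First I would build $\theta$ from an $\SO(3)$ anti-self-dual connection on $X$. For the product torsion-free $G_2$-structure on $T^3 \times X$, an ASD connection on $X$ pulled back trivially across $T^3$ is automatically a $G_2$-instanton, so it suffices to find such an ASD connection that is both rigid and $\Z_2^2$-equivariant. Rigid irreducible $\SO(3)$-instantons on K3 surfaces exist classically as isolated points of Donaldson moduli spaces, realised for instance by stable holomorphic rank-two bundles $E$ with $H^1(\End_0 E)=0$; the task is to exhibit one compatible with the specific $\Z_2^2$-action from \cref{subsubsection:branched-double-cover}. Once such $\theta$ is in place, its infinitesimal rigidity on the orbifold follows by a K\"unneth-type argument: the linearised instanton operator on $T^3 \times X$ splits off harmonic forms on $T^3$ tensored with the analogous operator on $X$, and the $\Z_2^2$-invariant part of its kernel and cokernel can be controlled using rigidity on the K3 factor together with the flat factor on $T^3$.

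For (b), I would take on Eguchi--Hanson space the basic ASD $\SO(3)$-instanton --- concretely, the adjoint of the tautological rank-two bundle, equivalently the anti-self-dual part of the Levi--Civita connection. This connection is rigid, so its moduli space is a single point and the constant section $x \mapsto s(x)$ automatically solves the Fueter equation. The matching condition in (c) then reduces to identifying the non-trivial $\Z_2$-holonomy of $\theta$ on a small link of $L$ with the asymptotic holonomy of the Eguchi--Hanson instanton on $\RP^3$, which one verifies directly from the choice of bundle in (a).

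The main obstacle will be step (a): exhibiting a $\Z_2^2$-equivariant rigid $\SO(3)$-bundle on the specific K3 surface $X$ that additionally satisfies the topological compatibility of (c). This requires using the branched double cover description of $X$ to select an explicit equivariant stable sheaf with vanishing $H^1$ of its traceless endomorphisms, and to track its restriction to the fixed locus. Granted (a)--(c), the main theorem applies and produces the $G_2$-instanton $\tilde A_t = A_t + a_t$ on $N_t$ for all sufficiently small $t$, and irreducibility of $\tilde A_t$ is inherited from that of $\theta$ since the estimate on $a_t$ is too small to introduce new parallel sections in $\Ad E_t$.
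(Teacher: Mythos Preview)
Your outline follows the paper's strategy exactly: pull back a rigid, unobstructed $\SO(3)$ ASD instanton from the K3 surface $X$ to get $\theta$ on $T^3\times X$, then glue in rigid model instantons on Eguchi--Hanson over each component of $L$ and apply \cref{theorem:instanton-existence}. The paper's specific bundle in step (a) is $\pi^*T\CP^2$, whose stability with respect to the Calabi--Yau K\"ahler class and whose $\Gamma$-equivariance are worked out in \cref{subsubsection:connection-on-orbifold-coming-from-stable-bundle}; you correctly flag this as the main obstacle.

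There is, however, a genuine gap in step (b). You treat the singular set $L$ as homogeneous, proposing a single non-trivial Eguchi--Hanson instanton and asserting that the compatibility check ``reduces to identifying the non-trivial $\Z_2$-holonomy of $\theta$ on a small link of $L$''. But $L=\fix(\alpha)\cup\fix(\beta)$ has two qualitatively different pieces, and the orbifold monodromy of $\theta$ is \emph{different} on each. In the paper's construction the lift $\alpha_0$ of the sheet-swap $\alpha$ is the identity over $\fix(\alpha)$ (since the bundle is pulled back from $\CP^2$), so the monodromy of $\theta$ around those four $S^1\times C$ components is \emph{trivial}; by contrast the lift $\beta_0$ of complex conjugation is genuinely non-trivial over $\fix(\beta)$, giving order-two monodromy around those four $S^1\times S^2$ components (\cref{proposition:bundle-lifts-of-alpha-beta}). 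The compatibility condition \cref{assumption:gluing-topological-compatibility} then forces \emph{two different} choices of model: over the $\alpha$-components one must glue in the trivial flat $\SO(3)$-connection on $X_{\mathrm{EH}}$, while over the $\beta$-components one needs a rigid instanton with the correct non-trivial asymptotic $\Z_2$-holonomy --- the paper uses the $\U(1)$-instanton of \cref{proposition:gocho-asd-instanton}, viewed as a reducible $\SO(3)$-connection. Your single proposed model cannot satisfy both matching conditions simultaneously, so the glued connection $A_t$ would not exist as described.
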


Thanks to the improved control over the torsion-free $G_2$-structure on resolutions of $T^7/\Gamma$ from \cite[Theorem 4.58]{Platt2020} we have an even stronger gluing theorem on these manifolds.
In this case, we need not require that the section $s$ is given by rigid instantons, only that it is a rigid solution of the Fueter equation (cf. \cref{theorem:instanton-existence3}):

\begin{theorem*}
 Let $N \rightarrow Y'$ be the resolution of the orbifold $Y'=T^7/\Gamma$ from before.
 Assume that the connection $\theta$ used to define the approximate $G_2$-instanton $A_t$ is infinitesimally rigid and that $s$ is an infinitesimally rigid Fueter section.
 
 There exists $c>0$ such that for small $t$ there exists an $\underline{a}_t=(a_t, \xi_t) \in C^{1,\alpha}(\Omega^0 \oplus \Omega^1(\Ad E_t))$ such that $\tilde{A}_t := A_t + a_t$ is a $G_2$-instanton.
 Furthermore, $\underline{a}_t$ satisfies $\|{ \underline{a}_t }_{\mathfrak{X}_t} \leq ct^{2-2\alpha}$.
\end{theorem*}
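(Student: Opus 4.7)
The plan is to run the same Banach fixed-point scheme as in the proof of the previous (general) theorem. Writing $\tilde A_t = A_t + a_t$ and enforcing Coulomb gauge through the Lagrange multiplier $\xi_t$, the $G_2$-instanton equation for $\underline a_t = (a_t,\xi_t)$ takes the schematic form $D_{A_t}\underline a_t + Q_{A_t}(\underline a_t) = -e_t$, where $D_{A_t}$ is the Dirac-type operator associated with the $G_2$-structure, $Q_{A_t}$ is the quadratic term, and $e_t = \pi_7(F_{A_t}\wedge \psi_t)$ measures how far $A_t$ is from being a $G_2$-instanton. Compared with the general theorem, two ingredients improve: the bound on $e_t$ uses the refined torsion-free $G_2$-structure from \cite[Theorem 4.58]{Platt2020}, and the right inverse for $D_{A_t}$ is built using infinitesimal rigidity of the Fueter section in place of pointwise ASD rigidity.

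For the error estimate I would split $N$ into the bubble region (a tubular neighbourhood of $L$ on which Eguchi--Hanson fibres have been glued in) and its complement, and bound $e_t$ on each piece separately. On the bubble region the failure of $A_t$ to be anti-self-dual is controlled, at optimal order, by the Fueter equation for $s$; on the complement the dominant contribution is the coupling of $A_t$ with the torsion of $\varphi_t$, and here \cite[Theorem 4.58]{Platt2020} replaces the generic estimate used in the general theorem. Carefully tracking the weight produces $\|{e_t}_{\mathfrak X_t^*} \leq c t^{2-2\alpha}$, which is exactly the source of the improved exponent in the statement.

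The main obstacle is the construction of a right inverse $G_t$ for $D_{A_t}$ with $\|{G_t}$ bounded uniformly in $t$. Without pointwise ASD rigidity the model operator on each Eguchi--Hanson bubble has a non-trivial kernel, so $D_{A_t}$ has small eigenvalues. In the limit $t\to 0$ these eigenvalues are organised by a finite-rank moduli bundle $\mathcal M \to L$ on which the linearised Fueter operator $\mathbb F_s$ acts. The strategy is a parametrix construction: invert the model operator on each bubble up to its kernel, invert the corresponding operator on $Y'$ using infinitesimal rigidity of $\theta$, patch with cut-offs, and then cancel the residue lying in the "Fueter" modes by applying the bounded inverse of $\mathbb F_s$, which exists precisely because $s$ is infinitesimally Fueter-rigid. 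Showing that the patching error is a contraction on $\mathfrak X_t$ for small $t$ then upgrades the parametrix to a genuine right inverse with norm bounded uniformly in $t$. The subtle point, and the hardest part of the argument, is matching the finite-dimensional model kernels on the bubbles with the weighted section spaces of $\mathcal M$ so that the cut-off losses are absorbed by gains in $t$; this is exactly what dictates the precise definition of the norm $\mathfrak X_t$.

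With $\|{G_t}$ uniform and $\|{e_t}_{\mathfrak X_t^*} \leq ct^{2-2\alpha}$, the standard quadratic estimate $\|{Q_{A_t}(\underline a)-Q_{A_t}(\underline b)}_{\mathfrak X_t^*} \leq c\bigl(\|{\underline a}_{\mathfrak X_t}+\|{\underline b}_{\mathfrak X_t}\bigr)\|{\underline a - \underline b}_{\mathfrak X_t}$ turns the rewritten equation $\underline a_t = -G_t\bigl(e_t + Q_{A_t}(\underline a_t)\bigr)$ into a contraction on the ball of radius $2ct^{2-2\alpha}$ in $\mathfrak X_t$ for $t$ small enough. Its unique fixed point delivers $\underline a_t$ together with the claimed $G_2$-instanton property and norm bound.
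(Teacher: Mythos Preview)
Your overall architecture is right—fixed-point scheme, improved pregluing via \cite[Theorem 4.58]{Platt2020}, and a uniform linear estimate built from Fueter rigidity—but you have the mechanism wrong in one essential place. You claim a uniform quadratic estimate
\[
\|{Q_t(\underline a)-Q_t(\underline b)}_{\mathfrak Y_t}\leq c\bigl(\|{\underline a}_{\mathfrak X_t}+\|{\underline b}_{\mathfrak X_t}\bigr)\|{\underline a-\underline b}_{\mathfrak X_t}.
\]
This is false. The composite norms $\mathfrak X_t,\mathfrak Y_t$ are engineered so that $L_t^{-1}$ is uniformly bounded (this is exactly what rigidity of the Fueter section buys, via the comparison $L_t\iota_t\approx\iota_t\,\d_s\mathfrak F$), \emph{not} so that the quadratic term is uniformly bounded. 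In fact the paper's Proposition on $Q_t$, once rewritten in the $\mathfrak X_t/\mathfrak Y_t$ norms, gives
\[
\|{Q_t(\underline a_1,\underline a_2)}_{\mathfrak Y_t}\leq c\,t^{-2-\alpha-\delta/2}\,\|{\underline a_1}_{\mathfrak X_t}\|{\underline a_2}_{\mathfrak X_t},
\]
the worst contribution coming from the $\eta_t$-part of $Q_t$ evaluated on two inputs in the image of $\overline\pi_t$. The restriction to $T^7/\Gamma$ is needed precisely because the improved pregluing estimate $\|{e_t}_{\mathfrak Y_t}\leq ct^{2-\alpha}$ (rather than the generic $t^{1/18}$) is required to absorb this blowing-up factor in the contraction-mapping lemma. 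Your account obscures this: you attribute the gain entirely to the error term and treat the quadratic piece as harmless, which would make the $T^7/\Gamma$ hypothesis look unnecessary.

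On the linear side, your parametrix description (invert on bubbles modulo kernel, invert on $Y'$, patch, then kill the residue with $(\d_s\mathfrak F)^{-1}$) is a reasonable alternative to what the paper actually does, which is a contradiction/blow-up argument: a Schauder estimate plus the model analysis on $\R^3\times X_{\mathrm{EH}}$ and $\R^3\times\C^2/\{\pm1\}$ yields $\|{\underline a}_{L^\infty_{-1,\delta;t}}\leq c(\|{L_t\underline a}_{C^{0,\alpha}_{-2,\delta;t}}+\|{\overline\pi_t\underline a}_{L^\infty_{-1,\delta;t}})$, and this is combined with the Fueter comparison and cross-term estimates to close the $\mathfrak X_t$–$\mathfrak Y_t$ inequality. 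Either route should give the uniform bound on $L_t^{-1}$, so this part is not a gap; but be aware that the cross-term estimates (controlling $\eta_t L_t\iota_t$ and $\pi_t L_t\eta_t$) themselves rely on $\tilde\psi^N_t-\psi^P_t$ being small, which is again a $T^7/\Gamma$-specific input.
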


Here, $\|{ \; \cdot \; }_{\mathfrak{X}_t}$ denotes a complicated composite norm.
It consists of a part that is harmonic in the Eguchi-Hanson directions in the gluing region and a rest, and the two parts are scaled differently.

Unfortunately, no genuine examples of these more general ingredients are known.
That is:
all known rigid Fueter sections are actually sections of rigid instantons.

The article is structured as follows:
in \cref{section:background} we prepare some facts that are used later on.
Notably, in \cref{proposition:moduli-space-bijection} we give a proof of the folklore result that the moduli spaces of framed instantons over an ALE space and over its compactification are in bijection.
Following this, in \cref{section:review-of-the-manifold-construction}, we review the two construction methods for torsion-free $G_2$-structures from \cite{Joyce1996} and \cite{Joyce2017}.
In \cref{section:the-gluing-construction-for-instantons} we prove our gluing theorem for $G_2$-instantons:
we first construct the approximate solution $A_t$, then construct the perturbation to a genuine $G_2$-instanton.
Last, in \cref{section:examples}, we apply this construction method to the construction new $G_2$-instantons.

\textbf{Acknowledgment.} The author is indebted to his PhD supervisor Jason Lotay and co-supervisor Simon Donaldson for their encouragement and advice during the work on this article.
The author thanks Simon Salamon for helpful discussions and thanks the London School of Geometry and Number Theory for creating a stimulating learning and research environment.
This work was supported by the Engineering and Physical Sciences Research Council [EP/L015234/1], the EPSRC Centre for Doctoral Training in Geometry and Number Theory (The London School of Geometry and Number Theory), University College London. 
The author was also supported by Imperial College London and King's College London.

\section{Background}
\label{section:background}

\subsection{$G_2$-structures}
\label{section:g2-structures}

We now introduce $G_2$-structures and their torsion, following the treatment in \cite{Joyce2000}.

\begin{definition}[Definition 10.1.1 in \cite{Joyce2000}]
\label{definition:g2}
 Let $(x_1,\dots,x_7)$ be coordinates on $\R^7$.
 Write $\d x_{ij \dots l}$ for the exterior form $\d x_i \wedge \d x_j \wedge \dots \wedge \d x_l$.
 Define $\varphi_0 \in \Omega^3(\R^7)$ by
 \begin{align}
 \label{equation:standard-g2-form}
  \varphi_0
  =
  \d x_{123}+\d x_{145}+\d x_{167}+\d x_{246}-\d x_{257}-\d x_{347}-\d x_{356}.
 \end{align}
 The subgroup of $\GL(7,\R)$ preserving $\varphi_0$ is the exceptional Lie group $G_2$.
 It also fixes the Euclidean metric $g_0=\d x_1^2+\dots + \d x_7^2$, the orientation on $\R^7$, and $* \varphi_0 \in \Omega^4(\R^7)$.
\end{definition}

\begin{definition}
\label{definition:cross-product}
 The skew-symmetric bilinear map $\times : \R^7 \rightarrow \R^7$ defined by
 \[
  \varphi_0(u,v,w)
  =
  g_0(u \times v, w)
 \]
 for $u,v,w \in \R^7$ is called the \emph{cross product induced by $\varphi$}.
\end{definition}

\begin{theorem}[Theorem 8.5 in \cite{Salamon2017}]
\label{theorem:g2-decomposition-of-forms}
 Let $\psi = * \varphi_0$.
 Then $\Lambda^* (\R^7)^*$ splits into irreducible representations of $G_2$ as follows:
 \begin{align*}
  \Lambda^1 V^* &= \Lambda^1_7,
  \\
  \Lambda^2 V^* &= \Lambda^2_7 \oplus \Lambda^2_{14},
  \\
  \Lambda^3 V^* &= \Lambda^3_1 \oplus \Lambda^3_7 \oplus \Lambda^3_{27}
 \end{align*}
 and correspondingly for $\Lambda^k (\R^7)^* \simeq \Lambda^{7-k}(\R^7)^*$ with $k=4,5,6$.
 Here, $\dim \Lambda^k_d=d$ and
 \begin{align*}
  \Lambda^2_7
  &:=
  \{
   \alpha: *(\alpha \wedge \varphi_0)=2\alpha
  \}
  =
  \{
   i(u) \varphi_0: u \in \R^7
  \} \simeq \Lambda^1_7,
  \\
  \Lambda^2_{14}
  &:=
  \{
   \alpha: *(\alpha \wedge \varphi_0)=-\alpha
  \}
  =
  \{
   \alpha: \alpha \wedge \psi=0
  \}
  \simeq \mathfrak{g}_2,
  \\
  \Lambda^3_1
  &:=
  \< \varphi_0 \>,
  \\
  \Lambda^3_7
  &:=
  \{ i(u) \psi : u \in \R^7 \}
  \simeq \Lambda^1_7,
  \text{ and}
  \\
  \Lambda^3_{27}
  &:=
  \{
   \alpha : \alpha \wedge \varphi_0=0
   \text{ and }
   \alpha \wedge \psi =0
  \}
  \simeq \Sym_0(\R^7)
 \end{align*}
\end{theorem}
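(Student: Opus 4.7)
The plan is to exploit that $G_2$ sits inside $\SO(7)$, being a compact subgroup that preserves the metric $g_0$ and the orientation by \cref{definition:g2}. Consequently every finite-dimensional $G_2$-representation decomposes into an orthogonal direct sum of irreducibles, and it suffices to produce such an orthogonal decomposition into subspaces of the claimed dimensions; irreducibility then follows from the (independently established) classification of irreducible $G_2$-representations of dimension at most $27$, namely $\mathbf{1}$, $\mathbf{7}$, $\mathbf{14}$, and $\mathbf{27}$.

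The $\Lambda^1$ case is immediate since $G_2$ acts transitively on $S^6$. For $\Lambda^2$, the main tool is the $G_2$-equivariant self-adjoint endomorphism $T\colon \alpha \mapsto *(\alpha \wedge \varphi_0)$. Using the explicit form of $\varphi_0$, a direct calculation on the basis $\{\d x_{ij}\}$ shows that $T$ has eigenvalues $+2$ and $-1$, with eigenspaces of dimensions $7$ and $14$, giving the splitting $\Lambda^2_7 \oplus \Lambda^2_{14}$. Noting that $\alpha \wedge \psi$ is a constant multiple of $(T\alpha + \alpha) \cdot \vol$ identifies $\Lambda^2_{14}$ with $\{\alpha : \alpha \wedge \psi = 0\}$. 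The map $u \mapsto \iota_u\varphi_0$ is injective (if $\iota_u\varphi_0 = 0$ then $\iota_u \varphi_0 \wedge \iota_u \varphi_0 \wedge \varphi_0 = 0$, forcing $u=0$ by the non-degeneracy of $\varphi_0$) and lands in the $+2$-eigenspace, identifying $\Lambda^2_7 \simeq \R^7$. To see $\Lambda^2_{14} \simeq \mathfrak{g}_2$, recall $\mathfrak{g}_2 = \{A \in \mathfrak{so}(7) : A \cdot \varphi_0 = 0\}$; this kernel has dimension $14$ and lies in $\Lambda^2_{14}$ by inspection, so the two coincide.

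For $\Lambda^3$ there are three natural $G_2$-equivariant maps into $\Lambda^3$: the inclusion $1 \mapsto \varphi_0$, the map $u \mapsto \iota_u\psi$ (injective for the same reason as above), and a map $\Sym_0(\R^7) \to \Lambda^3$ sending $h = h_{ij}$ to $h_{ij}\, \d x^i \wedge \iota_{\partial_j}\varphi_0$ after subtracting the trace part along $\varphi_0$. Their images are mutually orthogonal under the induced inner product on $\Lambda^3$, and the dimensions $1 + 7 + 27 = 35 = \binom{7}{3}$ match, so together they span $\Lambda^3(\R^7)^*$. The stated wedge-product characterisation $\Lambda^3_{27} = \{\alpha : \alpha \wedge \varphi_0 = 0 = \alpha \wedge \psi\}$ then reduces to showing that the maps $\alpha \mapsto \alpha \wedge \varphi_0$ and $\alpha \mapsto \alpha \wedge \psi$ (which are $G_2$-equivariant linear maps $\Lambda^3 \to \Lambda^6$ and $\Lambda^3 \to \Lambda^7$) vanish on $\Lambda^3_{27}$, and that their simultaneous kernel has dimension exactly $27$; both follow from direct calculation on the explicit model for $\Lambda^3_{27}$. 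The main obstacle is the $\Lambda^3_{27}$ piece: showing it really is an irreducible $27$-dimensional representation, rather than a further direct sum. The cleanest route uses the classification invoked in the first paragraph; alternatively one exhibits a $G_2$-highest-weight vector directly in $\Sym_0(\R^7)$ with respect to a maximal torus of $G_2$.
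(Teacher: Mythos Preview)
The paper does not prove this theorem at all: it is stated as background (``Theorem 8.5 in \cite{Salamon2017}'') and cited without argument. There is therefore nothing to compare your proposal against from the paper's side.

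Your sketch is a standard and essentially correct outline of how one proves this result. A couple of minor points: the injectivity argument for $u\mapsto\iota_u\varphi_0$ is phrased circularly (if $\iota_u\varphi_0=0$ then of course the triple wedge vanishes); what you mean is the identity $\iota_u\varphi_0\wedge\iota_u\varphi_0\wedge\varphi_0 = 6\,g_0(u,u)\,\vol$, from which $\iota_u\varphi_0=0$ forces $u=0$. Also, ``lands in the $+2$-eigenspace by inspection'' and ``lies in $\Lambda^2_{14}$ by inspection'' each hide a short but genuine computation. Finally, invoking the classification of low-dimensional $G_2$-irreducibles to conclude irreducibility of $\Lambda^3_{27}$ is legitimate but makes the argument non-self-contained; since the paper only needs the $\Lambda^2$ decomposition (for \cref{definition:g2-instanton}), this is harmless for its purposes.
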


\begin{definition}
 Let $M$ be an oriented $7$-manifold.
 A principal subbundle $Q$ of the bundle of oriented frames with structure group $G_2$ is called a \emph{$G_2$-structure}.
 Viewing $Q$ as a set of linear maps from tangent spaces of $M$ to $\R^7$, there exists a unique $\varphi \in \Omega^3(M)$ such that $Q$ identifies $\varphi$ with $\varphi_0 \in \Omega^3(\R^7)$ at every point.
 
 Such $G_2$-structures are in $1$-$1$ correspondence with $3$-forms on $M$ for which there exists an oriented isomorphism mapping them to $\varphi_0$ at every point.
 We will therefore also refer to such $3$-forms as $G_2$-structures.
\end{definition}

Let $M$ be a manifold with $G_2$-structure $\varphi$.
We call $\nabla \varphi$ the \emph{torsion} of a $G_2$-structure $\varphi \in \Omega^3(M)$.
Here, $\nabla$ denotes the Levi-Civita induced by $\varphi$ in the following sense:
we have $G_2 \subset \SO(7)$, so $\varphi$ defines a Riemannian metric $g$ on $M$, which in turn defines a Levi-Civita connection.
As a shorthand, we also use the following notation:
write $\Theta(\varphi)=*\varphi$, where ``$*$'' denotes the Hodge star defined by $g$.
Using this, the following theorem gives a characterisation of torsion-free $G_2$-manifolds:

\begin{theorem}[Propositions 10.1.3 and 10.1.5 in \cite{Joyce2000}]
\label{theorem:torsion-free-g2-structures-characterisation}
 Let $M$ be an oriented $7$-manifold with $G_2$-structure $\varphi$ with induced metric $g$.
 The following are equivalent:
 \begin{enumerate}[label=(\roman*)]
  \item 
  $\Hol(g) \subseteq G_2$,
  
  \item
  $\nabla \varphi=0$ on $M$, where $\nabla$ is the Levi-Civita connection of $g$, and
  
  \item
  $\d \varphi =0$ and $\d \Theta(\varphi)=0$ on $M$.
 \end{enumerate}
 If these hold then $g$ is Ricci-flat.
\end{theorem}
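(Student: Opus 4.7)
The plan is to prove the standard cycle of implications (i)$\Rightarrow$(ii)$\Rightarrow$(iii)$\Rightarrow$(ii)$\Rightarrow$(i), with the hard step being (iii)$\Rightarrow$(ii), which is the Fern\'andez--Gray type statement. First I would establish (i)$\Leftrightarrow$(ii) by the holonomy principle: by \cref{definition:g2}, $G_2 \subset \GL(7,\R)$ is exactly the stabiliser of $\varphi_0$, so a $G_2$-structure amounts to a reduction of the frame bundle whose associated tensor $\varphi$ is preserved pointwise. Parallel transport with respect to $\nabla$ preserves $\varphi$ if and only if $\nabla \varphi = 0$, and by the holonomy principle this is equivalent to the holonomy representation fixing $\varphi_0 \in \Lambda^3(\R^7)^*$, i.e. $\Hol(g) \subseteq G_2$.

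For (ii)$\Rightarrow$(iii), I would use the fact that for any differential form $\alpha$, $\d \alpha$ is the antisymmetrisation of $\nabla \alpha$ (as $\nabla$ is torsion-free). Since $\nabla g = 0$, the Hodge star operator commutes with $\nabla$, so $\nabla \varphi = 0$ implies $\nabla (*\varphi) = 0$, and hence both $\d \varphi$ and $\d \Theta(\varphi) = \d (*\varphi)$ vanish.

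The main obstacle is (iii)$\Rightarrow$(ii). The standard strategy is via the intrinsic torsion: the failure of $\nabla \varphi$ to vanish is encoded in a section $T$ of $T^*M \otimes (\mathfrak{so}(7)/\mathfrak{g}_2)$. Under the $G_2$-representation decomposition of $\cref{theorem:g2-decomposition-of-forms}$, one has $\mathfrak{so}(7) = \mathfrak{g}_2 \oplus \Lambda^2_7$ and $T^*M \otimes \Lambda^2_7 \cong \Lambda^1_7 \otimes \Lambda^1_7$, which splits into four irreducible $G_2$-pieces $\Lambda^0_1 \oplus \Lambda^2_{14} \oplus \Lambda^1_7 \oplus \Sym_0(\R^7)$, corresponding to the four ``torsion classes'' $\tau_0,\tau_1,\tau_2,\tau_3$. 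I would then compute $\d \varphi$ and $\d (*\varphi)$ algebraically in terms of $\nabla \varphi$ (using that $\d$ is the antisymmetrisation of $\nabla$ applied to $\varphi$ and $*\varphi$), and show, by inspecting the $G_2$-equivariant linear maps involved, that each of the four torsion components $\tau_i$ appears non-trivially in the pair $(\d \varphi,\d (*\varphi))$; equivalently, the map from the intrinsic torsion into $\Lambda^4 \oplus \Lambda^5$ given by $T \mapsto (\d\varphi, \d\Theta(\varphi))$ is injective. Hence $\d\varphi = 0$ and $\d (*\varphi) = 0$ together force the intrinsic torsion to vanish, which is equivalent to $\nabla \varphi = 0$ by the definition of the intrinsic torsion.

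Finally, for Ricci-flatness I would argue representation-theoretically: once $\Hol(g) \subseteq G_2$, the Riemannian curvature tensor $\Rm$ is a section of $\Sym^2(\Lambda^2_{14}) \subset \Sym^2(\Lambda^2 T^*M)$, because curvature must take values in $\mathfrak{hol} \otimes \mathfrak{hol}^*$ and $\mathfrak{g}_2 \cong \Lambda^2_{14}$ by \cref{theorem:g2-decomposition-of-forms}. The Ricci contraction is a $G_2$-equivariant linear map from this space to $\Sym^2(T^*M) \cong \Lambda^0_1 \oplus \Sym_0(\R^7)$; by Schur's lemma and the decomposition of $\Sym^2(\mathfrak{g}_2)$ into $G_2$-irreducibles, one checks that neither the trivial nor the $\Sym_0$ summand appears with the right symmetries compatible with the first Bianchi identity, so the Ricci tensor is forced to vanish identically.
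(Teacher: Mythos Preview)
The paper does not give its own proof of this theorem: it is stated as a citation of Propositions 10.1.3 and 10.1.5 in \cite{Joyce2000}, with no argument supplied. So there is nothing in the paper to compare your proposal against.

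That said, your sketch follows the standard Fern\'andez--Gray approach that one finds in Joyce's book and elsewhere, and the outline is correct. The cycle (i)$\Leftrightarrow$(ii) via the holonomy principle and (ii)$\Rightarrow$(iii) via antisymmetrisation of $\nabla$ are routine; your treatment of (iii)$\Rightarrow$(ii) through the intrinsic torsion in $T^*M \otimes \Lambda^2_7$ and the injectivity of $T \mapsto (\d\varphi,\d*\varphi)$ is exactly the standard argument. For Ricci-flatness, your representation-theoretic argument is valid in principle, though in practice the cleanest route (and the one Joyce takes) is a direct contraction identity showing that when $\nabla\varphi=0$ the Ricci tensor can be expressed in terms of $\nabla\varphi$ and hence vanishes; the Schur's-lemma argument you propose requires actually decomposing $\Sym^2(\mathfrak{g}_2)$ subject to the Bianchi identity, which is more work than you indicate.
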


The goal of \cref{section:review-of-the-manifold-construction} will be to construct $G_2$-structures that induce metrics with holonomy \emph{equal} to $G_2$.
A torsion-free $G_2$-structure alone only guarantees holonomy \emph{contained} in $G_2$, but in the compact setting a characterisation of manifolds with holonomy equal to $G_2$ is available:

\begin{theorem}[Proposition 10.2.2 and Theorem 10.4.4 in \cite{Joyce2000}]
 Let $M$ be a compact oriented manifold with torsion-free $G_2$-structure $\varphi$ and induced metric $g$.
 Then $\Hol(g)=G_2$ if and only if $\pi_1(M)$ is finite.
 In this case the moduli space of metrics with holonomy $G_2$ on $M$, up to diffeomorphisms isotopic to the identity, is a smooth manifold of dimension $b^3(M)$.
\end{theorem}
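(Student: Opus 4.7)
The statement has two parts: the characterisation of $\Hol(g) = G_2$ via $\pi_1(M)$, and the smooth-manifold structure on the moduli space with its dimension count. My starting point for the first part is that $g$ is Ricci-flat by \cref{theorem:torsion-free-g2-structures-characterisation}, so Cheeger--Gromoll splitting applies to the universal cover $\tilde M$ and relates $\pi_1(M)$ to holonomy reductions. For the second part, I would linearise the torsion-free equations and apply the implicit function theorem, having quotiented out the $\Diff_0(M)$-action via a slice.

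For the direction $(\Rightarrow)$ of the iff I argue contrapositively. If $\pi_1(M)$ is infinite, $\tilde M$ is non-compact, and Cheeger--Gromoll yields an isometric splitting $\tilde M \cong \R^k \times N$ with $k \geq 1$. A coordinate vector field on the $\R^k$-factor is parallel, so $\Hol^0(g) = \Hol(\tilde g)$ stabilises a non-zero vector in $\R^7$. Since $G_2$ acts irreducibly on $\R^7$, this forces $\Hol^0(g) \subsetneq G_2$; but $G_2$ is connected, so $\Hol(g) = G_2$ would entail $\Hol^0(g) = G_2$, contradicting this.

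For $(\Leftarrow)$, if $\pi_1(M)$ is finite then $\tilde M$ is compact, simply connected, Ricci-flat, with $\Hol(\tilde g) \subseteq G_2$. Applying de Rham decomposition, together with Cheeger--Gromoll to exclude flat factors, $\tilde M$ is a Riemannian product of irreducible non-flat compact simply connected Ricci-flat factors. A dimension check rules out every non-trivial decomposition of $7$: $1$-dimensional Ricci-flat factors must be flat, $2$- and $3$-dimensional compact simply connected Ricci-flat manifolds do not exist (they would be $S^2$ or a flat $\R^n$ respectively), and a splitting as $\SU(3) \oplus \R$ is excluded since it would again produce a flat factor. Hence $\tilde M$ is irreducible, and Berger's list (the only irreducible Ricci-flat non-symmetric holonomy in dimension $7$ inside $G_2$ is $G_2$ itself; irreducible compact Ricci-flat cannot be symmetric) gives $\Hol(\tilde g) = G_2$. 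Then $\Hol(g) \supseteq \Hol^0(g) = \Hol(\tilde g) = G_2$, and equality follows.

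For the moduli space statement I would set up a suitable Banach completion $\mathcal{T}$ of torsion-free $G_2$-structures on $M$. A first-order variation $\dot\varphi$ of $\varphi$ preserves the torsion-free condition iff $d\dot\varphi = 0$ and $d\Theta'_\varphi(\dot\varphi) = 0$, where $\Theta'_\varphi$ is the linearisation of $\varphi \mapsto *\varphi$. Using the $G_2$-irreducible decomposition from \cref{theorem:g2-decomposition-of-forms}, $\Theta'_\varphi$ acts as a scalar on each of $\Lambda^3_1, \Lambda^3_7, \Lambda^3_{27}$, so combined with a Hodge gauge slice for $\Diff_0(M)$ the two equations cut out exactly the harmonic $3$-forms $\mathcal{H}^3(M, g) \cong H^3(M; \R)$, of dimension $b^3(M)$. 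Smoothness then follows by an implicit function theorem argument on a local Kuranishi-type model near $\varphi$. The main obstacle is producing a genuine slice for the diffeomorphism action (an Ebin-type argument adapted to the $G_2$-setting) and verifying that the linearised operator is surjective onto its image; both rest on Hodge theory and elliptic regularity for the relevant operators on $\Lambda^3$.
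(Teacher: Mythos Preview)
The paper does not prove this statement itself; it is quoted as background from \cite{Joyce2000} (Proposition 10.2.2 and Theorem 10.4.4) with no argument given. Your sketch follows the standard route found there: Cheeger--Gromoll splitting plus Berger's classification for the holonomy characterisation, and a slice--linearisation--Hodge package for the moduli space. So there is nothing in the paper to compare against, and your outline is the expected one.

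Two minor remarks. In your $(\Leftarrow)$ direction, the parenthetical ``they would be $S^2$ or a flat $\R^n$'' is sloppy ($\R^n$ is not compact); the clean version is that a compact simply connected Ricci-flat manifold has no flat de Rham factor (Bieberbach), and the irreducible non-flat Ricci-flat holonomies on Berger's list occur only in dimensions $4,6,7,8,\dots$, so $7=7$ is forced. For the moduli space, your sketch is correct in spirit but elides the unobstructedness of the deformation problem; Joyce's actual argument shows directly that the period map $\varphi \mapsto [\varphi] \in H^3(M;\R)$ is a local diffeomorphism, which absorbs the slice and surjectivity issues in one step rather than a separate Kuranishi model.
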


Note that this theorem makes no statement about the existence of a torsion-free $G_2$-structure in the first place.
Finding a characterisation of manifolds which admit a torsion-free $G_2$-structure and even the construction of examples remain challenging problems in the field.

Later on, we will investigate perturbations of $G_2$-structures and analyse how that changes their torsion.
To this end, we will use the following estimates for the map $\Theta$ defined before:

\begin{proposition}[Proposition 10.3.5 in \cite{Joyce2000} and eqn. (21) of part I in \cite{Joyce1996}]
\label{proposition:Theta-estimates}
 There exists $\epsilon > 0$ and $c>0$ such that whenever $M$ is a $7$-manifold with $G_2$-structure $\varphi$ satisfying $\d \varphi=0$, then the following is true.
 Suppose $\chi \in C^\infty(\Lambda^3 T^*M)$ and $| \chi | \leq \epsilon$.
 Then $\varphi+\chi$ is a $G_2$-structure, and
 \begin{align}
  \label{equation:theta-expansion}
  \Theta(\varphi+\chi)
  =*\varphi-T(\chi)-F(\chi),
 \end{align}
 where ``$*$'' denotes the Hodge star with respect to the metric induced by $\varphi$, $T: \Omega^3(M) \rightarrow \Omega^4(M)$ is a linear map (depending on $\varphi$), and $F$ is a smooth function from the closed ball of radius $\epsilon$ in $\Lambda^3T^*M$ to $\Lambda^4 T^*M$ with $F(0)=0$.
 Furthermore,
  \begin{align*}
   \left|
    F(\chi)
   \right|
   &\leq
   c
   \left| \chi \right|^2,
   \\
   \left|
    \d \,(F(\chi))
   \right|
   &\leq
   c
   \left\{
    \left| \chi \right|^2 \left| \d^* \varphi \right|
    +
    \left| \nabla \chi \right| \left| \chi \right|
   \right\},
   \\
   [\d \, (F(\chi))]_\alpha
   &\leq
   c
   \left\{
    [\chi]_\alpha \|{ \chi }_{L^\infty} \|{ \d^* \varphi }_{L^\infty}
    +
    \|{ \chi }_{L^\infty}^2 [ \d ^* \varphi ]_\alpha
    +
    [\nabla \chi ]_\alpha \|{ \chi }_{L^\infty}
    +
    \|{ \nabla \chi }_{L^\infty} [\chi]_\alpha
   \right\},
  \end{align*}
  as well as
  \begin{align*}
   \left|
    \nabla (F(\chi))
   \right|
   &\leq
   c
   \left\{
    \left| \chi \right|^2 \left| \nabla \varphi \right|
    +
    \left| \nabla \chi \right| \left| \chi \right|
   \right\},
   \\
   [\nabla (F(\chi))]_{C^{0,\alpha}}
   &\leq
   c
   \left\{
    [\chi]_\alpha \|{ \chi }_{L^\infty} \|{ \nabla \varphi }_{L^\infty}
    +
    \|{ \chi }_{L^\infty}^2 [ \nabla \varphi ]_\alpha
    +
    [\nabla \chi ]_\alpha \|{ \chi }_{L^\infty}
    +
    \|{ \nabla \chi }_{L^\infty} [\chi]_\alpha
   \right\}.
  \end{align*}
  Here, $\left| \cdot \right|$ denotes the norm induced by $\varphi$, $\nabla$ denotes the Levi-Civita connection of the metric induced by $\varphi$, and $[ \cdot ]_{C^{0,\alpha}}$ denotes the unweighted Hölder semi-norm induced by this metric.
\end{proposition}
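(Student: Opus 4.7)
The plan is to derive all of the estimates from a single pointwise Taylor expansion of the smooth algebraic map $\varphi \mapsto *_\varphi \varphi$, together with the chain rule. First, since $G_2$ is the $\GL(7,\R)$-stabiliser of $\varphi_0$ and the orbit $\GL(7,\R) \cdot \varphi_0$ is open in $\Lambda^3 (\R^7)^*$, every $3$-form sufficiently close to $\varphi_0$ is again a $G_2$-structure; translated pointwise to $M$ this produces $\epsilon > 0$ such that $\varphi + \chi$ is a $G_2$-structure whenever $|\chi| \leq \epsilon$. On this open set the Hodge dual $\Theta$ is smooth, so I would Taylor-expand at $\varphi$ to obtain
\[
\Theta(\varphi + \chi) = \Theta(\varphi) + D\Theta|_\varphi(\chi) + F(\chi),
\]
where $F$ is smooth and vanishes to second order at $\chi = 0$. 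Setting $T(\chi) := -D\Theta|_\varphi(\chi)$ then yields \cref{equation:theta-expansion}, and the pointwise bound $|F(\chi)| \leq c|\chi|^2$ follows from the integral form of Taylor's remainder on the fibre.

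For the first-order derivative bounds I would regard $F$ as a smooth fibrewise function of the two arguments $\varphi$ and $\chi$ and apply the chain rule to obtain
\[
\nabla \bigl( F(\chi) \bigr) = (\partial_\chi F) \cdot \nabla \chi + (\partial_\varphi F) \cdot \nabla \varphi.
\]
Because $F$ vanishes to second order in $\chi$, the fibrewise Taylor estimates $|\partial_\chi F| \leq c|\chi|$ and $|\partial_\varphi F| \leq c|\chi|^2$ hold on the ball $|\chi| \leq \epsilon$, which immediately gives the claimed bound on $|\nabla F(\chi)|$. The estimate for $|\d F(\chi)|$ uses the same decomposition, but exploits the antisymmetrisation in $\d$: under the hypothesis $\d\varphi = 0$, the Fern\'andez--Gray classification shows that $\nabla\varphi$ is an algebraic, $\varphi$-dependent function of $\d^*\varphi$, so the $(\partial_\varphi F) \cdot \nabla\varphi$ contribution to $\d F(\chi)$ is controlled by $|\chi|^2 |\d^*\varphi|$ rather than $|\chi|^2 |\nabla\varphi|$.

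The H\"older seminorm estimates then follow by mechanically applying the elementary rules $[fg]_\alpha \leq [f]_\alpha \|{g}_{L^\infty} + \|{f}_{L^\infty} [g]_\alpha$ and $[G(f)]_\alpha \leq \|{DG}_{L^\infty} [f]_\alpha$ to the chain-rule expansions above: each of the four summands in the stated bounds for $[\d F(\chi)]_\alpha$ and $[\nabla F(\chi)]_\alpha$ corresponds to a choice of exactly one factor among $\chi$, $\nabla\chi$, $\d^*\varphi$, $\nabla\varphi$ carrying the $[\,\cdot\,]_\alpha$-seminorm while the remaining factors contribute $L^\infty$-norms. The hard part will be the bookkeeping: tracking, term by term in the Taylor expansion of $\Theta(\varphi+\chi)$ about $\varphi$, precisely which factors carry $\nabla\chi$, $\nabla\varphi$, $\chi$, or $\d^*\varphi$, and invoking the Fern\'andez--Gray identity once to convert a $\nabla\varphi$-factor into a $\d^*\varphi$-factor in the $\d$-version of the estimate. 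Once that organisation is in place, every inequality in the proposition reduces to a direct consequence of Taylor's theorem and of the product and chain rules on the fibre.
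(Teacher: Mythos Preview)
The paper does not actually prove this proposition; it is stated with a citation to Joyce's book and original paper, so there is no in-paper argument to compare against. Your sketch is the standard argument and is essentially what Joyce does: treat $\Theta$ as a smooth $\GL(7,\R)$-equivariant fibrewise map on the open orbit of positive $3$-forms, Taylor-expand to second order to isolate $F$, and then differentiate using the chain rule together with the fact that, when $\d\varphi=0$, the intrinsic torsion (hence $\nabla\varphi$) is an algebraic function of $\d^*\varphi$.

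One small clarification worth making explicit in your write-up: the improvement from $|\nabla\varphi|$ to $|\d^*\varphi|$ in the $\d F(\chi)$ bound does not really require any subtle use of antisymmetrisation. Once $\d\varphi=0$, the Fern\'andez--Gray theory already gives a pointwise equivalence $|\nabla\varphi|\leq c\,|\d^*\varphi|$ (and similarly for the H\"older seminorms), so the $\d$-estimate is simply a consequence of the $\nabla$-estimate together with $|\d F(\chi)|\leq c\,|\nabla F(\chi)|$. Framing it this way avoids having to track how the antisymmetrisation interacts with the two chain-rule terms separately.
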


Finally, the landmark result on the existence of torsion-free $G_2$-structures is the following theorem.
It first appeared in \cite[part I, Theorem A]{Joyce1996}, and we present a rewritten version in analogy with \cite[Theorem 2.7]{Joyce2017}:

\begin{theorem}
\label{theorem:original-torsion-free-existence-theorem}
 Let $\alpha, K_1, K_2, K_3$ be any positive constants.
 Then there exist $\epsilon \in (0,1]$ and $K_4 >0$, such that whenever $0<t \leq \epsilon$, the following holds.
 
 Let $M$ be a compact oriented $7$-manifold, with $G_2$-structure $\varphi$ with induced metric $g$ satisfying $\d \varphi=0$.
 Suppose there is a closed $3$-form $\psi$ on $M$ such that $\d^* \varphi=\d^* \psi$ and
 \begin{enumerate}[label=(\roman*)]
  \item 
  $\|{ \psi }_{C^0} \leq K_1 t^\alpha$,
  $\|{ \psi }_{L^2} \leq K_1 t^{7/2+\alpha}$,
  and
  $\|{ \psi }_{L^{14}} \leq K_1 t^{-1/2+\alpha}$.
  
  \item
  The injectivity radius $\inj$ of $g$ satisfies $\inj \geq K_2 t$.
  
  \item
  The Riemann curvature tensor $\Rm$ of $g$ satisfies $\|{ \Rm }_{C^0} \leq K_3 t^{-2}$.
 \end{enumerate}
 Then there exists a smooth, torsion-free $G_2$-structure $\tilde{\varphi}$ on $M$ such that $\|{ \tilde{\varphi}-\varphi}_{C^0} \leq K_4 t^\alpha$ and $[\tilde{\varphi}]=[\varphi]$ in $H^3(M,\R)$.
 Here all norms are computed using the original metric $g$.
\end{theorem}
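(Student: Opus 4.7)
The approach is a fixed-point iteration in the spirit of the proof of the Calabi conjecture, adapted to the $G_2$-setting. To preserve the cohomology class I write $\tilde\varphi=\varphi+d\eta$ for some 2-form $\eta\in\Omega^2(M)$; then $d\tilde\varphi=0$ automatically and, for $|d\eta|$ small enough, $\tilde\varphi$ is a genuine $G_2$-structure by \cref{proposition:Theta-estimates}. The remaining torsion-free condition $d\Theta(\tilde\varphi)=0$, combined with the hypothesis $d^{*}\varphi=d^{*}\psi$ (equivalently $d{*\varphi}=d{*\psi}$), reduces via \cref{proposition:Theta-estimates} to
\[
 dT(d\eta)+dF(d\eta)=d{*\psi},
\]
which, after imposing the gauge $d^{*}\eta=0$ and rearranging, is a quasilinear elliptic equation for $\eta$ whose linear principal part is (up to a constant) the Hodge Laplacian on coexact 2-forms.

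I would solve it by the Picard iteration $\eta_0=0$ and
\[
 \Delta\eta_{n+1}=d^{*}\psi-d^{*}F(d\eta_n),
\]
with each $\eta_{n+1}$ chosen coexact; existence and uniqueness of $\eta_{n+1}$ follow from Hodge theory on the compact manifold $M$ since the right-hand side is exact (using $d\psi=0$). Convergence is measured in a composite norm combining $\Vert d\eta\Vert_{C^0}$, $\Vert d\eta\Vert_{L^2}$, and $\Vert d\eta\Vert_{L^{14}}$ with $t$-weights calibrated so that hypothesis (i) yields $\Vert d\eta_1\Vert_{*}\lesssim K_1$; the goal is to prove by induction that $\Vert d\eta_n\Vert_{*}\leq K_4 t^{\alpha}$ with $\eta_{n+1}-\eta_n$ decaying geometrically.

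The base case rests on scale-invariant Sobolev, $L^p$, and Schauder estimates for $\Delta$ on 2-forms of $(M,g)$: the bounds $\inj\geq K_2 t$ and $\Vert\Rm\Vert_{C^0}\leq K_3 t^{-2}$ in hypotheses (ii) and (iii) allow one to cover $M$ by geodesic balls of radius $\sim t$, rescale each by $t^{-1}$ to obtain uniformly Euclidean-close data, apply standard flat estimates, and patch via a partition of unity, obtaining global estimates with constants depending only on $K_2,K_3$ and not on $t$. The inductive step then uses the quadratic estimates of \cref{proposition:Theta-estimates}: the bounds $|F(\chi)|\leq c|\chi|^2$ together with the analogous derivative and Hölder bounds guarantee that $F(d\eta_n)$ is smaller than $d\eta_n$ by a factor $\sim t^{\alpha}$ once $\Vert d\eta_n\Vert_{C^0}\lesssim t^{\alpha}$, so the iteration closes for sufficiently small $t$.

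Passing to the limit produces $\eta\in C^{1,\alpha}$ with $\Vert d\eta\Vert_{C^0}\leq K_4 t^{\alpha}$; elliptic bootstrapping applied to $\Delta\eta=d^{*}\psi-d^{*}F(d\eta)$ promotes $\eta$ to smooth, so $\tilde\varphi=\varphi+d\eta$ is a smooth closed $G_2$-structure satisfying $d\Theta(\tilde\varphi)=0$, hence torsion-free by \cref{theorem:torsion-free-g2-structures-characterisation}; the cohomology class is preserved because $\tilde\varphi-\varphi=d\eta$ is exact. The main technical obstacle is the coupled bookkeeping of the three norms at their different $t$-scales: the exponents in hypothesis (i) are calibrated precisely so that the quadratic feedback through $F$ does not destabilise the iteration, and verifying closure requires Gagliardo--Nirenberg-type interpolation between $L^2$, $L^{14}$, and $C^0$ on the pinched manifold $(M,g)$, together with a careful accounting of how the Schauder constants absorb the powers of $t$ supplied by the base estimates on $\psi$.
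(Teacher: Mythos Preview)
The paper does not prove this theorem; it is quoted verbatim as background from \cite[part I, Theorem A]{Joyce1996} (in the reformulation of \cite[Theorem 2.7]{Joyce2017}), so there is no in-paper proof to compare against.

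That said, your outline is a faithful summary of Joyce's original argument: the ansatz $\tilde\varphi=\varphi+d\eta$, the expansion of $\Theta$ from \cref{proposition:Theta-estimates}, reduction to a quasilinear equation for $\eta$ with Laplacian principal part, a Picard iteration controlled simultaneously in $C^0$, $L^2$ and $L^{14}$, and the use of hypotheses (ii)--(iii) to get $t$-uniform elliptic constants by rescaling balls of radius $\sim t$ to approximately Euclidean geometry---all of this matches Joyce's scheme. One step you pass over a little quickly is the identification of $dT(d\eta)$, in the coexact gauge, with a constant multiple of $\Delta\eta$: this is not formal but relies on the $G_2$-decomposition of $\Lambda^3$ and $\Lambda^4$ (in Joyce's notation the operator picks out the $\pi_1\oplus\pi_7$ components and produces an explicit numerical factor). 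Without that representation-theoretic computation the principal symbol is not obviously elliptic, so it is worth flagging as a genuine ingredient rather than a gauge-fixing triviality.
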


On $\mathbb{H}$ with coordinates $(y_0,y_1,y_2,y_3)$ we have the three symplectic forms $\omega_1,\omega_2,\omega_3$ given as
\begin{align*}
 \omega_0&=
 \d y_0 \wedge \d y_1 + \d y_2 \wedge \d y_3,
 &
 \omega_1&=
 \d y_0 \wedge \d y_2 - \d y_1 \wedge \d y_3,
 &
 \omega_2&=
 \d y_0 \wedge \d y_3 + \d y_1 \wedge \d y_2.
\end{align*}
Identify $\R^7$ with coordinates $(x_1,\dots,x_7)$ with $\R^3 \oplus \mathbb{H}$ with coordinates $((x_1,x_2,x_3),(y_1,y_2,y_3,y_4))$.
Then we have for $\varphi_0, * \varphi_0$ from \cref{definition:g2}:
\begin{align}
\label{equation:product-g2-structure}
 \varphi_0
 &=
 \d x_{123} - 
 \sum_{i=1}^3
 \d x_i \wedge \omega_i
 ,&
 *\varphi_0
 &=
 \vol_{\mathbb{H}}-
 \sum_
 {\substack{(i,j,k) = (1,2,3)\\
 \text{and cyclic permutation}}}
 \omega_i \wedge \d x_{jk}.
\end{align}

This linear algebra statement easily extends to product manifolds in the following sense:
if $X$ is a Hyperkähler $4$-manifold, and $\R^3$ is endowed with the Euclidean metric, then $\R^3 \times X$ has a $G_2$-structure.
The $G_2$-structure is given by the same formula as in the flat case, namely \cref{equation:product-g2-structure}, after replacing $(\omega_1,\omega_2,\omega_3)$ with the triple of parallel symplectic forms defining the Hyperkähler structure on $X$.
This \emph{product $G_2$-structure} will be glued into $G_2$-orbifolds \cref{section:review-of-the-manifold-construction}.

\subsection{Gauge Theory in Dimension $4$}
\label{subsection:gauge-theory-dimension-4}
\label{subsubsection:gauge-theory-on-ale}

In this part we briefly review the theory of ASD instantons on ALE spaces.
We follow the treatment of \cite{Nakajima1990}.
A treatment of compact $4$-manifolds can be found in \cite{Donaldson1990}.

Let $\Gamma \subset \SU(2)$ be a finite subgroup and let $X$ be an ALE $4$-manifold asymptotic to $\C^2 /\Gamma$.
Even though $X$ is non-compact, some of the results from gauge theory on compact manifolds carry over to this setting.
First, we explain a correspondence between gauge equivalence classes of connections on $X$ and on its one point compactification $\hat{X}=X \cup \{ \infty \}$.
For $\hat{X}$, we have:

\begin{proposition}[p.687 in \cite{Kronheimer1989} and Proposition 2.36 in \cite{Walpuski2013}]
\label{proposition:one-point-compactification-of-eguchi-hanson}
\label{equation:orbifold-chart}
Let $(X,g)$ be an ALE manifold asymptotic to $\C^2 / \Gamma$ and let $\hat{X}=X \cup \{ \infty \}$ be the one point compactification of $X$.
\begin{enumerate}
 \item
 The topological space $\hat{X}$ is an orbifold and there exist a neighbourhood $V$ of $\infty$ and an orbifold chart $f: B^4/ \Gamma \rightarrow V$, where $B^4$ is the unit ball in $\R^4$.
 
 \item
 The orbifold $\hat{X}$ carries an orbifold metric $\hat{g}$ of regularity $C^{3,\alpha}$ for any $\alpha \in (0,1)$ such that the restriction of $\hat{g}$ to $X \subset \hat{X}$ is conformally equivalent to $g$.
\end{enumerate}
\end{proposition}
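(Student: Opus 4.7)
The plan is to compactify via inversion on the ALE end, producing the orbifold chart at infinity from the asymptotic chart of $X$, and then to use the standard conformal property of inversion in $\R^4$ to produce an extending metric. By definition of ALE, there is a compact set $K \subset X$, some $R>0$, and a diffeomorphism $\phi: (\R^4 \setminus B_R)/\Gamma \to X \setminus K$ such that $h:= \phi^* g - g_{\mathrm{Eucl}}$ decays like $|x|^{-4}$ together with appropriate derivatives (this is the standard ALE decay for hyperkähler ALE spaces, in particular for those asymptotic to $\C^2/\Gamma$).

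First I would construct the chart $f$. Let $\sigma: \R^4 \setminus \{0\} \to \R^4 \setminus \{0\}$ be inversion, $\sigma(y)=y/|y|^2$. Since $\Gamma \subset \SU(2) \subset \SO(4)$ acts on $\R^4$ by isometries and $\sigma$ is a radial map, $\sigma$ commutes with $\Gamma$ and descends to $(\R^4 \setminus \{0\})/\Gamma$. Define, for $\epsilon = 1/R$,
\begin{align*}
 f: B^4_\epsilon / \Gamma \longrightarrow \hat X, \qquad f(y) = \begin{cases} \phi(\sigma(y)) & y \neq 0, \\ \infty & y = 0. \end{cases}
\end{align*}
Since $|\sigma(y)| = 1/|y| \to \infty$ as $y \to 0$, and $X \setminus K$ together with $\infty$ forms a neighbourhood of $\infty$ in the one-point compactification, $f$ is a homeomorphism onto an open neighbourhood $V$ of $\infty$. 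Together with the smooth manifold structure of $X$ on $\hat X \setminus \{\infty\}$, this equips $\hat X$ with the structure of a topological orbifold with orbifold chart $f$.

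Next I would build $\hat g$. Choose a smooth positive function $\rho$ on $X$ which equals $1$ on $K$ and equals $|\phi^{-1}(\cdot)|^{-4}$ outside a slightly larger compact set, and set $\hat g := \rho \cdot g$ on $X$. This is tautologically conformally equivalent to $g$ on $X$. The content is that $\hat g$ extends across $\infty$ with the claimed regularity. In the chart $f$, one computes, using $\phi^*(\rho\cdot g) = |y|^4 \sigma^*(g_{\mathrm{Eucl}} + h)$ near $y=0$, and the basic identity $|y|^4 \sigma^* g_{\mathrm{Eucl}} = g_{\mathrm{Eucl}}$ (the Jacobian of $\sigma$ has the form $|y|^{-2}(I - 2\hat y\hat y^T)$, and $I-2\hat y\hat y^T$ is orthogonal), that
\begin{align*}
 f^* \hat g \;=\; g_{\mathrm{Eucl}} + |y|^4 \sigma^* h.
\end{align*}
The first summand is smooth on $B^4_\epsilon/\Gamma$. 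So it remains to check that $|y|^4 \sigma^* h$ extends as a $C^{3,\alpha}$ symmetric $2$-tensor across $y=0$.

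The main technical step is this regularity check, and I expect it to be the only point requiring care. Writing $(\sigma^* h)_{ab}(y) = h_{ij}(\sigma(y)) \,\partial_a \sigma^i\,\partial_b \sigma^j$ and using $|h_{ij}|_{\sigma(y)} = O(|y|^4)$ together with $|\partial \sigma| = O(|y|^{-2})$, the components $(\sigma^*h)_{ab}$ are $O(1)$, hence $|y|^4 \sigma^* h = O(|y|^4)$. Each further derivative $\partial_y$ loses at most a factor of $|y|^{-1}$: either it falls on $|y|^4$ (yielding $|y|^3$ and one fewer factor), or it falls on $\sigma^* h$, where a chain rule gives $\partial_k h_{ij}$ at $\sigma(y)$ (which is $O(|y|^5)$ by the ALE derivative decay) times $O(|y|^{-2})$ from $\partial \sigma$, or it falls on one of the Jacobian factors, yielding a factor of $O(|y|^{-1})$ relative to the original. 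Propagating this bookkeeping up to the fourth derivative shows $|y|^4 \sigma^* h \in C^{3,\alpha}$ for every $\alpha \in (0,1)$, since the fourth derivative is bounded by $O(|y|^{4-4}) = O(1)$ with Hölder seminorm estimate following from standard difference quotients. This establishes the claimed regularity, and since $\rho$ was chosen $\Gamma$-invariantly (it depends only on $|\phi^{-1}(\cdot)|$) the extension is genuinely defined on the orbifold $B^4_\epsilon/\Gamma$, completing the construction.
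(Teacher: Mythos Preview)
Your proof is correct and follows the standard approach to conformal compactification of ALE spaces via inversion. The paper itself does not supply a proof of this proposition; it simply cites Kronheimer and Walpuski. Your argument is essentially what one finds in those references.

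One small remark: the paper later (in the proof of \cref{proposition:moduli-space-bijection}) refers back to ``the definition of $\hat g$ from the proof'' of this proposition and uses the scaling relation $|T|_g = (1+r^2)^{-k}|T|_{\hat g}$ for a $(0,k)$-tensor. This indicates the conformal factor implicitly adopted there is the globally smooth $(1+r^2)^{-2}$ rather than your cutoff version of $r^{-4}$. The two choices agree to leading order at infinity and yield the same regularity conclusion, so this is purely cosmetic; but if you want your proof to be drop-in compatible with how the proposition is invoked later in the paper, you might prefer the $(1+r^2)^{-2}$ factor, which avoids the cutoff and is defined on all of $X$ once $r$ is extended to a smooth positive function.
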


Let $G$ be a compact connected Lie group with a faithful representation $G \rightarrow \GL(V)$.
Let $\hat{P}$ be an orbifold $G$-bundle over $\hat{X}$ and denote its restriction to $X$ by $P$, i.e. $P=\hat{P}|_{X}$.
That is, $\hat{P}$ restricted to $V \simeq B^4/\Gamma$ from \cref{proposition:one-point-compactification-of-eguchi-hanson} is the trivial bundle $B^4 \times G$ together with a fixed lift of the action of $\Gamma$ on $B^4$ to $B^4 \times G$.
Over the point $0 \in B^4$, this defines a homomorphism $\rho : \Gamma \rightarrow G$.
The following proposition states that this homomorphism essentially characterises the orbifold bundle over $B^4$ completely.

\begin{proposition}
\label{assumption:trivialisation-at-infinity}
 There exists a trivialisation $\kappa: \hat{P}|_{B^4} \rightarrow B^4 \times G$ such that $\Gamma$ acts through left multiplication by $\rho$:
\begin{align}
 \label{equation:asymptotic-at-infinity-condition2}
 \gamma \cdot \kappa^{-1}(b,g)
 =
 \kappa^{-1}(\gamma \cdot b,\rho(\gamma)g)
 \text{ for }
 \gamma \in \Gamma,
 (b,g) \in B^4 \times G.
\end{align}
\end{proposition}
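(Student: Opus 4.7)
The plan is to build $\kappa$ via parallel transport along radial rays from $0\in B^4$ using a $\Gamma$-invariant connection on $\hat{P}|_{B^4}$. Such a connection is available because $\Gamma$ is finite: take any smooth connection $A_0$ (for instance the flat connection in the given trivialisation of $\hat{P}|_V$) and set $A = \frac{1}{|\Gamma|}\sum_{\gamma\in\Gamma}\gamma^*A_0$, which is smooth and $\Gamma$-invariant. Concretely, the problem reduces to producing a gauge transformation $u\colon B^4\to G$ that intertwines the given (a priori non-standard) cocycle $\gamma\cdot (b,g)=(\gamma b,\phi_\gamma(b)g)$ with the standard one, i.e.\ satisfying $\phi_\gamma(b)u(b) = u(\gamma b)\rho(\gamma)$.

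For each $b\in B^4$ let $c_b\colon [0,1]\to B^4$, $c_b(t) = tb$, be the radial ray from $0$ to $b$, and let $P_{c_b}\colon (\hat{P}|_{B^4})_0 \to (\hat{P}|_{B^4})_b$ denote parallel transport along $c_b$ with respect to $A$. By the definition of $\rho$, there is a canonical identification $(\hat{P}|_{B^4})_0 \cong G$ of $G$-torsors under which $\Gamma$ acts by left multiplication by $\rho$. I then define
\[
\kappa^{-1}(b,g) \;:=\; P_{c_b}(g), \qquad (b,g)\in B^4\times G.
\]
This map is smooth, including at $b=0$, by smooth dependence of the parallel transport ODE on the endpoint: the family $c_b$ depends smoothly on $b$, and at $b=0$ one has $c_0\equiv 0$ and $P_{c_0}=\Id$.

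The equivariance condition \cref{equation:asymptotic-at-infinity-condition2} is then immediate from two facts. First, $\Gamma$ acts \emph{linearly} on $B^4$, so the radial rays are permuted by the action: $\gamma\cdot c_b = c_{\gamma b}$. Second, $\Gamma$-invariance of $A$ implies the standard intertwining $\gamma\circ P_{c_b} = P_{\gamma\cdot c_b}\circ\gamma$, where the $\gamma$ on the right denotes the action on the fibre at $0$, i.e.\ left multiplication by $\rho(\gamma)$. Combining these,
\[
\gamma\cdot\kappa^{-1}(b,g) \;=\; \gamma\cdot P_{c_b}(g) \;=\; P_{c_{\gamma b}}(\rho(\gamma)g) \;=\; \kappa^{-1}(\gamma b,\rho(\gamma)g),
\]
which is exactly \cref{equation:asymptotic-at-infinity-condition2}. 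The only potentially subtle point I foresee is the intertwining property of parallel transport at the fixed point $0$, where the radial family degenerates; this is however automatic, since $P_{c_0}=\Id$ and $\Gamma$ already acts on $(\hat{P}|_{B^4})_0$ by $\rho$ by hypothesis, and for $b\neq 0$ the identity follows from uniqueness of ODE solutions applied to the $\Gamma$-image of the horizontal lift of $c_b$.
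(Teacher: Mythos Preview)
Your proof is correct and takes a genuinely different route from the paper's. The paper argues algebraically: writing the lifted $\Gamma$-action in the given trivialisation as $\gamma\cdot(b,g)=(\gamma b,w(b)(\gamma)g)$, it invokes the rigidity result \cref{corollary:rep-variety-components-come-from-conjugation} (which in turn rests on Weil's theorem and the vanishing of $H^1(\Gamma,\mathfrak g)$ for finite $\Gamma$) to conclude that the conjugacy class of $w(b)\in\Hom(\Gamma,G)$ is constant in $b$, and then chooses a conjugating gauge transformation $\sigma$. Your argument instead exploits finiteness of $\Gamma$ only to average a connection, and then uses the geometric facts that $\Gamma\subset\SU(2)$ acts linearly on $B^4$ (hence preserves the family of radial rays through the fixed point $0$) and that parallel transport for an invariant connection intertwines the $\Gamma$-action. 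This is more elementary and constructive---it bypasses the representation-variety machinery entirely---while the paper's approach has the mild advantage of not using that the base is star-shaped about a fixed point, so it would transfer more readily to other contractible $\Gamma$-spaces.
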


\begin{proof}
 The lift of the action of $\Gamma$ to $B^4 \times G$ can be viewed as an element $w \in C^\infty(B^4, \Hom(\Gamma,G))$ via $\gamma \cdot (b,g)=(\gamma \cdot b, w(b)(\gamma)\cdot g)$.
 The space $B^4$ is connected, so by \cref{corollary:rep-variety-components-come-from-conjugation} the conjugacy class of $w$ does not change over $B^4$.
 That is, there exists $\sigma \in C^\infty(B^4, G)$ such that $l_\sigma r_{\sigma^{-1}} w \in C^\infty(B^4, \Hom(\Gamma,G))$ is constant and $l_\sigma r_{\sigma^{-1}} w(0)=\rho$.
 Thus $\sigma$ defines a trivialisation of $B^4 \times G$ in which $\Gamma$ acts through left multiplication via $\rho$.
\end{proof}

Because of \cref{assumption:trivialisation-at-infinity} we can fix a trivialisation of $\hat{P}$ over $B^4$ such that $\Gamma$ acts through left multiplication by $\rho$.
Then denote by $A_0$ any extension of the product connection with respect to this trivialisation to all of $\hat{P}$.
Different choices of extension will give rise to the very same spaces in \cref{equation:ALE-moduli-definitions}.
We identify $[R, \infty) \times S^3 / \Gamma \simeq X \setminus K$ for some $R>0$ big enough and a compact set $K \subset X$.
Then the monodromy representation of $A_0$ restricted to $\{ t \} \times S^3 / \Gamma$, say $h: \pi_1(\{ t \} \times S^3 / \Gamma) \rightarrow G$, satisfies
\begin{align}
\label{equation:asymptotic-at-infinity-condition}
 h = \rho
\end{align}
under the canonical identification $\Gamma \simeq \pi_1(\{ t \} \times S^3 / \Gamma)$.
Extend the projection onto the first component $X \setminus K \simeq [R, \infty) \times S^3 \rightarrow [R, \infty)$ to a smooth positive function $r$ on all of $X$.
For a non-negative integer $l$, a weight $\delta \in \R$, and $p \geq 1$ define the weighted Sobolev norm on the $k$-forms with values in the adjoint bundle with compact support $\Omega^k_0(\Ad P)$ via
\begin{align}
\label{equation:weighted-sobolev-norm}
 \|{
  \alpha
 }_{L^p_{l,\delta}}
 =
 \sum_{j=0}^l
 \left(
  \int_X
  | \nabla_{A_0}^j \alpha |^p
  r^{-(\delta-j)p-4}
  \d V
 \right)^{1/p},
\end{align}
and denote by $L^p_{l,\delta}(\Lambda^k(\Ad P))$ the completion of $\Omega^k_0(\Ad P)$ with respect to the norm 
$\|{
  \alpha
 }_{L^p_{l,\delta}}$.
 
As before, set $E=P \times_G V$ and for $l \geq 3$ define
\begin{align}
\label{equation:ALE-moduli-definitions}
\begin{split}
 \mathscr{A}^{l, \delta}
 &=
 \{
  A_0 + \alpha : \alpha \in L^2_{l,\delta}(\Lambda^1(\Ad P))
 \},
 \\
 \mathscr{G}_0^{l+1, \delta+1}
 &=
 \{
  s \in L^2_{l+1, \text{loc}}(\Lambda^0(\End(E)) :
  s(x) \in G \text{ for all } x \in G,
  \|{ s - \Id }_{L^2_{l+1,\delta+1}} < \infty
 \},
 \\
 G_\rho
 &=
 \{
  s \in G: s \rho s^{-1} = \rho
 \},
 \\
 \mathscr{G}^{l+1, \delta+1}
 &=
 \{
  s \in L^2_{l+1, \text{loc}}(\Lambda^0(\End(E)) :
  s(x) \in G \text{ for all } x \in G,
  \\
  &\;\;\;\;\;\;\;\;\;\;\;\;\;\;\;\;\;\;\;\;\;\;\;\;\;\;\;\;\;\;\;\;\;\;\;\;\;\;\;\;\;\;\;\;\;\;\;
  \|{ s - s_\infty }_{L^2_{l+1,\delta+1}} < \infty
  \text{ for some }
  s_\infty \in G_\rho
 \}.
\end{split}
\end{align}
 In the definition of $\mathscr{G}^{l+1, \delta+1}$ we regarded $s_\infty \in G_\rho$ as an element in $C^\infty(\Lambda^0(\End(E))$ as follows:
 consider $\hat{P}$ over $B^4$ defined by the orbifold chart around $\infty$.
Using the trivialisation from \cref{assumption:trivialisation-at-infinity}, this canonically defines a gauge transformation over $B^4$.
(It is the same to say that we obtain a gauge transformation by parallel transport with respect to $A_0$.)
 This gauge transformation is $\Gamma$-equivariant by definition of $G_\rho$ and \cref{assumption:trivialisation-at-infinity}.
 We then extend it arbitrarily on the rest of $\hat{X}$ to an element in $C^\infty(\Lambda^0(\End(E))$.
 The choice of the extension does not matter for the condition $\|{ s - s_\infty }_{L^2_{l+1,\delta+1}} < \infty$.

The gauge groups $\mathscr{G}_0^{l+1, \delta+1}$ and $\mathscr{G}^{l+1, \delta+1}$ both act on $\mathscr{A}^{l, \delta}$, and the quotient spaces $\mathscr{A}^{l, \delta}/\mathscr{G}_0^{l+1, \delta+1}$ and $\mathscr{A}^{l, \delta}/\mathscr{G}^{l+1, \delta+1}$ are called the moduli space of framed connections and the moduli space of unframed connections, respectively.
We can restrict to anti-self-dual connections:
\begin{align*}
 \mathscr{A}^{l, \delta}_{\text{asd}}
 =
 \{
  A \in \mathscr{A}^{l, \delta} : A \text{ is anti-self-dual}
 \}
\end{align*}
and obtain the \emph{moduli space of framed ASD connections $M^{l, \delta}:=\mathscr{A}^{l, \delta}_{\text{asd}}/\mathscr{G}_0^{l+1, \delta+1}$} and the \emph{moduli space of ASD connections $\mathscr{A}^{l, \delta}_{\text{asd}}/\mathscr{G}^{l+1, \delta+1}$}.

The four quotient spaces $\mathscr{A}^{l, \delta}/\mathscr{G}_0^{l+1, \delta+1}$, $\mathscr{A}^{l, \delta}/\mathscr{G}^{l+1, \delta+1}$, $M^{l, \delta}$, and $\mathscr{A}^{l, \delta}_{\text{asd}}/\mathscr{G}^{l+1, \delta+1}$ are topological spaces.
For $M^{l, \delta}$ we will observe explicitly (cf. \cref{theorem:tangent-space-of-moduli}) that it is metrisable and therefore Hausdorff, and the same argument works for the other three quotient spaces, cf. \cite[Lemma 4.2.4]{Donaldson1990}.

Moving on to the orbifold, we define:
\begin{definition}
 For $l \geq 3$ let
 \begin{align*}
  \mathscr{A}^{l, \text{orb}}_{\text{asd}}
  &=
  \{
   A_0 + \alpha : \alpha \in L^2_{l}(\Lambda^1(\Ad \hat{P}))
  \},
  \\
  \mathscr{G}^{l+1, \text{orb}}
  &=
  \{
   s \in L^2_{l+1}(\Lambda^0(\End V)):
   s(x) \in G \text{ for all } x \in \hat{X},
   s(\infty) \in G_\rho
  \},
  \\
  \mathscr{G}^{l+1, \text{orb}}_0
  &=
  \{
   s \in \mathscr{G}^{l+1, \text{orb}}:
   s(\infty) = \Id
  \}.
 \end{align*}
 Then $\mathscr{G}^{l+1, \text{orb}}$ and $\mathscr{G}^{l+1, \text{orb}}_0$ both act on $\mathscr{A}^{l, \text{orb}}_{\text{asd}}$ and we can form the quotient spaces $\mathscr{A}^{l, \text{orb}}_{\text{asd}}/\mathscr{G}^{l+1, \text{orb}}$ and $M^{l, \text{orb}}=\mathscr{A}^{l, \text{orb}}_{\text{asd}}/\mathscr{G}^{l+1, \text{orb}}_0$.
 Here, $M^{l, \text{orb}}$ is called the \emph{moduli space of framed ASD connections} on $\hat{X}$.
\end{definition}

We have that these definitions are essentially independent of the chosen regularity $l$:

\begin{proposition}
\label{proposition:moduli-independent-of-regularity}
 For $3 \leq l_1 < l_2$, the inclusion maps
 \begin{align*}
  M^{l_1, \text{orb}} \hookrightarrow M^{l_2, \text{orb}},
  &&
  M^{l_1, -2} \hookrightarrow M^{l_2, -2}
 \end{align*}
 are homeomorphisms.
\end{proposition}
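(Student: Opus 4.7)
The assertion in both cases is that the natural forgetful map from higher to lower regularity is a homeomorphism, i.e.\ a continuous bijection of Hausdorff spaces whose inverse is also continuous. Continuity of the forgetful map is immediate from the Sobolev inclusions $L^2_{l_2} \subset L^2_{l_1}$ and $L^2_{l_2,\delta} \subset L^2_{l_1,\delta}$. The substance is therefore threefold: find a gauge representative in any $M^{l_1}$-class that has $l_2$-regularity, check that any gauge transformation relating two $l_2$-regular representatives of the same $l_1$-class is itself $l_2{+}1$-regular, and ensure both statements come with quantitative bounds. All three follow from an elliptic bootstrap in Coulomb gauge relative to the smooth reference connection $A_0$, and the argument is formally identical in the compact orbifold and weighted ALE cases.

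\textbf{Bootstrap.} Given a class in $M^{l_1}$ with representative $A = A_0 + a$, $a \in L^2_{l_1}$ (resp.\ $L^2_{l_1,-2}$), apply Uhlenbeck's local gauge-fixing theorem to find $g$ in $\mathscr{G}^{l_1+1,\text{orb}}_0$ (resp.\ $\mathscr{G}^{l_1+1,-1}_0$) such that $\tilde a := g\cdot A - A_0$ lies in the Coulomb slice $\ker d_{A_0}^*$. Combining $d_{A_0}^*\tilde a = 0$ with the anti-self-duality $F_A^+ = 0$ yields the first-order elliptic system
\begin{align*}
(d_{A_0}^* + d_{A_0}^+)\,\tilde a \;=\; -F_{A_0}^+ - \tfrac{1}{2}[\tilde a \wedge \tilde a]^+.
\end{align*}
Because $l_1 \geq 3$, the relevant Sobolev space is an algebra in dimension four, so the right-hand side has the same regularity as $\tilde a$. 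Elliptic regularity for $d_{A_0}^* + d_{A_0}^+$---orbifold elliptic theory on $\hat X$ or the weighted theory of \cite{Nakajima1990} on $X$---then promotes $\tilde a$ one Sobolev step at a time, terminating after finitely many iterations at $L^2_{l_2}$ (resp.\ $L^2_{l_2,-2}$). This gives surjectivity. Injectivity is handled by an analogous bootstrap: if $g \in \mathscr{G}^{l_1+1}_0$ satisfies $g\cdot A = A'$ with $A, A' \in \mathscr{A}^{l_2}_{\text{asd}}$, the identity $d_{A_0} g = g(A'-A_0) - (A-A_0) g$ inductively upgrades the regularity of $g$ until $g \in \mathscr{G}^{l_2+1}_0$, so the gauge orbits agree already in the finer moduli space.

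\textbf{Continuity of the inverse and main obstacle.} Each step of the bootstrap comes with a quantitative Sobolev estimate, so the $l_2$-regular representative depends continuously in the $l_2$-norm on the input; this yields continuity of the inverse. The main technical obstacle is establishing Uhlenbeck gauge fixing and elliptic regularity for $d^* + d^+$ in both settings. Near the orbifold point $\infty \in \hat X$ one works $\Gamma$-equivariantly on $B^4$ via the chart of \cref{equation:orbifold-chart} and applies the standard Euclidean theory; on the ALE end of $X$ one invokes the weighted analogues of \cite{Nakajima1990}, checking in particular that Sobolev multiplication $L^2_{l,-2} \cdot L^2_{l,-2} \hookrightarrow L^2_{l,-4} \hookrightarrow L^2_{l,-2}$ holds for $l \geq 3$. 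Once these ingredients are in place, the bootstrap is routine and the proposition follows.
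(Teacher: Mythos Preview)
Your proposal is correct and follows essentially the same route as the paper. The paper's own argument is in fact much terser: it simply refers to the compact case, \cite[Proposition 4.2.16]{Donaldson1990}, and remarks that in the ALE setting one replaces ordinary Sobolev spaces by the weighted ones of \cref{equation:weighted-sobolev-norm}, invoking \cite[Corollary 6.8]{Pacini2013} for the requisite weighted Sobolev embedding and multiplication theorems. Your write-up expands exactly this bootstrap (Coulomb gauge plus elliptic regularity for $d_{A_0}^* + d_{A_0}^+$, then the analogous bootstrap for gauge transformations), so the two agree in substance; you cite \cite{Nakajima1990} where the paper cites Pacini, but the content is the same. One small imprecision: Uhlenbeck's gauge-fixing is a local statement, so the Coulomb gauge and the bootstrap should be carried out on small balls and then patched, rather than globally against $A_0$ as you phrase it---this is how \cite[Proposition 4.2.16]{Donaldson1990} proceeds, and it avoids any smallness hypothesis on $a$.
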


The proof of \cref{proposition:moduli-independent-of-regularity} works the same as in the compact case, i.e. the proof of \cite[Proposition 4.2.16]{Donaldson1990}.
The only difference is that in the non-compact case, i.e. for the claim $M^{l_1, -2} \hookrightarrow M^{l_2, -2}$, one has to take the weighted Sobolev norms from \cref{equation:weighted-sobolev-norm}.
These have their own versions of the Sobolev embedding theorem and, if the weight is non-positive, the multiplication theorem for Sobolev norms also holds.
These properties of weighted Sobolev norms are proved in \cite[Corollary 6.8]{Pacini2013}.

\begin{proposition}
\label{proposition:bundle-extension-is-isomorphic}
 For any $A \in \mathscr{A}^{l,-2}_{\asd}$ there exists a connection $\hat{A} \in \mathscr{A}(\hat{P})$ satisfying $\hat{A}|_P=A$.
\end{proposition}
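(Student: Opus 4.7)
The plan is to apply Uhlenbeck's removable singularity theorem for ASD connections in a $\Gamma$-equivariant form. By \cref{proposition:one-point-compactification-of-eguchi-hanson}, there is an orbifold chart $f: B^4/\Gamma \to V$ around $\infty \in \hat{X}$, and $\hat{g}|_{V \setminus \{\infty\}}$ is conformally equivalent to the ALE metric $g$ on the end of $X$. Using the trivialisation $\kappa$ from \cref{assumption:trivialisation-at-infinity}, I would pull back $A$ to the cover $B^4 \setminus \{0\}$ to obtain a $\Gamma$-equivariant connection $\tilde{A}$ on the trivial bundle; by the conformal invariance of the ASD equation in dimension $4$, $\tilde{A}$ is ASD with respect to the flat (equivalently $\hat{g}$-) metric on $B^4 \setminus \{0\}$.

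Next, I would verify that the weight $\delta = -2$ forces the curvature to lie in $L^2$. A direct computation with \cref{equation:weighted-sobolev-norm} shows that $\|\alpha\|_{L^2(X,g)}^2 = \|\alpha\|_{L^2_{0,-2}}^2 < \infty$ (the weight exponent reduces to $0$ for $p=2$, $j=0$, $\delta=-2$) and that $\nabla_{A_0}\alpha \in L^2(X)$ for similar reasons once the volume form on the end is taken into account. Sobolev embedding in dimension $4$ then gives $\alpha \in L^4(X)$, so $F_A = F_{A_0} + d_{A_0}\alpha + \alpha \wedge \alpha$ lies in $L^2(X)$, using that $F_{A_0}$ vanishes outside a compact set of $X$. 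Since the $L^2$-norm of a $2$-form in dimension $4$ is conformally invariant, $F_{\tilde A}$ has finite $L^2$-norm on $B^4 \setminus \{0\}$ with respect to the Euclidean metric.

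Uhlenbeck's removable singularity theorem then supplies a gauge transformation $u$ on $B^4 \setminus \{0\}$ such that $u^*\tilde{A}$ extends to a smooth connection on $B^4$. Since all the data entering the construction of $\tilde A$ are $\Gamma$-equivariant by hypothesis, one can arrange $u$ to be $\Gamma$-equivariant, using either an equivariant version of Uhlenbeck's theorem or the uniqueness (modulo residual rotations in $G_\rho$) of the Coulomb gauge together with an averaging argument; the extended connection then descends to an orbifold connection on $\hat{P}|_V$. Patching this extension with $A$ on the complement of a neighbourhood of $\infty$ via a cut-off function, and undoing the gauge change $u$, yields the desired $\hat{A} \in \mathscr{A}(\hat{P})$ with $\hat{A}|_P = A$.

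The main obstacle is the $\Gamma$-equivariance of the gauge transformation supplied by Uhlenbeck and the bookkeeping needed to match the extension across the gluing region without losing the regularity required to lie in $\mathscr{A}(\hat{P})$: the weight $\delta = -2$ has been chosen precisely to be the borderline at which this matching is possible, so the key point is showing that this borderline regularity is actually attained rather than merely approached.
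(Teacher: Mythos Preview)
Your overall strategy---pass to the orbifold chart around $\infty$, invoke Uhlenbeck's removable singularities theorem, then transport back to $\hat P$---is the same as the paper's. The gap is in your final step. ``Patching this extension with $A$ \dots\ via a cut-off function, and undoing the gauge change $u$'' cannot produce $\hat A|_P=A$: a cut-off interpolation only gives agreement outside a neighbourhood of $\infty$, while ``undoing $u$'' over $B^4\setminus\{0\}$ just returns you to $\tilde A$, which was not known to extend. What you actually need is that the Uhlenbeck gauge $u$ itself extends continuously over $0$; then $\tilde A=(u^{-1})^*(u^*\tilde A)$ extends as well, and no cut-off is required.

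This is exactly what the paper isolates in \cref{proposition:bounded-implies-xi-limit-exists} and \cref{corollary:removable-sing-over-ale-limit-at-infinity}: from $a\in L^2_{l,-2}$ one gets $a=\mathcal O(r^{-2})$ in the ALE metric, and under the inversion $f$ this becomes $a=\mathcal O(1)$ in the orbifold metric on $B^4$. With $A$ bounded and $A'(0)=0$, the relation $\xi^*A'=A$ forces $\d\xi$ bounded near $0$, so $\lim_{x\to 0}\xi(x)$ exists. Hence $\xi$ extends to an isomorphism $h:\hat P\to P'$ of orbifold bundles, and $\hat A:=h^*A'$ satisfies $\hat A|_P=A$ on the nose. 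Your closing remark that $\delta=-2$ is the ``borderline'' is morally right, but the precise mechanism is this extension of the gauge, not a regularity threshold for the connection; and the $\Gamma$-equivariance you flag as the main obstacle is in fact secondary once $\xi$ is known to extend, since $\Gamma$-equivariance of $\xi$ passes to the limit.
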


\begin{proof}
 \Cref{corollary:removable-sing-over-ale-limit-at-infinity} gives a bundle $P'$ over $\hat{X}$ with connection $A'$ together with an injective bundle homomorphism $\xi:P \rightarrow P'$.
 After fixing a trivialisation of $\hat{P}$ around $\infty$, this canonically defines an isomorphism of orbifold $G$-bundles $h:\hat{P} \rightarrow P'$, and $\hat{A}:=h^*(A')$ satisfies $\hat{A}|_P=A$.
\end{proof}

\begin{definition}
\label{definition:psi-function-moduli-space-iso}
 Define the map
 \[
  \Psi:
  M^{3,-2} \rightarrow M^{3,\orb}
 \]
 as follows:
 for $[A_0+a] \in M^{3,-2}$ let $\hat{A} \in \mathscr{A}(\hat{P})$ be the induced connection from \cref{proposition:bundle-extension-is-isomorphic} and set $\Psi([A_0+a]) := [\hat{A}]$.
\end{definition}

\begin{proposition}
\label{proposition:moduli-space-bijection}
 The function $\Psi$ from \cref{definition:psi-function-moduli-space-iso} is bijective.
\end{proposition}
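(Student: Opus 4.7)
The plan is to construct an inverse to $\Psi$ by restricting orbifold connections and gauge transformations from $\hat{X}$ to $X$, and to verify that this restriction descends to a well-defined map $M^{3,\text{orb}} \to M^{3,-2}$. The two parts are then \emph{surjectivity} of $\Psi$ (orbifold ASD connections restrict to ALE ones in the correct weighted space) and \emph{injectivity} of $\Psi$ (orbifold gauge equivalence restricts to framed ALE gauge equivalence).

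For surjectivity, given $[\hat{A}] \in M^{3,\text{orb}}$, I would set $A := \hat{A}|_P$ and show $A \in \mathscr{A}^{3,-2}_{\asd}$, so that $\Psi([A]) = [\hat{A}]$ directly from \cref{definition:psi-function-moduli-space-iso}. The ASD equation is local and hence automatically preserved, so the content is the weighted decay $A - A_0 \in L^2_{3,-2}(\Lambda^1(\Ad P))$. Working in the orbifold chart $f \colon B^4/\Gamma \to V$ of \cref{proposition:one-point-compactification-of-eguchi-hanson} and in the trivialisation of \cref{assumption:trivialisation-at-infinity}, both $\hat{A}$ and $A_0$ take the form $d + \alpha$ with $\alpha$ in $L^2_3$ on $B^4$, so $\hat{A} - A_0$ is an $L^2_3$ orbifold $1$-form. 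Since the ALE metric $g$ and the orbifold metric $\hat{g}$ on the end are conformally related with a conformal factor of order $r^4$, the $g$-pointwise norm of a bounded orbifold $1$-form is $O(r^{-2})$, with analogous bounds on derivatives. Plugging these bounds into \cref{equation:weighted-sobolev-norm} (using the weighted Sobolev calculus of \cite[Corollary~6.8]{Pacini2013}) shows $\hat{A} - A_0 \in L^2_{3,-2}$ on the end; interior elliptic regularity takes care of the compact core.

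For injectivity, suppose $\Psi([A_0+a]) = \Psi([A_0+b])$ with extensions $\hat{A}, \hat{B}$. By hypothesis there exists $\hat{s} \in \mathscr{G}^{4,\text{orb}}_0$ with $\hat{s}\cdot \hat{A} = \hat{B}$ and $\hat{s}(\infty) = \Id$, and setting $s := \hat{s}|_X$ one has $s \cdot (A_0+a) = A_0+b$ trivially. The one remaining point is $s \in \mathscr{G}_0^{4,-1}$, i.e.\ $s - \Id \in L^2_{4,-1}$. Sobolev embedding on the $4$-dimensional orbifold promotes $\hat{s} \in L^2_4$ to continuity with control on enough derivatives, and Taylor expansion in the orbifold coordinate $b$ gives $|\hat{s}(b) - \Id| = O(|b|)$ together with matching bounds on the first three derivatives. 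Translating to the ALE picture via the conformal factor (using $|b| \sim r^{-1}$) yields precisely the weighted bounds needed for $s - \Id \in L^2_{4,-1}$, and hence $[A_0+a] = [A_0+b]$ in $M^{3,-2}$.

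The main obstacle I expect is the dictionary between smooth orbifold regularity on $\hat{X}$ and weighted Sobolev regularity on the ALE end: carefully matching pointwise bounds on orbifold objects, via the conformal factor and Taylor expansion, to the exact weights $-2$ for connections and $-1$ for gauge transformations that appear in \cref{equation:ALE-moduli-definitions}. This is a routine but finicky computation near the borderline weights, and is the kind of step that gets suppressed in the literature, consistent with the result being folklore.
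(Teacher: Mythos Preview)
Your proposal is correct and follows essentially the same approach as the paper: both arguments use the conformal relation between the ALE and orbifold metrics together with a Taylor-type bound $\hat{s}-\Id = \mathcal{O}(|x|)$ at $\infty$ to translate orbifold regularity into the weighted ALE conditions $s-\Id \in L^2_{4,-1}$ and $a \in L^2_{3,-2}$. The paper's version is terser (it directly writes $|\nabla^k_{A_0}(s-\Id)|_g = (1+r^2)^{-k}|\nabla^k_{A_0}(s-\Id)|_{\hat g} = \mathcal{O}(r^{-1-k})$ without invoking Sobolev embedding or Pacini's calculus explicitly), but the content is the same.
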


\begin{proof}
\label{proposition:orbifold-moduli-space-homeo} 
 \textbf{$\Psi$ is injective:}
 let $[A_0 + a],[A_0 + \tilde{a}] \in M^{3,-2}$ such that $\Psi([A_0 + a])=[\hat{A}]$ as well as $\Psi([A_0 + \tilde{a}])=[\hat{A}']$.
 If $[\hat{A}]=[\hat{A}']$, then $\hat{A}'=s\hat{A}$ for some $s \in \mathscr{G}^{4,\orb}_0$.
 We have $s(\infty)=\Id$, so $(s-\Id)=\mathcal{O}(|x|)$ and $\nabla_{A_0}^k(s-\Id)=\mathcal{O}(1)$ for $k \in \{ 1,2,3,4\}$.
 Here, $\nabla_{A_0}^k$ includes terms containing the Levi-Civita connection for the orbifold metric $\hat{g}$ on $\hat{X}$ for $k>1$, and $|x|$ denotes the distance from $\infty \in \hat{X}$ in this metric.
 In particular, $\nabla_{A_0}^k(s-\Id)=\mathcal{O}(|x|^{1-k})$.
 We have
 \[
  \left| \nabla^k_{A_0} (s-\Id) \right|_g
  =
  (1+r^2)^{-k}
  \left| \nabla^k_{A_0} (s-\Id) \right|_{\hat{g}}
  =
  \mathcal{O}(r^{-2k} \left| x \right|^{1-k})
  =
  \mathcal{O}(r^{-1-k}),
 \]
 where $g$ denotes the ALE metric, in the first step we used the definition of $\hat{g}$ from the proof of \cref{proposition:one-point-compactification-of-eguchi-hanson} and the fact that we are measuring a tensor with $k$ covariant indices and $0$ contravariant indices.
 Thus, $s \in \mathscr{G}_0^{4,-1}$. 
 Therefore, $[A_0 + a]=[A_0+\tilde{a}]$ as elements in $M^{3,-2}$, which shows the claim.
 
 \textbf{$\Psi$ is surjective:}
 Let $[A_0+a] \in M^{3,\orb}$, i.e. $A_0+a \in \mathscr{A}^{3,\orb}_{\asd}$.
 Similar to the previous point we find that $\nabla_{A_0}^k a = \mathcal{O}(r^{-2-k})$.
 By construction $\Psi([(A_0+a)|_X])=[A_0+a]$, which proves the claim.
\end{proof}

Because of \cref{proposition:moduli-independent-of-regularity} we will drop the regularity and decay from the notation of our moduli spaces most of the time.
That is, we will often write $M$ for $M^{l, \delta}$ with any $l \geq 3$ and $\delta = -2$.
Likewise for $\mathscr{A}, \mathscr{G}, \mathscr{G}_0, \mathscr{A}^{\orb}, M^{\text{orb}}, \mathscr{G}^{\orb}$, and $\mathscr{G}_0^{\orb}$. 

The important results about the local structure of $M$ are the following:

\begin{theorem}[Theorem 2.4 and Proposition 5.1 in \cite{Nakajima1990}]
\label{theorem:tangent-space-of-moduli}
 $M$ is a nonsingular smooth manifold and for $[A] \in M$ its tangent space is isomorphic to
 \[
  H^1_{A,-2}
  :=
  \{
   \alpha \in L^2_{l,-2}(\Lambda^1(\Ad P)) : \delta_A(\alpha)=0
  \},
 \]
 where
 \begin{align}
  \label{equation:asd-instanton-linearisation}
  \begin{split}
  \delta_A: \Omega^1(\Ad P) & \rightarrow
  (\Omega^0 \oplus \Omega^2_+)(\Ad P)
  \\
  \alpha & \mapsto 
  (
  \d^*_A \alpha,
  \d_A^+ \alpha
  ).
  \end{split}
 \end{align}
\end{theorem}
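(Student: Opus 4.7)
The plan is to build a local model for $M$ near $[A]$ by combining a Coulomb slice with the implicit function theorem, which is the standard approach for gauge-theoretic moduli but here in weighted Sobolev spaces. First, I would linearise the gauge action of $\mathscr{G}_0^{l+1,-1}$ at $A$: the orbit map has derivative $d_A : L^2_{l+1,-1}(\Omega^0(\Ad P)) \to L^2_{l,-2}(\Omega^1(\Ad P))$, so a natural local slice is the kernel of the formal adjoint, $S_A := \{ \alpha \in L^2_{l,-2}(\Omega^1(\Ad P)) : d^*_A \alpha = 0 \}$. A slice theorem then shows that, in a small neighbourhood of $[A]$, the quotient $\mathscr{A}^{l,-2}/\mathscr{G}_0^{l+1,-1}$ is modelled on a neighbourhood of $0$ in $S_A$. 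Freeness of the $\mathscr{G}_0$-action is automatic: any $s \in \mathscr{G}_0^{l+1,-1}$ with $d_A s = 0$ and $s \to \Id$ at infinity must equal $\Id$ by parallel transport.

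Next I would cut out the ASD locus inside $S_A$. Define $\Phi : S_A \to L^2_{l-1,-3}(\Lambda^2_+(\Ad P))$ by $\Phi(\alpha) = F^+_{A + \alpha} = d^+_A \alpha + \tfrac{1}{2}[\alpha \wedge \alpha]^+$; this is smooth near $0$, with $D\Phi|_0 = d^+_A$ restricted to $S_A$, and $M$ near $[A]$ is $\Phi^{-1}(0)$. The implicit function theorem produces a smooth manifold structure with tangent space $\ker(d^+_A|_{S_A}) = \{\alpha : d^*_A \alpha = 0, \ d^+_A \alpha = 0\} = H^1_{A,-2}$, provided the linearisation $\delta_A = (d^*_A, d^+_A)$ in \eqref{equation:asd-instanton-linearisation} is surjective onto $L^2_{l-1,-3}(\Omega^0 \oplus \Omega^2_+)(\Ad P))$.

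The main analytic obstacle, and the step I would expect to occupy most of the work, is showing that $\delta_A$ is Fredholm with vanishing cokernel at weight $\delta = -2$. Fredholmness follows from Lockhart–McOwen theory for asymptotically conical elliptic operators, applied to the model operator on $\C^2/\Gamma$: one must verify that $-2$ lies in the continuous range of admissible weights, i.e.\ that it is not a critical weight of the associated indicial operator on $S^3/\Gamma$. For cokernel vanishing, an element of $\ker \delta_A^*$ is a pair $(\phi, \omega) \in L^2_{l-1,-1}(\Omega^0 \oplus \Omega^2_+)(\Ad P))$ (dual weight) satisfying $d_A \phi + d^*_A \omega = 0$ with $\omega$ self-dual. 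The Weitzenböck formula together with the ASD condition on $A$ forces $\nabla^*_A \nabla_A \omega + \text{(curvature)}\,\omega = 0$ and $d^*_A d_A \phi = 0$; integration by parts, with boundary terms controlled by the weight $-1$ decay, then yields $\phi = 0$ and $\omega = 0$. The weight $-2$ on $\alpha$ is exactly the one that makes both the Fredholm theory available and the cokernel vanish, which is why it appears in the definition of $H^1_{A,-2}$.

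Finally, combining the slice description with the implicit function theorem output gives the smooth manifold structure of $M$ near each point, with tangent space $H^1_{A,-2}$; independence of the regularity $l \geq 3$ is supplied by \cref{proposition:moduli-independent-of-regularity}, so the resulting manifold structure is intrinsic. The entire argument mirrors the compact case treated in \cite{Donaldson1990}, with weighted spaces and the Lockhart–McOwen framework replacing the standard elliptic theory, exactly as organised in \cite{Nakajima1990}.
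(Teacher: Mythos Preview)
The paper does not give its own proof of this statement; it is quoted as a background result from \cite{Nakajima1990}. Your outline --- Coulomb slice in weighted Sobolev spaces, implicit function theorem, Lockhart--McOwen Fredholm theory for $\delta_A$ at weight $-2$, and Weitzenb\"ock-based vanishing of $\ker \delta_A^*$ --- is exactly the standard argument and is what Nakajima carries out. Note also that the cokernel-vanishing step you flag as the main analytic obstacle is recorded separately in this paper as \cref{proposition:ale-asd-kernel-cokernel}, confirming your identification of it as the crucial ingredient.
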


For the linear operator $\delta_A$ we have the following analytic result:

\begin{proposition}[Proposition 5.10 in \cite{Walpuski2013a}]
\label{proposition:ale-asd-kernel-cokernel}
 Let $A \in \mathscr{A}(E)$ be a finite energy ASD instanton on $E$.
 Then the following holds:
 \begin{enumerate}
  \item
  If $a \in \Ker \delta_A$ decays to zero at infinity, i.e., $\lim_{ r \rightarrow \infty} \sup_{\rho(x)=r} |a|(x) =0$, then $\nabla_A^k a= \mathcal{O}(|\pi|^{-3-k})$ for all $k \geq 0$.
  
  \item
  If $(\xi, \omega) \in \Ker \delta_A^*$ decays to zero at infinity, then $(\xi, \omega)=0$.
 \end{enumerate}
\end{proposition}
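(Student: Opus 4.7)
The plan is to treat the two parts separately, in each case combining a Weitzenb\"ock identity with control at infinity coming from the ALE geometry and the finite energy of $A$.

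For (ii), I begin by unpacking $\delta_A^*(\xi,\omega)=\d_A\xi+\d_A^*\omega=0$ and decoupling the two summands. $L^2$-pairing with $\d_A\xi$, the boundary terms at infinity vanishing by the assumed decay and elliptic regularity, gives
\[
\|{\d_A\xi}_{L^2}^2+\<\d_A^*\omega,\d_A\xi\>_{L^2}=0,\qquad \<\d_A^*\omega,\d_A\xi\>_{L^2}=\<\omega,\d_A^2\xi\>_{L^2}=\<\omega,[F_A,\xi]\>_{L^2}.
\]
Because $A$ is ASD, $[F_A,\xi]\in\Omega^2_-(\Ad P)$ is pointwise orthogonal to $\omega\in\Omega^2_+(\Ad P)$, so the cross term vanishes and hence $\d_A\xi=0$ and $\d_A^*\omega=0$ separately. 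Since $\d_A\xi=\nabla_A\xi=0$, the norm $|\xi|$ is constant on components of $X$, and the decay hypothesis forces $\xi\equiv 0$.

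For $\omega$, the identity $\d_A^*=-{*}\d_A{*}$ on $2$-forms in dimension four, combined with $*\omega=\omega$, upgrades $\d_A^*\omega=0$ to $\d_A\omega=0$. The Weitzenb\"ock formula for $\Omega^2_+(\Ad P)$-valued forms under an ASD connection reads
\[
(\d_A\d_A^*+\d_A^*\d_A)\omega=\nabla_A^*\nabla_A\omega-2W^+(\omega)+\tfrac{s}{3}\omega-2[F_A^+,\omega],
\]
and on the Kronheimer ALE hyperk\"ahler spaces relevant in this paper one has $s=0$, $W^+=0$, and $F_A^+=0$ by ASD, so $\nabla_A^*\nabla_A\omega=0$. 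The Bochner identity $\tfrac12\Delta|\omega|^2=\<\nabla_A^*\nabla_A\omega,\omega\>-|\nabla_A\omega|^2$ then gives $\Delta|\omega|^2=-2|\nabla_A\omega|^2\leq 0$, so $|\omega|^2$ is a non-negative subharmonic function decaying to zero at infinity; the strong maximum principle forces $|\omega|^2\equiv 0$.

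For (i), the Coulomb and half-ASD conditions $\d_A^*a=0$ and $\d_A^+a=0$ together mean $\delta_A^*\delta_A a=0$, which unpacked by Weitzenb\"ock reads $\nabla_A^*\nabla_A a+\mathcal R(a)=0$, where $\mathcal R$ is algebraic in the Riemann tensor and in $F_A$. On the ALE end both $|F_A|$ and the ambient curvature decay like $O(r^{-4})$, so outside a compact set the equation is a small perturbation of the flat rough Laplacian on $(\C^2\setminus B)/\Gamma$. Expanding $a$ on each cross-section $S^3/\Gamma$ in eigensections of the tangential operator yields a discrete set of indicial roots corresponding to the allowed asymptotic homogeneities; the Coulomb and half-ASD constraints eliminate the $O(r^{-2})$ modes, leaving $-3$ as the sharpest indicial root compatible with $a\to 0$. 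Hence $a=O(r^{-3})$, and the decay of higher covariant derivatives follows by differentiating the equation and applying interior Schauder estimates on annular shells of thickness comparable to $r$. The main obstacle, in my view, is this indicial-root bookkeeping: one has to check that the conditions $\d^*a=0$, $\d^+a=0$ really kill the $O(r^{-2})$ asymptotic modes on $(\C^2\setminus B)/\Gamma$, whereas part (ii) is comparatively routine once one exploits the hyperk\"ahler structure of the ALE base to kill the Weitzenb\"ock curvature term.
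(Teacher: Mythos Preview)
The paper does not prove this proposition; it is quoted verbatim from \cite[Proposition~5.10]{Walpuski2013a} with no argument supplied. So there is no in-paper proof to compare against, and I comment on your sketch as a standalone argument.

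For part~(ii) your strategy is correct, but the decoupling step via an $L^2$-pairing has a gap. The hypothesis is only that $(\xi,\omega)\to 0$ in $C^0$; interior elliptic estimates on annuli of radius comparable to $r$ then give $|\nabla_A(\xi,\omega)|=o(r^{-1})$, which is \emph{not} enough either to put $\d_A\xi$ in $L^2$ or to make the boundary term over $\partial B_R$ vanish (it is only $o(R^2)$, not $o(1)$). The clean fix is to decouple differentially rather than by integration by parts: apply $\delta_A$ to the equation $\delta_A^*(\xi,\omega)=0$. Since $A$ is ASD, the off-diagonal pieces vanish pointwise---$\d_A^+\d_A\xi=[F_A^+,\xi]=0$, and $\d_A^*\d_A^*\omega=\ast[F_A\wedge\omega]=0$ because $\Lambda^2_-\wedge\Lambda^2_+=0$. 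This gives $\d_A^*\d_A\xi=0$ and $\d_A^+\d_A^*\omega=0$ directly, after which your Bochner and maximum-principle arguments (using $W^+=0$, $s=0$, $F_A^+=0$ on the hyperk\"ahler ALE space) go through cleanly.

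For part~(i) your approach via indicial roots on the asymptotic cone is the right one and is essentially how the cited reference proceeds. One clarification on your phrasing: for the \emph{first-order} operator $\delta_A$ (which under the spinor identification $\Lambda^1\cong S^+\otimes S^-$, $\Lambda^0\oplus\Lambda^2_+\cong S^+\otimes S^+$ is a twisted Dirac operator), the indicial roots on $\C^2/\Gamma$ already lie in $\{0,1,2,\dots\}\cup\{-3,-4,\dots\}$, since the eigenvalues of the Dirac operator on $S^3$ are $\pm(k+\tfrac32)$ for $k\geq 0$. So the gap $(-3,0)$ is automatic for $\delta_A$ itself; you do not need to pass to the second-order equation $\delta_A^*\delta_A a=0$ and then reimpose the first-order constraints to kill a putative $O(r^{-2})$ mode. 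Once the $O(r^{-3})$ rate is established on the model, the perturbation to the ALE metric and the rescaled interior Schauder estimates for $\nabla_A^k a$ are as you describe.
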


The Hyperkähler triple of $X$ acts on the $1$-form part of $\Omega^1(\Ad P)$.
It is checked in \cite[Section 4]{Itoh1988} together with \cite[Proposition 2.4]{Itoh1985} that this action restricts to $H^1_{A,-2}$ for all $[A]\in M$.
We thus have a triple of complex structures on $M$.
The following theorem states that this defines a Hyperkähler structure with respect to the standard metric on $M$:

\begin{theorem}[Theorem 2.6 and Proposition 5.1 in \cite{Nakajima1990}]
\label{theorem:moduli-hyperkaehler-structure}
 The metric $g_M$ defined by
 \[
  g_{M}(\alpha,\beta)
  =
  \int_X
  g(\alpha, \beta) \vol _X
  \;\;\;\;
  \text{ for }
  \alpha,\beta \in H^1_{A,-2}
 \]
 and the Hyperkähler triple defined by acting with the Hyperkähler triple of $X$ on the $1$-form part of $\Omega^1(\Ad P)$ is well-defined on $M$ and defines a Hyperkähler structure on $M$.
\end{theorem}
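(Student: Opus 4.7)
The strategy is to realise $M$ as an infinite-dimensional Hyperkähler quotient of the affine space $\mathscr{A}$ by the gauge group $\mathscr{G}_0$, so that the Hyperkähler structure on $M$ descends from a flat one upstairs. Once this is set up, the metric is immediate and closedness of the three Kähler forms is automatic from standard reduction theory; the only ALE-specific content is the convergence of all relevant integrals and the vanishing of boundary terms in formal integrations by parts.

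First I would check well-definedness of $g_M$. By \cref{proposition:ale-asd-kernel-cokernel}, any $\alpha \in H^1_{A,-2}$ satisfies $|\alpha| = \mathcal{O}(r^{-3})$, so $g(\alpha,\beta) = \mathcal{O}(r^{-6})$, which is integrable against $\vol_X$ on an asymptotically Euclidean end (the volume form behaves like $r^3 \, \d r \, \d\vol_{S^3/\Gamma}$). The integrand is independent of the gauge representative because the pointwise inner product on $\Lambda^1 \tensor \Ad P$ is invariant under conjugation by $\mathscr{G}_0$. Non-degeneracy follows from the fact that $H^1_{A,-2}$ is defined as a subspace of $L^2_{l,-2}$, on which the weighted norm with weight $-2$ is equivalent to the unweighted $L^2$ norm.

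Next I would equip $\mathscr{A}$ with the flat Hyperkähler structure inherited pointwise from $X$: the metric $g_{\mathscr{A}}(\alpha,\beta) = \int_X g(\alpha,\beta)\, \vol_X$, the three symplectic forms
\[
\Omega_i(\alpha,\beta) = \int_X \tr(\alpha \wedge \beta) \wedge \omega_i,
\]
and the three complex structures $I_i$ acting pointwise on the $1$-form part by the action of the complex structures of $X$ on $T^*X$. The $I_i$ are pointwise $g$-isometries satisfying the quaternionic relations, so the same holds after integration; and the identity $\Omega_i(\alpha,\beta) = g_{\mathscr{A}}(I_i\alpha,\beta)$ is the pointwise identity on $X$ integrated over $X$. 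The $\Omega_i$ are constant-coefficient in the affine coordinates on $\mathscr{A}$ and therefore closed. The $\mathscr{G}_0$-action is by conjugation in the fibres, which preserves everything pointwise and hence preserves the whole Hyperkähler structure on $\mathscr{A}$. A standard formal calculation identifies the three moment maps for the $\mathscr{G}_0$-action with $\mu_i(A) = F_A \wedge \omega_i$ (up to a sign), so that the zero locus $\mu_1 = \mu_2 = \mu_3 = 0$ is exactly $\mathscr{A}_{\asd}$, using the canonical identification $\Lambda^2_+ T^*X \simeq \R\omega_1 \oplus \R\omega_2 \oplus \R\omega_3$ on the Hyperkähler manifold $X$. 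The Hyperkähler quotient $\mathscr{A}_{\asd}/\mathscr{G}_0 = M$ therefore inherits a Hyperkähler structure whose metric is $g_M$ and whose complex structures act on $H^1_{A,-2} = T_{[A]}M$ in the way described in the theorem. Closedness of the $\omega_i^M$ is automatic from the Hyperkähler quotient construction, and the fact that the $I_i$ restrict to $H^1_{A,-2}$ is quoted in the excerpt from \cite{Itoh1988,Itoh1985}.

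The main obstacle is the non-compactness: every formal step above has to be justified analytically on the ALE space. Concretely, I would use the $\mathcal{O}(r^{-3-k})$ decay in \cref{proposition:ale-asd-kernel-cokernel} to show that (i) the symplectic pairings $\Omega_i(\alpha,\beta)$ converge absolutely for $\alpha,\beta \in H^1_{A,-2}$, (ii) the moment-map computation
\[
\tfrac{d}{dt}\Big|_{t=0} \int_X \tr(F_{A+t\alpha}) \wedge \omega_i = \int_X \d\tr(\alpha \wedge F_A) \cdot \omega_i + \cdots
\]
yields no boundary contribution at infinity after Stokes, and (iii) the moment-map zero set matches $\mathscr{A}_{\asd}$ in the class of connections with the required decay, which is where the weight $\delta = -2$ is essential. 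Once these convergence and integration-by-parts statements are established (all of which reduce to showing $r^{-k}$ is integrable against $r^3 \, \d r$ for $k > 4$), the finite-dimensional Hyperkähler quotient argument applies verbatim and yields the theorem.
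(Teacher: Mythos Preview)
The paper does not supply its own proof of this theorem: it is stated as a citation to Nakajima's \cite{Nakajima1990} (Theorem~2.6 and Proposition~5.1 therein), with the only surrounding remark being the reference to \cite{Itoh1988,Itoh1985} for the fact that the Hyperkähler triple of $X$ preserves $H^1_{A,-2}$. So there is no in-paper argument to compare against.

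Your sketch is precisely the infinite-dimensional Hyperkähler quotient argument that Nakajima carries out, and the outline is correct: flat Hyperkähler structure on $\mathscr{A}$, gauge group acts triholomorphically, the three moment maps package together as $F_A^+$ (equivalently the components $\langle F_A,\omega_i\rangle$), and the ALE analysis reduces to verifying integrability and the vanishing of Stokes boundary terms, both of which follow from the $\mathcal{O}(r^{-3-k})$ decay in \cref{proposition:ale-asd-kernel-cokernel}. One small point: the displayed moment-map check you wrote is not quite the right derivative to take; the condition to verify is $\tfrac{d}{dt}\big|_{t=0}\langle \mu_i(A+t\alpha),\xi\rangle = \Omega_i(\d_A\xi,\alpha)$ for $\xi\in\Omega^0(\Ad P)$, and it is in \emph{that} integration by parts that the boundary term must be shown to vanish. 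Your observation that the $L^2_{0,-2}$ norm coincides with the unweighted $L^2$ norm is correct and is exactly why $g_M$ is the honest $L^2$ metric.
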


\begin{theorem}[Theorem 2.47 in \cite{Walpuski2013}]
\label{theorem:moduli-space-dimension}
 Let $\rho: \Gamma \rightarrow G$ be a homomorphism, $A_0$ a connection on a bundle $P$ that is flat at infinity as in \cref{assumption:trivialisation-at-infinity} whose holonomy representation is equal to $\rho$ in the sense of \cref{equation:asymptotic-at-infinity-condition}.
 Let $\delta \in (-3,-1)$ and $A=A_0 + \alpha$ for some $\alpha \in L^2_{1,\delta}(\Lambda^1(\Ad P))$.
 Then the $L^2$ index of $\delta_A$, defined as 
 \begin{align*}
  &\dim \{ a \in L^2(\Lambda^1(\Ad P)) \cap C^\infty(\Lambda^1(\Ad P)): \delta_A(a)=0 \}
  \\
 -&\dim \{ \underline{a} \in L^2(\Lambda^0\oplus \Lambda^2_+(\Ad P)) \cap C^\infty(\Lambda^0\oplus \Lambda^2_+(\Ad P)): \delta_A^*(\underline{a})=0 \},
 \end{align*}
  is given by
 \begin{align}
  \ind \delta_A
  =
  -2 \int_X p_1(\Ad P)
  +\frac{2}{|\Gamma |}
  \sum_{g \in \Gamma \setminus \{ e \}}
  \frac{\chi_{\mathfrak{g}}(g)-\dim \mathfrak{g}}{2-\tr g}.
 \end{align}
 Here $p_1(\Ad P)$ is the Chern-Weil representative of the first Pontrjagin class of $P$ and $\chi_{\mathfrak{g}}$ is the character of $g$ acting on $\mathfrak{g}$, the Lie algebra associated with $G$, via $\rho$, and $\tr g$ is the trace of $g$ acting on $\mathfrak{g}$.
 Moreover, if $A$ is an ASD instanton, then $\ind \delta_A=\dim \Ker \delta_A = \dim M$.
\end{theorem}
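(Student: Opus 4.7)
The plan is to treat $\delta_A=(\d_A^*,\d_A^+)$ as an asymptotically translation-invariant elliptic operator and compute its $L^2$-index by an Atiyah--Patodi--Singer (APS) argument, following the standard strategy for instantons on ALE spaces.

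First I would set up the analytic framework. Using the identification $X\setminus K \simeq [R,\infty)\times S^3/\Gamma$ and the conformal change from the proof of \cref{proposition:one-point-compactification-of-eguchi-hanson}, I can view $X$ as a manifold with a cylindrical end $\R_+\times S^3/\Gamma$, on which $\delta_A$ takes the form $\partial_t+B_\rho$, where $B_\rho$ is the self-adjoint tangential operator on the cross-section $S^3/\Gamma$ coupled to the flat bundle determined by $\rho$. Lockhart--McOwen theory then shows that $\delta_A:L^2_{1,\delta}\to L^2_{0,\delta-1}$ is Fredholm for every $\delta$ avoiding the discrete set of critical weights associated with $B_\rho$; for $\delta\in(-3,-1)$ there are no critical weights in the interval, and one checks by integration by parts against cut-off functions that the kernel is independent of $\delta$ in this range and coincides with the $L^2$-kernel, and similarly for the cokernel. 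This justifies calling this integer ``the $L^2$-index of $\delta_A$''.

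Next I would compute the index by APS. The index density on the interior is independent of $A$ (any two connections in the same asymptotic class yield operators differing by a compact term), so Atiyah--Singer gives the bulk contribution $-2\int_X p_1(\Ad P)$: this is precisely the topological index of the twisted $(\d^*,\d^+)$-complex on a closed $4$-manifold, and the sign and factor match the standard formula $\ind((\d^*\oplus\d^+)\otimes\Ad P)=-2p_1(\Ad P) +(\text{signature/Euler terms that vanish in the reduced setting})$, with the Euler and signature terms absorbed because the flat model operator is translation-invariant on the end and thus contributes only through $B_\rho$. The boundary correction is the $\eta$-invariant $-\tfrac{1}{2}(\eta_{B_\rho}(0)+h_{B_\rho})$, to be computed below.

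Then I would evaluate the boundary term. Pulling back to the universal cover $S^3$, the operator $B_\rho$ becomes the flat-coupled tangential operator on $S^3$, and the $\eta$-invariant on $S^3/\Gamma$ decomposes via Donnelly's $G$-equivariant fixed-point formula into contributions indexed by $g\in\Gamma$. The identity element produces the $\eta$-invariant on $S^3$ itself, which vanishes because $S^3$ bounds the flat $B^4$ (so this piece is absorbed by the bulk computation, leaving $-2\int_X p_1(\Ad P)$ clean). For $g\neq e$, the fixed-point formula gives a contribution of $\frac{1}{|\Gamma|}\cdot\frac{2(\chi_\mathfrak{g}(g)-\dim\mathfrak{g})}{2-\tr g}$, where the numerator is the character of $\Ad\rho(g)$ minus $\dim\mathfrak{g}$ (the subtraction removes the identity-component bulk that was already counted) and the denominator $2-\tr g$ comes from the fixed-point datum of $g$ acting on $\R^4$. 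Summing yields the claimed formula. Finally, in the ASD case, \cref{proposition:ale-asd-kernel-cokernel}(2) identifies $\Ker\delta_A^*\cap L^2=0$, so $\ind\delta_A=\dim\Ker\delta_A$, which by \cref{theorem:tangent-space-of-moduli} equals $\dim T_{[A]}M=\dim M$.

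The main obstacle is the boundary computation: one must be careful to match the normalisations between the ALE conformal class and the cylindrical one, to identify the tangential operator $B_\rho$ correctly as the signature operator on $S^3/\Gamma$ coupled to $\rho$, and to apply Donnelly's formula with the right sign conventions so that the sum over $\Gamma\setminus\{e\}$ appears with coefficient $2/|\Gamma|$ and the ``$-\dim\mathfrak{g}$'' renormalisation emerges naturally from subtracting the contribution already accounted for by the $L^2$-piece on $\R^4/\Gamma$.
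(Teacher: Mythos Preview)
The paper does not actually prove this theorem; it is stated as \emph{Theorem 2.47 in \cite{Walpuski2013}} and used as a black box (see the text immediately after the statement, which moves on to an example). So there is no ``paper's own proof'' to compare against.

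That said, your outline is the standard route to this formula and is essentially what one finds in the cited reference and in the earlier work of Nakajima and Kronheimer--Nakajima: pass to a conformally cylindrical model, identify $\delta_A$ as an APS-type operator $\partial_t+B_\rho$ on the end, use Lockhart--McOwen to justify that the $L^2$-index is well-defined and weight-independent for $\delta\in(-3,-1)$, and then apply the APS index theorem together with Donnelly's equivariant $\eta$-invariant computation on $S^3/\Gamma$ to produce the sum over $\Gamma\setminus\{e\}$. Your final sentence, invoking \cref{proposition:ale-asd-kernel-cokernel} and \cref{theorem:tangent-space-of-moduli} to conclude $\ind\delta_A=\dim\Ker\delta_A=\dim M$ in the ASD case, is exactly right and matches how the paper uses these auxiliary results.

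One point worth tightening: your description of the bulk term glosses over why the Euler and signature contributions from the untwisted part of the complex disappear. In the ALE setting these do not simply ``vanish'' --- rather, they combine with the $g=e$ term of the equivariant $\eta$-invariant to cancel, and the clean $-2\int_X p_1(\Ad P)$ emerges only after this cancellation. You acknowledge this in your ``main obstacle'' paragraph, but the body of the argument should make the cancellation explicit rather than asserting that the terms are ``absorbed''.
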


We will now explain one example of an ASD-instanton on Eguchi-Hanson space that will be needed later.
To this end, we recall the construction of it as a Hyperkähler quotient as explained in \cite{Gibbons1997}.
Let $\mathcal{M}=\mathbb{H}^2$ with quaternionic coordinates $q_a$, $a \in \{1,2\}$, and let $\U(1)$ act on $\mathcal{M}$ via
\begin{align}
\label{equation:hk-quotient-action}
 q_a \mapsto q_a e^{it}, \quad t \in (0, 2 \pi].
\end{align}
A Hyperkähler moment map for this action is given by
\begin{align}
\label{equation:hk-moment-map}
\begin{split}
 \mu : \mathcal{M} & \rightarrow \Im (\mathbb{H}) \simeq \R^3 \tensor \mathfrak{u}(1)
 \\
 (q_1,q_2) & \mapsto
 \frac{1}{2}
 \sum_{a \in \{1,2\}}
 q_a i \overline{q}_a.
\end{split}
\end{align}
Let $\zeta = \frac{i}{2} \in \Im (\mathbb{H})$.
The group $\U(1)$ acts freely on $\mu^{-1}(\zeta)$ and the general theory of Hyperkähler reduction gives rise to a Hyperkähler structure on the four-dimensional manifold $\mathcal{M} \mmod \U(1) := \mu^{-1}(\zeta)/\U(1)$.

\begin{definition}
\label{definition:eguchi-hanson-hyperkaehler}
 The Hyperkähler space $\XEH=\mathcal{M} \mmod \U(1)$ is called \emph{Eguchi-Hanson space}.
\end{definition}

From this, we get our first example of an ASD-instanton on Eguchi-Hanson space:

\begin{proposition}[Section 2 in \cite{Gocho1992}]
\label{proposition:gocho-asd-instanton}
 The $\U(1)$-bundle $\mathcal{R} := \mu^{-1}(i/2) \rightarrow \XEH=\mu^{-1}(i/2)/\U(1)$ admits a non-flat finite energy ASD instanton $A$ asymptotic to the representation $\rho: \Z_2 \rightarrow \U(1)$ determined by $\rho(-1)=-1$ in the sense of \cref{equation:asymptotic-at-infinity-condition}.
\end{proposition}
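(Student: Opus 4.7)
The strategy is to use the Hyperkähler quotient construction itself to produce both the bundle and the connection. On $\mathcal{M}=\mathbb{H}^2$ the $\U(1)$-action \eqref{equation:hk-quotient-action} is free on $\mu^{-1}(i/2)$, so $\mathcal{R}\to \XEH$ is a genuine principal $\U(1)$-bundle. I would define the connection $A$ to be the one whose horizontal distribution at a point $q\in\mathcal{R}$ is the flat-metric orthogonal complement, inside $T_q\mu^{-1}(i/2)\subset T_q\mathcal{M}$, of the fundamental vector field $X_\xi$ of the $\U(1)$-action. Equivalently, $A$ is the unique $\U(1)$-connection $1$-form on $\mathcal{R}$ whose kernel is this distribution; it can be written explicitly in terms of $q_a,\bar q_a$ using the standard contact form $\Im\sum_a\bar q_a\,dq_a$.

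The core step is to show that $A$ is anti-self-dual. This is a general fact for tautological connections on Hyperkähler quotients: one computes $F_A$ by projecting the derivative of the connection form to the horizontal distribution and identifies it, via the implicit function theorem applied to $\mu=i/2$, with an expression involving $d\mu$. The three Kähler forms on $\XEH$ are the reductions of the three flat Kähler forms $\omega_i$ on $\mathbb{H}^2$, and a direct computation shows that $F_A\wedge\omega_i$ vanishes for each $i\in\{1,2,3\}$, i.e.\ $F_A$ is of type $(1,1)$ with respect to all three complex structures simultaneously. By the decomposition $\Lambda^2_+=\langle\omega_1,\omega_2,\omega_3\rangle$ on a Hyperkähler $4$-manifold, this is exactly the ASD condition. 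Alternatively, since the bundle $\mathcal{R}$ descends from a trivial bundle on $\mathcal{M}$ carrying the flat (hence ASD in a trivial sense) connection, and the reduction procedure preserves the (1,1) property with respect to each complex structure, ASD-ness is inherited.

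Next I would verify the three remaining properties. \emph{Finite energy:} outside a compact set $\mathcal{R}$ is asymptotic to a flat $\U(1)$-bundle over the cone $\C^2/\Z_2$ (the moment map constraint gives a hypersurface asymptotic to a cone on $S^3/\Z_2$), and a direct bound on $F_A$ using the explicit formula shows $|F_A|=O(r^{-3})$ on $\XEH$, so $\int_{\XEH}|F_A|^2\,\vol<\infty$. \emph{Non-flatness:} the first Chern class $c_1(\mathcal{R})$ represents a generator of $H^2(\XEH,\Z)\cong\Z$ (this bundle is the one obtained from the tautological line bundle over the Hyperkähler quotient), so $F_A$ cannot be identically zero. \emph{Asymptotic representation:} the $\Z_2$-action $(q_1,q_2)\mapsto(-q_1,-q_2)$ on $\mathcal{M}$ at infinity, which gives the orbifold structure on $\C^2/\Z_2$, is the same as applying the $\U(1)$-action with parameter $t=\pi$; therefore the induced action on the fibre of $\mathcal{R}$ is multiplication by $-1\in \U(1)$, matching $\rho(-1)=-1$ in the sense of \eqref{equation:asymptotic-at-infinity-condition}.

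The step I expect to be the real work is the explicit verification that $F_A$ is $(1,1)$ with respect to all three complex structures; this requires carefully writing out $A$ in the quaternionic coordinates, differentiating the constraint $\mu=i/2$ to identify the horizontal bundle, and checking the pairing $F_A\wedge\omega_i=0$ for each $i$. The remaining points—finite energy, non-triviality and monodromy—are essentially bookkeeping once the explicit formula for $A$ is in hand.
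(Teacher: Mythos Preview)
The paper does not supply its own proof of this proposition; it is quoted directly from \cite{Gocho1992}, so there is nothing in the paper to compare against. Your outline is exactly the Gocho--Nakajima argument from that reference: take the natural connection on the level set $\mu^{-1}(i/2)$ (horizontal space equal to the metric orthogonal of the $\U(1)$-orbit), observe that its curvature is type $(1,1)$ for all three complex structures simultaneously, and read off the remaining properties from the explicit description. The monodromy computation via ``$-1\in\Z_2$ acts as $t=\pi$ in the $\U(1)$-action'' is the correct way to identify $\rho$. Your proposal is sound and matches the cited source.
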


There exists an ADHM-type construction of ASD-instantons on ALE spaces which generalises this example, cf. \cite{Kronheimer1990}, but we will not need this here.

An additional property of $\mathcal{R}$ that we will need later is the following:

\begin{proposition}
 There exists a lift of the action of the holomorphic isometry group $\U(2)/\{\pm 1\}$ of $\XEH$ to $\mathcal{R}$.
\end{proposition}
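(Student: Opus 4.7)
The plan is to realise the lift by extending the $\U(2)$-action directly to the ambient hyperk\"ahler manifold $\mathcal{M} = \mathbb{H}^2$ so that it commutes with the $\U(1)$ used in the reduction and preserves the moment map $\mu$; then restricting to $\mathcal{R} = \mu^{-1}(i/2)$ gives the lift, and passing to the quotient gives the holomorphic isometry action on $\XEH$. To do this I would split $\mathbb{H} = \C \oplus \C j$ and write $q = z + wj$ with $z, w \in \C$, so that $\mathcal{M} \cong T^*\C^2$ with base coordinates $(z_1, z_2)$ and fibre coordinates $(w_1, w_2)$, and the $\U(1)$-action from \cref{equation:hk-quotient-action} becomes the natural scaling $(z, w) \mapsto (e^{it}z, e^{-it}w)$.

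On $T^*\C^2$ I would take the cotangent lift of the standard $\U(2)$-action on $\C^2$, namely
\[
 A \cdot (z, w) := (A z, \overline{A}\, w), \qquad A \in \U(2),
\]
where $\overline{A} = (A^{-1})^T$ is the contragredient. Three properties need to be checked: (i) the action commutes with the $\U(1)$-action, which is immediate because each factor is $\C$-linear and only $z$ (resp.\ $w$) is scaled; (ii) it preserves the hyperk\"ahler structure, since it is $\C$-linear (hence preserves the complex structure $I$), unitarity of $A$ and $\overline{A}$ preserves the flat K\"ahler metric, and $A^T \overline{A} = I$ for unitary $A$ preserves the holomorphic symplectic form $\sum_a \d z_a \wedge \d w_a$, which together with the K\"ahler form determines the other two K\"ahler forms; (iii) it preserves the moment map. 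For (iii) a direct expansion of \cref{equation:hk-moment-map} in complex coordinates yields $\mu_{\R}(z, w) = \tfrac{1}{2}(|z|^2 - |w|^2)$ and $\mu_{\C}(z, w) = -i \sum_a z_a w_a$, both manifestly invariant by the same unitarity and $A^T \overline{A} = I$ identities.

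From (i)--(iii) the $\U(2)$-action preserves $\mathcal{R}$ and descends to a holomorphic isometric action on $\XEH$. Since $-I \in \U(2)$ acts as $q_a \mapsto -q_a$, which coincides with the $\U(1)$-gauge element at $e^{i\pi}$, it acts trivially on $\XEH$, so the descended action factors through $\U(2)/\{\pm 1\}$ and agrees with the given holomorphic isometry action (which can be confirmed by restricting to the zero section $\CP^1 \subset \XEH$, where both reduce to the standard $\U(2)/\{\pm 1\}$-action on $\CP^1$). The $\U(2)$-action on $\mathcal{R}$ is the desired lift, with the kernel $\{\pm 1\}$ of $\U(2) \to \U(2)/\{\pm 1\}$ realised inside the $\U(1)$-structure group of $\mathcal{R} \to \XEH$. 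The main subtlety is the choice of formula: the naive ``diagonal'' action $(z, w) \mapsto (Az, Aw)$, corresponding to left multiplication by $A$ as a quaternionic matrix, commutes with the $\U(1)$-action but fails to preserve $\mu$, so the cotangent lift using $\overline{A}$ on the $w$-factor is exactly what is needed.
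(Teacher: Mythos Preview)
Your construction has a genuine gap: the $\U(2)$-action you define on $\mathcal{R}$ does not descend to the full holomorphic isometry group $\U(2)/\{\pm 1\}$ on $\XEH$, only to the triholomorphic subgroup $\SO(3)\cong \PU(2)$. Concretely, for a central element $A=e^{it}I\in\U(2)$ your formula gives $(z,w)\mapsto(e^{it}z,e^{-it}w)$, which is \emph{exactly} the $\U(1)$-gauge action (as you yourself computed in step~(i)). Hence the centre of $\U(2)$ acts trivially on $\XEH=\mathcal{R}/\U(1)$, and the induced effective action on $\XEH$ is only $\U(2)/\U(1)=\PU(2)$. Your verification on the zero section $\CP^1$ does not detect this, because the centre of $\U(2)/\{\pm 1\}$ already acts trivially on $\CP^1$; the missing piece is the rotation of the cotangent fibres, which is nontrivial away from the zero section. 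In fact your action is, up to the transpose automorphism, right multiplication by $\U(2)$ on $\mathbb{H}^2$, and its centre is precisely the $\U(1)$ of \cref{equation:hk-quotient-action}.

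The paper's route (via \cref{proposition:eguchi-hanson-isometry-group}) supplies the missing piece: it uses right multiplication by $\SU(2)$ \emph{together with} left multiplication by scalars $q_a\mapsto e^{it}q_a$, which in your coordinates reads $(z,w)\mapsto(e^{it}z,e^{it}w)$. This left $\SO(2)$ does not preserve the full moment map---it rotates the $j,k$-components of $\mu$---but it does preserve the level set $\mu^{-1}(i/2)$ since $\zeta=i/2$ has vanishing $j,k$-components; on $\XEH$ it descends (after absorbing a gauge factor) to the fibre rotation $[z,w]\mapsto[z,e^{2it}w]$, which is exactly the holomorphic, non-triholomorphic part of the isometry group you are missing. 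So the ``diagonal'' action $(z,w)\mapsto(Az,Aw)$ that you discarded is actually needed for the scalar part; it fails to preserve $\mu$ only for non-central $A$.
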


\begin{proof}
 We show in \cref{proposition:eguchi-hanson-isometry-group} that the holomorphic isometry group $\U(2)/\{\pm 1\}$ is realised as an action of $\U(2)/\{ \pm 1\}$ on $\mu^{-1}(i/2)$ that commutes with the action of $\U(1)$ on $\mu^{-1}(i/2)$.
 The action of $\U(2)/\{ \pm 1\}$ on $\mu^{-1}(i/2)$ is the desired lift of the action of $\U(2)/\{ \pm 1\}$ on $\XEH$.
\end{proof}

\begin{remark}
\label{proposition:gocho-asd-infinitesimally-rigid}
 We can apply \cref{theorem:moduli-space-dimension} to the $\U(1)$-bundle over $\XEH$ defined before to find that it is rigid.
 As $\Ad \mathcal{R}$ has rank $1$, we have that $p_1(\Ad \mathcal{R}) = c_2(\Ad \mathcal{R} ^ {\C})=0$, and plugging this into the index formula from \cref{theorem:moduli-space-dimension} proves the claim.
\end{remark}

\begin{remark}
 On simply connected compact manifolds it is the case that any $\U(1)$-bundle admits an ASD-instanton that is unique up to the action of the gauge group.
 This is a consequence of the Hodge theorem.
 On non-compact manifolds a variation of the Hodge theorem for $L^2$-forms holds, see \cite[Example 0.15]{Lockhart1987}, and can be used to give an alternative proof of \cref{proposition:gocho-asd-infinitesimally-rigid} without the use of the index formula.
\end{remark}

Before ending the section we will state two results about universal bundles that will be needed later.
The following proposition is proved in \cite[Proposition 5.2.17]{Donaldson1990} for compact manifolds, but the proof carries over to the ALE setting with small alterations.

\begin{proposition}
\label{proposition:tautological-bundle-properties}
 There exist
 \begin{itemize}
  \item 
  a $G$-bundle $\tilde{\mathbb{P}}$ over $M \times
  \hat{X}$ with a natural action of $G_\rho \simeq \mathscr{G}/\mathscr{G}_0$ on $\tilde{\mathbb{P}}$ covering the action of $G_\rho$ on $M$,
  
  \item
  a connection $\tilde{\mathbb{A}} \in \mathscr{A}(\tilde{\mathbb{P}})$ that is invariant under the action of $G_\rho \simeq \mathscr{G}/\mathscr{G}_0$, and
  
  \item
  for each choice of $\phi \in \Iso_{\Gamma}(G, P_\infty)$ a canonical isomorphism of $G$-bundles with $\Gamma$ left action
 $\underline{\phi}:
  \tilde{\mathbb{P}}|_{M \times \{ \infty \}}
  \rightarrow
  G \times M$
 \end{itemize}
 satisfying:
 \begin{itemize}
  \item
  for any element $[A] \in M$ there exists an isomorphism $\tilde{\mathbb{P}}|_{\{[A]\} \times \hat{X}} \simeq \hat{P}$ such that under this isomorphism $\tilde{\mathbb{A}}|_{\{[A]\} \times X}$ and $A$ agree up to the action of $\mathscr{G}_0$.
  
  \item
  if we decompose the curvature of $\tilde{\mathbb{A}}$ over $M \times X$ according to the bi-grading on $\Lambda^* T^*(M \times X)$ induced by $T^*(M \times X)=\pi_1 ^* T^* M \oplus \pi_2 ^* T^* X$, then its components satisfy the following:
  
  \begin{itemize}
  \item
  $F_{\tilde{\mathbb{A}}}^{1,1} \in \Gamma(\Hom(\pi_1^* T^* M, \pi_2^* T^* X \tensor \Ad P))$ at $([A], x)$ is the evaluation of $a \in T_{[A]}M$ at $x$,
  
  \item
  $F_{\tilde{\mathbb{A}}}^{0,2} \in \Gamma(\pi_2^* \Lambda^- ( X)^* \tensor \Ad P)$, where $\Lambda^-$ is defined using the ALE metric on $X$,
 \end{itemize}
 
 \item
 $\underline{\phi}^*A_{\text{product}}
 =
 \tilde{\mathbb{A}}|_{M \times \{\infty\}}$,
 where $A_{\text{product}} \in \mathscr{A}(G \times M)$ denotes the product connection.
 \end{itemize}
\end{proposition}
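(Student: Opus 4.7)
The plan is to imitate the construction of \cite[Proposition 5.2.17]{Donaldson1990} for compact $4$-manifolds, passing through the orbifold compactification $\hat{X}$ and using the identification of moduli spaces from \cref{proposition:moduli-space-bijection} so that all analysis happens on a compact orbifold. Consider the product bundle $\mathscr{A}^{\orb}_{\asd} \times \hat{P} \to \mathscr{A}^{\orb}_{\asd} \times \hat{X}$, with the diagonal action of $\mathscr{G}_0^{\orb}$ (pullback on the first factor, the gauge action on the second). First I would verify that this action is free: if $s \in \mathscr{G}_0^{\orb}$ fixes an ASD connection $A$, then $s(\infty) = \Id$ and $\nabla_A s = 0$, so parallel transport out of $\infty$ together with unique continuation forces $s \equiv \Id$ on the connected orbifold $\hat{X}$.

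Given freeness, set $\tilde{\mathbb{P}} := (\mathscr{A}^{\orb}_{\asd} \times \hat{P})/\mathscr{G}_0^{\orb}$, a principal $G$-bundle over $M \times \hat{X}$. On $\mathscr{A}^{\orb}_{\asd} \times \hat{P}$, start from the tautological connection $\mathbb{A}$ whose restriction to $\{A\} \times \hat{P}$ is $A$ and which is declared trivial in the $\mathscr{A}$-direction. This connection is not $\mathscr{G}_0^{\orb}$-invariant, so I would correct it by adding the universal Coulomb gauge-fixing $1$-form $\Phi$: for $a \in T_A \mathscr{A}^{\orb}_{\asd} = \Omega^1(\Ad \hat{P})$, set $\Phi(a) = \d_A G_A \d_A^* a$, where $G_A$ is the Green operator for $\d_A^* \d_A$ on $\Omega^0(\Ad \hat{P})$ over the compact orbifold $\hat{X}$. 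The corrected connection $\mathbb{A} - \Phi$ is horizontal for the $\mathscr{G}_0^{\orb}$-action and hence descends to the connection $\tilde{\mathbb{A}}$ on $\tilde{\mathbb{P}}$.

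For the $G_\rho$-action, note that $\mathscr{G}^{\orb}/\mathscr{G}_0^{\orb} \simeq G_\rho$ via evaluation at $\infty$. The full action of $\mathscr{G}^{\orb}$ on $\mathscr{A}^{\orb}_{\asd} \times \hat{P}$ commutes with the Coulomb correction, so the quotient by $\mathscr{G}_0^{\orb}$ carries a residual $G_\rho$-action covering the $G_\rho$-action on $M$ and leaving $\tilde{\mathbb{A}}$ invariant. The canonical trivialisation at infinity comes from the fact that every $s \in \mathscr{G}_0^{\orb}$ acts as the identity on the fibre $\hat{P}|_\infty$; fixing any $\phi \in \Iso_{\Gamma}(G, P_\infty)$ then yields $\underline{\phi} : \tilde{\mathbb{P}}|_{M \times \{\infty\}} \to M \times G$, and since $\Phi$ vanishes at $\infty$ by construction, $\underline{\phi}^* A_{\text{product}} = \tilde{\mathbb{A}}|_{M \times \{\infty\}}$.

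The curvature decomposition is then a direct bigraded unpacking. The $(0,2)$-component of $F_{\tilde{\mathbb{A}}}$ at $([A],x)$ equals $F_A|_x$, which is anti-self-dual since $A \in \mathscr{A}^{\orb}_{\asd}$. The $(1,1)$-component is the derivative of the connection $1$-form in the $\mathscr{A}$-direction, which on Coulomb-gauge representatives $a \in H^1_{A,-2} \simeq T_{[A]}M$ reduces to the evaluation map $a \mapsto a|_x$. I expect the main obstacle to be the functional-analytic justification that $G_A$ exists and depends smoothly on $A$, and that all descent and invariance statements remain valid in the weighted Sobolev setting on $X$; however, \cref{proposition:moduli-independent-of-regularity,proposition:moduli-space-bijection} reduce these questions to standard elliptic theory on a compact orbifold, where the argument of \cite{Donaldson1990} goes through with only cosmetic modifications for the orbifold chart near $\infty$.
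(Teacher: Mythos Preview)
Your proposal is correct and follows exactly the approach the paper intends: the paper does not give a detailed proof but simply states that the argument of \cite[Proposition 5.2.17]{Donaldson1990} carries over to the ALE setting with small alterations, and indeed confirms in the proof of \cref{proposition:lift-to-tautological-bundle} that $\tilde{\mathbb{P}} = \pi_2^* \hat{P}/\mathscr{G}^{\orb}_0$, which is precisely your quotient construction. Your write-up is a faithful expansion of what ``small alterations'' means here, correctly routing through the compact orbifold $\hat{X}$ via \cref{proposition:moduli-space-bijection} so that the Green operator and Coulomb correction are standard.
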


By \cref{proposition:eguchi-hanson-isometry-group}, the group of holomorphic isometries acting on $\XEH$ is $\U(2)/\{\pm 1\}$.
This induces a non-effective action of $\U(2)$ on $\XEHh$ by demanding that each group element fixes $\infty \in \XEHh$.
Then $\U(2)$ acts from the left on $M$ (and equally $M^{\text{orb}}$) as follows:
$\U(2)$ is connected, so $(u^{-1})^*E$ and $E$ are homotopic bundles and in particular isomorphic.
Different choices of isomorphism give rise to gauge equivalent connections, so $[(u^{-1})^* A] \in M$ is well-defined.

Later on (cf. \cref{definition:moduli-bundle-cov-derivative}) we will need the following assumption:

\begin{assumption}
\label{assumption:u2-lifts-to-universal-bundle}
 The action of $\U(2)$ on $M \times \XEHh$ can be lifted to an action on $\tilde{\mathbb{P}}$ that preserves $\tilde{\mathbb{A}}$.
\end{assumption}

In the examples constructed in \cref{section:examples} this assumption will be satisfied because of the following proposition:

\begin{proposition}
\label{proposition:lift-to-tautological-bundle}
 Let $\tilde{\mathbb{P}} \rightarrow M \times \XEHh$ be the tautological bundle with tautological connection $\tilde{\mathbb{A}}$ from \cref{proposition:tautological-bundle-properties}.

 If the action of $\U(2)$ on $\XEHh$ can be lifted to an action on $\hat{P}$, then the action of $\U(2)$ on $M \times \XEHh$ can be lifted to an action on $\tilde{\mathbb{P}}$.
 If it exists, this lift can be chosen to preserve $\tilde{\mathbb{A}}$.
\end{proposition}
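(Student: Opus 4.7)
The plan is to exploit the tautological nature of $(\tilde{\mathbb{P}}, \tilde{\mathbb{A}})$. I would work with the standard realisation
\[
 \tilde{\mathbb{P}} = (\mathscr{A}^{\orb}_{\asd} \times \hat{P}) \,\big/\, \mathscr{G}_0^{\orb},
\]
where $\mathscr{G}_0^{\orb}$ acts diagonally (the usual gauge action on $\mathscr{A}^{\orb}_{\asd}$ and the bundle-automorphism action on $\hat{P}$), and $\tilde{\mathbb{A}}$ is the descent of the canonical $\mathfrak{g}$-valued $1$-form $\tilde{A}(a, v) := A(v)$ on $\mathscr{A}^{\orb}_{\asd} \times \hat{P}$, after projecting the $\mathscr{A}$-direction of tangent vectors onto the Coulomb slice $\Ker d_A^*$ at each $A$.

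Given the hypothesised action $u \mapsto \tilde{u}$ of $\U(2)$ on $\hat{P}$, I would define a $\U(2)$-action on $\mathscr{A}^{\orb}_{\asd} \times \hat{P}$ by
\[
 u \cdot (A, p) := \bigl(\tilde{u}_* A,\, \tilde{u}(p)\bigr),
 \qquad
 \tilde{u}_* A := (\tilde{u}^{-1})^* A.
\]
This preserves the ASD condition because $u$ is a holomorphic isometry of $\XEH$ (hence of its conformal compactification $\XEHh$ equipped with $\hat{g}$), and the ASD condition is conformally invariant in dimension four. To see the action descends to the quotient, I would compute, for $g \in \mathscr{G}_0^{\orb}$, that $u \cdot (g \cdot (A, p)) = g' \cdot (u \cdot (A, p))$ with $g' = \tilde{u} \circ g \circ \tilde{u}^{-1}$. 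Because $u$ fixes $\infty \in \XEHh$ by construction of the $\U(2)$-action and $g(\infty) = \Id$, we get $g'(\infty) = \Id$, so $g' \in \mathscr{G}_0^{\orb}$. Hence the action descends to $\tilde{\mathbb{P}}$, and tautologically covers the $\U(2)$-action on $M \times \XEHh$.

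Finally I would check that $\tilde{\mathbb{A}}$ is preserved. Since $u$ is a Riemannian isometry of $\XEH$ and $\tilde{u}$ is a bundle isomorphism covering it, $\tilde{u}$ intertwines $d_A^*$ with $d^*_{\tilde{u}_* A}$; hence the Coulomb slice at $A$ is mapped to the Coulomb slice at $\tilde{u}_* A$. The canonical $1$-form $\tilde{A}$ on $\mathscr{A}^{\orb}_{\asd} \times \hat{P}$ is $\tilde{u}$-invariant essentially by the definition of $\tilde{u}_* A$, so after descent $\tilde{\mathbb{A}}$ is $\U(2)$-invariant. The main subtlety I expect is tracking the behaviour at $\infty \in \XEHh$: the identification $\underline{\phi}$ in \cref{proposition:tautological-bundle-properties} involves a framing there, and $\tilde{u}|_{\hat{P}_\infty}$ may well differ from the identity by an element of $G_\rho$. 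Since $G_\rho$ acts on $\tilde{\mathbb{P}}$ and preserves $\tilde{\mathbb{A}}$, this ambiguity can be absorbed by modifying $\tilde{u}$ at $\infty$ by the appropriate element of $G_\rho$, which is precisely the freedom encoded in the phrase ``can be chosen'' in the statement.
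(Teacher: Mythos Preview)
Your approach is essentially the same as the paper's: both realise $\tilde{\mathbb{P}}$ as $(\mathscr{A}^{\orb}_{\asd}\times\hat{P})/\mathscr{G}_0^{\orb}$, lift the $\U(2)$-action diagonally via $(A,p)\mapsto(\tilde u_*A,\tilde u(p))$, and then check that this descends and preserves $\tilde{\mathbb{A}}$. The paper simply declares the well-definedness and invariance checks ``straightforward'' and refers to the definition of $\tilde{\mathbb{A}}$ in \cite[Section 5.2.3]{Donaldson1990}, whereas you spell out the conjugation computation $g'=\tilde u\circ g\circ\tilde u^{-1}$ and the Coulomb-slice argument; your discussion of the framing at $\infty$ is extra and not needed for the proposition as stated, but harmless.
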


\begin{proof}
 First, assume that the action of $\U(2)$ on $\XEHh$ can be lifted to an action on $\hat{P}$.
 This is equivalent to saying that for all $g \in G$ there exists a bundle isomorphism $\xi_g: \hat{P} \rightarrow \hat{P}$ covering $g: \XEHh \rightarrow \XEHh$.
 The bundle $\tilde{\mathbb{P}}$ is defined as $\tilde{\mathbb{P}} = \pi_2^* \hat{P}/\mathscr{G}^{\orb}_0$, where $\pi_2:\mathscr{A}^{\orb}_{\asd} \times \XEHh \rightarrow \XEHh$ is the projection onto the second factor.
 Let $([A], x) \in M \times \XEHh$ and $[u] \in \tilde{\mathbb{P}}_{([A], x)}$ where $u \in \left( \pi_2^* \hat{P} \right) _{(A,x)} \simeq \hat{P}_x$.
 We define $\kappa_g: \tilde{\mathbb{P}} \rightarrow \tilde{\mathbb{P}}$ covering $g: M \times \XEHh \rightarrow M \times \XEHh$ via $\kappa_g [u] := [\xi_g (u)]$.
 The check that this is well-defined and that this lift preserves $\tilde{\mathbb{A}}$ is straightforward, using the definition of $\tilde{\mathbb{A}}$ from \cite[Section 5.2.3]{Donaldson1990}.
\end{proof}

\subsection{Gauge Theory on $G_2$-manifolds}

\begin{definition}
\label{definition:g2-instanton}
 Let $(Y,\varphi)$ be a $G_2$-manifold, $\psi=*_\varphi \varphi$, and $E$ be a principal bundle over $Y$.
 A connection $A \in \mathscr{A}(E)$ is called a \emph{$G_2$-instanton}, if
 $F_A \in \Gamma(\Lambda^2_{14} \tensor \Ad E)$, i.e. (by \cref{theorem:g2-decomposition-of-forms})
 \begin{align}
 \label{equation:vanilla-instanton-equation}
  F_A \wedge \psi =0,
 \end{align}
 where the wedge product is taken in the $2$-form part of $\Lambda^2 \tensor \Ad E$.
\end{definition}

\begin{example}
\label{example:pullback-of-asd-is-g2-instanton}
 Let $A$ be an ASD instanton on a bundle $E$ over a Hyperkähler $4$-fold $X$.
 Denote by $p_X: \R^3 \times X \rightarrow X$ the projection onto the second factor.
 Then $\R^3 \times X$ carries the torsion-free $G_2$-structure $\varphi$ from \cref{equation:product-g2-structure}, and $p_X^*A$ is a $G_2$-instanton on the bundle $p_X^*E$ with respect to this $G_2$-structure.
 To see this, let $\omega_1,\omega_2,\omega_3 \in \Omega^2(X)$ denote a Hyperkähler triple on $X$.
 These $2$-forms are self-dual, thus $A$ being ASD is equivalent to $F_A \wedge \omega_i=0$ for $i \in \{1,2,3\}$.
 Recall that for the product $G_2$-structure, we have that
 \begin{align*}
  * \varphi
  =
  \psi
  =
  \frac{1}{2}
  \omega_1^2
  -
  \d x_{12} \wedge \omega_3
  -
  \d x_{23} \wedge \omega_1
  -
  \d x_{31} \wedge \omega_2
 \end{align*}
 and therefore
 \begin{align*}
  F_{p_X^*A} \wedge \psi
  &=
  p_X^*(F_A) \wedge \psi
  =0.
 \end{align*}
\end{example}

The linearisation of the $G_2$-instanton equation \cref{equation:vanilla-instanton-equation} is not elliptic.
This problem is overcome in the following proposition:

\begin{lemma}[Proposition 1.98 in \cite{Walpuski2013}]
\label{lemma:equivalent-instanton-definition}
 Let $(Y,\varphi)$ be a compact $G_2$-manifold, $\psi=*_\varphi \varphi$, and $E$ be a principal bundle over $Y$, and $A \in \mathscr{A}(E)$.
 Then $A$ is a $G_2$-instanton if and only if there exists $\xi \in \Omega^0(Y,\Ad E)$ such that
 \begin{align}
  *(F_A \wedge \psi)+\d _A \xi=0.
 \end{align}
\end{lemma}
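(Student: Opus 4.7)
The plan is to show both directions by noticing that the extra term $d_A \xi$ is forced to vanish when paired against the Bianchi identity. The ``only if'' direction is immediate: if $A$ is a $G_2$-instanton, then $F_A \wedge \psi = 0$ by definition, so $\xi = 0$ works.

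For the ``if'' direction, I would set $\beta := *(F_A \wedge \psi) \in \Omega^1(Y, \Ad E)$ and use the hypothesis in the form $\beta = -d_A \xi$. The key observation is that $\beta$ is automatically co-closed: since $\psi$ is closed (as $(Y,\varphi)$ is a torsion-free $G_2$-manifold, by \cref{theorem:torsion-free-g2-structures-characterisation}) and $F_A$ satisfies the Bianchi identity $d_A F_A = 0$, one computes
\begin{equation*}
 d_A(*\beta) = d_A(F_A \wedge \psi) = (d_A F_A) \wedge \psi + F_A \wedge d\psi = 0,
\end{equation*}
using that $*\beta = F_A \wedge \psi$ (up to the sign $(-1)^{1\cdot 6} = +1$ from $*^2$ on a $1$-form in dimension $7$). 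On $1$-forms in dimension $7$ one has $d_A^* = -*d_A\,*$, so this yields $d_A^* \beta = 0$.

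Combining $\beta = -d_A \xi$ with $d_A^* \beta = 0$ gives $d_A^* d_A \xi = 0$. Integration by parts on the compact manifold $Y$ then produces
\begin{equation*}
 \|d_A \xi\|_{L^2}^2
 = \langle d_A \xi, d_A \xi \rangle_{L^2}
 = \langle \xi, d_A^* d_A \xi \rangle_{L^2}
 = 0,
\end{equation*}
so $d_A \xi = 0$. Hence $\beta = 0$, i.e.\ $F_A \wedge \psi = 0$, and $A$ is a $G_2$-instanton.

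There is no real obstacle here; the only delicate point is keeping track of the signs in the identities $*^2 = (-1)^{k(n-k)}$ and $d^* = (-1)^{n(k+1)+1} * d \,*$ to confirm that ``$d_A$-closedness of $F_A \wedge \psi \in \Omega^6(\Ad E)$'' is equivalent to ``$d_A^*$-closedness of the dual $1$-form'', but both are standard. The compactness of $Y$ is used only in the integration-by-parts step, and the torsion-free hypothesis is used only through $d\psi = 0$; no further ellipticity input is needed for this equivalence, although it is precisely this reformulation that makes the linearisation elliptic modulo gauge.
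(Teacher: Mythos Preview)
Your proof is correct. The paper does not include its own proof of this lemma; it is stated with a citation to \cite[Proposition 1.98]{Walpuski2013} and used as a black box. Your argument---using the Bianchi identity and $\d\psi=0$ to show that $*(F_A\wedge\psi)$ is $\d_A^*$-closed, and then integrating by parts on the compact manifold to kill $\d_A\xi$---is exactly the standard one found in the cited reference, so there is nothing further to compare.
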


Adding the Coulomb gauge condition to this, we consider for a fixed connection $A \in \mathscr{A}(E)$, $\xi \in \Omega^0(Y,\Ad E)$, and $a \in \Omega^1(Y,\Ad E)$ the system
\begin{align}
\label{eqution:g2-instanton-equation-made-elliptic}
 \begin{split}
  *(F_{A+a} \wedge \psi) + \d_{A+a} \xi &= 0
  \\
  \d^*_A a &= 0.
 \end{split}
\end{align}
Here, every solution $(\xi, a)$ defines the $G_2$-instanton $A+a$ which is in Coulomb gauge with respect to $A$.
A computation in coordinates shows that the linearisation of \cref{eqution:g2-instanton-equation-made-elliptic} is an elliptic operator:

\begin{proposition}
\label{proposition:g2-instanton-equation-elliptic}
 The linearisation of \cref{eqution:g2-instanton-equation-made-elliptic} is
 \begin{align}
 \label{equation:g2-instanton-linearisation}
 \begin{split}
  L_A:
  (\Omega^0 \oplus \Omega^1)(Y,\Ad E)
  & \rightarrow 
  (\Omega^0 \oplus \Omega^1)(Y,\Ad E)
  \\
  \begin{pmatrix}
   \xi \\ a
  \end{pmatrix}
  & \mapsto
  \begin{pmatrix}
   0 & \d^*_A \\
   \d_A & *(\psi \wedge \d_A)
  \end{pmatrix}
  \begin{pmatrix}
   \xi \\ a
  \end{pmatrix}
 \end{split}
 \end{align}
 which is a self-adjoint elliptic operator if $\d^* \varphi =0$.
\end{proposition}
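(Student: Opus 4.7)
There are three assertions to verify: the operator displayed is the linearisation at $(0,0)$, it is formally self-adjoint (under $\d^*\varphi=0$), and it is elliptic. The first is a direct computation, the second uses integration by parts together with the closedness of $\psi$, and the third is a symbol calculation that exploits the $G_2$-decomposition of $\Lambda^2$.

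For the linearisation, I would expand $F_{A+ta} = F_A + t\,\d_A a + \tfrac{t^2}{2}[a,a]$ and $\d_{A+ta}(t\xi) = t\,\d_A\xi + t^2[a,\xi]$ and read off the $\mathcal{O}(t)$ terms. The first component of the equation, $\d_A^*a$, is already linear; the second linearises to $\d_A\xi + *(\psi\wedge\d_A a)$, giving exactly the matrix displayed.

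For self-adjointness, the off-diagonal blocks $\d_A$ and $\d_A^*$ are $L^2$-adjoints by definition, so the cross terms pair correctly. The content is that $P \colon a \mapsto *(\psi\wedge\d_A a)$ is formally self-adjoint on $\Omega^1(\Ad E)$. I would rewrite
\begin{align*}
\int_Y \langle Pa,b\rangle\,\vol_g
=\int_Y \psi\wedge \d_A a\wedge b
\end{align*}
using that wedging with $*b$ on 1-forms inverts $*$, and then integrate by parts. Because $\d^*\varphi=0$ is equivalent to $\d\psi=0$, the derivative of the wedge $\psi\wedge a\wedge b$ produces only $\pm\psi\wedge\d_A a\wedge b$ and $\pm\psi\wedge a\wedge\d_A b$ up to Ad-valued brackets that cancel against the Bianchi-type identity for $\d_A$ on sections. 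The Stokes boundary term vanishes on the compact $Y$, and one reads off $\langle Pa,b\rangle_{L^2}=\langle a,Pb\rangle_{L^2}$.

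For ellipticity, I would compute the principal symbol at $\eta\in T^*_xY\setminus\{0\}$:
\begin{align*}
\sigma_\eta(L_A)\begin{pmatrix}\xi\\ a\end{pmatrix}
=
\begin{pmatrix}
-\iota_{\eta^\sharp}a\\[2pt]
\eta\,\xi + *(\psi\wedge\eta\wedge a)
\end{pmatrix}.
\end{align*}
Suppose this vanishes. Decompose $a=\lambda\eta + a^\perp$ with $a^\perp\perp\eta$; the first equation gives $\lambda|\eta|^2=0$, hence $\lambda=0$ and $a=a^\perp$. For the second equation, wedge with $\eta$ and use that $\eta\wedge\eta=0$ kills the second summand, obtaining $\eta\wedge\eta\,\xi=0$ — this step alone is not enough, so I would instead take the inner product with $\eta$ to get $|\eta|^2\xi=-\langle\eta,*(\psi\wedge\eta\wedge a)\rangle$, and the right-hand side vanishes because $\eta\wedge\psi\wedge\eta\wedge a=0$. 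Thus $\xi=0$, and what remains is $*(\psi\wedge\eta\wedge a^\perp)=0$, i.e.\ $\psi\wedge(\eta\wedge a^\perp)=0$. The 2-form $\eta\wedge a^\perp$ therefore lies in $\Lambda^2_{14}$ by \cref{theorem:g2-decomposition-of-forms}, but it also has the form $\eta\wedge a^\perp$ of rank at most $2$, which at the $G_2$-algebraic level is easily seen to force $a^\perp=0$ (equivalently, $\Lambda^2_{14}\cap(\eta\wedge\Lambda^1)=0$, since projection of $\eta\wedge a^\perp$ onto $\Lambda^2_7\cong\R^7$ via $\alpha\mapsto *(\alpha\wedge\varphi)$ recovers $a^\perp$ up to a nonzero multiple of $|\eta|^2$).

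The main obstacle I expect is the last algebraic step in the symbol calculation: showing $\eta\wedge\Lambda^1\cap\Lambda^2_{14}=0$ cleanly. Everything else is bookkeeping once the $G_2$-identities of \cref{theorem:g2-decomposition-of-forms} and the closedness of $\psi$ are in hand.
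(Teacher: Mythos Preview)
Your argument is correct and in fact more detailed than what the paper offers: the paper does not prove this proposition but simply asserts that ``a computation in coordinates shows'' ellipticity, and the subsequent remark points to \cite[Section~3, Lemma~4]{ReyesCarrion1998} for a coordinate-free treatment. Your approach is essentially that coordinate-free one.

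Two small points of precision. In the self-adjointness step, the phrase ``Ad-valued brackets that cancel against the Bianchi-type identity'' misidentifies the mechanism: what makes $\d\langle a,b\rangle=\langle \d_A a,b\rangle-\langle a,\d_A b\rangle$ hold for $\Ad E$-valued $1$-forms is just the $\Ad$-invariance of the inner product on $\mathfrak{g}$, which kills the bracket terms directly; no Bianchi identity enters. In the ellipticity step, your parenthetical is slightly off: the $\Lambda^2_7$-component of $\eta\wedge a^\perp$ is, up to a constant, $\iota_{\eta^\sharp\times(a^\perp)^\sharp}\varphi$, so what one recovers is the cross product $\eta^\sharp\times(a^\perp)^\sharp$ rather than $a^\perp$ itself. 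The conclusion is unaffected, since the $G_2$ cross product of two nonzero orthogonal vectors in $\R^7$ never vanishes.
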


\begin{remark}
 A coordinate-free proof for the ellipticity of operator $L_A$ is given in \cite[Section 3, Lemma 4]{ReyesCarrion1998}.
\end{remark}

\section{Resolutions of $G_2$-orbifolds}
\label{section:review-of-the-manifold-construction}

\subsection{Torsion-Free $G_2$-structures on Resolutions of $T^7/\Gamma$}
\label{section:torsion-free-on-resolution-of-T7}
\label{section:torsion-free-structures-on-the-generalised-kummer-construction}

In the two articles \cite{Joyce1996}, Joyce constructed the first compact examples of manifolds with holonomy equal to $G_2$.
One starts with the flat $7$-torus $T^7$, which carries the flat $G_2$-structure $\varphi_0$.
A quotient of the torus by a finite group of maps $\Gamma$ preserving the $G_2$-structure still carries a flat $G_2$-structure, but has \emph{singularities}.
The singularities are modelled on $\R^3 \times (\C^2/\{\pm 1\})$ and admit a resolution $\R^3 \times \XEH$.
Gluing this resolution into $T^7/\Gamma$, we have two different $G_2$-structures near the resolution locus:
the flat $G_2$-structure $\varphi_0$ and the product $G_2$-structure $\varphi^P_t$ which depends on one real parameter that is proportional to the size of the unique minimal $2$-sphere in $\XEH$.
Gluing $\varphi_0$ and $\varphi^P_t$, one obtains a $1$-parameter family of smooth manifolds $N_t$ and $G_2$-structures $\varphi^t \in \Omega^3(N)$ depending on a single real parameter $t \in (0,1)$.
While $\varphi_0$ and $\varphi^P_t$ are torsion-free, the glued structure $\varphi^t$ is not, because of the error introduced by the gluing.
However, one can check that
\begin{align}
 \label{equation:pregluing-estimate-G2-structure-T7}
 \|{
  \varphi^P_t-\varphi^t
 }_{C^k}
 \leq
 ct^4
\end{align}
for any $k$ and a constant $c>0$ independent of $t$.
This implies that for small $t$, \cref{theorem:original-torsion-free-existence-theorem} can be applied and one obtains a torsion-free $G_2$-structure $\tilde{\varphi}^t \in \Omega^3(N)$ satisfying
\[
 \|{
  \tilde{\varphi}^t - \varphi^t
 }_{C^0}
 \leq
 ct^{1/2}.
\]
We need an improved version of this estimate using weighted Hölder norms denoted by
$\|{
  \cdot
 }_{C^{2,\alpha/2}_{\beta;t}}$
from \cite{Platt2020}.
At this point, we will not reproduce the definition of these norms from the reference, because we will define more general norms in \cref{definition:hoelder-norms-for-gauge-with-cases}.

\begin{theorem}[Theorem 4.58 in \cite{Platt2020}]
\label{corollary:kummer-construction-simplified-torsion-free-estimate}
Let $N_t$ be the resolution of $T^7/\Gamma$ and $\varphi^t \in \Omega^3(N_t)$ the $G_2$-structure with small torsion from \cref{section:torsion-free-on-resolution-of-T7}.
There exists $c>0$ independent of $t$ such that the following is true:
for $t$ small enough, there exists $\eta^t \in \Omega^2(N_t)$ such that $\tilde{\varphi}=\varphi^t+\d \eta^t$ is a torsion-free $G_2$-structure, and $\eta^t$ satisfies
\[
 \|{
  \eta^t
 }_{C^{2,\alpha/2}_{\beta;t}}
 \leq
 ct^{7/2-\beta}.
\]
In particular,
\[
\|{\tilde{\varphi}-\varphi^t}_{L^\infty} \leq ct^{5/2} \text{ and }
\|{\tilde{\varphi}-\varphi^t}_{C^{0,\alpha/2}} \leq ct^{5/2-\alpha/2} \text{ as well as }
\|{\tilde{\varphi}-\varphi^t}_{C^{1,\alpha/2}} \leq ct^{3/2-\alpha/2}.
\]
\end{theorem}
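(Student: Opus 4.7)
The plan is to solve for $\eta^t$ directly in the weighted Hölder space $C^{2,\alpha/2}_{\beta;t}$ by a Banach fixed-point argument, rather than extracting $\eta^t$ after the fact from the existence statement of \cref{theorem:original-torsion-free-existence-theorem}. Since $\d \varphi^t = 0$, the form $\tilde\varphi = \varphi^t + \d \eta^t$ is automatically closed, so I only need $\d\, \Theta(\varphi^t + \d \eta^t) = 0$. Substituting the expansion from \cref{proposition:Theta-estimates} this becomes the semilinear elliptic equation
\begin{align*}
  \d\, T(\d \eta^t) \;=\; -\d(*\varphi^t) + \d\, F(\d \eta^t),
\end{align*}
and, after imposing the Coulomb gauge $\d^* \eta^t = 0$, the principal part of $\d T \d$ agrees modulo error terms that vanish with $t$ with the Hodge Laplacian on co-exact $2$-forms.

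Granted a uniform-in-$t$ Schauder estimate of the shape
\begin{align*}
  \|{\eta}_{C^{2,\alpha/2}_{\beta;t}} \leq c \, \|{\d T (\d \eta)}_{C^{0,\alpha/2}_{\beta-2;t}}
\end{align*}
for $\eta$ orthogonal to the harmonic $2$-forms on $N_t$, the desired $\eta^t$ is obtained by iterating the map sending $\eta$ to the solution of $\d T(\d \eta') = -\d(*\varphi^t) + \d F(\d \eta)$ on a small ball in $C^{2,\alpha/2}_{\beta;t}$. The source term is controlled using the pre-gluing bound \cref{equation:pregluing-estimate-G2-structure-T7}: $\varphi^t - \varphi^P_t$ is supported in the gluing annuli, where the radial coordinate is of a positive power of $t$, and a direct computation of the weighted norm produces the factor $t^{7/2-\beta}$. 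The nonlinearity is handled by the quadratic estimates on $F$ in \cref{proposition:Theta-estimates}, which make $\eta \mapsto \d F(\d \eta)$ Lipschitz on a small $C^{2,\alpha/2}_{\beta;t}$-ball with Lipschitz constant going to zero with the radius, so that contraction holds once $t$ is small enough. The three embedded Hölder and $L^\infty$ estimates at the end of the statement then follow by specialising the weight $\beta$ appropriately and unpacking the definition of the weighted norm.

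The main obstacle is the uniform linear estimate itself: on $N_t$ the spectral gap of the Laplacian shrinks to zero as $t \to 0$, so one cannot just invoke Hodge theory on a single compact manifold. I would establish the estimate by a blow-up/contradiction argument. A hypothetical sequence of counterexamples $\eta_n$, after rescaling by its $C^{2,\alpha/2}_{\beta;t_n}$-norm and passing to a subsequence, would converge on one of the two model geometries --- either the compact orbifold $T^7/\Gamma$ or the non-compact product $\R^3 \times \XEH$ --- to a non-zero bounded solution of the homogeneous equation in the relevant weighted norm. On the orbifold model, the limit is excluded by standard Hodge theory together with the orthogonality condition; on $\R^3 \times \XEH$, it is excluded by the Lockhart--McOwen weighted Fredholm theory combined with the $L^2$ Hodge theory of ALE $4$-manifolds reviewed in \cref{subsection:gauge-theory-dimension-4}, provided $\beta$ is chosen so as to avoid the countable set of critical rates of the Laplacian on $\XEH$. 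Stitching these two model analyses together across the transition annulus, and arranging a choice of representative in $[\varphi^t]$ so that no bulk harmonic form can obstruct the estimate in the limit, is the most delicate technical step.
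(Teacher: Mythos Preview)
The paper does not contain a proof of this statement: it is imported as Theorem~4.58 of the external reference \cite{Platt2020} and stated without argument. So there is no proof in the present paper to compare against.

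Your outline is the correct overall architecture of the argument in \cite{Platt2020}: recast the torsion-free condition as a semilinear second-order PDE for a $2$-form, establish a uniform-in-$t$ linear estimate in the weighted Hölder spaces by a blow-up argument on the two model geometries $T^7/\Gamma$ and $\R^3\times\XEH$, and close with a contraction. You have correctly identified the uniform Schauder estimate as the crux and the Lockhart--McOwen weighted theory on the ALE factor as the tool that rules out the non-compact limit.

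One point needs sharpening before this is a proof. As written, the map $\eta\mapsto \d\,T(\d\eta)$ sends $\Omega^2$ to $\Omega^5$, so it cannot literally have the Hodge Laplacian on $2$-forms as its principal part, and adjoining $\d^*\eta=0$ gives an overdetermined system rather than an elliptic one. The actual reduction (following Joyce) uses the algebraic identity relating $T$ to the Hodge star on the irreducible pieces $\Lambda^3_1\oplus\Lambda^3_7\oplus\Lambda^3_{27}$, so that after taking $\d^*$ of $\Theta(\varphi^t+\d\eta)$ one obtains an equation whose linearisation is $\Delta$ on (a component of) $\d\eta$, not on $\eta$ itself; the $2$-form $\eta$ is then recovered by Hodge theory. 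You should make explicit which elliptic operator you are inverting and on which space. Similarly, the exponent $7/2-\beta$ for the source term does not follow from the $C^k$ bound \cref{equation:pregluing-estimate-G2-structure-T7} alone: it comes from the pointwise decay profile of $\d^*\varphi^t$ (supported in the gluing annulus, of size $\mathcal{O}(t^4 r_t^{-5})$) paired against the weight, and this computation should be displayed.
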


\subsection{Torsion-Free $G_2$-Structures on Joyce-Karigiannis Manifolds}
\label{section:torsion-free-on-joyce-karigiannis}

In \cite{Joyce2017}, the authors constructed new examples of compact manifolds with holonomy $G_2$.
They first used a gluing procedure to construct a $G_2$-structure with small torsion and then applied \cref{theorem:original-torsion-free-existence-theorem} to perturb this $G_2$-structure into a torsion-free $G_2$-structure.

The main difference to Joyce's original construction is the following:
if one uses the cutoff procedure from the $T^7/\Gamma$ case in the new setting, one produces a $G_2$-structure that does not satisfy the necessary estimates to apply \cref{theorem:original-torsion-free-existence-theorem}.
The authors of \cite{Joyce2017} overcome this problem by constructing a $G_2$-structure with \emph{even} smaller torsion, to which \cref{theorem:original-torsion-free-existence-theorem} \emph{can} be applied.

\subsubsection{Ingredients for the Construction}

Let $Y$ be a compact manifold endowed with a torsion-free $G_2$-structure $\varphi$.
Write $g$ for the metric induced by $\varphi$.
Let $\iota: Y \rightarrow Y$ be a $G_2$-involution, i.e. satisfying $\iota^2=\Id$, $\iota \neq \Id$, $\iota^* \varphi=\varphi$.
We then have:

\begin{proposition}[Proposition 2.13 in \cite{Joyce2017}]
 Let $L= \fix(\iota)$ and assume $L \neq \emptyset$.
 Then $L$ is a smooth, orientable $3$-dimensional compact submanifold of $Y$ which is totally geodesic, and, with respect to a canonical orientation, is associative.
\end{proposition}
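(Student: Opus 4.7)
The plan is to reduce the statement to a local linear-algebra question at each fixed point, using the fact that a $G_2$-involution is in particular a Riemannian isometry. Since $\iota$ preserves $\varphi$, it preserves the induced metric $g$, so $\iota$ is an isometry of $(Y,g)$. I would invoke the classical theorem that the fixed point set of an isometry of a Riemannian manifold is a disjoint union of closed, totally geodesic submanifolds, each of whose tangent space at a fixed point $p$ is the $+1$-eigenspace of $\d \iota_p$ acting on $T_pY$. This immediately gives that $L$ is closed (hence compact, since $Y$ is), smooth, and totally geodesic. It also reduces the dimension claim to showing that, at any $p \in L$, the $+1$-eigenspace of $\d\iota_p$ is $3$-dimensional.

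For the dimension and the associative property, the key is to classify $\d\iota_p$ pointwise. Since $\iota^*\varphi = \varphi$, we have $\d\iota_p \in G_2 \subset \SO(T_pY)$, and $(\d\iota_p)^2 = \Id$. I would then appeal to the classification of involutions in $G_2$: up to conjugacy, there is a unique non-trivial such involution, whose centraliser is (the image of) $\SU(2) \times \SU(2)$ and whose action on the $7$-dimensional representation has signature $(3,4)$. Concretely, one can exhibit a normal form: using the identification $\R^7 \simeq \R^3 \oplus \mathbb{H}$ and the formula $\varphi_0 = \d x_{123} - \sum_{i=1}^3 \d x_i \wedge \omega_i$ from \cref{equation:product-g2-structure}, the map $(x,y) \mapsto (x,-y)$ lies in $G_2$ (each $\omega_i$ is degree two in the $y$-coordinates, hence fixed), is an involution, and has fixed set $\R^3 \subset \R^3 \oplus \mathbb{H}$. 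Since $\d\iota_p \ne \Id$ (as $\iota \ne \Id$ on the connected component of $p$ in $L$ would force $\iota = \Id$ by totally geodesic extension), we may pick a $G_2$-frame at $p$ identifying $\d\iota_p$ with this model; this shows $\dim_p L = 3$.

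In the chosen model the $+1$-eigenspace $\R^3 \subset \R^3 \oplus \mathbb{H}$ satisfies $\varphi_0|_{\R^3} = \d x_{123}$, which is the Euclidean volume form on $\R^3$. This is precisely the pointwise condition for a $3$-plane in a $G_2$-manifold to be associative. Therefore $\varphi|_L$ coincides with the Riemannian volume form of the induced metric on $L$ with respect to the orientation for which it is positive. In particular, $\varphi|_L$ is a nowhere-vanishing $3$-form on $L$, which both orients $L$ canonically and identifies it as associative at every point.

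The only minor subtlety I expect is in ruling out that different connected components of $L$ could have different dimensions; but since any non-trivial involution in $G_2$ is conjugate to the model above, every component of $L$ with $\d\iota_p \ne \Id$ is $3$-dimensional. A component where $\d\iota_p = \Id$ would force $\iota = \Id$ on all of $Y$ by the identity theorem for isometries (two isometries agreeing to first order at a point agree everywhere on a connected manifold), contradicting $\iota \ne \Id$. Thus every non-empty component is $3$-dimensional, completing the proof.
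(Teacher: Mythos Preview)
Your argument is correct and is essentially the standard one: the paper does not supply its own proof but simply cites \cite[Proposition 2.13]{Joyce2017}, and your reconstruction follows the same line as the proof given there (fixed sets of isometries are totally geodesic, then classify non-trivial involutions in $G_2$ via the model $(x,y)\mapsto(x,-y)$ on $\R^3\oplus\mathbb{H}$, read off $\dim=3$ and associativity from $\varphi_0|_{\R^3}=\d x_{123}$). The only cosmetic point is that your first mention of ruling out $\d\iota_p=\Id$ is phrased a little awkwardly; the clean statement you give at the end---two isometries agreeing to first order at a point of a connected Riemannian manifold are equal---is the right way to say it, and you may want to note this uses connectedness of $Y$.
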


\begin{assumption}
 We assume that $L$ is nonempty, and we assume we are given a closed, coclosed, nowhere vanishing $1$-form $\lambda$ on $L$.
\end{assumption}

Such a $1$-form need not exist, and cases in which its existence can be guaranteed are discussed in \cite[Section 7.1]{Joyce2017}.

\subsubsection{$G_2$-structures on the Normal Bundle $\nu$ of $L$}
\label{subsection:g2-structures-on-the-normal-bundle}

The metric defined by $\varphi$ defines a splitting
\begin{align}
\label{equation:TM-metric-splitting}
 TY|_L
 \simeq
 \nu \oplus TL,
\end{align}
which is orthogonal with respect to $g$.
Write $g_L$ for the metric on $L$ induced by $g$ and $g|_L=h_\nu \oplus g_L$.
Write $\tilde{\nabla}^\nu$ for some connection on $\nu$.
For now, we may think of $\tilde{\nabla}^\nu$ as being the restriction of the Levi-Civita connection of $g$ to $\nu \rightarrow L$, but later we will need the freedom to choose another connection.
We write elements in $\nu$ as $(x,\alpha)$, where $x \in L$, $\alpha \in \nu_x$.
For $R>0$ let
\begin{align*}
 U_R
 =
 \{
 (x,\alpha) \in \nu:
 | \alpha |_{h_\nu} < R
 \}.
\end{align*}
Write $\pi : U_R \rightarrow L$ for the projection $(x,\alpha)\mapsto x$.
We will make use of a map $\Upsilon: U_R \rightarrow Y$ satisfying the following:
\begin{enumerate}
 \item
 $\Upsilon$ is a diffeomorphism onto its image,
 
 \item
 $\Upsilon(x,0)=x$ for $x \in L$,
 
 \item
 $\Upsilon(x,-\alpha)=\iota \circ \Upsilon(x,\alpha)$ for $(x,\alpha) \in U_R$,
 
 \item
 the induced pushforward $\Upsilon_*: TU_R \rightarrow TY$ restricted to the zero section of $TU_R$ is the identity map on $T_xL \oplus \nu_x$.
\end{enumerate}
For example, $\Upsilon = \exp$ would satisfy these four conditions for small $R$.
But later on we require $\Upsilon$ to satisfy an extra condition that $\exp$ need not satisfy.

Write $(\cdot t): \nu \rightarrow \nu$ for the dilation map $(x,\alpha) \mapsto (x,t\alpha)$, and for $t \neq 0$, define $\Upsilon_t = \Upsilon \circ (\cdot t): U_{\left| t \right|^{-1}R} \rightarrow Y$.

The connection $\tilde{\nabla}^\nu$ defines a splitting 
\begin{align}
\label{equation:T-nu-connection-splitting}
 T\nu = V \oplus H,
 \;\;\;
 \text{ where }
 V \simeq \pi^*(\nu)
 \text{ and }
 H \simeq \pi^*(TL),
\end{align}
where $V$ and $H$ are the vertical and horizontal subbundles of the connection.
Combining \cref{equation:TM-metric-splitting,equation:T-nu-connection-splitting}, we have that $T \nu \simeq \pi^*(TY|_L)$.
Denote by
\begin{align}
\label{equation:varphi-nu}
 \varphi^\nu \in \Omega^3(\nu), 
 \psi^\nu \in \Omega^4(\nu), \text{ and }
 g^\nu \in S^2(\nu)
\end{align}
the structures obtained from $\varphi$, $\psi$, and $g$ via this isomorphism and for $t > 0$ write
$\varphi^\nu_t=(\cdot t)^* \varphi^\nu$, as well as
$\psi^\nu_t=(\cdot t)^* \psi^\nu$, and
$g^\nu_t=(\cdot t)^* g^\nu$.
Note that this definition implicitly depends on the choice of $\tilde{\nabla}^\nu$.
The main result of \cite[Section 3]{Joyce2017} is then:

\begin{proposition}
\label{proposition:jk-normal-bundle-choices}
 There exist $R>0$, a connection $\tilde{\nabla}^\nu$ on $\nu$ and a map $\Upsilon: U_R \rightarrow M$ satisfying
 \begin{enumerate}
 \item
 $\Upsilon$ is a diffeomorphism onto its image,
 
 \item
 $\Upsilon(x,0)=x$ for $x \in L$,
 
 \item
 $\Upsilon(x,-\alpha)=\iota \circ \Upsilon(x,\alpha)$ for $(x,\alpha) \in U_R$,
 
 \item
 the induced pushforward $\Upsilon_*: TU_R \rightarrow TY$ restricted to the zero section of $TU_R$ is the identity map on $T_xL \oplus \nu_x$,
 \end{enumerate}
 and for $t>0$ a closed $G_2$-structure $\tilde{\varphi}^\nu_t$ on $\nu/ \{ \pm 1\}$ and closed $4$-form $\tilde{\psi}^\nu_t \in \Omega^4(\nu/\{\pm 1\})$ satisfying the following properties:
 first,
 \begin{align}
 \label{equation:phi-nu-tilde-phi-nu-difference}
  \varphi^\nu_t-\tilde{\varphi}^\nu_t
  =
  \mathcal{O}(t^2r^2)
  \quad \text{ and } \quad
  \psi^\nu_t-\tilde{\psi}^\nu_t
  =
  \mathcal{O}(t^2r^2).
 \end{align}
 Second, there exist $\eta \in \Omega^2(\nu), \zeta \in \Omega^3(\nu)$ such that
 \begin{align*}
  | \eta |_{g^\nu}
  &=
  \mathcal{O}(r^3)
  &
  \text{ and }
  &&
  | \d \eta |_{g^\nu}
  &=
  \left|
  \Upsilon^* \varphi- \tilde{\varphi}_1^\nu|_{U_R} 
  \right|_{g^\nu}
  =
  \mathcal{O}(r^2),
  \\
  | \zeta |_{g^\nu}
  &=
  \mathcal{O}(r^3)
  &
  \text{ and }
  &&
  | \d \zeta |_{g^\nu}
  &=
  \left|
  \Upsilon^* \psi- \tilde{\psi}_1^\nu|_{U_R} 
  \right|_{g^\nu}
  =
  \mathcal{O}(r^2).
 \end{align*} 
\end{proposition}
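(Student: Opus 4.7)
The plan is to produce $\tilde\varphi^\nu_t$ and $\tilde\psi^\nu_t$ as explicitly closed forms on $\nu/\{\pm 1\}$ that are modelled fibrewise on the product $G_2$-structure of \cref{equation:product-g2-structure}, and then to show that they differ from the ``naive'' pullbacks $\varphi^\nu_t, \psi^\nu_t$ only at quadratic order in the fibre coordinate. The primitives $\eta, \zeta$ are extracted from the fibrewise Poincaré lemma, and properties (i)-(iv) of $\Upsilon$ are arranged by adjusting the exponential map.

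First I would fix the auxiliary data. Since $\iota$ acts as $-1$ on the fibres of $\nu$, the restrictions $\varphi|_L$ and $g|_L$ endow each fibre with a hyperkähler triple $\omega_1,\omega_2,\omega_3$ determined algebraically by $\varphi$, the metric and the nowhere-vanishing $1$-form $\lambda$ (the latter picks out the distinguished direction in $TL$ which, together with the associative structure, cuts the frame bundle of $\nu$ down to an $\SU(2)$-reduction). I would then take $\tilde\nabla^\nu$ to be any connection preserving this $\SU(2)$-reduction, so that $\omega_1,\omega_2,\omega_3$ extend to globally defined, horizontally-parallel vertical $2$-forms on the total space. The map $\Upsilon$ is built from $\exp$ by first symmetrising under $\iota$ and then absorbing finitely many jets along $L$ so that, under the decomposition $T\nu \simeq \pi^*(TY|_L)$ coming from $\tilde\nabla^\nu$, one has $\Upsilon^*\varphi - \varphi^\nu = \mathcal{O}(r^2)$; properties (i)-(iv) then hold for $R$ small.

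Second, I would write down the model. Let $\alpha_1=\lambda/|\lambda|_{g_L}$, extended locally to an oriented orthonormal coframe $\alpha_1,\alpha_2,\alpha_3$ of $T^*L$, and set
\begin{align*}
 \tilde\varphi^\nu
 &= \pi^*(\vol_L) - \sum_{i=1}^3 \pi^*\alpha_i \wedge \omega_i,
\\
 \tilde\psi^\nu
 &= \tfrac12\,\omega_1\wedge\omega_1 - \sum_{(ijk)\text{ cyclic}} \pi^*(\alpha_j\wedge\alpha_k)\wedge\omega_i.
\end{align*}
Although $\alpha_2,\alpha_3$ are only locally defined, the combinations above are $\SO(3)$-invariant under simultaneous rotation of $\{\alpha_i\}$ and $\{\omega_i\}$, hence globally well-defined. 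Closedness then reduces to the horizontal parallelism of the $\omega_i$, together with the hypotheses $\d\lambda=0$ and $\d*_L\lambda=0$ which kill the cross-terms coming from $\d\alpha_i$. Both forms are even in the fibre coordinate and so descend to $\nu/\{\pm 1\}$; the rescalings $\tilde\varphi^\nu_t:=(\cdot t)^*\tilde\varphi^\nu$, $\tilde\psi^\nu_t:=(\cdot t)^*\tilde\psi^\nu$ inherit closedness.

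Finally, I would verify the estimates and produce $\eta, \zeta$. At $r=0$, $\varphi^\nu$ and $\tilde\varphi^\nu$ restrict to the same model, and the $\mathcal{O}(r)$ term in the difference is killed by the choice of $\Upsilon$ together with horizontal parallelism of $\omega_i$; hence $\varphi^\nu-\tilde\varphi^\nu=\mathcal{O}(r^2)$, which rescales to $\varphi^\nu_t-\tilde\varphi^\nu_t=\mathcal{O}(t^2r^2)$, and identically for $\psi$. Since $\Upsilon^*\varphi-\tilde\varphi^\nu_1$ is a closed $3$-form vanishing on the zero section to order $r^2$, applying the standard radial homotopy operator
\[
 K\omega = \int_0^1 \sigma_s^*(\iota_{X}\omega)\,\frac{\d s}{s},
\]
with $\sigma_s$ the fibre dilation and $X$ the Euler vector field on $\nu$, yields a primitive whose pointwise norm gains one power of $r$ over the input, giving $|\eta|_{g^\nu}=\mathcal{O}(r^3)$ and $\d\eta=\Upsilon^*\varphi-\tilde\varphi^\nu_1$. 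The same operator applied to $\Upsilon^*\psi-\tilde\psi^\nu_1$ gives $\zeta$.

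The hard part is the global construction of the closed model $(\tilde\varphi^\nu, \tilde\psi^\nu)$: one must check that the $\SO(3)$-invariant combinations of $\alpha_i$ and $\omega_i$ are closed on the total space $\nu$ and not merely fibrewise, and this is precisely where the assumption that $\lambda$ is both closed and coclosed (and nowhere vanishing, so that $\alpha_1$ exists globally) is used in an essential way; everything else is a Taylor-expansion and Poincaré-lemma computation.
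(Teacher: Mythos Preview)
The paper does not prove this proposition; it records it as ``the main result of \cite[Section 3]{Joyce2017}'' and uses it as a black box. Your overall strategy (build a closed product-type model on $\nu$, compare it with $\varphi^\nu$ by Taylor expansion, and extract primitives via the fibrewise Poincar\'e lemma) is exactly the one carried out in \cite{Joyce2017}, so in that sense you are on the right track.

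There is, however, a genuine gap in your ``closed model'' step. The formula you write down for $\tilde\varphi^\nu$, namely $\pi^*\vol_L-\sum_i\pi^*\alpha_i\wedge\omega_i$, is precisely the paper's $\varphi^\nu$ (see \cref{definition:varphi-nu-t}), and this form is \emph{not} closed in general. Horizontal parallelism of the $\omega_i$ only kills the $(2,1)$-component of $\d\omega_i$ (the covariant derivative); it does not kill the $(1,2)$-component, which is the curvature term $\omega_i\bigl(F^{\tilde\nabla^\nu}(\,\cdot\,,\,\cdot\,)\!\cdot p,\;\cdot\,\bigr)$, linear in the fibre coordinate $p$. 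Likewise, the hypotheses $\d\lambda=0$ and $\d*_L\lambda=0$ do not by themselves force $\sum_i\omega_i\wedge\pi^*\d\alpha_i=0$ once $|\lambda|$ is non-constant and the local frame $(\alpha_2,\alpha_3)$ is twisted. In \cite{Joyce2017} these obstructions are dealt with by \emph{adding explicit correction terms} to $\varphi^\nu$ (built from the curvature of $\tilde\nabla^\nu$ and from derivatives of $\lambda$), and these corrections are what account for the $\mathcal O(t^2r^2)$ difference in \cref{equation:phi-nu-tilde-phi-nu-difference}; the specific choice of $\tilde\nabla^\nu$ and $\Upsilon$ is also tuned so that the corrections take a manageable form. (Incidentally, the reduction of the frame bundle of $\nu$ is to $\U(2)$, not $\SU(2)$; cf.\ \cref{equation:definition-of-P}.) Once the corrected $\tilde\varphi^\nu$ is in hand, your radial-homotopy argument for $\eta,\zeta$ is correct.
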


\subsubsection{$G_2$-structures on the Resolution $P$ of $\nu /\{\pm 1\}$}
\label{subsection:g2-structures-on-the-resolution}

The $G_2$-structure $\varphi \in \Omega^3(Y)$ defines for all $x \in Y$ a cross product $\times : T_x Y \times T_x Y \rightarrow T_x Y$ as in \cref{definition:cross-product}.
We then have a complex structure $I \in \End(\nu)$ given by
 \begin{align}
 \label{equation:complex-structure-on-normal-bundle}
  I(V)
  =
  \frac{\lambda}{\left| \lambda \right|} \times V
  \text{ for }
  V \in \nu_x, x \in L.
 \end{align}
Recall the metric $h_\nu$ on $\nu$ defined by $g|_L=h_\nu \oplus g_L$, cf. \cref{subsection:g2-structures-on-the-normal-bundle}. 
Then $I$ and $h_\nu$ together define a $\U(2)$-reduction of the frame bundle of $\nu$.
Denote by ${\XEH}$ the Eguchi-Hanson space with Hyperkähler triple $\tilde{\omega}_1,\tilde{\omega}_2,\tilde{\omega}_3$.
Denote by $\rho: {\XEH} \rightarrow \C^2/\{ \pm 1\}$ the blowup map of the blowup with respect to the complex structure induced by $\tilde{\omega}_1$ and let
\begin{align}
\label{equation:definition-of-P}
 P
 =
 \Fr \times_{\U(2)} {\XEH}.
\end{align}
Denote by $\sigma:P \rightarrow L$ the projection of this bundle.
Analogously, we have
\begin{align*}
 \nu/\{\pm 1\}
 =
 \Fr \times_{\U(2)} \C^2/\{\pm 1\}.
\end{align*}
Let $L' \subset L$ be a nonempty, open set on which we can extend $e_1:= \frac{\lambda}{| \lambda |} \in T^*(L')$ to an orthonormal basis $(e_1,e_2,e_3)$.
Then there exist $\hat{\omega}^I,\hat{\omega}^J,\hat{\omega}^K \in \Omega^2((\nu/\{\pm1 \})|_{L'})$ such that $\varphi^\nu$ from \cref{equation:varphi-nu} has the form
\begin{align}
\label{definition:varphi-nu-t}
 \varphi^\nu
 =
 e_1 \wedge e_2 \wedge e_3
 -
 \hat{\omega}^I \wedge e_1
 -
 \hat{\omega}^J \wedge e_2
 -
 \hat{\omega}^K \wedge e_3.
\end{align}
We define $\check{\omega}^I,\check{\omega}^J,\check{\omega}^K \in \Omega^2(P|_{L'})$ as follows:
For $x \in L'$, let $f \in \Fr_x$ such that $f: (\nu/\{\pm 1\})_x \rightarrow \C^2/\{ \pm 1\}$ satisfies
\begin{align*}
 f^*(\omega_1,\omega_2,\omega_3)
 =
 (
 \hat{\omega}^I|_{\nu_x},
 \hat{\omega}^J|_{\nu_x},
 \hat{\omega}^K|_{\nu_x}
 ),
\end{align*}
where $(\omega_1,\omega_2,\omega_3)$ denotes the flat Hyperkähler triple on $\C^2/\{\pm 1\}$.
This choice of $f$ defines isomorphisms of complex surfaces $P_x \simeq {\XEH}$ and $(\nu / \{\pm 1\})_x \simeq \C^2/\{\pm 1\}$.
Let $\check{\omega}^I,\check{\omega}^J,\check{\omega}^K \in \Omega^2(P_x)$ be the pullback of $\tilde{\omega}_1,\tilde{\omega}_2,\tilde{\omega}_3 \in \Omega^2({\XEH})$ under this isomorphism.
This is independent of the choice of $f$, and therefore defines $\check{\omega}^I,\check{\omega}^J,\check{\omega}^K \in \Omega^2(P_x)$.
The following diagram sums up the situation:

\begin{equation}
\label{equation:hyperkaehler-commutative-diagram}
\begin{tikzcd}
{(P_x,\check{\omega}^I|_{P_x},\check{\omega}^J|_{P_x},\check{\omega}^K|_{P_x})} \arrow[r, "\simeq"] \arrow[d, "\rho"]                           & {({\XEH},\tilde{\omega}_1,\tilde{\omega}_2,\tilde{\omega}_3)} \arrow[d, "\rho"] \\
{(\nu_x/\{\pm 1\},\hat{\omega}^I|_{\nu_x/\{\pm 1\}},\hat{\omega}^J|_{\nu_x/\{\pm 1\}},\hat{\omega}^K|_{\nu_x/\{\pm 1\}})} \arrow[r, "\simeq"] & {(\C^2/\{\pm 1\},\omega_1,\omega_2,\omega_3)}     
\end{tikzcd}
\end{equation}
Here, by abuse of notation we denoted the map $P_x \rightarrow \nu_x/\{\pm 1\}$ which makes the diagram commutative also by $\rho$.
Horizontal arrows pull Hyperkähler triples back to one another, Hyperkähler triples connected by vertical arrows are asymptotically close to each other.
A complicated point is the actual definition of $\check{\omega}^I,\check{\omega}^J,\check{\omega}^K$ as $2$-forms on $P|_{L'}$.
\Cref{equation:hyperkaehler-commutative-diagram} tells us what they look like fibrewise.
To make sense of them as global objects on $P$, one needs to choose a connection on $P$.
In \cite{Joyce2017}, the horizontal subspaces $\breve{H}$ were defined to this end which allows us to decompose forms on $P$ into vertical and horizontal components, much like for forms on $\nu$.
There are then unique vertical $2$-forms which restrict to $\check{\omega}^I|_{P_x},\check{\omega}^J|_{P_x},\check{\omega}^K|_{P_x}$ on every fibre.

We are now ready to define $\varphi^P_t \in \Omega^3(P|_{L'})$, $\psi^P_t \in \Omega^4(P|_{L'})$ via
\begin{align}
\label{definition:varphi-P-t}
\begin{split}
 \varphi^P_t
 &:=
 \check{\varphi}_{0,3}
 +
 t^2\check{\varphi}_{2,1}
 \\
 &
 :=
 \sigma^*(e_1 \wedge e_2 \wedge e_3)
 -
 t^2
 \left(
  \sigma^*(e_1) \wedge \check{\omega}^I-
  \sigma^*(e_2) \wedge \check{\omega}^J-
  \sigma^*(e_3) \wedge \check{\omega}^K
 \right)
 ,
\end{split}
 \\
 \notag
 \psi^P_t
 &:=
 t^4 \check{\psi}_{4,0}
 +
 t^2 \check{\psi}_{2,2}
 \\
 \notag
 &
 :=
 \frac{1}{2} \check{\omega}^I \wedge \check{\omega}^I
 -
 \sigma^*(e_2 \wedge e_3) \wedge \check{\omega}^I-
 \sigma^*(e_3 \wedge e_1) \wedge \check{\omega}^J-
 \sigma^*(e_1 \wedge e_2) \wedge \check{\omega}^K.
\end{align}
These expressions are independent of the choice of $(e_2,e_3)$, and therefore define forms $\varphi^P_t \in \Omega^3(P), \psi^P_t \in \Omega^4(P)$, not just forms over $L' \subset L$.
Let also $g^P_t$ denote the metric induced by $\varphi^P_t$.

As in the previous section, we add terms to $\varphi^P_t$ and $\psi^P_t$ to define \emph{closed} forms on $P$, and we have the following control over how they are asymptotic to forms on $\nu/\{\pm 1\}$:
\begin{proposition}[Section 4.5 in \cite{Joyce2017}]
\label{proposition:xi-estimates}
 There exist $\xi_{1,2},\xi_{0,3} \in \Omega^3(P)$, $\tau_{1,1} \in \Omega^2(\{x \in P: \check{r}(x) > 1)$, such that
 \begin{align*}
  \tilde{\varphi}^P_t
  :=
  \varphi^P_t+t^2 \xi_{1,2}+t^2 \xi_{0,3}
 \end{align*}
 is closed and satisfies
 \begin{align}
 \label{equation:phi-P-tilde-phi-nu-tilde-difference}
  \tilde{\varphi}^P_t
  =
  \rho^* \tilde{\varphi}^\nu_t
  +
  t^2 \d \tau_{1,1}
 \end{align}
 where $\check{r}>1$.
 These forms satisfy the following estimates:
 \begin{align}
  \notag
  \left|
   \nabla^k (t^2 \xi_{1,2})
  \right|_{g^P_t}
  &=
  \begin{cases}
   \mathcal{O}(t^{1-k}), & \check{r} \leq 1,\\
   \mathcal{O}(t^{1-k} \check{r}^{-3-k}), & \check{r} > 1,
  \end{cases}
  \\  
  \label{equation:xi-0-3-estimate}
  \left|
   \nabla^k (t^2 \xi_{0,3})
  \right|_{g^P_t}
  &=
  \begin{cases}
   \mathcal{O}(t^{2-k}), & \check{r} \leq 1,\\
   \mathcal{O}(t^{2-k} \check{r}^{2-k}), & \check{r} > 1,
  \end{cases}
  \\
  \label{equation:tau-1-1-estimate}
  \left|
   \nabla^k (t^2 \tau_{1,1})
  \right|_{g^P_t}
  &=
  \mathcal{O}(t^{1-k}\check{r}^{-3-k}).
 \end{align}
\end{proposition}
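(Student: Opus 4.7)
The plan is to identify $\xi_{1,2}$ and $\xi_{0,3}$ by solving the closedness equation $\d\bigl(\varphi^P_t + t^2\xi_{1,2} + t^2\xi_{0,3}\bigr) = 0$ component by component in the horizontal--vertical bidegree decomposition of $\Omega^*(P)$ induced by $\breve{H}$, and then to verify \cref{equation:phi-P-tilde-phi-nu-tilde-difference} by comparing with $\rho^*\tilde{\varphi}^\nu_t$ on $\{\check{r}>1\}$ using the ALE asymptotics of the Hyperkähler triple $\check{\omega}^I,\check{\omega}^J,\check{\omega}^K$ on each fibre of $P \to L$.

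First, I would decompose $\d = d_V + d_H + d_F$ on $P$, where $d_V$ raises the vertical degree by one, $d_H$ raises the horizontal degree by one, and $d_F$ contracts a vertical factor and wedges with the curvature of $\breve{H}$. Since $\check{\varphi}_{0,3} = \sigma^*(e_1\wedge e_2\wedge e_3)$ is the pullback of the volume form on the $3$-manifold $L$, it is closed. For $\check{\varphi}_{2,1}$, fibrewise closedness of the Hyperkähler triple on $\XEH$ gives $d_V\check{\varphi}_{2,1}=0$, so the obstruction to $\varphi^P_t$ being closed lives in bidegrees $(2,2)$, $(1,3)$, and $(0,4)$, coming from $d_H\check{\varphi}_{2,1}$, $d_F\check{\varphi}_{2,1}$, and $\d\sigma^*(e_i)$ wedged appropriately. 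Using that the Hyperkähler forms on $\XEH$ are exact away from the exceptional $\CP^1$ with primitives $\beta^A$ satisfying $|\nabla^k\beta^A| = \mathcal{O}(\check{r}^{-3-k})$, I would construct $\xi_{1,2}$ fibrewise so that $d_V\xi_{1,2}$ cancels the $(2,2)$ part of $\d\check{\varphi}_{2,1}$; the remaining pure-horizontal error is then killed by a $(0,3)$ primitive $\xi_{0,3}$ by a Poincaré-type argument on $L$ combined with the Chern--Simons form of the curvature of $\breve{H}$.

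For the comparison with $\rho^*\tilde{\varphi}^\nu_t$ on $\{\check{r}>1\}$, the ALE expansion $\check{\omega}^A = \rho^*\hat{\omega}^A + \d\beta^A$ combined with \cref{proposition:jk-normal-bundle-choices} expresses $\rho^*\tilde{\varphi}^\nu_t - \tilde{\varphi}^P_t$ as the exterior derivative of an explicit bidegree $(1,1)$ form assembled from the $\beta^A$ and $\sigma^*(e_i)$; this is $\tau_{1,1}$, and its pointwise decay descends directly from that of the $\beta^A$. The asymmetric $t$-weights in the stated estimates come from the fact that $g^P_t$ rescales the vertical directions by $t$, so a form of bidegree $(v,h)$ has $|\cdot|_{g^P_t} = t^{-v}|\cdot|_{g^P_1}$: the $t^2$ prefactors compensate in different ways for $\xi_{1,2}$ (two vertical indices) and $\xi_{0,3}$ (no vertical indices), producing the stated orders in $t$ and $\check{r}$.

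\textbf{Main obstacle.} The principal difficulty is the bookkeeping of $t$-scaling across bidegrees together with the simultaneous requirement of smoothness across the exceptional $\CP^1$ in each fibre of $P \to L$, where $\rho$ ceases to be a diffeomorphism. Concretely, $\xi_{1,2}$ must be built from ALE primitives of the Hyperkähler triple that extend smoothly across this locus, and a careful cutoff is required to glue the outer model dictated by $\rho^*\tilde{\varphi}^\nu_t$ to the inner model $\varphi^P_t$ without spoiling either closedness or the decay of $\xi_{1,2},\xi_{0,3},\tau_{1,1}$.
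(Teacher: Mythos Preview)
The paper does not prove this proposition; it is quoted verbatim from \cite[Section~4.5]{Joyce2017} and used as a black box in the subsequent analysis. There is therefore no proof in the paper to compare your proposal against.

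That said, your outline is broadly in the spirit of the argument in \cite{Joyce2017}: one decomposes the exterior derivative on $P$ according to the horizontal--vertical bigrading induced by $\breve{H}$, observes that $\d\check{\varphi}_{2,1}$ has components in bidegrees $(2,2)$ and $(1,3)$ (there is no $(0,4)$ contribution from $\d\check{\varphi}_{0,3}$ since the base is $3$-dimensional), and then chooses correction terms to kill these. Your identification of the $t$-scaling as coming from $|\,\cdot\,|_{g^P_t} = t^{-v}|\,\cdot\,|_{g^P_1}$ on forms of vertical degree $v$ is the right mechanism for the stated estimates.

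One point where your sketch is imprecise: the growth $\check{r}^{2-k}$ of $\xi_{0,3}$ for large $\check{r}$ is not the decay one would get from a Chern--Simons or Poincar\'e primitive of a curvature term alone. In \cite{Joyce2017} this term is tied to the quadratic correction $\varphi^\nu_t - \tilde{\varphi}^\nu_t = \mathcal{O}(t^2r^2)$ on the normal bundle (cf.\ \cref{equation:phi-nu-tilde-phi-nu-difference}), which is why $\xi_{0,3}$ grows rather than decays. Your proposal does not account for this, so as written it would not produce the correct form of $\xi_{0,3}$ nor verify \cref{equation:phi-P-tilde-phi-nu-tilde-difference}.
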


\begin{proposition}[Section 4.5 in \cite{Joyce2017}]
\label{proposition:v-estimates}
 There exist $\chi_{1,3}, \theta_{3,1}, \theta_{2,2} \in \Omega^4(P)$, $v_{1,2} \in \Omega^3(\{x \in P: \check{r}(x) > 1)$, such that
 \begin{align}
  \tilde{\psi}^P_t
  :=
  \psi^P_t+t^2 \chi_{1,3}+t^4 \theta_{3,1}+t^4\theta_{2,2}
 \end{align}
 is closed and satisfies
 \begin{align}
  \label{equation:psi-P-tilde-psi-nu-tilde-difference}
  \tilde{\psi}^P_t
  =
  \rho^* \tilde{\psi}^\nu_t
  +
  t^2 \d v_{1,2}
 \end{align}
 where $\check{r}>1$.
 These forms satisfy the following estimates:
 \begin{align}
  \left|
   \nabla^k (t^2 \chi_{1,3})
  \right|_{g^P_t}
  &:=
  \begin{cases}
   \mathcal{O}(t^{1-k}), & \check{r} \leq 1,\\
   \mathcal{O}(t^{1-k} \check{r}^{-3-k}), & \check{r} > 1,
  \end{cases}
  \\  
  \left|
   \nabla^k (t^4 \theta_{3,1})
  \right|_{g^P_t}
  &:=
  \begin{cases}
   \mathcal{O}(t^{1-k}), & \check{r} \leq 1,\\
   0, & \check{r} > 1,
  \end{cases}
  \\  
  \left|
   \nabla^k (t^4 \theta_{2,2})
  \right|_{g^P_t}
  &:=
  \begin{cases}
   \mathcal{O}(t^{2-k}), & \check{r} \leq 1,\\
   \mathcal{O}(t^{2-k} \check{r}^{2-k}), & \check{r} > 1,
  \end{cases}
  \\
  \label{equation:v-1-2-estimate}
  \left|
   \nabla^k (t^2 v_{1,2})
  \right|_{g^P_t}
  &:=
  \mathcal{O}(t^{1-k}\check{r}^{-3-k}).
 \end{align}
\end{proposition}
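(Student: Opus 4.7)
The strategy is to parallel the construction of $\tilde\varphi^P_t$ in the preceding \cref{proposition:xi-estimates}, now at the level of $4$-forms. I work with the bi-grading $(p,q)$ on $\Omega^*(P)$ induced by the splitting $TP = \breve V \oplus \breve H$, where $p$ is the vertical and $q$ the horizontal degree, so $p \le 4$ and $q \le 3$. Decomposing $d = d^V + d^H$, a form of bi-degree $(p,q)$ is sent by $d^V$ into $(p+1,q)$ and by $d^H$ into $(p,q+1)$; the curvature of the connection on $P \to L$ is absorbed into $d^H$ so no $(p-1,q+2)$-component appears.

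The first step is to compute $d\psi^P_t$ bi-degree by bi-degree. The summand $t^4\check\psi_{4,0} = \tfrac{t^4}{2}\check\omega^I\wedge\check\omega^I$ has bi-degree $(4,0)$, and since $d^V\check\psi_{4,0}$ lies in the trivial bi-degree $(5,0)$ beyond the fibre dimension, $d(t^4\check\psi_{4,0})$ contributes only a $(4,1)$-piece of pointwise size $\mathcal O(t^4)$ coming from the connection curvature; this piece vanishes on $\{\check r > 1\}$ because $\check\omega^I$ is asymptotic there to the flat parallel form $\rho^*\omega_1$. The summand $t^2\check\psi_{2,2}$ has bi-degree $(2,2)$; its vertical derivative vanishes because each $\tilde\omega_i$ is fibrewise closed, so $d(t^2\check\psi_{2,2})$ contributes only a $(2,3)$-piece of order $\mathcal O(t^2)$ from $d^H$. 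Accordingly I choose $\theta_{3,1}$ of bi-degree $(3,1)$ as a fibrewise $d^V$-primitive of the $(4,1)$-error (which can be taken to vanish on $\{\check r > 1\}$, explaining the corresponding piece of the estimate), $\chi_{1,3}$ of bi-degree $(1,3)$ as a $d^V$-primitive of the $(2,3)$-error, and $\theta_{2,2}$ of bi-degree $(2,2)$ absorbing the residual $(3,2)$-error produced by $d^H\theta_{3,1}$ via $d^V\theta_{2,2} = -d^H\theta_{3,1}$. These fibrewise primitives exist by the Poincaré lemma on $\XEH$ and are explicit in Eguchi-Hanson coordinates; the remaining bi-degrees $(5,0)$ and $(1,4)$ vanish for dimensional reasons, so $\tilde\psi^P_t$ is closed.

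The asymptotic identity $\tilde\psi^P_t = \rho^*\tilde\psi^\nu_t + t^2\d v_{1,2}$ on $\{\check r > 1\}$ follows from the ALE decay of $\XEH$: on each fibre the difference $\tilde\omega_i - \rho^*\omega_i$ is an exact $2$-form with explicit primitive decaying as $\check r^{-3-k}$ under $k$ covariant derivatives, since $\tilde\omega_i - \rho^*\omega_i$ decays as $\check r^{-4}$ on the ALE end. Assembling these primitives fibrewise via the connection on $P$ produces $v_{1,2}$ satisfying \cref{equation:v-1-2-estimate}. The pointwise estimates for $\chi_{1,3},\theta_{3,1},\theta_{2,2}$ in the rescaled metric $g^P_t$ then reduce to a direct computation: uniform bounds on the compact core region $\{\check r \le 1\}$ and ALE decay on $\{\check r > 1\}$, combined with the scaling behaviour of $g^P_t$ under the dilation that rescales vertical directions by $t$ while leaving horizontal ones unchanged. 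The main technical obstacle is the careful bookkeeping of $t$- and $\check r$-powers across the three bi-degrees and the two regimes, so that the piecewise estimates in the statement come out exactly; the switch at $\check r = 1$ reflects the transition between the Eguchi-Hanson core and the ALE asymptotic region in each fibre, and otherwise the argument is structurally identical to the proof of \cref{proposition:xi-estimates}.
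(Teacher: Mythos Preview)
The paper does not supply its own proof of this proposition: it is quoted verbatim from \cite[Section 4.5]{Joyce2017}, just as \cref{proposition:xi-estimates} is. So there is no in-paper argument to compare against; the reference is the proof.

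That said, your sketch contains a genuine error. You assert that ``the curvature of the connection on $P \to L$ is absorbed into $d^H$ so no $(p-1,q+2)$-component appears.'' This is false: on a fibre bundle with connection the exterior derivative decomposes as $d = d^V + d^H + \mathcal{F}$, where $\mathcal{F}$ shifts bi-degree by $(-1,+2)$ and is built from the curvature of the horizontal distribution $\breve H$. The connection $\tilde{\nabla}^\nu$ (and hence $\breve H$) is not flat in general, so this term is present. Concretely, $d(t^4\check\psi_{4,0})$ has a nontrivial $(3,2)$-component and $d(t^2\check\psi_{2,2})$ has a nontrivial $(1,4)$-component (the latter vanishes for dimensional reasons, but the former does not). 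Your bookkeeping therefore misses a $(3,2)$-error, and the system of equations you write down for $\chi_{1,3}$, $\theta_{3,1}$, $\theta_{2,2}$ is incomplete: you also need $d^V\theta_{2,2}$ to absorb the curvature contribution from $\check\psi_{4,0}$, and you must check that the secondary errors $d^H\theta_{2,2}$ and $\mathcal{F}\theta_{3,1}$ (both landing in $(2,3)$) are themselves cancelled. In the actual Joyce--Karigiannis argument this closure is verified by exploiting $d^2=0$ bi-degree by bi-degree together with the specific fibrewise cohomology of $X_{\text{EH}}$, not by the simplified scheme you describe. The overall strategy---bi-graded correction terms and ALE decay for $v_{1,2}$---is correct in spirit, but the omission of the curvature component means your argument as written does not close up.
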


\subsubsection{Correcting for the Leading-order Errors on $P$}

Armed with the $G_2$-structures $\varphi$ on $Y$ and $\tilde{\varphi}^P_t$ on $P$, we could define a glued together $G_2$-structure just as is done in the case of resolutions of $T^7/\Gamma$.
However, in this case it would turn out that the torsion of the glued together $G_2$-structure is too big and \cref{theorem:original-torsion-free-existence-theorem} cannot be applied.
We thus make use of the following correction terms which will make the torsion of the glued together $G_2$-structure small enough.

\begin{theorem}[Theorem 5.1 in \cite{Joyce2017}]
\label{theorem:jk-correction-theorem}
 There exist 
 $\alpha_{0,2}, \alpha_{2,0} \in \Omega^2(P)$,
 $\beta_{0,3}, \beta_{2,1} \in \Omega^3(P)$,
 satisfying for all $t>0$ the equation
 \[
  (D_{\varphi^P_t} \Theta)
  \left( t^2 [\d \alpha_{0,2}]_{1,2}+t^4[\d \alpha_{2,0}]_{3,0}+t^2\xi_{1,2} \right)
  \\
  =
  t^2 \d \beta_{0,3} +t^4 [\d \beta_{2,1}]_{3,1}+t^2\chi_{1,3}+t^4\theta_{3,1}.
 \]
 Moreover, for $\gamma > 0$ sufficiently small and for all $k \geq 0$, these forms satisfy the following estimates
 \begin{align*}
  \left|
   \nabla^k (t^2 \alpha_{0,2})
  \right|_{g^P_t}
  &=
  \begin{cases}
   \mathcal{O}(t^{2-k}), & \check{r} \leq 1, \\
   \mathcal{O}(t^{2-k}\check{r}^{-2-k+\gamma}), 
   & \check{r} \geq 1,
  \end{cases}
  \\
  \left|
   \nabla^k (t^4 \alpha_{2,0})
  \right|_{g^P_t}
  &=
  \begin{cases}
   \mathcal{O}(t^{2-k}), & \check{r} \leq 1, \\
   \mathcal{O}(t^{2-k}\check{r}^{-2-k+\gamma}), 
   & \check{r} \geq 1,
  \end{cases}
  \\
  \left|
   \nabla^k (t^2 \beta{0,3})
  \right|_{g^P_t}
  &=
  \begin{cases}
   \mathcal{O}(t^{2-k}), & \check{r} \leq 1, \\
   \mathcal{O}(t^{2-k}\check{r}^{-2-k+\gamma}), 
   & \check{r} \geq 1,
  \end{cases}
  \\
  \left|
   \nabla^k (t^4 \beta{2,1})
  \right|_{g^P_t}
  &=
  \begin{cases}
   \mathcal{O}(t^{2-k}), & \check{r} \leq 1, \\
   \mathcal{O}(t^{2-k}\check{r}^{-2-k+\gamma}), 
   & \check{r} \geq 1,
  \end{cases}
 \end{align*} 
\end{theorem}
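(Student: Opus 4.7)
The plan is to solve the stated equation fiberwise on $P\to L$, exploiting that each fiber is isomorphic to $\XEH$ and that $\varphi^P_t$ as well as every inhomogeneous term appearing is already organised by bidegree $(p,q)$ with respect to the horizontal-vertical splitting of $\Omega^*(P)$ coming from the horizontal distribution $\breve H$ of \cite{Joyce2017}. First, I would compute the leading-order action of $D_{\varphi^P_t}\Theta$ on each bidegree: since $\varphi^P_t=\check\varphi_{0,3}+t^2\check\varphi_{2,1}$ is, on every fiber, a rescaling of the product $G_2$-structure on the local model $\R^3\times\XEH$ of \cref{example:pullback-of-asd-is-g2-instanton}, at leading order in $t$ this linearisation is determined by the flat Hodge star on $L$ together with the hyperkähler Hodge star on the fiber $\XEH$. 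From this explicit form one reads off how $D_{\varphi^P_t}\Theta$ shifts bidegrees, for example sending $(1,2)$ into $(1,3)\oplus(3,1)$ and $(3,0)$ into $(3,1)$.

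Second, I would match terms of each bidegree on both sides of the desired identity. The right-hand side piece $t^2\chi_{1,3}$ has bidegree $(1,3)$ and $t^4\theta_{3,1}$ has bidegree $(3,1)$, while $D_{\varphi^P_t}\Theta(t^2\xi_{1,2})$ generically contributes to both. The correction $\alpha_{0,2}$ is chosen so that $D_{\varphi^P_t}\Theta\bigl([\d\alpha_{0,2}]_{1,2}\bigr)$ cancels the $(1,3)$-component modulo an exact remainder, which in turn defines $\beta_{0,3}$; similarly, $\alpha_{2,0}$ absorbs the $(3,1)$-component and defines $\beta_{2,1}$. On each fiber $P_x\simeq\XEH$ this reduces to an inhomogeneous linear elliptic equation for an unknown $2$-form with a prescribed source, which I would solve using the weighted Hodge theory on ALE $4$-manifolds from \cite{Pacini2013}, choosing weights slightly above $-2$ to avoid the indicial root $-2$ of the scalar Laplacian on $\R^4$. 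Smooth dependence of the elliptic inverses on the base parameter $x\in L$ then assembles these pointwise solutions into smooth global sections $\alpha_{0,2},\alpha_{2,0},\beta_{0,3},\beta_{2,1}$.

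The estimates on $|\nabla^k\,\cdot\,|_{g^P_t}$ then follow by combining the fiberwise decay $\check r^{-2+\gamma}$ of each elliptic solution with the scaling behaviour of $g^P_t$ under $(\cdot t)$; the dichotomy $\check r\leq 1$ versus $\check r\geq 1$ in the conclusion mirrors the transition from the bounded Eguchi-Hanson core to the asymptotically flat end, exactly as in the corresponding estimates for $\xi_{1,2},\chi_{1,3},\theta_{3,1}$ themselves.

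The main obstacle is solvability of the fiberwise problem, which requires the source on each fiber to lie in the image of the fiberwise linearised operator, i.e.\ to pair trivially with its cokernel. This should hold here because $\xi_{1,2},\chi_{1,3},\theta_{3,1}$ arise precisely as the discrepancy in \cref{equation:phi-P-tilde-phi-nu-tilde-difference} and \cref{equation:psi-P-tilde-psi-nu-tilde-difference} between the bundle-theoretic construction on $P$ and the pullback of the structures on $\nu/\{\pm 1\}$, and this discrepancy is fiberwise exact by construction. A secondary, unavoidable difficulty is the small loss $\gamma>0$ coming from the indicial root at $-2$, which forces the estimates to be stated for any sufficiently small $\gamma>0$ rather than $\gamma=0$.
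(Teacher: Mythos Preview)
The paper does not prove this theorem: it is quoted as Theorem~5.1 of \cite{Joyce2017} and used as a black box in \cref{equation:varphi-on-joyce-karigiannis,equation:psi-on-joyce-karigiannis}, so there is no proof here against which to compare your sketch.

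For what it is worth, your outline is in the spirit of the argument in \cite{Joyce2017}: one does analyse $D_{\varphi^P_t}\Theta$ according to the $(p,q)$-bigrading, and the decay estimates do come from weighted elliptic theory on the Eguchi--Hanson fibres, with the $\gamma>0$ loss tied to an indicial root. One place where your sketch is too quick is the claim that the problem reduces to a purely fibrewise elliptic equation. The term $[\d\alpha_{0,2}]_{1,2}$ involves horizontal differentiation of a vertical form, so the equation genuinely couples neighbouring fibres; in \cite{Joyce2017} this is handled by showing that, after the bidegree decomposition, the principal part in the relevant weighted norms is a fibrewise operator (so that Fredholm theory on $\XEH$ applies uniformly in $x\in L$), with the horizontal derivatives entering only as lower-order perturbations. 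Your sketch also does not address why the cokernel obstruction actually vanishes beyond the heuristic ``fibrewise exact by construction''; in the reference this requires identifying the cokernel with harmonic forms on $\XEH$ of specific decay and checking the pairing explicitly.
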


\subsubsection{$G_2$-structures on the Resolution $N_t$ of $Y/\<\iota\>$}

We are now ready to glue together $P$ and $Y/\< \iota \>$ to a manifold, and define a $G_2$-structure with small torsion on it.

\begin{definition}
\label{definition:joyce-karigiannis-N-t}
Define
\begin{align}
 N_t
 :=
 \left[
 \rho^{-1}(U_{t^{-1}R}/\{ \pm 1\})
 \coprod
 (Y \setminus L)/\< \iota \>
 \right]
 /\sim,
\end{align}
where $x \sim \Upsilon_t \circ \rho(x)$ for $x \in \rho^{-1}(U_{t^{-1}R}/\{\pm 1\})$.
\end{definition}

\begin{definition}
Let $a:[0,\infty) \rightarrow \R$ be a smooth function with $a(x)=0$ for $x \in [0,1]$, and $a(x)=1 \in [2, \infty)$.
Define then
\begin{align}
\label{equation:varphi-on-joyce-karigiannis}
 \varphi^N_t
 &=
 \begin{cases}
  \tilde{\varphi}^P_t+
  \d \,[ t^2 \alpha_{0,2}+t^4 \alpha_{2,0} ]
  ,
  & \text{ if }
  \check{r} \leq t^{-1/9},
  \\
  \tilde{\varphi}^P_t+
  \d \,[ t^2 \alpha_{0,2}+t^4 \alpha_{2,0} 
  +a(t^{1/9} \check{r}) \cdot \Upsilon_* \eta
  ]
  ,
  & \text{ if }
  t^{-1/9} \leq \check{r} \leq 2t^{-1/9},
  \\
  \tilde{\varphi}^P_t+
  \d \,[ t^2 \alpha_{0,2}+t^4 \alpha_{2,0} 
  + \Upsilon_* \eta
  ]
  ,
  & \text{ if }
  2t^{-1/9} \leq \check{r} \leq t^{-4/5},
  \\
  \tilde{\varphi}^\nu_t+
  \d \,[ (1-a(t^{4/5}\check{r}))(t^2 \tau_{1,1}+ t^2 \alpha_{0,2}+t^4 \alpha_{2,0})
  + \Upsilon_* \eta
  ]
  ,
  & \text{ if }
  t^{-4/5} \leq \check{r} \leq 2t^{-4/5},
  \\
  \varphi
  ,
  & \text{ elsewhere},
 \end{cases}
 \\
\label{equation:psi-on-joyce-karigiannis}
 \psi^N_t
 &=
 \begin{cases}
  \tilde{\psi}^P_t+
  \d \,[ t^2 \beta_{0,3}+t^4 \beta_{2,1} ]
  ,
  & \text{ if }
  \check{r} \leq t^{-1/9},
  \\
  \tilde{\psi}^P_t+
  \d \,[ t^2 \beta_{0,3}+t^4 \beta_{2,1} 
  +a(t^{1/9} \check{r}) \cdot \Upsilon_* \zeta
  ]
  ,
  & \text{ if }
  t^{-1/9} \leq \check{r} \leq 2t^{-1/9},
  \\
  \tilde{\psi}^P_t+
  \d \,[ t^2 \beta_{0,3}+t^4 \beta_{2,1} 
  + \Upsilon_* \zeta
  ]
  ,
  & \text{ if }
  2t^{-1/9} \leq \check{r} \leq t^{-4/5},
  \\
  \tilde{\psi}^\nu_t+
  \d \,[ (1-a(t^{4/5}\check{r}))(t^2 v_{1,2}+ t^2 \beta_{0,3}+t^4 \beta_{2,1})
  + \Upsilon_* \zeta
  ]
  ,
  & \text{ if }
  t^{-4/5} \leq \check{r} \leq 2t^{-4/5},
  \\
  \psi
  ,
  & \text{ elsewhere},
 \end{cases}
\end{align}
\end{definition}

The important properties of these forms are that $\varphi^N_t$ and $\psi^N_t$ are closed, and that $\psi^N_t$ is close to being the Hodge dual of $\varphi^N_t$.
That is, the $3$-form $\varphi^N_t - *_{\varphi^N_t} \psi^N_t$ satisfies the assumption of \cref{theorem:original-torsion-free-existence-theorem} and $\varphi^N_t$ can be perturbed to a torsion-free $G_2$-structure.
This yields the following theorem:

\begin{theorem}[Theorem 6.4 in \cite{Joyce2017}]
 For small $t$ there exists $\eta_t \in \Omega^2(N_t)$ such that $\tilde{\varphi}^N_t := \varphi^N_t + \d \eta_t$ is a torsion-free $G_2$-structure, and
 \begin{align}
  \|{
   \tilde{\varphi}^N_t-\varphi^N_t
  }_{L^\infty}
  \leq
  ct^{1/18}
 \end{align}
 for some constant $c>0$ independent of $t$.
\end{theorem}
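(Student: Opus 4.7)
The plan is to apply \cref{theorem:original-torsion-free-existence-theorem} to the closed $G_2$-structure $\varphi^N_t$ with exponent $\alpha = 1/18$; the existence theorem will then directly produce $\eta_t$ together with the stated $L^\infty$ bound. Three hypotheses must be verified. The two geometric hypotheses on the metric $g^N_t$ induced by $\varphi^N_t$---namely $\inj(g^N_t) \geq K_2 t$ and $\|{\Rm}_{C^0} \leq K_3 t^{-2}$---are built into the construction in \cref{definition:joyce-karigiannis-N-t}. In the resolution region $\check{r} \leq 2t^{-4/5}$ the leading term of $g^N_t$ is induced by $\varphi^P_t$, which on each fibre of $\nu \to L$ is modelled on the $t$-scaled Eguchi--Hanson metric; this yields injectivity radius of order $t$ at the exceptional set and curvature of order $t^{-2}$. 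Outside the gluing region $g^N_t$ equals the smooth metric on $Y/\<\iota\>$ for which these bounds are uniform, and the transition annuli contribute only lower-order corrections.

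The analytic hypothesis asks for a closed $3$-form $\psi$ with $\d^*\varphi^N_t = \d^*\psi$ and the three bounds $\|{\psi}_{C^0} \leq K_1 t^{1/18}$, $\|{\psi}_{L^2} \leq K_1 t^{7/2+1/18}$, $\|{\psi}_{L^{14}} \leq K_1 t^{-1/2+1/18}$. A natural approach is to construct $\psi$ Hodge-theoretically from the torsion $3$-form $*_{\varphi^N_t}\psi^N_t - \varphi^N_t$, so that bounding $\psi$ reduces to bounding this torsion. Using \cref{proposition:Theta-estimates} to expand $\Theta(\varphi^N_t)$, the torsion is estimated region by region according to \cref{equation:varphi-on-joyce-karigiannis,equation:psi-on-joyce-karigiannis}. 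On the bulk $\check{r} \geq 2t^{-4/5}$, where $\varphi^N_t$ and $\psi^N_t$ coincide with the torsion-free data on $Y/\<\iota\>$, the torsion vanishes. On the innermost region $\check{r} \leq t^{-1/9}$ the corrections $\alpha_{0,2}, \alpha_{2,0}, \beta_{0,3}, \beta_{2,1}$ from \cref{theorem:jk-correction-theorem} were engineered precisely to cancel the leading-order mismatch between $\tilde{\varphi}^P_t$ and $\tilde{\psi}^P_t$, leaving a quadratic error controlled by \cref{proposition:xi-estimates,proposition:v-estimates}.

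The main obstacle is the two transition annuli $\{t^{-1/9} \leq \check{r} \leq 2 t^{-1/9}\}$ and $\{t^{-4/5} \leq \check{r} \leq 2t^{-4/5}\}$, where the cutoff $a(t^{1/9}\check{r})$, respectively $a(t^{4/5}\check{r})$, produces derivative errors of size $t^{1/9}$ and $t^{4/5}$ acting on the tail terms $\tau_{1,1}, v_{1,2}$ and $\Upsilon_*\eta, \Upsilon_*\zeta$ from \cref{proposition:jk-normal-bundle-choices,proposition:xi-estimates,proposition:v-estimates}. Balancing these derivative losses against the pointwise asymptotics of those tail terms, the worst $C^0$ contribution to the torsion scales as $t^{1/18}$; the $L^2$ and $L^{14}$ bounds then follow by integrating the pointwise estimate against the annular volume, which itself shrinks as a positive power of $t$. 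Once the three $\psi$-bounds are verified with $\alpha = 1/18$, \cref{theorem:original-torsion-free-existence-theorem} delivers $\eta_t$ with $\|{\tilde{\varphi}^N_t - \varphi^N_t}_{C^0} \leq K_4 t^{1/18}$ and $[\tilde{\varphi}^N_t] = [\varphi^N_t]$ in $H^3(N_t,\R)$, completing the proof.
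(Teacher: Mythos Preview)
The paper does not give its own proof of this statement; it is quoted as Theorem~6.4 of \cite{Joyce2017}, preceded only by the one-line remark that the $3$-form $\varphi^N_t - *_{\varphi^N_t}\psi^N_t$ satisfies the hypotheses of \cref{theorem:original-torsion-free-existence-theorem}. Your proposal is a correct fleshing-out of exactly that strategy: apply the existence theorem with $\alpha = 1/18$, verify the injectivity-radius and curvature bounds from the scaled Eguchi--Hanson model, and estimate the torsion region by region using \cref{proposition:jk-normal-bundle-choices,proposition:xi-estimates,proposition:v-estimates,theorem:jk-correction-theorem}, with the dominant $t^{1/18}$ contribution coming from the cutoff derivatives on the inner transition annulus. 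One small point worth sharpening: the candidate $\varphi^N_t - *_{\varphi^N_t}\psi^N_t$ is not itself closed, so your phrase ``construct $\psi$ Hodge-theoretically'' is hiding a genuine step (in \cite{Joyce2017} this is handled by a careful choice of $\psi$ compatible with the precise form of the existence theorem used there); but the underlying size estimates you describe are the substance of the argument and are correct.
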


\section{The Gluing Construction for Instantons}
\label{section:the-gluing-construction-for-instantons}

We now turn to constructing $G_2$-instantons on the resolutions of $Y/\<\iota\>$ explained in the previous chapter.
As is common in gluing constructions in differential geometry, we obtain this result by following the three step procedure of (1) constructing an approximate solution, (2) estimating the linearisation of the equation to be solved, (3) perturbing the approximate solution to a genuine solution.

This method was first employed in \cite{Taubes1982} for the construction of anti-self-dual connections over $4$-manifolds.
A similar but simpler proof of the same results is given in \cite[Section 7.2]{Donaldson1990}.

In \cref{subsection:the-pregluing-construction} we explain how a section $s$ of a moduli bundle gives rise to a connection $s(A)$ on the bundle of Eguchi-Hanson spaces $P$ from \cref{equation:definition-of-P}, cf. \cref{proposition:section-gives-rise-to-connection}.
If the topological compatibility condition \cref{assumption:gluing-topological-compatibility} is satisfied, we can glue $s(A)$ to a $G_2$-instanton $\theta$ on the orbifold $Y/\<\iota\>$.
The resulting connection $A_t$ is close to being a $G_2$-instanton and in \cref{subsection:pregluing-estimate} we will quantify this.
We will see that this error is small in a suitable norm if $s$ satisfies a first order partial differential equation, the Fueter equation.
\Cref{subsection:linear-estimates} is the difficult part of the analysis, where we give an estimate for the inverse of the linearised instanton operator.
In \cref{subsection:quadratic-estimate,subsection:deforming-to-genuine-solutions} we complete the argument and construct the perturbation that turns the approximate solution from before into a genuine solution to the $G_2$-instanton equation.

Throughout we will use the notation from the previous chapter.
That is, $Y$ is a $G_2$-manifold with $G_2$-involution $\iota:Y \rightarrow Y$, and $N_t$ is the resolution of $Y/\< \iota \>$.
The resolution $N_t$ is obtained by gluing in the Eguchi-Hanson bundle $P$ over the singular locus $L=\fix (\iota)$.
On $P$ we have the $G_2$-structures $\varphi^P_t$ and $\tilde{\varphi}^P_t$, and on $N_t$ we have the $G_2$-structure $\varphi^N_t$ with small torsion and the torsion-free $G_2$-structure $\tilde{\varphi}^N_t$.
In the case that $N_t$ is a resolution of $T^7/\Gamma$, we also defined the $G_2$-structures $\varphi^t$ and $\tilde{\varphi}^t$.
These two will also be denoted by $\varphi^N_t$ and $\tilde{\varphi}^N_t$ respectively and the special case of $T^7/\Gamma$ will need no special treatment most of the time.
The exception is the pre-gluing estimate for resolutions of $T^7/\Gamma$, \cref{corollary:pregluing-estimate-on-T7}, which is better than in the general case.

In the case of resolutions of $T^7/\Gamma$, our main result is \cref{theorem:instanton-existence3}, in the general case it is \cref{theorem:instanton-existence}.
We will use both theorems in \cref{section:examples} to construct new examples of $G_2$-instantons on the resolution of $T^7/\Gamma$ and the resolution of $(T^3 \times \text{K3})/\mathbb{Z}^2_2$.

\subsection{The Pregluing Construction}
\label{subsection:the-pregluing-construction}

\subsubsection{Moduli Bundles of ASD-Instantons}

Let $\pi: E_0 \rightarrow Y/\< \iota \>$ be an orbifold $G$-bundle with connection $\theta$, i.e. a $G$-bundle with connection over $Y$ together with a lift $\hat{\iota}$ of $\iota$ such that $\hat{\iota}^2=\Id$ and such that $\hat{\iota}^* \theta= \theta$.
As before, $\fix(\iota)=L$ and we now set $E_\infty = E_0|_L$, which is a $G$-bundle with $\mathbb{Z}_2$-action, and $A_\infty = \theta|_{E_\infty}$.
For each connected component of $L$ choose a framed moduli space of ASD instantons $M$ on a bundle $E$ over Eguchi-Hanson space ${\XEH}$, cf. \cref{subsubsection:gauge-theory-on-ale}.
The homomorphism $\rho: \Z_2 \rightarrow G$ used in the definition of $M$ defines a $\Z_2$ left action on $G$.
We then ask for $E_0$ and $M$ to be compatible in the following sense:

\begin{assumption}
\label{assumption:gluing-topological-compatibility}
 For all $l \in L$ there exists an isomorphism of manifolds with $G$ right action and $\Z_2$ left action $\phi: E_\infty|_l \rightarrow G$.
\end{assumption}

\begin{proposition}
 Let $G_\rho \subset G$ be the stabiliser of $\rho$ as in \cref{equation:ALE-moduli-definitions}.
 Then there exists a $G_\rho$-reduction $\check{E}$ of $E_\infty$ such that $A_\infty$ reduces to $\check{E}$.
\end{proposition}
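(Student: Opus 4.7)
The plan is to construct $\check{E}$ fibrewise as the preimage of $G_\rho$ under the pointwise isomorphisms from \cref{assumption:gluing-topological-compatibility} and then upgrade this to a smooth subbundle by choosing local frames in which $\hat{\iota}$ acts by right multiplication by the fixed element $\rho(-1)$.

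First I would work locally. Pick a contractible open $U \subset L$ and a smooth section $s : U \to E_\infty$. Because $\hat{\iota}$ is $G$-equivariant and covers the identity on $L$, there exists a unique smooth $f : U \to G$ with $\hat{\iota}(s(l)) = s(l) \cdot f(l)$, and the relation $\hat{\iota}^2 = \Id$ forces $f(l)^2 = e$. At every $l \in U$ the map $\mathbb{Z}_2 \to G$, $-1 \mapsto f(l)$, is a representation; \cref{assumption:gluing-topological-compatibility} implies that it is conjugate to $\rho$ pointwise. Applying \cref{corollary:rep-variety-components-come-from-conjugation} (already used in \cref{assumption:trivialisation-at-infinity}) to $f$, one obtains a smooth $\sigma : U \to G$ with $\sigma(l) f(l) \sigma(l)^{-1} = \rho(-1)$ for all $l \in U$. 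Setting $s'(l) := s(l) \cdot \sigma(l)^{-1}$, a direct computation using right $G$-equivariance of $\hat{\iota}$ gives $\hat{\iota}(s'(l)) = s'(l) \cdot \rho(-1)$.

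Next I would define $\check{E}|_U := \{s'(l) \cdot h : l \in U,\ h \in G_\rho\}$ and verify consistency. If $s''$ is another local frame with $\hat{\iota}(s''(l)) = s''(l) \cdot \rho(-1)$, then writing $s''(l) = s'(l) \cdot h(l)$ the $\hat{\iota}$-equivariance forces $h(l)^{-1} \rho(-1) h(l) = \rho(-1)$, i.e.\ $h(l) \in G_\rho$. Thus the transition functions between any two such preferred frames are $G_\rho$-valued, so the local pieces patch together into a globally defined smooth principal $G_\rho$-subbundle $\check{E} \subset E_\infty$, which is by construction a $G_\rho$-reduction.

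Finally I would check the reduction of the connection. In a preferred frame $s'$ as above, let $A := (s')^* A_\infty \in \Omega^1(U, \mathfrak{g})$. The hypothesis $\hat{\iota}^* \theta = \theta$ gives
\[
 (s')^* A_\infty \;=\; (\hat{\iota} \circ s')^* A_\infty \;=\; (s' \cdot \rho(-1))^* A_\infty \;=\; \Ad_{\rho(-1)^{-1}}\!\bigl((s')^* A_\infty\bigr),
\]
so $A$ takes values in the $\Ad_{\rho(-1)}$-invariant subspace of $\mathfrak{g}$, which is precisely $\mathfrak{g}_\rho = \Lie(G_\rho)$. Hence $A_\infty$ reduces to $\check{E}$.

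The only genuine obstacle is upgrading the pointwise content of \cref{assumption:gluing-topological-compatibility} to a smooth local trivialisation in which $\hat{\iota}$ acts by a fixed group element; this is exactly the role played by the locally-constant-conjugacy-class statement \cref{corollary:rep-variety-components-come-from-conjugation}. Everything else is formal manipulation with right-equivariant bundle maps.
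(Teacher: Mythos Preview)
Your proof is correct and follows essentially the same idea as the paper, but the paper takes a more economical route: it defines $\check{E}$ globally in one line as
\[
 \check{E} := \{ u \in E_\infty : u \cdot \rho(-1) = \hat{\iota}(u) \},
\]
then uses \cref{assumption:gluing-topological-compatibility} only pointwise to check that each fibre $\check{E}|_l$ is a $G_\rho$-torsor. Your local construction produces exactly the same subset (a frame $s'$ with $\hat{\iota}(s') = s'\cdot\rho(-1)$ is precisely a local section of this set), so the invocation of \cref{corollary:rep-variety-components-come-from-conjugation} to build smooth preferred frames, and the subsequent check that transition functions are $G_\rho$-valued, are not needed: smoothness and $G_\rho$-invariance of $\check{E}$ are immediate from the global description. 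The argument that $A_\infty$ reduces is identical in both: apply $\hat{\iota}^*\theta=\theta$ to a curve in $\check{E}$ and conclude that the connection form is $\Ad_{\rho(-1)}$-invariant, hence $\Lie(G_\rho)$-valued.
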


\begin{proof}
 As before, let $\rho: \Z_2 \rightarrow G$ be the representation that defines the asymptotic limit for connections in $M$.
 Define
 \begin{align}
 \label{equation:reduction-of-orbifold-bundle}
  \check{E}
  :=
  \{
   u \in E_\infty :
   u \cdot \rho(-1)
   =  
   \hat{\iota}(u)
  \}. 
 \end{align}
 To see that this is a $G_\rho$-bundle, fix $l \in L$ and let $\phi: E_\infty|_l \rightarrow G$ be the isomorphism from \cref{assumption:gluing-topological-compatibility}.
 Then $u \in \check{E}|_l$ if and only if $\phi(u) \in G_\rho$.
 
 It remains to check that $A_\infty$ reduces to $\check{E}$.
 To this end, let $\gamma:I \rightarrow \check{E}$ be a curve.
 Then
 \begin{align}
 \label{equation:connection-reduces}
 \begin{split}
  A_\infty(\dot{\gamma}(0))
  &=
  \hat{\iota}^* A_\infty(\dot{\gamma}(0))
  \\
  &=
  A_\infty
  \left(
  \ddt (\gamma(t) \cdot \rho(-1))|_{t=0}
  \right)
  \\
  &=
  \Ad(\rho(-1))
  \left(
  A_\infty(\dot{\gamma}(0))
  \right).
 \end{split}
 \end{align}
 In the first step we used $\hat{\iota}^* \theta= \theta$.
 The second step is the defining property of $\check{E}$ from \cref{equation:reduction-of-orbifold-bundle}.
 Now, for any subgroup $H \subset G$ we define the \emph{centraliser of $H$ in $G$} as $Z(H)=\{g \in G: hgh^{-1}=g $ for all $ h \in H\}$.
 Then
 \begin{align}
 \label{equation:lie-algebra-of-centraliser}
  \Lie(Z(H))
  =
  \mathfrak{z}_H
  :=
  \{
   V \in \mathfrak{g}:
   \Ad(h)V=V
   \text{ for all }
   h \in H
  \}.
 \end{align}
 This equality holds, because for $X=\dot{g}(0) \in \Lie(Z(H))$, where $g:I \rightarrow Z(H)$ is a curve, we have that
 $\Ad(h)X=\ddt (hg(t)h^{-1})|_{t=0}=X$ by definition of $Z(H)$.
 Conversely, for $V \in \mathfrak{z}_H$, we have that $g(t):=\exp(tV)$ is a curve with $\dot{g}(0)=V$ in $Z(H)$, because
 $hg(t)h^{-1}=\exp(t \cdot \Ad(h) V)=\exp(tV)=g(t)$
 for all $h \in H$.
 
 Therefore, by \cref{equation:connection-reduces,equation:lie-algebra-of-centraliser}, we have that $A_\infty|_{\check{E}}$ takes values in $\Lie(G_\rho)$, i.e. restricts to a connection on $\check{E}$.
\end{proof}

\begin{definition}
 Define the \emph{moduli bundle}
 \begin{align}
 \label{equation:moduli-bundle}
  \mathfrak{M}
  :=
  (\Fr \times \check{E}) \times_{\U(2) \times G_\rho} M
 \end{align}
 and its \emph{vertical tangent space}
 \begin{align}
  V\mathfrak{M}
  :=
  (\Fr \times \check{E}) \times_{\U(2) \times G_\rho} TM.
 \end{align}
\end{definition}

\subsubsection{Fueter Sections and Connections on Bundles over $P$}

In the following, we will study sections $s:L \rightarrow \mathfrak{M}$.
It will turn out that such a section $s$ gives rise to a connection that is almost a $G_2$-instanton, if it satisfies a first order differential equation, the \emph{Fueter equation} (cf. \cref{definition:fueter-section}).

\begin{definition}
\label{definition:moduli-bundle-cov-derivative}
 Let $s:L \rightarrow \mathfrak{M}$ be a section.
 We define its covariant derivative $\nabla s \in \Omega^1(L, V \mathfrak{M})$ as follows:
 for $x \in L$, $X \in T_x L$ let $f \in C^\infty(\Fr)$ and $e \in C^\infty(\check{E})$ be local sections around $x$ such that $A ^{\text{LC}} \d f(x)=0$ and $A_\infty(\d e(X))=0$, where $A ^{\text{LC}}$ is the Levi-Civita connection of $Y$.
 Let $B: L \rightarrow M$ be a local section around $x$ such that $s=[(f,e),B]$.
 Then
 \[
  \nabla _X(s)
  =
  [
   (f,e), dB(X)
  ]
  \in
  (\Fr \times \check{E}) \times_{\U(2) \times G_\rho} TM.
 \]
\end{definition}

\begin{definition}
 Let $s:L \rightarrow \mathfrak{M}$ be a section.
 Fix $x \in L$ and let $e_1$, $e_2$, $e_3$ be an orthonormal basis of $T_x L$.
 The $G_2$-structure on $Y$ defines a map
 \begin{align}
 \begin{split}
  \Lambda^1 (T_x L) & \rightarrow
  \Lambda^+ P_x
  \\
  e_i & \mapsto \check{\omega}_i|_{P_x} =: \omega_i.
  \end{split}
 \end{align}
 The $\omega_i$ correspond to complex structures on $P_x$ and therefore, by \cref{theorem:moduli-hyperkaehler-structure}, to elements $I_i \in \End(V_x \mathfrak{M})$.
 We thus have a Clifford multiplication given by
 \begin{align}
  \begin{split}
   e_i \cdot : V_x \mathfrak{M} & \rightarrow V_x \mathfrak{M}
   \\
   a & \mapsto I_i (a).
  \end{split}
 \end{align}
\end{definition}

\begin{definition}
\label{definition:fueter-section}
 A section $s:L \rightarrow \mathfrak{M}$ is called a \emph{Fueter section} if
 \begin{align}
 \label{equation:fueter-equation}
  \mathfrak{F} s
  :=
  \sum_{i=1}^3
  e_i \cdot \nabla_{e_i} s
  =0
  \in \Gamma (s^* V \mathfrak{M}),
 \end{align}
 where $(e_1,e_2,e_3)$ is a local orthonormal frame.
\end{definition}

The following is an extension of \cite[Theorem 1]{Donaldson2011}:

\begin{theorem}
\label{proposition:section-gives-rise-to-connection}
 Denote by $\tilde{\mathbb{P}} \rightarrow M \times {\XEHh}$ the tautological bundle with tautological connection $\tilde{\mathbb{A}}$ over $M \times {\XEH}$ from \cref{proposition:tautological-bundle-properties} and assume that there exists a lift of the $\U(2)$-action on $M \times {\XEHh}$ to $\tilde{\mathbb{P}}$ preserving $\tilde{\mathbb{A}}$.
 Let $s \in C^\infty(\mathfrak{M})$, and denote $\hat{P} = \Fr \times_{\U(2)} {\XEHh}$.
 Then there exists a natural $G$-bundle $s(E)$ over $\hat{P}$ with connection $s(A)\in \mathscr{A}(s(E)|_P)$ together with an isomorphism of $G$-bundles with $\mathbb{Z}_2$ left action $\Phi: s(E)|_{\hat{P} \setminus P} \rightarrow E_\infty$ so that:
 
 \begin{enumerate}[label=(\roman*)]
  \item
  The pair $(s(E),s(A))|_{P_x}$ represents $s(x)$.
  That means:
  if $s(x)=[(f,e), [B]]$ for $f \in \Fr_x$, $e \in (E_0)_x$, $[B] \in M$, then under the diffeomorphism ${\XEH} \simeq P_x$, $y \mapsto [f,y]$, the $G$-bundles $s(E)|_{P_x}$ and $E$ are isomorphic, and $B$ and $s(A)$ are gauge equivalent.
  
  \item
  The map $\Phi$ identifies $A_\infty$ and $s(A)$ over the fibre at infinity, i.e. $\Phi^*A_\infty = s(A)|_{\hat{P} \setminus P}$.
  
  \item
  The connection $s(A)|_P$ is a $(\psi^P_t)^*$-instanton if and only if $s$ is a Fueter section.
  Here, $s(A)$ being a $(\psi^P_t)^*$-instanton means that $F_{s(A)} \wedge(\psi^P_t)^*=0$, where $(\psi^P_t)^*=\sum \sigma^*(e^i) \wedge \sigma^*(e^j) \wedge \check{\omega}^k$ and $\sigma:P \rightarrow L$ is the projection of the bundle $P$ (cf. \cref{equation:definition-of-P}).
 \end{enumerate}
\end{theorem}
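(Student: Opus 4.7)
The plan is to construct $s(E)$ and $s(A)$ by pulling back the tautological pair $(\tilde{\mathbb{P}}, \tilde{\mathbb{A}})$ from \cref{proposition:tautological-bundle-properties} along a map induced by $s$ and then taking a quotient by $\U(2) \times G_\rho$. The section $s$ lifts to a $\U(2) \times G_\rho$-equivariant map $\tilde{s}: \Fr \times_L \check{E} \to M$; pairing it with the identity on $\XEHh$ gives
\[
\hat{s}: (\Fr \times_L \check{E}) \times \XEHh \to M \times \XEHh, \qquad (f, e, y) \mapsto (\tilde{s}(f,e), y).
\]
By \cref{assumption:u2-lifts-to-universal-bundle} and the natural $G_\rho$-action on $\tilde{\mathbb{P}}$ from \cref{proposition:tautological-bundle-properties}, the pulled-back bundle $\hat{s}^*\tilde{\mathbb{P}}$ carries a $\U(2) \times G_\rho$-action commuting with $G$. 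Taking the quotient, and using that $G_\rho$ acts trivially on $\XEHh$, yields $s(E) := \hat{s}^*\tilde{\mathbb{P}}/(\U(2) \times G_\rho) \to \hat{P}$. The $\U(2) \times G_\rho$-invariance of $\tilde{\mathbb{A}}$ allows me to descend $\hat{s}^*\tilde{\mathbb{A}}$ to a connection $s(A) \in \mathscr{A}(s(E)|_P)$.

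Properties (i) and (ii) then reduce to unwinding the universal properties of \cref{proposition:tautological-bundle-properties}. For (i), fixing $(f,e) \in (\Fr \times_L \check{E})_x$ realises $P_x$ inside $\hat{P}$ as the image of $\{(f,e)\} \times \XEHh$, and the restriction of $\hat{s}^*\tilde{\mathbb{P}}$ to this slice is $\tilde{\mathbb{P}}|_{\{s(x)\} \times \XEHh}$, which is isomorphic to $E$ with $\tilde{\mathbb{A}}$ gauge equivalent to the chosen representative $B$ of $s(x)$. For (ii), the canonical trivialisation $\underline{\phi}$ over $M \times \{\infty\}$ and the relation $\underline{\phi}^* A_{\mathrm{product}} = \tilde{\mathbb{A}}|_{M \times \{\infty\}}$ descend, using the defining equation \eqref{equation:reduction-of-orbifold-bundle} of $\check{E}$, to the required isomorphism $\Phi$ intertwining $s(A)|_{\hat{P}\setminus P}$ with $A_\infty$.

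The main content is (iii). The connection $\tilde{\nabla}^\nu$ on $\nu$ induces a horizontal--vertical splitting $T\hat{P} = H \oplus V$ and a corresponding bi-grading on forms, in which $(\psi^P_t)^*$ is of pure type $(2,2)$ and $F_{s(A)} = F^{2,0} + F^{1,1} + F^{0,2}$. Of the three potential contributions to $F_{s(A)} \wedge (\psi^P_t)^*$, the piece with $F^{2,0}$ has horizontal degree $4 > \dim L$ and vanishes, and the piece with $F^{0,2}$ vanishes because the universal property forces $F^{0,2}|_{P_x}$ to be the curvature of an ASD instanton, hence anti-self-dual, while each $\check{\omega}^k|_{P_x}$ is self-dual. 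Thus everything is controlled by the $F^{1,1}$-term.

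The final step translates this $F^{1,1}$-term into the Fueter equation. From the bi-grading description in \cref{proposition:tautological-bundle-properties}, in a local orthonormal frame $(e_1,e_2,e_3)$ of $L$ one reads off
\[
F^{1,1}_{s(A)} = \sum_{i=1}^3 \sigma^*(e^i) \wedge \alpha_i, \qquad \alpha_i := \nabla_{e_i} s \in \Omega^1\bigl(P_x;\, \Ad s(E)|_{P_x}\bigr),
\]
so that a direct computation gives
\[
F_{s(A)} \wedge (\psi^P_t)^* = -\sigma^*(e^1 \wedge e^2 \wedge e^3) \wedge \sum_{i=1}^3 \alpha_i \wedge \check{\omega}^i.
\]
Applied fibrewise, the hyperkähler identity $\beta \wedge \check{\omega}^i = \pm *(I_i \beta)$ for $\beta \in \Omega^1(P_x)$ identifies the vanishing of the inner sum with $\sum_i I_i \alpha_i = 0$, which is precisely $\mathfrak{F}s = 0$. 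The main obstacle I expect is the careful bookkeeping of equivariance, quotients and sign conventions so that the identification $F^{1,1} \leftrightarrow \nabla s$ and the hyperkähler identity combine to give the Fueter equation exactly as stated in \cref{definition:fueter-section}, rather than a sign-twisted cousin.
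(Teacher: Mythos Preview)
Your approach matches the paper's: construct $s(E)$ and $s(A)$ by pulling back the tautological pair along a map induced by $s$, read off (i) and (ii) from the universal properties in \cref{proposition:tautological-bundle-properties}, and for (iii) decompose $F_{s(A)}$ by bi-degree, discard the $(2,0)$ and $(0,2)$ pieces for the reasons you give, and identify the $(1,1)$-piece with the Fueter operator via the hyperk\"ahler identity.

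There is one genuine gap in your construction of $s(A)$. Invariance of $\tilde{\mathbb{A}}$ under $\U(2)\times G_\rho$ does not by itself let you descend $\hat{s}^*\tilde{\mathbb{A}}$ to the quotient: for a connection form to be basic with respect to the $\U(2)\times G_\rho$-action you also need it to annihilate the fundamental vector fields of that action on $\hat{s}^*\tilde{\mathbb{P}}$, and nothing in \cref{assumption:u2-lifts-to-universal-bundle} guarantees this. The paper resolves this by bringing in the connections $\nabla^{\text{LC}}$ on $\Fr$ and $A_\infty$ on $\check{E}$ and \emph{defining} the induced connection $\alpha$ on the associated bundle by $\alpha([(U,V),T])=\tilde{\mathbb{A}}(T)$ for $U,V$ horizontal with respect to those connections; the pullback of $\alpha$ along $(s\times\Id)$ is then $s(A)$. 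This is exactly the step you flagged as your ``main obstacle'': without these auxiliary connections the assertion $F^{1,1}_{s(A)}=\sum_i \sigma^*(e^i)\wedge\nabla_{e_i}s$ has no content, because $\nabla s$ in \cref{definition:moduli-bundle-cov-derivative} is itself defined using $\nabla^{\text{LC}}$ and $A_\infty$. Once you insert these connections into your construction of $s(A)$, your argument and the paper's coincide; the paper carries out the curvature computation $F_{s(A)}(e_i,Z)=\d B(e_i)(Z)$ explicitly from this definition of $\alpha$, which is the precise justification your sketch is missing.
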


\begin{proof}
\textbf{Construction of $s(E)$, $s(A)$, and $\Phi$:}
together with the connections $\nabla ^{\text{LC}}$ on $\Fr$ and $A_\infty$ on $\check{E}$, the connection $\tilde{\mathbb{A}}$ induces a connection $\alpha$ on the principal $G$-bundle $(\Fr \times \check{E}) \times_{\U(2) \times G_\rho} \tilde{\mathbb{P}} \rightarrow (\Fr \times \check{E}) \times_{\U(2) \times G_\rho} (M \times {\XEHh})$ via the formula
\begin{align}
\label{equation:definition-auxiliary-alpha-connection}
 \alpha ([(U, V),T])
 :=
 \tilde{\mathbb{A}}(T),
\end{align}
where $U \in T \Fr$, $V \in T \check{E}$ are horizontal vectors and $T \in T \tilde{\mathbb{P}}$.
By assumption, $\tilde{\mathbb{A}}$ is left-invariant, which makes the definition of $\alpha$ independent of the chosen representative.

Consider the map
\begin{align*}
 (s \times \Id): \hat{P} = \Fr \times_{\U(2)} {\XEHh}
 & \rightarrow
 (\Fr \times \check{E}) \times_{\U(2) \times G_\rho}
 (M \times {\XEHh})
 \\
 [f,y]
 &\mapsto
 [(f,e),(B,y)],
\end{align*}
where $s(\sigma(e))=[(f,e),B] \in \mathfrak{M}_{\pi(e)}$.
Then define
\[
s(E) := (s \times \Id)^*((\Fr \times \check{E}) \times_{\U(2) \times G_\rho} \tilde{\mathbb{P}}),
\quad
s(A) := (s \times \Id)^* \alpha
\]
and the trivialisation
$\underline{\phi}:
  \tilde{\mathbb{P}}|_{M^{\orb} \times \{ \infty \}}
  \rightarrow
  G \times M^{\orb}$
from \cref{proposition:tautological-bundle-properties} induces an isomorphism 
\begin{align}
\begin{split}
 \Phi: s(E)|_{\hat{P} \setminus P} 
 \quad\quad\quad\quad\quad\quad\quad\quad\quad\quad\quad\quad\quad\quad\quad
 &
 \\
 \simeq
 (s \times \Id|_{{\XEHh} \setminus {\XEH}})^*
 \left(
 (\Fr \times \check{E})
 \times_{\U(2) \times G_\rho}
 \tilde{\mathbb{P}}|_{M \times \{ \infty\} }
 \right)
 &\rightarrow 
 s^*
 \left(
 (\Fr \times \check{E})
 \times_{\U(2) \times G_\rho}
 G \times M
 \right)
 \\
 &
 \quad\quad
 \simeq
 \check{E} \times_{G_\rho} G
 \simeq
 E_\infty.
 \end{split}
\end{align}
The last point of \cref{proposition:tautological-bundle-properties} states that
$\underline{\phi}^*A_{\text{product}}
 =
 \tilde{\mathbb{A}}|_{M \times \{\infty\}}$
which implies that 
$\Phi^*A_\infty = s(A)|_{\hat{P} \setminus P}$.

\textbf{$s(A)$ is a $(\psi^P_t)^*$-instanton if and only if $s$ is a Fueter section:}
for easier notation, assume that the bundle $\Fr$ is trivial and $\nabla^{\text{LC}}$ is the product connection.
The proof of the general case works the same.
In this case, $L \times {\XEHh} = \hat{P}$ and $s(E) = (s \times \Id)^*(\check{E} \times_{G_\rho} \tilde{\mathbb{P}})$.
Then fix $(l,x) \in L \times {\XEHh} = \hat{P}$, an orthonormal basis $(e_1,e_2,e_3)$ of $T_lL$ and denote by $(e^1,e^2,e^3)$ its dual basis.
Around $l$, write $s(x)=[e,B]$ with the property that $\d e(V)$ is parallel for all $V \in T_l L$.
Then, for $Z \in T_x {\XEHh}$:
\begin{align}
\label{equation:mixed-curvature-computation}
\begin{split}
 F_{s(A)}(e_i, Z)
 &=
 \left( (s \times \Id)^* F_\alpha \right)
 (e_i, Z)
 \\
 &=
 F_\alpha
 \left(
  \left[ 
   \d e (e_i), (\d B(e_i), 0)
  \right],
  \left[
   \d e (e_i), (0, Z)
  \right]
 \right)
 \\
 &=
 F_{\tilde{\mathbb{A}}}
 (\d B(e_i), Z)
 \\
 &=
 \d B(e_i)(Z).
\end{split}
\end{align}
In the first step we used that the curvature of a pullback connection is the pullback of its curvature.
The third step is the definition of $\alpha$ from \cref{equation:definition-auxiliary-alpha-connection}, and in the last step we used the curvature properties of the tautological connection $\tilde{\mathbb{A}}$ from \cref{proposition:tautological-bundle-properties}.
As before, denote by $I_1, I_2, I_3$ the Hyperkähler triple of complex structures on ${\XEH}$ and $\omega_1,\omega_2,\omega_3$ the corresponding symplectic forms.
The Fueter condition from \cref{definition:fueter-section} for $s$ is equivalent to the following equation of elements in $\Omega^1({\XEH}, \Ad P)$:
\begin{align*}
 0 =
 \sum_{i=1}^3
 I_i (\d B(e_i))
 =
 \sum_{i=1}^3
 \omega_i 
 (\d B(e_i), \cdot)
 &=
 \sum_{i=1}^3
 \omega_i 
 (F_{s(A)}(e_i, \cdot), \cdot)
 \\
 &=
 *
 \left(
 \sum_{i=1}^3
 \omega_i 
 \wedge
 F_{s(A)}(e_i, \cdot)
 \right)
\end{align*}
where $*$ denotes the Hodge star on ${\XEH}$.
The first equality is the Fueter equation, the third equality is \cref{equation:mixed-curvature-computation}, and the second and fourth equality are linear algebra computations that can be computed in standard coordinates.

Applying $*$ to both sides gives
\[
 0 =
 \left(
 \sum_{i=1}^3
 \omega_i 
 \wedge
 F_{s(A)}(e_i, \cdot)
 \right)
\]
which in turn implies
\[
 0=
 \sum_{i,j,k \text{ cyclic}}
 \omega_i \wedge e^j \wedge e^k \wedge [F_{s(A)}]_{(1,1)},
\]
where $[F_{s(A)}]_{(1,1)}$ denotes the $(1,1)$-component of $F_{s(A)}$ according to the bi-grading on $\Lambda^* T^*(L \times {\XEH})$ induced by $T^* (L \times {\XEH})=T^*L \oplus T^* {\XEH}$.
On the other hand, $[F_{s(A)}]_{(0,2)} \in \Omega^2({\XEH}, \Ad P)$ is anti-self-dual by \cref{proposition:tautological-bundle-properties}, thus
\[
 0=
 \sum_{i,j,k \text{ cyclic}}
 \omega_i \wedge e^j \wedge e^k \wedge [F_{s(A)}]_{(0,2)}.
\]
Last, $0=
 \sum_{i,j,k \text{ cyclic}}
 \omega_i \wedge e^j \wedge e^k \wedge [F_{s(A)}]_{(2,0)}$,
because this is a sum of forms of type $(2,4)$ which must vanish as $L$ has dimension $3$.
\end{proof}

\subsubsection{Gluing Connections over $P$ and $Y/\<\iota\>$}

Throughout the rest of the article, we will use weighted the Hölder norms from \cite[Section 6]{Walpuski2017}:

\begin{definition}
\label{definition:hoelder-norms-for-gauge-with-cases}
For $\delta, l \in \R$, let
\begin{align}
 \begin{split}
  w_{l,\delta;t}
  : N_t & \rightarrow \R
  \\
  x & \mapsto
  \begin{cases}
   t^\delta (t+ r_t(x))^{-l-\delta},
   & \text{ if } r_t(x) \leq \sqrt{t}
   \\
   r_t^{-l+\delta}
   & \text{ if } r_t(x) > \sqrt{t}
  \end{cases}
 \end{split}
\end{align}
and by slight abuse of notation use the same symbol to denote $w_{l,\delta;t}:N_t \times N_t \rightarrow \R$ given by $w_{l,\delta;t}(x,y)=\min
  \{
   w(x),w(y)
  \}$.
 Let $U \subset N_t$.
 For $\alpha \in (0,1)$, $\beta \in \R$, $k \in \N$, and $f$ a tensor field on $N_t$ define the \emph{weighted Hölder norm of $f$} via
 \begin{align*}
  \left[
   f
  \right]
  _{C^{0,\alpha}_{l,\delta;t}(U)}
  &:=
  \sup
  _{
   \substack{
   x,y \in U, \; x \neq y \\
   d (x,y) \leq t+\min \{r_t(x),r_t(y)\}
   }
  }
  w_{l-\alpha,\delta;t}(x,y)
  \frac{\left| f(x)-f(y) \right|}{d(x,y)^\alpha},
  \\
  \|{
   f
  }_{L^\infty_{l,\delta;t}(U)}
  &:=
  \|{
   w_{l,\delta;t}
   f
  }_{L^\infty(U)},
  \\
  \|{
   f
  }_{C^{k,\alpha}_{l,\delta;t}(U)}
  &:=
  \sum_{j=0}^k
  \|{
   \nabla^j f
  }_{L^\infty_{l-j,\delta;t}(U)}
  +
  \left[
   \nabla^j f
  \right]_{C^{0,\alpha}_{l-j,\delta;t}(U)}.
 \end{align*}
 The term $f(x)-f(y)$ in the first line denotes the difference between $f(x)$ and the parallel transport of $f(y)$ to the fibre over $x$ along one of the shortest geodesics connecting $x$ and $y$.
 When $U$ is not specified, take $U=N_t$.
 We use the notation $\|{
   \cdot
  }_{C^{k,\alpha}_{l,\delta;t}(U),g}$
 for the weighted Hölder norm with respect to the metric $g$, i.e. use parallel transport with respect to the Levi-Civita connection induced by the metric $g$, and measure vectors in $g$.
If no metric $g$ is specified, we take $g=g^N_t$.
For our analysis, we need $\delta \in (-1,0)$, $\alpha \in (0,1)$, $\alpha \ll |\delta|$, for example $\delta=-1/64$, $\alpha=1/256$ will work.
\end{definition}

\begin{remark}
Note that $w_{l,\delta;t}$ is not continuous, but that does not cause any problems.
\end{remark}

\begin{proposition}[Proposition 6.2 in \cite{Walpuski2017}]
\label{proposition:norm-weights-basic-estimate}
 If $(f,g) \mapsto f \cdot g$ is a bilinear form satisfying $\left| f \cdot g \right| \leq \left| f \right| \left| g \right|$, then
 \begin{align*}
  \|{
   f \cdot g
  }_{C^{k,\alpha}_{l_1 + l_2, \delta_1 + \delta_2; t}}
  \leq
  \|{
   f
  }_{C^{k,\alpha}_{l_1, \delta_1 ; t}}
  \cdot
  \|{
   g
  }_{C^{k,\alpha}_{l_2, \delta_2; t}}.
 \end{align*}
\end{proposition}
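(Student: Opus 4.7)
The plan is to reduce the inequality to the pointwise multiplicativity of the weight function $w_{l,\delta;t}$ together with the standard Leibniz rule. The starting observation is that on each of the two regions appearing in the piecewise definition of $w_{l,\delta;t}$, the weight is purely a power of $(t+r_t)$ or of $r_t$, so the exponents add and
\[
 w_{l_1+l_2,\,\delta_1+\delta_2;\,t}(x)
 \;=\;
 w_{l_1,\delta_1;t}(x)\cdot w_{l_2,\delta_2;t}(x)
\]
holds for every $x\in N_t$ (the case $r_t(x)\le\sqrt{t}$ and the case $r_t(x)>\sqrt{t}$ are both checked directly).

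Once this is granted, the $L^{\infty}$ part of the norm is immediate:
\[
 \|w_{l_1+l_2,\,\delta_1+\delta_2;\,t}\,fg\|_{L^{\infty}}
 \;=\;
 \|(w_{l_1,\delta_1;t}f)\cdot(w_{l_2,\delta_2;t}g)\|_{L^{\infty}}
 \;\le\;
 \|f\|_{L^{\infty}_{l_1,\delta_1;t}}\,\|g\|_{L^{\infty}_{l_2,\delta_2;t}}.
\]
For the Hölder seminorm I would apply Leibniz, interpreting $f(x)-f(y)$ via parallel transport,
\[
 (fg)(x)-(fg)(y)
 =
 f(x)\bigl(g(x)-g(y)\bigr)+\bigl(f(x)-f(y)\bigr)g(y),
\]
and estimate the two summands separately. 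For the first one, pointwise multiplicativity gives
\[
 w_{l_1+l_2-\alpha,\,\delta_1+\delta_2;\,t}(x,y)
 \;\le\;
 w_{l_1,\delta_1;t}(x)\cdot w_{l_2-\alpha,\delta_2;t}(x,y),
\]
after which $|f(x)|$ is absorbed into $\|f\|_{L^{\infty}_{l_1,\delta_1;t}}$ and $|g(x)-g(y)|/d(x,y)^{\alpha}$ is absorbed into $[g]_{C^{0,\alpha}_{l_2,\delta_2;t}}$. The second term is handled symmetrically. For higher $k$ one invokes the generalised Leibniz rule $\nabla^{j}(fg)=\sum_{i=0}^{j}\binom{j}{i}\nabla^{i}f\otimes\nabla^{j-i}g$ and repeats the same two-term argument for each $i$, then sums.

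The only point requiring real care, and the one I expect to be the main obstacle, is the step where the pointwise weight $w_{l_1,\delta_1;t}(x)$ is exchanged for the two-point weight $w_{l_1,\delta_1;t}(x,y)=\min\{w_{l_1,\delta_1;t}(x),w_{l_1,\delta_1;t}(y)\}$ inside the Hölder seminorm. Here one uses crucially the admissibility constraint $d(x,y)\le t+\min\{r_t(x),r_t(y)\}$ that is built into the definition of $[\,\cdot\,]_{C^{0,\alpha}_{l,\delta;t}}$: under this condition $r_t(x)$ and $r_t(y)$ are comparable up to a fixed multiplicative constant (they lie in the same ``scale''), hence $w_{l,\delta;t}(x)$ and $w_{l,\delta;t}(y)$ are comparable, and the minimum differs from either endpoint value only by a constant that can be absorbed. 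Spelling this comparability out carefully (separately on the two regions $r_t\le\sqrt{t}$ and $r_t>\sqrt{t}$, and on the boundary between them) is the one routine but nontrivial piece of the argument.
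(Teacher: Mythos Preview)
The paper does not give its own proof of this proposition; it is quoted directly as Proposition~6.2 in \cite{Walpuski2017} and used as a black box throughout. So there is no in-paper argument to compare against.

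Your sketch is the standard route and is essentially correct: pointwise multiplicativity $w_{l_1+l_2,\delta_1+\delta_2;t}=w_{l_1,\delta_1;t}\cdot w_{l_2,\delta_2;t}$ holds exactly on both regions, the $L^\infty$ part follows immediately, and the H\"older part goes via the Leibniz splitting plus comparability of $w(x)$ and $w(y)$ under the locality constraint $d(x,y)\le t+\min\{r_t(x),r_t(y)\}$. You have correctly isolated the one nontrivial step. Note, however, that this comparability argument produces a universal constant in front of the right-hand side; the inequality $\min(AB,CD)\le A\cdot\min(B,D)$ is false in general, so one cannot avoid invoking $w(x)\sim w(y)$. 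The statement as written suppresses this constant (as is common for such multiplicativity lemmas), so your proof matches the intended content even if not the literal inequality with constant~$1$.
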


We have shown that $s(A)$ is a $(\psi^P_t)^*$-instanton.
It is, however, in general not a $G_2$-instanton with respect to $\psi^P_t$ because of the $(2,0)$ part of its curvature.
We will later estimate the failure of $s(A)$ of being a $G_2$-instanton.

\begin{definition}
\label{definition:pullback-connection-from-L}
 For $l \in L$ choose a neighbourhood $l \in V_l \subset L$ over which $E_\infty$ is trivial.
 Use the identification $\Phi: s(E)|_{\hat{P} \setminus P} \rightarrow E_\infty$ and parallel transport with respect to $s(A)$ to get a trivialisation of $s(E)$ around $\hat{P}|_{V_l} \setminus P|_{V_l}$, say on a neighbourhood $U_l \subset \hat{P}$.
 Using this, we can view the pullback of $s(A)|_{\hat{P} \setminus P}$ under the projection $U_l \rightarrow V_l$ as a connection $\overline{A_\infty}^l \in \mathscr{A}(s(E)|_{U_l})$.
 This definition is independent of the choice of $l \in L$, and therefore defines some connection $\overline{A_\infty} \in \mathscr{A}(s(E)|_U)$, where $U \subset \hat{P}$ is a neighbourhood of the points at infinity $\hat{P} \setminus P$.
\end{definition}

Now is the first time we cite a non-trivial result from \cite{Walpuski2017}.
Therein, Fueter sections into a moduli bundle of ASD-instantons on $\R^4$ were considered, while in this chapter ASD-instantons on ${\XEH}$ are considered.
At some points this changes the analysis, and these results are reproved in this new setting in the coming sections.
At some points, results carry over without having to change the proof.
The following proposition is the first such result:

\begin{proposition}[Proposition 7.4 in \cite{Walpuski2017}]
\label{proposition:estimates-for-s(A)}
 There exists $c>0$ such that for all $t \in (0,T)$:
 \begin{align}
  \|{
   [F_{s(A)}]_{2,0}
   -
   F_{\overline{A_\infty}}
  }_{C^{0,\alpha}_{-2,0;t}(U),g^P_t}
  &\leq
  ct^2,
  \\
  \|{
   [F_{s(A)}]_{1,1}
  }_{C^{0,\alpha}_{-3,0;t}(U),g^P_t}
  &\leq
  ct^2, \text{ and }
  \\
  \|{
   [F_{s(A)}]_{0,2}
  }_{C^{0,\alpha}_{-4,0;t}(U),g^P_t}
  &\leq
  ct^2.
 \end{align}
\end{proposition}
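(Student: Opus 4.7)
The strategy is to decompose $F_{s(A)}$ according to the bi-grading induced by $T^*(L\times\hat{X}_{\text{EH}})=T^*L\oplus T^*\hat{X}_{\text{EH}}$ and estimate each piece using the curvature properties of the tautological connection $\tilde{\mathbb{A}}$ recorded in \cref{proposition:tautological-bundle-properties}, combined with the decay estimates for $L^2$-harmonic $1$-forms in $\Ker\delta_A$ from \cref{proposition:ale-asd-kernel-cokernel}. The prefactor $t^2$ in each estimate will come from the conformal rescaling by $t$ of the Eguchi--Hanson directions built into $\varphi^P_t$ (and hence $g^P_t$).

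First I would handle the fibrewise (i.e.\ $(0,2)$) component. On each fibre $P_x\simeq \hat{X}_{\text{EH}}$, the connection $s(A)|_{P_x}$ represents the class $s(x)\in M$ and is thus an ASD instanton of finite energy. By \cref{theorem:moduli-space-dimension} together with the standard decay for ASD curvatures on ALE spaces, $|F_{s(A)|_{P_x}}|$ decays as $r^{-4}$ in the intrinsic Eguchi--Hanson radius $r$. Since $g^P_t$ scales the fibre metric by $t^2$, a 2-form has its pointwise norm multiplied by $t^{-2}$ while the radius is rescaled as $\check{r}=tr$. One gets $|[F_{s(A)}]_{0,2}|_{g^P_t} = O(t^2\check{r}^{-4})$ near infinity and $O(t^{-2})$ near the exceptional sphere, which in both regimes gives a $C^0_{-4,0;t}$-bound of order $t^2$; the Hölder seminorm is controlled by differentiating the same expression and the weight exponent $-4-\alpha$ absorbs the extra factor of length.

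Next, for the $(1,1)$ component I would use the second item of \cref{proposition:tautological-bundle-properties}, which identifies $F_{\tilde{\mathbb{A}}}^{1,1}|_{([A],x)}$ with the evaluation of the tangent vector $a\in T_{[A]}M\cong H^1_{A,-2}$ at $x$. Pulling back by $(s\times\mathrm{Id})$ shows that $[F_{s(A)}]_{1,1}$ is, fibrewise over $x\in L$, given by $\nabla s$ acting through the horizontal/vertical splitting. By \cref{proposition:ale-asd-kernel-cokernel}(1) every such harmonic representative satisfies $|a|=O(r^{-3})$, and rescaling by $t$ (one fibre index) produces $O(t^2\check{r}^{-3})$ far from the exceptional set, with $O(t^{-1})$ behaviour on the core. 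Multiplying by $w_{-3,0;t}$ gives the claimed $t^2$ bound; the Hölder part is handled in the same way using $\nabla^2 s$ plus the decay rates differentiated once. I would also use \cref{proposition:norm-weights-basic-estimate} to package the products cleanly.

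The hardest step is the $(2,0)$ estimate. Here I would work in the trivialisation of $s(E)$ over $U\subset \hat{P}$ provided by \cref{definition:pullback-connection-from-L}: by construction, $\overline{A_\infty}$ is the pullback of $s(A)|_{\hat{P}\setminus P}=\Phi^*A_\infty$ under the projection $U\to L$, and the difference $s(A)-\overline{A_\infty}$ is, to leading order, exactly the harmonic tangent vector supplied by $\nabla s$. The $(2,0)$-curvature of $\overline{A_\infty}$ coincides with the pullback of $F_{A_\infty}$, and I would write
\[
 [F_{s(A)}]_{2,0}-F_{\overline{A_\infty}}
 =
 \d_{\overline{A_\infty}}(a)^{2,0}+[a\wedge a]^{2,0}
\]
where $a=s(A)-\overline{A_\infty}$. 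The first term involves horizontal derivatives of the harmonic $1$-form $a$, which decay as $O(r^{-4})$ intrinsically and thus as $O(t^2\check{r}^{-2})$ in the rescaled metric, since $a$ itself contributes $O(t^2\check{r}^{-3})$ and the horizontal derivative costs one power of the (unscaled) $L$-metric but contributes no further $t$. The quadratic term is of order $(t^2\check{r}^{-3})^2$, which is strictly better than required. Together these yield the weighted $C^{0,\alpha}_{-2,0;t}$-bound of order $t^2$. The main obstruction in this step is bookkeeping: one must verify that the various corrections introduced by the splitting $T\nu=V\oplus H$, the connection $\breve{H}$ chosen on $P$, and the choice of trivialisation are all compatible with the weighted Hölder norm and do not spoil the leading scaling. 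Once this is in place, the three estimates follow uniformly for $t\in(0,T)$ with a single constant $c$.
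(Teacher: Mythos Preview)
The paper does not actually supply a proof of this proposition: it is quoted verbatim from \cite[Proposition~7.4]{Walpuski2017}, with the remark that the argument there (written for ASD instantons on $\R^4$) carries over unchanged to the Eguchi--Hanson setting. Your sketch is essentially a reconstruction of that argument---decay of the three curvature components combined with the $t$-rescaling built into $g^P_t$---and is the right strategy.

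One point deserves correction. In your treatment of the $(2,0)$ piece you write that $a=s(A)-\overline{A_\infty}$ ``is, to leading order, exactly the harmonic tangent vector supplied by $\nabla s$''. That is not what $a$ is: on each fibre $P_x$, the difference $a$ is the tail of the ASD instanton $s(A)|_{P_x}$ as it approaches its flat limit at infinity (this is the $\sigma$ of \cref{proposition:pregluing-summands-norms}), and has nothing directly to do with elements of $\Ker\delta_A$ or with $\nabla s$. The harmonic tangent vectors enter only in the $(1,1)$ component, via \cref{proposition:tautological-bundle-properties}. Fortunately both objects decay like $(1+\check r)^{-3}$, so your subsequent bookkeeping for $[d_{\overline{A_\infty}}a]_{2,0}$ still yields the required $C^{0,\alpha}_{-2,0;t}$ bound of order $t^2$; the claimed intermediate decay $O(r^{-4})$ is not needed (and indeed horizontal differentiation of $a$ only preserves the $r^{-3}$ rate, it does not improve it), but $r^{-3}$ against the weight $(t+r_t)^2=t^2(1+\check r)^2$ already gives $ct^2$. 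With that identification fixed, your plan matches the cited proof.
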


\begin{proposition}
\label{definition:approximate-solution}
\label{proposition:approximate-solution}
 Let $E_0 \rightarrow Y / \< \iota \>$ be an orbifold bundle with connection $\theta$ satisfying \cref{assumption:gluing-topological-compatibility}, $L= \fix(\iota)$, and $s:L \rightarrow \mathfrak{M}$ be a Fueter section.
 
 Then there exists a $G$-bundle $E_t$ over $N_t$ and a connection $A_t$ on $E_t$ such that 
 \begin{align*}
  (E_t,A_t)|_{N_t \setminus \Upsilon_t(U_{t^{-1}R})} 
  &\simeq (E_0,\theta)|_{N_t \setminus \Upsilon_t(U_{t^{-1}R})}
  \quad
  \text{ and}
  \\
  (E_t,A_t)|_{\Upsilon_t(U_1)} 
  &\simeq (s(E),s(A))|_{\rho^{-1}(U_1)}.
 \end{align*}
\end{proposition}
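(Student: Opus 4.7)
The plan is a straightforward gluing construction in parallel with the construction of $N_t$ itself. By \cref{definition:joyce-karigiannis-N-t}, $N_t$ is obtained by identifying $\rho^{-1}(U_{t^{-1}R}/\{\pm 1\}) \subset P$ with its image in $(Y \setminus L)/\<\iota\>$ via $\Upsilon_t \circ \rho$. I would build $E_t$ by performing the analogous gluing of $s(E)$ (on the $P$-side) with $E_0$ (on the $Y$-side), and define $A_t$ by a cutoff interpolation of $s(A)$ and $\theta$ in the overlap.

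The first step is to produce a bundle identification on the overlap $\rho^{-1}((U_{t^{-1}R}/\{\pm 1\}) \setminus \{0\})$. On the $P$-side, the trivialisation from \cref{definition:pullback-connection-from-L} yields an isomorphism $s(E)|_{U} \simeq \sigma^* E_\infty|_U$ on a neighbourhood $U \subset \hat P$ of $\hat P \setminus P$, under which $s(A)$ pulls back to $\overline{A_\infty}$. On the $Y$-side, use $\theta$-parallel transport along geodesics normal to $L$ to trivialise $E_0$ over a tubular neighbourhood of $L$, producing an isomorphism $E_0 \simeq \pi^* E_\infty$, where $\pi: \nu/\{\pm 1\} \to L$ is the bundle projection. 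Since $\sigma = \pi \circ \rho$, composing these two trivialisations gives the desired bundle isomorphism on the overlap annulus. The topological compatibility of \cref{assumption:gluing-topological-compatibility}, together with the $G_\rho$-reduction $\check E$ and the identification of $\Phi$ as an isomorphism of $G$-bundles with $\Z_2$ left action established in \cref{proposition:section-gives-rise-to-connection}, guarantees that the gluing descends to the quotient by $\<\iota\>$ and defines a genuine $G$-bundle $E_t \to N_t$.

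For the connection, set $A_t := \theta$ on the region $N_t \setminus \Upsilon_t(U_{t^{-1}R})$ (where no gluing occurs) and $A_t := s(A)$ on $\rho^{-1}(U_1) \subset P \subset N_t$. In the intervening annular region both $s(A)$ and $\theta$ are defined on $E_t$ after applying the bundle identification above, and both lie in a common affine space over $\overline{A_\infty}$, so setting $a := s(A) - \overline{A_\infty}$ and $b := \theta - \overline{A_\infty}$ in $\Omega^1(\Ad E_t)$ and choosing a smooth cutoff $\chi:N_t \to [0,1]$ which equals $1$ near $\rho^{-1}(U_1)$ and vanishes outside $\rho^{-1}(U_{t^{-1}R/2}/\{\pm 1\})$, the connection
\[
 A_t := \overline{A_\infty} + \chi a + (1-\chi) b
\]
patches smoothly with the two prescribed values on the ends of the annulus, giving a global connection with the stated restrictions.

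The construction is essentially formal at this stage; the only mild obstacle is verifying that the bundle identification is globally well-defined and $\<\iota\>$-equivariant, which is precisely the content of \cref{assumption:gluing-topological-compatibility} together with the $\U(2)\times G_\rho$-equivariance built into the definition of $\mathfrak{M}$. All quantitative estimates showing that $A_t$ is close to being a $G_2$-instanton are deferred to the subsequent pregluing estimate (\cref{proposition:estimates-for-s(A)} and its consequences in \cref{subsection:pregluing-estimate}).
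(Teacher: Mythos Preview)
Your construction is correct for the proposition as stated, and the bundle gluing via $\Phi$ extended by radial parallel transport is exactly what the paper does. The one substantive difference is in the connection: you interpolate with a \emph{single} cutoff $\chi$, so that $A_t$ transitions directly from $s(A)$ to $\theta$ across one annular region. The paper instead uses \emph{two} independent cutoffs $\chi^-_t$ (supported where $t \leq r_t \leq 2t$) and $\chi^+_t$ (supported where $R/2 \leq r_t \leq R$), and sets
\[
A_t = \underline{A_\infty} + \chi^-_t\, b + \chi^+_t\, \sigma
\]
on $t \leq r_t \leq R$, where $\sigma = s(A) - \overline{A_\infty}$ and $b = \theta - \underline{A_\infty}$. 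This produces a large middle zone $2t \leq r_t \leq R/2$ on which $A_t = \underline{A_\infty} + b + \sigma$ exactly, i.e.\ both correction terms are present simultaneously rather than being traded off against each other.

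That extra structure is not needed for the bare existence statement here, but it is exactly what drives the three-region decomposition in the pregluing estimate (\cref{proposition:pregluing-estimate-without-tilde}): on the inner annulus one controls the $b$-terms, on the outer annulus the $\sigma$-terms, and on the middle zone the curvature is $F_\theta + [\sigma,b] + F_{s(A)} - F_{A_\infty}$ with no cutoff derivatives at all. Your single-cutoff version would require the cutoff derivative to hit either $\sigma$ or $b$ somewhere in an annulus whose inner and outer scales differ by a factor of $t^{-1}$, and the resulting error would not obviously be as small. So your argument suffices for this proposition, but you should adopt the paper's two-scale interpolation if you intend to feed $A_t$ into the later estimates.
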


\begin{proof}

\textbf{Construction of $E_t$:}
 By \cref{proposition:section-gives-rise-to-connection} we have a bundle isomorphism $\Phi: E_\infty \rightarrow s(E)|_{\hat{P} \setminus P}$.
 Let $U \subset \hat{P}$ be a neighbourhood of $\hat{P} \setminus P$.
 Now use radial parallel transport with respect to $\theta$ on $E_0$ and parallel transport with respect to $\overline{A_\infty}$ (the pullback of $\Phi^* A_\infty$ to a neighbourhood of $\hat{P} \setminus P$ defined in \cref{proposition:estimates-for-s(A)}) to extend $\Phi$ to the neighbourhood $\Upsilon(U_R) \subset Y$ of $L$, denote the extension by $\Psi$.
 The conditions $\hat{\iota}^* \theta=\theta$ and \cref{assumption:gluing-topological-compatibility} ensure that this is well-defined.
 
 As in \cref{subsection:g2-structures-on-the-resolution} we use the symbol $\rho$ to denote the map $\rho: P \rightarrow \nu/\{\pm 1\}$ induced by the blowup map ${\XEH} \rightarrow \C^2/\{\pm 1\}$ on Eguchi-Hanson space.
 For small enough $t$ we have that the overlap $O:=U_{t^{-1}R} \cap \rho(U)$ is non-empty. 
 Use this to define $E_t$ by gluing together $E_0$ and $s(E)$ via $\Psi$ over $O$, i.e.
 \begin{align}
  E_t
  :=
  E_0|_{Y \setminus \Upsilon_t(U_{t^{-1}R} \setminus O)}
  \cup
  s(E)|_{\rho^{-1}(U_{t^{-1}R})}
  /_\sim,
 \end{align}
 where $v \sim \Psi(v)$ for $v \in E_0|_{\Upsilon_t(O)}$. 
 
 \textbf{Construction of $A_t$:} 
 Let $\chi^-_t: N_t \rightarrow [0, 1]$ and $\chi^+_t: N_t \rightarrow [0, 1]$ be rescalings of a smooth cut-off function such that 
 \begin{align}
  \label{equation:cut-off-functions}
  \begin{split}
  \chi^-_t |_{\{r_t \leq t \} } \equiv 0
  \text{ and }
  \chi^-_t |_{\{r_t \geq 2t \} } \equiv 1,
  \\
  \chi^+_t |_{\{r_t \leq R/2 \} } \equiv 1
  \text{ and }
  \chi^+_t |_{\{r_t \leq R \} } \equiv 0.
  \end{split}
 \end{align}
 
 \begin{figure}
 \begin{center}
  \input{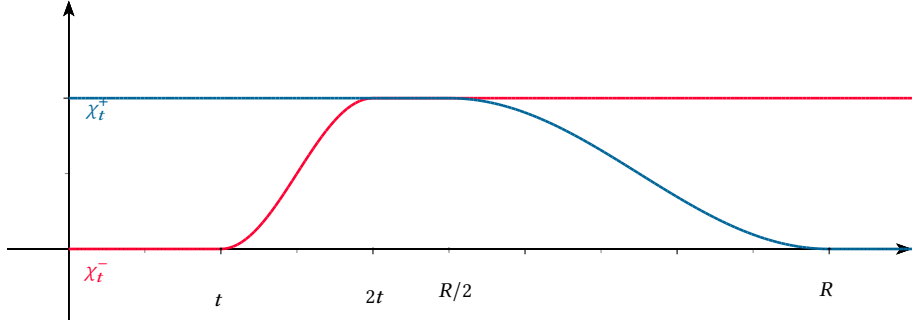}
 \end{center}
 \caption{The cut-off functions $\chi^-_t$ and $\chi^+_t$ from \cref{equation:cut-off-functions} for small $t$.}
 \end{figure}

 Similar to the definition of $\overline{A_\infty} \in \mathscr{A}\left( s(E)|_{U} \right)$, define $\underline{A_\infty} \in \mathscr{A}\left( E_0|_{\Upsilon_t \left( U_{t^{-1}R} \right) } \right)$ by pulling back $A_\infty \in \mathscr{A}(E_\infty)$.
 By definition of $E_t$, we have that $\overline{A_\infty}$ and $\underline{A_\infty}$ are both connections on $E_t$.
 The map $\Phi$ identifies $A_\infty$ and $s(A)$ by the second point of \cref{proposition:section-gives-rise-to-connection}.
 Because $\Psi$ is an extension of $\Phi$ defined by radial parallel transport, and $\overline{A_\infty}$ and $\underline{A_\infty}$ are also defined via radial parallel transport, we have that $\overline{A_\infty}=\underline{A_\infty}$ as connections on $E_t|_{\Upsilon_t(O)}$.
 
 We then have $\sigma \in \Omega^1(\Ad s(E)|_O)$ and $b \in \Omega^1(\Ad E_0|_O)$ such that
 \begin{align}
 \label{equation:definition-of-sigma-and-b}
  s(A)
  =
  \overline{A_\infty}+\sigma,
  \;\;\;\;\;\;\;
  \theta
  =
  \underline{A_\infty}+b
  \;\;\;
  \text{ over }
  O.
 \end{align}
 Define then
 \begin{align}
  A_t
  :=
  \begin{cases}
   s(A) & \text{on } r_t < t
   \\
   \underline{A_\infty}+
   \chi^-_t b+
   \chi^+_t \sigma
   & \text{on } t \leq r_t \leq R
   \\
   \theta & \text{on } r_t > R.
  \end{cases}
 \end{align} 
\end{proof}

The following proposition follows immediately from \cref{definition:hoelder-norms-for-gauge-with-cases}.

\begin{proposition}
\label{proposition:cut-off-norms}
 Let $\chi^-_t$ and $\chi^+_t$ as in \cref{equation:cut-off-functions}.
 Then there exists $c>0$ such that for all $t \in (0,T)$:
 \begin{align*}
  \|{
   \chi^-_t
  }_{C^{0,\alpha}_{0,0;t}}
  +
  \|{
   \d \chi^-_t
  }_{C^{0,\alpha}_{-1,0;t}}  
  &\leq
  c,
  \\
  \|{
   \chi^+_t
  }_{C^{0,\alpha}_{0,0;t}}
  +
  \|{
   \d \chi^+_t
  }_{C^{0,\alpha}_{0,0;t}}
  &\leq
  c.
 \end{align*}
\end{proposition}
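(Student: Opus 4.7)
The plan is to observe that both cut-offs are rescalings of fixed smooth profiles: writing $\chi^-_t(x)$ as a function of $r_t(x)/t$ and $\chi^+_t(x)$ as a function of $r_t(x)$, one immediately obtains the pointwise bounds $|\nabla^k \chi^-_t| \leq c_k t^{-k}$ on its support $\{t \leq r_t \leq 2t\}$ and $|\nabla^k \chi^+_t| \leq c_k$ on its support $\{r_t \leq R\}$, with $c_k$ independent of $t$. The remainder of the proof is bookkeeping: insert these estimates into \cref{definition:hoelder-norms-for-gauge-with-cases} and exploit that the weight $w_{l,\delta;t}$ simplifies drastically on the support of each cut-off.

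I would handle $\chi^-_t$ first. For $t$ small its support $\{t \leq r_t \leq 2t\}$ sits inside $\{r_t \leq \sqrt{t}\}$, so on it $w_{l,\delta;t} = t^\delta(t+r_t)^{-l-\delta}$ is comparable to $t^{-l}$. The $L^\infty$ parts are then immediate: $\|{\chi^-_t}_{L^\infty_{0,0;t}} \leq 1$ and $\|{\d\chi^-_t}_{L^\infty_{-1,0;t}} \lesssim t \cdot t^{-1} = 1$. For the Hölder seminorms I would combine the Lipschitz estimates $|\chi^-_t(x)-\chi^-_t(y)| \leq c t^{-1} d(x,y)$ and $|\d\chi^-_t(x)-\d\chi^-_t(y)| \leq c t^{-2} d(x,y)$ with the distance restriction built into the definition: whenever at least one of $x,y$ lies in $\supp(\chi^-_t)$ (the only pairs giving a nonzero contribution) one has $\min(r_t(x), r_t(y)) \leq 2t$, forcing $d(x,y) \leq 3t$. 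Together with the fact that the weight $(t+r_t)^\alpha \sim t^\alpha$ on the support, this produces the desired $t$-independent constants.

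The $\chi^+_t$ estimate is strictly easier. Since $\chi^+_t$ is independent of $t$, all its derivatives are uniformly bounded; its support $\{r_t \leq R\}$ is fixed, and on it the weights $w_{0,0;t}$ and $w_{-\alpha,0;t}$ are bounded above by a constant (either by the first-case formula if $r_t \leq \sqrt{t}$ or by $R^\alpha$ otherwise). Hence both weighted norms reduce to the ordinary $C^{1,\alpha}$ norm of a fixed smooth compactly supported function, which is finite.

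The only real point of care is the Hölder estimate for $\chi^-_t$, where one must handle pairs $(x,y)$ that straddle the boundary of $\supp(\chi^-_t)$ so that $\chi^-_t$ vanishes at one of the two points; here the distance cut-off $d(x,y) \leq t + \min(r_t(x), r_t(y))$ is precisely what keeps $d(x,y)=O(t)$, which in turn is what makes the scaling of the Lipschitz constant $t^{-1}$ against the weight $t^\alpha$ and the factor $d(x,y)^{1-\alpha} \leq (3t)^{1-\alpha}$ come out balanced. Everything else is an elementary manipulation.
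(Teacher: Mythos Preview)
Your argument is correct and is precisely the computation the paper has in mind: the paper does not give a proof at all, stating only that the proposition ``follows immediately from \cref{definition:hoelder-norms-for-gauge-with-cases}'', and your write-up is exactly the verification of that claim. The one place worth tightening is your phrase ``its support $\{t \leq r_t \leq 2t\}$'' for $\chi^-_t$ itself (as opposed to $\d\chi^-_t$), since $\chi^-_t \equiv 1$ on $\{r_t \geq 2t\}$; but your actual argument only uses that the \emph{difference} $\chi^-_t(x)-\chi^-_t(y)$ is nonzero, which does force $\min(r_t(x),r_t(y)) \leq 2t$ and hence $d(x,y) \leq 3t$ via the built-in distance restriction, so the reasoning goes through unchanged.
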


The following proposition is proved like \cref{proposition:estimates-for-s(A)} with the proof from \cite{Walpuski2017} directly carrying over to this setting.
The estimate for $\sigma$ holds because of the fast decay of the curvature of ASD connections on ALE spaces, see \cref{proposition:moduli-independent-of-regularity}.
The estimate for $b$ holds because over $L$ we have that $\underline{A_\infty}=\theta$, not just in the $L$-direction.
That is because $\underline{A_\infty}$ is defined using parallel transport with respect to $\theta$ as in \cref{definition:pullback-connection-from-L}.

\begin{proposition}[Proposition 7.6 in \cite{Walpuski2017}]
\label{proposition:pregluing-summands-norms}
 Let $\sigma \in \Omega^1(\Ad s(E)|_O)$ and $b \in \Omega^1(\Ad E_0|_O)$ as defined in \cref{equation:definition-of-sigma-and-b}.
 Then there exists $c>0$ such that for all $t \in (0,T)$:
 \begin{align*}
  \|{
   \sigma
  }_{C^{0,\alpha}_{-3,0;t}(t \leq r_t \leq R)}
  +
  \|{
   \d _{\overline{A_\infty}}
   \sigma
  }_{C^{0,\alpha}_{-4,0;t}(t \leq r_t \leq R)}
  &\leq
  c t^2
  \text{ and}
  \\
  \|{
   b
  }_{C^{0,\alpha}_{1,0;t}(r_t \leq R)}
  +
  \|{
   \d_{\underline{A_\infty}} b
  }_{C^{0,\alpha}_{0,0;t}(r_t \leq R)}
  &\leq
  c t^2.
 \end{align*}
\end{proposition}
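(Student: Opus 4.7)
The plan is to prove the two estimates separately, each exploiting the specific way its reference connection ($\overline{A_\infty}$ or $\underline{A_\infty}$) was constructed.

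\textbf{Estimate for $\sigma$.} By the definition of $\overline{A_\infty}$ via parallel transport with respect to $s(A)$, the difference $\sigma = s(A) - \overline{A_\infty}$ is exactly the obstruction to $s(A)$ being pulled back from the sphere at infinity of the Eguchi--Hanson fibre. Restricted to each fibre $P_x \simeq \XEH$, $s(A)|_{P_x}$ is a finite-energy framed ASD instanton, so \cref{proposition:ale-asd-kernel-cokernel} and the standard ALE decay of an ASD connection to its asymptotic holonomy yield
\[
 |\sigma|_{g^{\XEH}} = \mathcal{O}(\check{r}^{-3}),
 \qquad
 |d_{\overline{A_\infty}} \sigma|_{g^{\XEH}} = \mathcal{O}(\check{r}^{-4}).
\]
I would then transfer these bounds to $g^P_t$ using the relations $g^P_t|_{\text{fibre}} \simeq t^2 g^{\XEH}$ and $r_t = t\check{r}$, under which a $k$-tensor norm acquires a factor $t^{-k}$. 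This converts the above into $|\sigma|_{g^P_t} \lesssim t^2 r_t^{-3}$ and $|d_{\overline{A_\infty}}\sigma|_{g^P_t} \lesssim t^2 r_t^{-4}$, which are precisely the $L^\infty$ bounds corresponding to the weights $w_{-3,0;t}$ and $w_{-4,0;t}$. The Hölder seminorms are obtained by a standard covering of $\{t \leq r_t \leq R\}$ into balls of radius $\sim r_t$ on which, after rescaling, the local geometry is uniformly bounded; applying interior Schauder estimates for the ASD operator on each ball and reassembling gives the Hölder bound with the same $t^2$ factor.

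\textbf{Estimate for $b$.} The essential property of $\underline{A_\infty}$ is that it is obtained by radial parallel transport with respect to $\theta$, hence in the resulting radial gauge one has both $b|_L = 0$ (not merely in the $L$-direction — this is the point highlighted in the text just before this proposition) and $\iota_{\partial_r} b = 0$. These two properties force $b$ to vanish to high order in the normal coordinate $r_Y$ once one also uses the $\iota$-equivariance of $\theta$ (which kills the mixed $L\nu$-component of $F_\theta$ on $L$) and the $G_2$-instanton condition on $\theta$ (which constrains the leading Taylor coefficient of $b$ in the transverse normal directions through $F^{\nu\nu}_\theta|_L$). I would then combine this vanishing with the identification $r_Y \simeq r_t / t$ on the overlap $O$ and the appropriate scaling of 1-form norms between $g$ and $g^N_t$ in fibre directions to translate the Taylor bound on $b$ into the required $t^2$-bounds in the weighted norms $\|\cdot\|_{C^{0,\alpha}_{1,0;t}}$ and $\|\cdot\|_{C^{0,\alpha}_{0,0;t}}$.

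\textbf{Main obstacle.} The technically delicate point is the simultaneous bookkeeping of several rescalings — between the metrics $g$, $g^{\XEH}$, $g^P_t$, and $g^N_t$, and between the radial coordinates $r_Y$, $\check{r}$, and $r_t$ — while correctly transporting tensor norms between them. The Hölder seminorm parts, in particular, are most cleanly handled by rescaling each dyadic annulus $\{r_t \sim 2^{-k}R\}$ to a fixed geometry, applying classical Schauder/interpolation estimates in that fixed geometry, and then repackaging into the $t$-dependent weighted norms of \cref{definition:hoelder-norms-for-gauge-with-cases}.
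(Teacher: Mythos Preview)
Your $\sigma$ argument is essentially the paper's: the $O(\check r^{-3})$ decay of a framed finite-energy ASD connection on an ALE space to its flat limit, transferred via $r_t=t\check r$ and the $t^{-1}$ rescaling of $1$-form norms. The only slip is the citation --- \cref{proposition:ale-asd-kernel-cokernel} concerns elements of $\Ker\delta_A$, not the connection itself.

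Your $b$ argument contains two genuine errors. First, the scaling: on the overlap the gluing map $\Upsilon_t\circ\rho$ is approximately an isometry between $(N_t,g^N_t)$ and $(Y,g)$, so $r_Y\simeq r_t$, not $r_Y\simeq r_t/t$. Second, the claimed high-order vanishing of $b$ does not hold. The $\iota$-equivariance does force $F^{L\nu}_\theta|_L=0$, so the $L$-components of $b$ are indeed $O(r_Y^2)$; but in radial gauge the angular $\nu$-components of $b$ are driven by $F^{\nu\nu}_\theta|_L$, and the $G_2$-instanton condition does \emph{not} annihilate this --- it only ties the self-dual part of $F^{\nu\nu}_\theta|_L$ to $F^{LL}_\theta|_L$. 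Hence generically $|b|_g\sim r_Y$ and no better.

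Plugging the honest bound $|b|_g=O(r_Y)=O(r_t)$ into the weight $w_{1,0;t}\sim r_t^{-1}$ yields only $\|b\|_{C^{0,\alpha}_{1,0;t}}\le c$, not $ct^2$. This is exactly what the paper's one-line argument (``$\underline{A_\infty}=\theta$ over $L$'', hence $b|_L=0$) delivers, and it is all that the applications in \cref{proposition:pregluing-estimate-without-tilde} actually require. The extra $t^2$ in the displayed bound for $b$ appears to be a transcription slip from the $\sigma$ line; you were chasing a target that the sketched proof does not --- and cannot --- reach.
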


\subsection{Pregluing Estimate}
\label{subsection:pregluing-estimate}

The goal of this section is to derive an estimate for $F_{A_t} \wedge \tilde{\psi}^N_t$.
This is achieved in \cref{proposition:pregluing-estimate} in the general case, and in \cref{corollary:pregluing-estimate-on-T7} in the special case of resolutions of $T^7/\Gamma$.

\subsubsection{Estimates for the $G_2$-structures Involved}

We have constructed a connection $A_t$ that looks like $s(A)$ near $L$ and looks like $\theta$ far away from $L$.
The connection $s(A)$ is close to being a $G_2$-instanton with respect to $\psi^P_t$, so in order to control the pregluing error, we will need to estimate the difference $\psi^N_t-\varphi^P_t$.
This will be done in \cref{proposition:psi-n-psi-p-estimate,proposition:psi-N-psi-P-comparison-on-T7}.

On the other hand, $\theta$ is a $G_2$-instanton with respect to $\psi$, so we will need to estimate the difference $\psi^N_t-\psi$.
This will be done in \cref{proposition:psi-n-psi-estimate}.

\begin{proposition}
\label{proposition:psi-n-psi-p-estimate}
There exists $c>0$ independent of $t$ such that
\begin{align}
\label{equation:psi-n-psi-p-estimate}
  \|{
   \psi^N_t - \psi^P_t
  }_{C^{0,\alpha}_{2,0;t}(U_R)}
  \leq
  c t^{-1}.
 \end{align}
\end{proposition}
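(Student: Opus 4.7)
The plan is to carry out a region-by-region comparison, using the piecewise definition of $\psi^N_t$ in \cref{equation:psi-on-joyce-karigiannis} together with the asymptotic estimates already collected in \cref{proposition:v-estimates} and \cref{theorem:jk-correction-theorem}, and then to translate the pointwise bounds into the weighted Hölder norm via the relation $r_t \sim t\check{r}$ between the geodesic distance to $L$ in $g^N_t$ and the coordinate $\check{r}$ on $P$.

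First I would partition $U_R$ into the five regions that appear in \cref{equation:psi-on-joyce-karigiannis}: the inner region $\{\check{r}\le t^{-1/9}\}$, the two transition annuli $\{t^{-1/9}\le\check{r}\le 2t^{-1/9}\}$ and $\{t^{-4/5}\le\check{r}\le 2t^{-4/5}\}$, the intermediate region $\{2t^{-1/9}\le\check{r}\le t^{-4/5}\}$, and the outer piece where $\psi^N_t=\psi$. On the inner region, $\psi^N_t-\psi^P_t = t^2\chi_{1,3}+t^4\theta_{3,1}+t^4\theta_{2,2}+\d[t^2\beta_{0,3}+t^4\beta_{2,1}]$, and each summand satisfies the decay estimates in $g^P_t$ from \cref{proposition:v-estimates} and \cref{theorem:jk-correction-theorem}. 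On the intermediate region one additionally picks up $\d\Upsilon_*\zeta$, which is $\mathcal{O}(r^2)$ in $g^\nu_t$ by \cref{proposition:jk-normal-bundle-choices}. On the outer region I would write
\[
\psi^P_t-\psi = (\psi^P_t-\tilde{\psi}^P_t)+(\tilde{\psi}^P_t-\rho^*\tilde{\psi}^\nu_t)+\rho^*(\tilde{\psi}^\nu_t-\psi^\nu_t)+\rho^*(\psi^\nu_t-\Upsilon_t^*\psi),
\]
and bound the four terms by \cref{proposition:v-estimates}, \cref{equation:psi-P-tilde-psi-nu-tilde-difference}, \cref{equation:phi-nu-tilde-phi-nu-difference}, and the definition of $\psi^\nu$ via $\Upsilon$ from \cref{proposition:jk-normal-bundle-choices} respectively.

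To convert these bounds into the weighted norm $\|{\cdot}_{C^{0,\alpha}_{2,0;t}}$ I would use the scaling dictionary between $g^P_t$ and $g^N_t$ on $U_R$. The weight $w_{2,0;t}$ behaves like $(t+r_t)^{-2}\sim t^{-2}$ on $\{\check{r}\lesssim 1\}$ and like $r_t^{-2}\sim (t\check{r})^{-2}$ further out. The dominant contribution comes from $t^2\chi_{1,3}$ near $\check{r}\sim 1$, which is pointwise of size $\mathcal{O}(t)$ and therefore contributes $t^{-2}\cdot t=t^{-1}$ to the weighted $L^\infty$ part, exactly matching the claimed bound; for $\check{r}\gg 1$ the improved decay in $\check{r}$ from \cref{proposition:v-estimates} and \cref{theorem:jk-correction-theorem} more than offsets the growth of the weight. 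The Hölder semi-norm piece is obtained by interpolating the corresponding $C^0$ and $C^1$ bounds in the relevant ball of radius comparable to $t+r_t$, using \cref{proposition:norm-weights-basic-estimate} where convenient.

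The main obstacle I anticipate is the careful bookkeeping in the two transition annuli, where the cut-off $a(t^{1/9}\check{r})$ (resp.\ $a(t^{4/5}\check{r})$) produces a derivative whose size in $g^P_t$ scales inversely with the width of the annulus. One must check that the extra term $\d a\wedge\Upsilon_*\zeta$ (and the analogous term in the outer transition) still obeys the weighted bound: the smallness of $\Upsilon_*\zeta$, which is $\mathcal{O}(r^3)$ by \cref{proposition:jk-normal-bundle-choices}, combined with the precise width $\sim t^{-1/9}$ (resp.\ $\sim t^{-4/5}$) of the annulus in $\check{r}$-coordinates, is arranged to just give the correct power of $t$. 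Verifying this, together with the matching estimate on the matching $\tau_{1,1}$ and $v_{1,2}$ terms, is the technical heart of the argument; once the innermost and outer regions are controlled, the transition regions are handled by the same estimates with the added cut-off contribution absorbed into the constant $c$.
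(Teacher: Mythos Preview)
Your proposal is correct and follows essentially the same approach as the paper: a region-by-region computation of $\psi^N_t-\psi^P_t$ using the piecewise definition in \cref{equation:psi-on-joyce-karigiannis}, the estimates from \cref{proposition:jk-normal-bundle-choices,proposition:v-estimates,theorem:jk-correction-theorem}, and then multiplication by the weight $(t+r_t)^{-2}$. Your identification of $t^2\chi_{1,3}$ near $\check{r}\sim 1$ as the term saturating the $t^{-1}$ bound matches the paper exactly, and your four-term telescoping on the outer region is precisely what the paper does implicitly when it writes the difference there as $\d(\Upsilon_*\zeta)+t^2\chi_{1,3}+t^4\theta_{3,1}+t^4\theta_{2,2}-t^2\,\d v_{1,2}$.
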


\begin{proof}
We have
\begin{align}
\label{equation:psi-n-psi-p-pointwise-estimate}
\begin{split}
& \quad | \psi^N_t - \psi^P_t |_{g^N_t}
\\ & =
\begin{cases}
 \d \,[
  t^2 \beta_{0,3}+t^4 \beta_{2,1}+
  ]
  t^2 \chi_{1,3}+t^4 \theta_{3,1}+t^4\theta_{2,2}
  &
  \text{ if } \check{r} \leq t^{-1/9}
  \\
   \d \,[
  t^2 \beta_{0,3}+t^4 \beta_{2,1}+a(t^{1/9}\check{r}) \cdot \Upsilon_* \zeta]+
  t^2 \chi_{1,3}+t^4 \theta_{3,1}+t^4\theta_{2,2}
  &
  \text{ if } t^{-1/9} \leq \check{r} \leq 2t^{-1/9}
  \\
   \d \,[
  t^2 \beta_{0,3}+t^4 \beta_{2,1}+ \Upsilon_* \zeta]+
  t^2 \chi_{1,3}+t^4 \theta_{3,1}+t^4\theta_{2,2}
  &
  \text{ if } 2t^{-1/9} \leq \check{r} \leq t^{-4/5}
  \\
  \begin{matrix}
   \d \,[(1-a(t^{4/5}\check{r}))(
  t^2 \beta_{0,3}+t^4 \beta_{2,1})+ \Upsilon_* \zeta
  ]+
  \\
  t^2 \chi_{1,3}+t^4 \theta_{3,1}+t^4\theta_{2,2}
  -
  a(t^{4/5}\check{r}) t^2 v_{1,2}
  \end{matrix}
  &
  \text{ if } t^{-4/5} \leq \check{r} \leq 2t^{-4/5}
  \\
   \d \,(\Upsilon_* \zeta)+
  t^2 \chi_{1,3}+t^4 \theta_{3,1}+t^4\theta_{2,2}
  -t^2 v_{1,2}
  &
  \text{ if } 2t^{-4/5} \leq \check{r}
\end{cases}
\\ & =
\begin{cases}
  \mathcal{O}(t)
  &
  \text{ if } \check{r} \leq t
  \\
  \mathcal{O}(t \check{r}^{-3})
  &
  \text{ if } t \leq \check{r} \leq t^{-1/9}
  \\
  \mathcal{O}(t \check{r}^{-3}+t^2 \check{r}^2)
  &
  \text{ if } t^{-1/9} \leq \check{r} \leq 2t^{-1/9}
  \\
  \mathcal{O}(t \check{r}^{-3}+t^2 \check{r}^2)
  &
  \text{ if } 2t^{-1/9} \leq \check{r} \leq t^{-4/5}
  \\
  \mathcal{O}(t^2 \check{r}^2+\check{r}^{-4})
  &
  \text{ if } t^{-4/5} \leq \check{r} \leq 2t^{-4/5}
  \\
  \mathcal{O}(t^2 \check{r}^2+\check{r}^{-4})
  &
  \text{ if } 2t^{-4/5} \leq \check{r},
\end{cases}
\end{split}
\end{align}
where we used \cref{proposition:v-estimates,theorem:jk-correction-theorem,proposition:jk-normal-bundle-choices} in the second step.
Multiplying with the weight function $(t+r_t)^{-2}$ gives the estimate for the $L^\infty_{2,0;t}$-norm, and the estimate for the $C^{0,\alpha}_{2,0;t}$-norm is proved analogously.
\end{proof}

\begin{proposition}
\label{proposition:psi-N-psi-P-comparison-on-T7}
Let $N_t$ be the resolution of $T^7/\Gamma$ from \cref{section:torsion-free-structures-on-the-generalised-kummer-construction}.
There exists $c>0$ independent of $t$ such that
\begin{align}
\label{equation:psi-n-psi-p-estimate-on-T7}
  \|{
   \psi^N_t - \psi^P_t
  }_{C^{0,\alpha}_{2,0;t}(U_R)}
  \leq
  ct^4.
 \end{align}
\end{proposition}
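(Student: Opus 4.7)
The proof would exploit the substantial simplification that occurs in the $T^7/\Gamma$ setting relative to the general Joyce--Karigiannis construction. In \cref{section:torsion-free-structures-on-the-generalised-kummer-construction}, the $G_2$-structure $\varphi^N_t$ is defined by a straightforward cutoff between the product $G_2$-structure $\varphi^P_t$ on the Eguchi--Hanson fibers and the flat $G_2$-structure $\varphi_0$ on $T^7/\Gamma$; crucially, no auxiliary correction terms analogous to $\beta_{0,3}, \beta_{2,1}, \chi_{1,3}, \theta_{3,1}, \theta_{2,2}, v_{1,2}$ are needed, and $\psi^N_t = *_{\varphi^N_t}\varphi^N_t$.

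The first step is to locate the support of the difference. Because of the simple cutoff structure, there is a constant $c_0 > 0$ independent of $t$ such that $\varphi^N_t = \varphi^P_t$ on $\{r_t \leq c_0\}$, and consequently $\psi^N_t - \psi^P_t$ vanishes on that inner region. The support of $\psi^N_t - \psi^P_t$ is thus contained in $\{r_t \geq c_0\} \cap U_R$.

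The second step is a weight bound on the support. For $r_t \geq c_0$, the weight functions $w_{l,\delta;t}$ appearing in the definition of $\|{\cdot}_{C^{0,\alpha}_{2,0;t}}$ (with $(l,\delta) = (2,0)$ for the $L^\infty$ piece and $(l,\delta) = (2-\alpha,0)$ for the Hölder seminorm piece) fall into the branch $w = r_t^{-l+\delta}$ of \cref{definition:hoelder-norms-for-gauge-with-cases} and are uniformly bounded by $c_0^{-2}$ and $c_0^{-2+\alpha}$ respectively, independently of $t$.

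The third step is to apply the pregluing estimate \cref{equation:pregluing-estimate-G2-structure-T7} of \cite{Joyce1996}, which yields $\|{\varphi^N_t - \varphi^P_t}_{C^k} \leq ct^4$ for any $k$. Since $\Theta(\varphi) = *_\varphi\varphi$ depends smoothly on $\varphi$ with the derivative estimates of \cref{proposition:Theta-estimates}, and $\varphi^P_t$ is uniformly bounded in $C^k$ on the support of the difference, this transfers to $\|{\psi^N_t - \psi^P_t}_{C^k} \leq ct^4$. Multiplying by the uniformly bounded weight from Step 2 then gives the claimed $\|{\psi^N_t - \psi^P_t}_{C^{0,\alpha}_{2,0;t}(U_R)} \leq ct^4$. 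No significant obstacle arises: the only subtle point is the interaction of the weight function with the support, and this is controlled because the gluing in Joyce's original construction takes place at a fixed scale $r_t \sim 1$, away from the region where $w_{2,0;t}$ could blow up.
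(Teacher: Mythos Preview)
Your overall structure—locate the support of $\psi^N_t-\psi^P_t$ at $r_t\geq c_0$, observe the weight $w_{2,0;t}$ is uniformly bounded there, and then invoke an $O(t^4)$ bound on the difference—is correct and matches the paper's reasoning. However, Step~3 has a gap: you assert $\psi^N_t=*_{\varphi^N_t}\varphi^N_t$, but this is not how $\psi^N_t$ is defined. In the paper $\psi^N_t$ is the separately constructed closed $4$-form of \cref{equation:psi-on-joyce-karigiannis}; in the $T^7/\Gamma$ case it specialises (since $\chi_{1,3},\theta_{3,1},\theta_{2,2},\beta_{0,3},\beta_{2,1}$ and the $3$-form $\zeta$ all vanish) to a linear interpolation between $\psi^P_t$ and the flat $\psi$, using $t^2 v_{1,2}$ in the cutoff region. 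In that region $\psi^N_t\neq\Theta(\varphi^N_t)$ (the map $\Theta$ is nonlinear), so your transfer argument via $\Theta$ and \cref{proposition:Theta-estimates} bounds $\Theta(\varphi^N_t)-\Theta(\varphi^P_t)$, not $\psi^N_t-\psi^P_t$.

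The paper's route is to rerun the proof of \cref{proposition:psi-n-psi-p-estimate} with these simplifications: the only surviving contribution to $\psi^N_t-\psi^P_t$ comes from $t^2 v_{1,2}$, and since in the $T^7/\Gamma$ case the cutoff is at $\check r\sim t^{-1}$ (i.e.\ $r_t\sim\text{const}$, exactly as you noted), the estimate $|\nabla^k(t^2 v_{1,2})|_{g^P_t}=O(t^{1-k}\check r^{-3-k})$ from \cref{equation:v-1-2-estimate} evaluates to $O(t^4)$ there. Combined with your Step~2 this gives the claim. So your Steps~1 and~2 stand as written; replace Step~3 by this direct estimate on $v_{1,2}$ rather than the detour through $\Theta$.
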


\begin{proof}
This is a restatement of \cref{equation:pregluing-estimate-G2-structure-T7}.
In the case that $N_t$ is the resolution of $T^7/\Gamma$ we have that $\psi^P_t$ is closed, so the forms $t^2 \chi_{1,3}$, $t^4 \theta_{3,1}$, $t^4\theta_{2,2}$ from \cref{proposition:v-estimates} can be chosen to be $0$.
Furthermore, in this case $\tilde{\psi}^\nu_t=\Upsilon_t^*(* \varphi)$, so $\zeta=0$.
Using this and that the cut-off happens where $\zeta t^{-1}/2 \leq \check{r} \leq \zeta t^{-1}$, the same proof as for \cref{equation:psi-n-psi-p-estimate} shows the claim.
\end{proof}

The following estimate holds in general, not just for resolutions of $T^7/\Gamma$:

\begin{proposition}
\label{proposition:psi-n-psi-estimate}
There exists $c>0$ independent of $t$ such that
\begin{align}
\label{equation:psi-n-psi-estimate}
  \|{
   \psi^N_t - \psi
  }_{C^{0,\alpha}_{-2,0;t}(\{x \in N_t: \check{r}(x) \geq 1 \})}
  \leq
  c t^2.
 \end{align}
\end{proposition}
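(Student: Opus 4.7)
The strategy is directly parallel to the proof of \cref{proposition:psi-n-psi-p-estimate}: case-split $\{x \in N_t : \check{r}(x) \geq 1\}$ according to the piecewise definition of $\psi^N_t$ in \cref{equation:psi-on-joyce-karigiannis}, giving five subregions. In the outermost region ($\check{r} \geq 2t^{-4/5}$) we have $\psi^N_t = \psi$ identically, so the estimate is vacuous. In each remaining region I would rewrite $\psi^N_t$ and $\psi$ as forms on the same space (namely $P$, using the identification $\Upsilon_t \circ \rho$), cancel the common leading pieces, and then apply the pointwise asymptotic estimates from \cref{proposition:v-estimates,theorem:jk-correction-theorem,proposition:jk-normal-bundle-choices}.

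The two crucial identities for bringing $\psi^N_t$ and $\psi$ to a common form are \cref{equation:psi-P-tilde-psi-nu-tilde-difference}, which gives $\tilde{\psi}^P_t = \rho^*\tilde{\psi}^\nu_t + t^2 \d v_{1,2}$ on $\{\check{r}>1\}$, and the scaled version of \cref{proposition:jk-normal-bundle-choices}, namely $\Upsilon_t^* \psi = \tilde{\psi}^\nu_t + \d((\,\cdot\, t)^* \zeta)$ on $U_{t^{-1}R}$, so that under the identification $\psi$ corresponds to $\rho^*\tilde{\psi}^\nu_t + \rho^*\d((\,\cdot\, t)^* \zeta)$. In the two middle regions and the outer transition the $\Upsilon_*\zeta$ piece of $\psi^N_t$ pulls back to $\rho^*(\,\cdot\, t)^*\zeta$ and exactly cancels, leaving $\psi^N_t - \psi = t^2 \d v_{1,2} + \d[t^2\beta_{0,3} + t^4\beta_{2,1}] + (\text{cut-off contributions involving } \d a)$. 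In the inner transition only the partial cut-off $a(t^{1/9}\check{r})$ multiplies $\zeta$, and in the innermost region $\check{r} \leq t^{-1/9}$ the $\zeta$-term remains uncanceled as $-\rho^*\d((\,\cdot\, t)^*\zeta)$.

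Pointwise, using $r_t \sim t\check{r}$ to convert scales, \cref{proposition:v-estimates} gives $|t^2 \d v_{1,2}|_{g^P_t} = \mathcal{O}(t^4 r_t^{-4})$, \cref{theorem:jk-correction-theorem} gives $|\d(t^2\beta_{0,3})|_{g^P_t}, |\d(t^4\beta_{2,1})|_{g^P_t} = \mathcal{O}(t^{4-\gamma} r_t^{-3+\gamma})$, and the rescaled estimate $|\d\zeta|_{g^\nu} = \mathcal{O}(r^2)$ gives $|\rho^*\d((\,\cdot\, t)^*\zeta)|_{g^P_t} = \mathcal{O}(r_t^2)$ and $|\rho^*(\,\cdot\, t)^*\zeta|_{g^P_t} = \mathcal{O}(r_t^3)$. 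The cut-off derivatives contribute $|t^{1/9}\d\check{r}|_{g^P_t} = \mathcal{O}(t^{-8/9})$ and $|t^{4/5}\d\check{r}|_{g^P_t} = \mathcal{O}(t^{-1/5})$ in the two transitions respectively, combined with the $\zeta$-estimates and the $\beta$, $v$ estimates on narrow shells. Multiplying each contribution by the weight $w_{-2,0;t} \sim r_t^2$ and checking that the product is $\leq ct^2$ reduces to elementary power-of-$t$ inequalities; the restriction $\check{r} \geq 1$ (hence $r_t \geq t$) makes the $v_{1,2}$ and $\beta$ contributions manifestly harmless.

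The one place requiring real attention is the innermost region, where the uncanceled $\rho^*\d((\,\cdot\, t)^*\zeta) = \mathcal{O}(r_t^2)$ looks dangerous compared to the target $ct^2 r_t^{-2}$; the bound $r_t^2 \leq ct^2 r_t^{-2}$ is equivalent to $r_t \leq c't^{1/2}$, which holds precisely because the cut-off threshold $\check{r} \leq t^{-1/9}$ forces $r_t \leq t^{8/9} \leq t^{1/2}$ for small $t$. This exponent bookkeeping (together with tracking the right factors of $t$ and $r_t$ in every term of every region) is the main obstacle; no new ideas beyond those in \cref{proposition:psi-n-psi-p-estimate} are needed. The weighted Hölder seminorm part follows from the same estimates applied to one higher derivative by interpolation, as in the analogous step of the proof of \cref{proposition:psi-n-psi-p-estimate}.
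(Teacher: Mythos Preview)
Your proposal is correct and follows exactly the approach the paper intends: the paper's own proof consists of a single sentence stating that, using \cref{proposition:v-estimates,theorem:jk-correction-theorem,proposition:jk-normal-bundle-choices}, the argument is analogous to that of \cref{proposition:psi-n-psi-p-estimate}. Your write-up is a faithful and detailed unpacking of that one-line proof, including the key identities $\tilde{\psi}^P_t = \rho^*\tilde{\psi}^\nu_t + t^2\,\d v_{1,2}$ and $\Upsilon_t^*\psi = \tilde{\psi}^\nu_t + \d((\cdot\,t)^*\zeta)$ that make the cancellations work, and the exponent check $r_t \leq t^{8/9} \leq t^{1/2}$ that handles the uncanceled $\zeta$-contribution in the innermost region.
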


\begin{proof}
Using \cref{proposition:v-estimates,theorem:jk-correction-theorem,proposition:jk-normal-bundle-choices}, the proof is analogous to \cref{proposition:psi-n-psi-p-estimate}.
\end{proof}

Last we need an estimate comparing $\tilde{\psi}^N_t$ and $\psi^N_t$ in a Hölder norm.
In \cref{theorem:jk-correction-theorem} we had this estimate for the $L^\infty$-norm, but not for the $C^{0,\alpha}_{0,0;t}$-norm.
Going through the proof of \ref{theorem:original-torsion-free-existence-theorem}, one can improve this to a $C^{0,\alpha}_{0,0;t}$-estimate as stated in the following proposition.
For the case of resolutions of $T^7/\Gamma$, this was done in \cite[Proposition 4.20]{Walpuski2013a}, and the proof carries over to resolutions of $Y/\<\iota\>$.

\begin{proposition}
\label{proposition:equation:torsion-free-hoelder-difference}
There exists $c>0$ independent of $t$ such that
 \begin{align}
 \label{equation:torsion-free-hoelder-difference}
 \|{
   \tilde{\psi}^N_t-\psi^N_t
 }_{C^{0,\alpha}_{0,0;t}}
 \leq
 ct^{1/18}. 
 \end{align}
\end{proposition}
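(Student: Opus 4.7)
The plan is to upgrade the $L^\infty$ estimate $\|\tilde{\varphi}^N_t - \varphi^N_t\|_{L^\infty} \leq ct^{1/18}$ from Theorem 6.4 of \cite{Joyce2017} to the desired weighted Hölder estimate by exploiting the relation between $\tilde{\psi}^N_t$ and $\tilde{\varphi}^N_t$ given by the Hodge-star expansion in \cref{proposition:Theta-estimates}. Writing $\tilde{\varphi}^N_t = \varphi^N_t + \d \eta_t$, we have $\tilde{\psi}^N_t = \Theta(\varphi^N_t + \d \eta_t)$, and \cref{proposition:Theta-estimates} (applied with base $G_2$-structure $\varphi^N_t$, which is closed by construction) yields the pointwise decomposition
\begin{align*}
 \tilde{\psi}^N_t - \psi^N_t
 =
 \bigl( \Theta(\varphi^N_t) - \psi^N_t \bigr)
 -
 T(\d \eta_t)
 -
 F(\d \eta_t).
\end{align*}
The three summands will be estimated separately in $C^{0,\alpha}_{0,0;t}$.

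First I would deal with the initial torsion term $\Theta(\varphi^N_t) - \psi^N_t$. This is the ``error of the approximate solution'' on the Joyce--Karigiannis side and the bound is not stated in weighted Hölder form in \cite{Joyce2017}, but the construction in \cref{equation:varphi-on-joyce-karigiannis,equation:psi-on-joyce-karigiannis} is piecewise explicit: on each region the error is a sum of terms controlled pointwise by \cref{proposition:xi-estimates,proposition:v-estimates,theorem:jk-correction-theorem}, with differences appearing only in the two cut-off annuli $t^{-1/9} \leq \check{r} \leq 2t^{-1/9}$ and $t^{-4/5} \leq \check{r} \leq 2t^{-4/5}$. The resulting $C^{0,\alpha}_{0,0;t}$-bound will be of order $t^{1/18}$ (the dominant contribution is exactly the annulus that forces this exponent in Joyce--Karigiannis).

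Next, for the linear and nonlinear terms $T(\d\eta_t)$ and $F(\d\eta_t)$, I would import a weighted Hölder estimate on $\d \eta_t$ from the existence proof itself. In the $T^7/\Gamma$ case this is \cite[Proposition 4.20]{Walpuski2013a}, where one revisits the iteration $\eta_{n+1} = \d^* G (\Theta(\varphi^N_t) - \psi^N_t + (\text{nonlinear term in } \eta_n))$ used to prove \cref{theorem:original-torsion-free-existence-theorem}, and applies weighted Schauder estimates for the Laplacian on $N_t$ (which are available for the weights appearing in \cref{definition:hoelder-norms-for-gauge-with-cases}) to bootstrap from the $L^2$-$L^{14}$-$C^0$ bounds that the iteration already produces. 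This produces a bound $\|\d\eta_t\|_{C^{0,\alpha}_{0,0;t}} \leq c t^{1/18}$. The same scheme applies verbatim in the Joyce--Karigiannis setting: the iteration is identical, and the weighted Schauder estimates on $N_t$ only use the geometry coming from the ALE model $\XEH$ and the smoothness of $Y/\langle \iota \rangle$ away from $L$, both of which behave identically in the two constructions. Once this bound is in hand, $T(\d\eta_t)$ is controlled by linearity and $F(\d\eta_t)$ by the quadratic estimates of \cref{proposition:Theta-estimates} together with \cref{proposition:norm-weights-basic-estimate}.

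The main obstacle is the second step: rerunning the Joyce--Karigiannis iteration while tracking the weighted Hölder norms of both the torsion $\Theta(\varphi^N_t) - \psi^N_t$ and of the successive $\eta_n$. This requires verifying that the weighted Schauder estimate for $\d \d^*$ on $N_t$ has constants that are uniform in $t$, which follows from rescaling across the gluing region as in \cite[Section 6]{Walpuski2017} but must be checked in the presence of the extra correction terms $\alpha_{0,2}$, $\alpha_{2,0}$, $\beta_{0,3}$, $\beta_{2,1}$ and the additional cut-off scales $t^{-1/9}$ and $t^{-4/5}$ specific to the Joyce--Karigiannis construction. Once this is established, assembling the three terms yields the claimed bound $\|\tilde{\psi}^N_t - \psi^N_t\|_{C^{0,\alpha}_{0,0;t}} \leq c t^{1/18}$.
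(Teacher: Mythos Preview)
Your proposal is correct and matches the paper's approach: the paper does not give a detailed proof but simply states that one goes through the proof of \cref{theorem:original-torsion-free-existence-theorem} (following \cite[Proposition~4.20]{Walpuski2013a} in the $T^7/\Gamma$ case) and observes that the $L^\infty$ estimate upgrades to a $C^{0,\alpha}_{0,0;t}$ estimate, the argument carrying over to resolutions of $Y/\langle\iota\rangle$. Your explicit $\Theta$-decomposition and identification of the weighted Schauder control on $\d\eta_t$ through the iteration as the main obstacle is precisely the content of that remark.
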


\subsubsection{Principal Bundle Curvature Estimates}

For our pregluing estimate we will want to estimate $* (F_{A_t} \wedge \tilde{\psi}^N_t)$.
This is done in \cref{proposition:pregluing-estimate,corollary:pregluing-estimate-on-T7}.
Most of the heavy lifting is done by the following \cref{proposition:pregluing-estimate-without-tilde}:
here we get an estimate for $* (F_{A_t} \wedge \psi^N_t)$ which then is combined with the estimate for $\tilde{\psi}^N_t-\psi^N_t$.

\begin{proposition}
\label{proposition:pregluing-estimate-without-tilde}
 There exists $c>0$ such that for all $t \in (0,T)$ we have
 \begin{align}
  \|{
   * (F_{A_t} \wedge \psi^N_t)
  }_{C^{0,\alpha}_{-2,0;t}}
  \leq
  c t.
 \end{align}
\end{proposition}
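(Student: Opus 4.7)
The plan is to partition $N_t$ according to the piecewise definition of $A_t$ from \cref{proposition:approximate-solution} and estimate $*(F_{A_t} \wedge \psi^N_t)$ on each piece. I partition into: the Eguchi--Hanson \emph{core} $\{r_t \leq t\}$ on which $A_t = s(A)$; two transition annuli $\{t \leq r_t \leq 2t\}$ and $\{R/2 \leq r_t \leq R\}$ on which $\chi^\pm_t$ are non-constant; the \emph{plateau} $\{2t \leq r_t \leq R/2\}$ on which $A_t = \underline{A_\infty} + b + \sigma$; and the \emph{bulk} $\{r_t \geq R\}$ on which $A_t = \theta$. On each piece I would apply the product estimate of \cref{proposition:norm-weights-basic-estimate} together with three inputs: the Fueter condition for $s$ via \cref{proposition:section-gives-rise-to-connection}, the $G_2$-instanton equation $F_\theta \wedge \psi = 0$, and the comparison estimates \cref{proposition:psi-n-psi-p-estimate} and \cref{proposition:psi-n-psi-estimate}.

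On the bulk $F_{A_t} \wedge \psi^N_t = F_\theta \wedge (\psi^N_t - \psi)$, and combining \cref{proposition:psi-n-psi-estimate} with the uniform boundedness of $F_\theta$ gives a contribution $\leq c t^2 \leq c t$. On the core I split $F_{s(A)} \wedge \psi^N_t = F_{s(A)} \wedge \psi^P_t + F_{s(A)} \wedge (\psi^N_t - \psi^P_t)$. A bi-grading type count on $P$ using $\dim L = 3$ and $\dim \XEH = 4$, together with the Fueter condition and the fibrewise anti-self-duality from \cref{proposition:tautological-bundle-properties}, collapses $F_{s(A)} \wedge \psi^P_t$ to $F_{2,0} \wedge \psi^P_{0,4}$. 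Writing $F_{2,0} = F_{\overline{A_\infty}} + (F_{2,0} - F_{\overline{A_\infty}})$, the first summand is $\mathcal{O}(1)$ pointwise but the core weight satisfies $w_{-2,0;t} \leq 4 t^2$ and thereby absorbs it into a weighted contribution of order $t^2$, while the second gives at most $c t^2$ by \cref{proposition:estimates-for-s(A)} and \cref{proposition:norm-weights-basic-estimate}. For the remaining summand $F_{s(A)} \wedge (\psi^N_t - \psi^P_t)$ I would pair \cref{proposition:psi-n-psi-p-estimate} with the weight-$(-4,0)$ bound $\|{F_{s(A)}}_{C^{0,\alpha}_{-4,0;t}(\{r_t \leq t\})} \leq c t^2$, which follows from \cref{proposition:estimates-for-s(A)} by comparing weights in the core.

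The main difficulty lies in the plateau, where $w_{-2,0;t} \sim r_t^2$ no longer absorbs the pointwise $\mathcal{O}(1)$ contribution of $F_{\overline{A_\infty}} \wedge \psi^P_{0,4}$. The key step is to use the explicit form $A_t = \theta + \sigma$ to rewrite
\[
F_{A_t} \wedge \psi^N_t = F_\theta \wedge (\psi^N_t - \psi) + (F_{s(A)} - F_{\underline{A_\infty}}) \wedge \psi^N_t + [b,\sigma] \wedge \psi^N_t.
\]
Using $\underline{A_\infty} = \overline{A_\infty}$ on the overlap together with the Fueter identity $F_{s(A)} \wedge \psi^P_t = F_{2,0} \wedge \psi^P_{0,4}$ gives
\[
(F_{s(A)} - F_{\underline{A_\infty}}) \wedge \psi^P_t = (F_{2,0} - F_{\overline{A_\infty}}) \wedge \psi^P_{0,4},
\]
so the problematic leading-order term cancels exactly and only the $\mathcal{O}(t^2)$ contribution from \cref{proposition:estimates-for-s(A)} remains; the complementary $(\psi^N_t - \psi^P_t)$-piece is controlled by pairing $\|{\d_{\underline{A_\infty}} \sigma}_{C^{0,\alpha}_{-4,0;t}} \leq c t^2$ from \cref{proposition:pregluing-summands-norms} with \cref{proposition:psi-n-psi-p-estimate} through \cref{proposition:norm-weights-basic-estimate}.

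On the two transition annuli one picks up additional terms $\d\chi^\pm_t \wedge b$, $\d\chi^\pm_t \wedge \sigma$ and various brackets involving $b$ and $\sigma$; each of these is $\mathcal{O}(t^2)$ or better once \cref{proposition:pregluing-summands-norms,proposition:cut-off-norms} are combined through \cref{proposition:norm-weights-basic-estimate}. Summing the contributions from all five regions yields the $L^\infty$-bound by $c t$, and the Hölder seminorm part follows by the same arguments applied to the $C^{0,\alpha}$-versions of the estimates listed above.
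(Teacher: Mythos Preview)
Your proposal is correct and follows essentially the same approach as the paper: the same region-by-region decomposition (your five pieces are just a slightly finer version of the paper's three, with the bulk handled separately), the same use of the Fueter and ASD conditions to collapse $F_{s(A)}\wedge\psi^P_t$ to its pure-$L$ component, and the same cancellation on the plateau via the decomposition $F_{A_t}=F_\theta+(F_{s(A)}-F_{\underline{A_\infty}})+[b,\sigma]$. The only cosmetic differences are your $(L,\text{fibre})$ index convention versus the paper's, and your citing \cref{proposition:pregluing-summands-norms} for the $(-4,0)$-bound on $F_{s(A)}-F_{\underline{A_\infty}}$ where the paper cites \cref{proposition:estimates-for-s(A)} directly; both routes give the same $ct^2\cdot ct^{-1}=ct$ on the plateau.
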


\begin{proof}
 We will estimate $* (F_{A_t} \wedge \psi^N_t)$ separately on some regions:
 
 \begin{enumerate}
  \item 
  On $r_t \leq 2t$ we have
  \begin{align*}
   F_{A_t}
   &=
   F_{s(A)}
   +
   \chi^-_t \d_{A_\infty} b
   +
   \chi^-_t [\sigma, b]
   +
   \frac{1}{2}
   (\chi^-_t)^2 [b, b]
   +
   \d \chi^-_t \wedge b.
  \end{align*}
  Thus by \cref{proposition:norm-weights-basic-estimate}, \cref{proposition:cut-off-norms}, and \cref{proposition:pregluing-summands-norms}:
  \begin{align}
  \label{equation:pregluing-first-region-first-estimate}
  \begin{split}
   &
   \|{
    F_{A_t}-F_{s(A)}
   }_{C^{0,\alpha}_{-2,0;t}(r_t \leq 2 t)}
   \\
   &\;\;\; \leq
   \|{
    1
   }_{C^{0,\alpha}_{-2,0;t}(r_t \leq 2 t)}
   \|{
    \chi^-_t
   }_{C^{0,\alpha}_{0,0;t}(r_t \leq 2 t)}
   \|{
    \d_{A_\infty} b
   }_{C^{0,\alpha}_{0,0;t}(r_t \leq 2 t)}   
   \\
   &\;\;\;\;\;\;
   +
   \|{
    \chi^-_t
   }_{C^{0,\alpha}_{0,0;t}(r_t \leq 2 t)}
   \|{
    \sigma
   }_{C^{0,\alpha}_{-3,0;t}(r_t \leq 2 t)}
   \|{
    b
   }_{C^{0,\alpha}_{1,0;t}(r_t \leq 2 t)}   
   \\
   &\;\;\;\;\;\;
   +
   \frac{1}{2}
   \|{
    1
   }_{C^{0,\alpha}_{-3,0;t}(r_t \leq 2 t)}
   \|{
    \chi^-_t
   }_{C^{0,\alpha}_{0,0;t}(r_t \leq 2 t)}^2
   \|{
    b
   }_{C^{0,\alpha}_{1,0;t}(r_t \leq 2 t)}^2
   \\
   &\;\;\;\;\;\;
   +
   \|{
    1
   }_{C^{0,\alpha}_{-2,0;t}(r_t \leq 2 t)}
   \|{
    \d \chi^-_t
   }_{C^{0,\alpha}_{-1,0;t}(r_t \leq 2 t)}
   \|{
    b
   }_{C^{0,\alpha}_{1,0;t}(r_t \leq 2 t)}
   \\
   &\;\;\; \leq
   ct^2
   \end{split}
  \end{align}
  where we also used the fact that
  $\|{
    1
   }_{C^{0,\alpha}_{-l,0;t}(r_t \leq 2 t)}
  \leq c t^l$ if $l>0$,
  which follows from \cref{definition:hoelder-norms-for-gauge-with-cases} using $r_t \leq 2 t$.
  
 Remember that $[F_{s(A)}]_{2,0} \wedge \psi^P_t = 0$ by the ASD condition and $[F_{s(A)}]_{1,1} \wedge \psi^P_t = 0$ by the Fueter condition (cf. \cref{proposition:section-gives-rise-to-connection}).
 By \cref{proposition:estimates-for-s(A)}, we therefore have:
 \begin{align}
 \label{equation:pregluing-estimate-first-region-second-estimate}
 \begin{split}
  &\;\;\;\;\;\;
  \|{
   F_{s(A)} \wedge \psi^P_t
  }_{C^{0,\alpha}_{-2,0;t}(r_t \leq 2 t)}
  \\
  &\leq
  \|{
   [F_{s(A)}]_{(0,2)} \wedge \psi^P_t
  }_{C^{0,\alpha}_{-2,0;t}(r_t \leq 2 t)}
  \\
  &\leq
  \|{
   [F_{s(A)}-F_{\theta|_L}]_{(0,2)}
  }_{C^{0,\alpha}_{-2,0;t}(r_t \leq 2 t)}
  \cdot
  \|{
   \psi^P_t
  }_{C^{0,\alpha}_{0,0;t}(r_t \leq 2 t)}
  +
  \\
  &
  \;\;\;\;\;\;
  \|{
   F_{\theta|_L}
  }_{C^{0,\alpha}_{0,0;t}(r_t \leq 2 t)}
  \cdot
  \|{
   \psi^P_t
  }_{C^{0,\alpha}_{0,0;t}(r_t \leq 2 t)}
  \cdot
  \|{
   1
  }_{C^{0,\alpha}_{-2,0;t}(r_t \leq 2 t)}
  \\
  &\leq
  c t^2,
 \end{split}
 \end{align}
 where we again used \cref{proposition:norm-weights-basic-estimate}.
 Last, note that by \cref{proposition:estimates-for-s(A),equation:pregluing-first-region-first-estimate} we have
 $\|{
   F_{A_t}
  }_{C^{0,\alpha}_{-4,0;t}(r_t \leq 2 t)}
 \leq ct^2$.
 Thus, by
 \cref{proposition:norm-weights-basic-estimate,equation:psi-n-psi-p-estimate}:
 \begin{align}
 \label{equation:pregluing-first-region-psi-differences}
 \begin{split}
  \|{
   F_{A_t} \wedge (\psi^N_t - \psi^P_t)
  }_{C^{0,\alpha}_{-2,0;t}(r_t \leq 2 t)}
  &\leq
  \|{
   F_{A_t}
  }_{C^{0,\alpha}_{-4,0;t}(r_t \leq 2 t)}
  \|{
   \psi^N_t - \psi^P_t
  }_{C^{0,\alpha}_{2,0;t}(r_t \leq 2 t)}
  \\
  &\leq
  c t.
  \end{split}
 \end{align}
 Putting the estimates from \cref{equation:pregluing-first-region-first-estimate,equation:pregluing-estimate-first-region-second-estimate,equation:pregluing-first-region-psi-differences} together, we get
 \begin{align*}
 &\;\;\;\;\;\;
  \|{
   * (F_{A_t} \wedge \psi^N_t)
  }_{C^{0,\alpha}_{-2,0;t}(r_t \leq 2 t)}
  \\
  &\;\;\;
  \leq
  \|{
   F_{s(A)} \wedge \psi^P_t)
  }_{C^{0,\alpha}_{-2,0;t}(r_t \leq 2 t)}
  +
  \|{
   (F_{s(A)}-F_{A_t}) \wedge \psi^P_t
  }_{C^{0,\alpha}_{-2,0;t}(r_t \leq 2 t)}
  \\
  &\;\;\;
  \quad
  +
  \|{
   F_{A_t} \wedge (\psi^N_t - \psi^P_t)
  }_{C^{0,\alpha}_{-2,0;t}(r_t \leq 2 t)}
  \\
  &\;\;\; \leq
  c (t^2+t^2+t)
  \leq c t.
 \end{align*}

 \item
 On $2t \leq r_t \leq R/2$ 
 we have $A_t = A_\infty + \sigma + b$ and therefore
 \begin{align}
 \label{equation:pregluing-curvature-on-second-region}
  F_{A_t}
  =
  F_\theta + [\sigma, b]
  +
  F_{s(A)}
  -
  F_{A_\infty}.
 \end{align}
 First,
 \begin{align}
 \label{equation:pregluing-second-region-fueter-consequence}
 \begin{split}
  &\;\;\;
  \|{
   (F_{s(A)} - F_{A_\infty}) \wedge \psi^P_t
  }_{C^{0,\alpha}_{-2,0;t}(2t \leq r_t \leq R/2)}
  \\
  &\leq
  \|{
   \left[ F_{s(A)} - F_{A_\infty} \right]_{2,0} \wedge \psi^P_t
  }_{C^{0,\alpha}_{-2,0;t}(2t \leq r_t \leq R/2)}
  \\
  &\leq
  \|{
   \left[ F_{s(A)} - F_{A_\infty} \right]_{2,0}
  }_{C^{0,\alpha}_{-2,0;t}(2t \leq r_t \leq R/2)}
  \|{
   \psi^P_t
  }_{C^{0,\alpha}_{0,0;t}(2t \leq r_t \leq R/2)}
  \\
  &\leq
  ct^2,
  \end{split}
 \end{align}
 where we used point (ii) of \cref{proposition:section-gives-rise-to-connection} in the first step and \cref{proposition:estimates-for-s(A)} in the last step.
 We also have
 \begin{align}
 \label{equation:pregluing-estimate-second-region-psi-N-minus-psi-P-curvature}
 \begin{split}
  &\;\;\;
  \|{
   (F_{s(A)} - F_{A_\infty}) \wedge (\psi^N_t-\psi^P_t)
  }_{C^{0,\alpha}_{-2,0;t}(2t \leq r_t \leq R/2)}
  \\
  &\leq
  \|{
   (F_{s(A)} - F_{A_\infty})
  }_{C^{0,\alpha}_{-4,0;t}(2t \leq r_t \leq R/2)}
  \|{
   \psi^N_t-\psi^P_t
  }_{C^{0,\alpha}_{2,0;t}(2t \leq r_t \leq R/2)}
  \\
  &\leq
  ct
 \end{split}
 \end{align}
 where we used \cref{proposition:estimates-for-s(A),equation:psi-n-psi-p-estimate}, therefore
 \begin{align}
 \begin{split}
 \label{equation:pregluing-second-region-third-estimate}
  &\;\;\;
  \|{
   (F_{s(A)} - F_{A_\infty}) \wedge \psi^N_t
  }_{C^{0,\alpha}_{-2,0;t}(2t \leq r_t \leq R/2)}
  \\
  &\leq
  \|{
   (F_{s(A)} - F_{A_\infty}) \wedge \psi^P_t
  }_{C^{0,\alpha}_{-2,0;t}(2t \leq r_t \leq R/2)}
  \\
  &\;\;\;
  +
  \|{
   (F_{s(A)} - F_{A_\infty}) \wedge (\psi^N_t- \psi^P_t)
  }_{C^{0,\alpha}_{-2,0;t}(2t \leq r_t \leq R/2)}
  \\
  &\leq
  ct.
 \end{split}
 \end{align}
 Second,
 \begin{align}
 \label{equation:pregluing-second-region-sigma-b-estimate}
 \begin{split}
  & \quad
  \|{
   [\sigma, b] \wedge \psi^N_t
  }_{C^{0,\alpha}_{-2,0;t}(2t \leq r_t \leq R/2)}
  \\
  &
  \leq
  c
  \|{
   \sigma
  }_{C^{0,\alpha}_{-3,0;t}(2t \leq r_t \leq R/2)}
  \|{
   b 
  }_{C^{0,\alpha}_{1,0;t}(2t \leq r_t \leq R/2)}
  \|{
   \psi^N_t
  }_{C^{0,\alpha}_{0,0;t}(2t \leq r_t \leq R/2)}
  \\
  &
  \leq
  ct^4
  \end{split}
 \end{align}
 by \cref{proposition:pregluing-summands-norms}.
 
 Third,
 \begin{align}
 \label{equation:pregluing-second-region-f-theta-estimate}
 \begin{split}
 & \quad
  \|{
   F_{\theta} \wedge \psi^N_t
  }_{C^{0,\alpha}_{-2,0;t}(2t \leq r_t \leq R/2)}
  \\
  &\leq
  \|{
   F_{\theta} \wedge \psi
  }_{C^{0,\alpha}_{-2,0;t}(2t \leq r_t \leq R/2)}
  \\
  &\quad +
  \|{
   F_{\theta}
  }_{C^{0,\alpha}_{0,0;t}(2t \leq r_t \leq R/2)}
  \|{
   \psi^N_t -\psi
  }_{C^{0,\alpha}_{-2,0;t}(2t \leq r_t \leq R/2)}
  \\
  &\leq
  ct^2
 \end{split}
 \end{align}
 where we used the fact that $\theta$ is a $G_2$-instanton with respect to $\psi$ as well as \cref{equation:psi-n-psi-estimate} in the second step.
 So, altogether
 \begin{align*}
  \|{
   * (F_{A_t} \wedge \psi^N_t)
  }_{C^{0,\alpha}_{-2,0;t}(2t \leq r_t \leq R/2)}
  &\leq
  \|{
   F_\theta \wedge \psi^N_t
  }_{C^{0,\alpha}_{-2,0;t}(2t \leq r_t \leq R/2)}
  \\
  & \quad +
  \|{
   [\sigma, b] \wedge \psi^N_t
  }_{C^{0,\alpha}_{-2,0;t}(2t \leq r_t \leq R/2)}
  \\
  & \quad +
  \|{
   (F_{s(A)} - F_{A_\infty}) \wedge \psi^N_t
  }_{C^{0,\alpha}_{-2,0;t}(2t \leq r_t \leq R/2)}
  \\
  & \leq
  c t
 \end{align*}
 by combining \cref{equation:pregluing-curvature-on-second-region,equation:pregluing-second-region-third-estimate,equation:pregluing-second-region-sigma-b-estimate,equation:pregluing-second-region-f-theta-estimate}.
 
 \item
 On $R/2 \leq r_t \leq R$ we have $A_t= \theta + \chi^+_t \sigma$ and therefore
 \begin{align*}
  F_{A_t}
  =
  F_\theta + \chi^+_t \d_\theta \sigma
  +
  \frac{1}{2}
  (\chi^+_t)^2[ \sigma, \sigma]
  +
  \d \chi^+_t \wedge \sigma.
 \end{align*}
 Therefore, we find that
 \begin{align*}
  \begin{split}
   \|{
   F_{A_t}- F_\theta
  }_{C^{0,\alpha}_{-2,0;t}(R/2 \leq r_t)}
  &\leq
  \|{
   \chi^+_t
  }_{C^{0,\alpha}_{0,0;t}(R/2 \leq r_t)}
  \|{
   \d_\theta \sigma
  }_{C^{0,\alpha}_{-4,0;t}(R/2 \leq r_t)}
  \|{
   1
  }_{C^{0,\alpha}_{2,0;t}(R/2 \leq r_t)}
  \\
  &\;\;\;
  +
  \frac{1}{2}  
  \|{
   \chi^+_t
  }_{C^{0,\alpha}_{0,0;t}(R/2 \leq r_t)}^2
  \|{
   \sigma
  }_{C^{0,\alpha}_{-3,0;t}(R/2 \leq r_t)}^2
  \|{
   1
  }_{C^{0,\alpha}_{4,0;t}(R/2 \leq r_t)}
  \\
  &\;\;\;
  +
  \|{
   \d \chi^+_t
  }_{C^{0,\alpha}_{0,0;t}(R/2 \leq r_t)}
  \|{
   \sigma
  }_{C^{0,\alpha}_{-3,0;t}(R/2 \leq r_t)}
  \|{
   1
  }_{C^{0,\alpha}_{1,0;t}(R/2 \leq r_t)}
  \\
  &\leq
  c
  t^2
  \end{split}
 \end{align*}
 where we used \cref{proposition:norm-weights-basic-estimate,proposition:cut-off-norms,proposition:pregluing-summands-norms} in the second step.
 Using this, we see
 \begin{align*}
  \|{
   F_{A_t} \wedge \psi^N_t
  }_{C^{0,\alpha}_{-2,0;t}(R/2 \leq r_t)}
  &\leq
  \|{
   (F_{A_t}- F_\theta) \wedge \psi^N_t
  }_{C^{0,\alpha}_{-2,0;t}(R/2 \leq r_t)}
  \\
  &\;\;\;
  +
   \|{
   F_\theta \wedge \psi^N_t
  }_{C^{0,\alpha}_{-2,0;t}(R/2 \leq r_t)}
  \\
  &\leq
  ct^2,
 \end{align*}
 where we used the fact that $\psi^N_t=\psi$ where $r_t \geq R/2$ and that $\theta$ is a $G_2$-instanton with respect to $\psi$.
 \end{enumerate}
 We have that $F_{A_t} \wedge \psi^N_t=0$ outside the three considered regions, which proves the claim.
\end{proof}

\begin{corollary}
\label{proposition:pregluing-estimate}
 There exists $c>0$ such that
 \begin{align}
  \|{
   * (F_{A_t} \wedge \tilde{\psi}^N_t)
  }_{C^{0,\alpha}_{-2,0;t}}
  \leq
  c t^{1/18}.
 \end{align}
\end{corollary}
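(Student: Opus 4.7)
The proof should follow from the previous proposition by writing
\[
F_{A_t} \wedge \tilde{\psi}^N_t
=
F_{A_t} \wedge \psi^N_t
+
F_{A_t} \wedge (\tilde{\psi}^N_t - \psi^N_t)
\]
and handling the two summands separately. For the first summand, \cref{proposition:pregluing-estimate-without-tilde} already gives a bound of $ct$, which is more than enough. The plan is therefore to estimate the second summand using the bilinear norm estimate from \cref{proposition:norm-weights-basic-estimate}, producing
\[
\|{F_{A_t} \wedge (\tilde{\psi}^N_t - \psi^N_t)}_{C^{0,\alpha}_{-2,0;t}}
\leq
\|{F_{A_t}}_{C^{0,\alpha}_{-2,0;t}}
\cdot
\|{\tilde{\psi}^N_t - \psi^N_t}_{C^{0,\alpha}_{0,0;t}}.
\]
The second factor is controlled by \cref{proposition:equation:torsion-free-hoelder-difference}, contributing the $t^{1/18}$ that appears in the statement.

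The substantive step is a uniform bound $\|{F_{A_t}}_{C^{0,\alpha}_{-2,0;t}} \leq c$ independent of $t$. This requires revisiting the three regions in the proof of \cref{proposition:pregluing-estimate-without-tilde}, but estimating $F_{A_t}$ itself rather than $F_{A_t} \wedge \psi^N_t$. On $\{r_t \leq 2t\}$, one writes $F_{A_t} = F_{s(A)} + (\text{cross terms involving }\sigma, b, \chi^-_t)$; the curvature $F_{s(A)}$ of a finite-energy ASD instanton on the $t$-rescaled Eguchi--Hanson space is pointwise $\mathcal{O}(t^{-2}(t+r_t)^{-2})$, which is precisely of weight $(-2,0)$ in our norm, and the cross terms are $\mathcal{O}(t^2)$ by the computation leading to \cref{equation:pregluing-first-region-first-estimate}. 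On $\{2t \leq r_t \leq R/2\}$, decomposition \cref{equation:pregluing-curvature-on-second-region} combined with \cref{proposition:estimates-for-s(A),proposition:pregluing-summands-norms} gives decay of each summand in the weighted norm. On $\{R/2 \leq r_t \leq R\}$ and beyond, $F_{A_t}$ differs from $F_\theta$ by $\mathcal{O}(t^2)$ cut-off terms, and $F_\theta$ is smooth on the compact orbifold $Y/\<\iota\>$, so its weighted norm is bounded.

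Combining these, $\|{F_{A_t}}_{C^{0,\alpha}_{-2,0;t}} \leq c$ for some constant $c>0$ independent of $t$. Therefore
\[
\|{* (F_{A_t} \wedge \tilde{\psi}^N_t)}_{C^{0,\alpha}_{-2,0;t}}
\leq
ct + c \cdot ct^{1/18}
\leq
c' t^{1/18}
\]
for small $t$, which completes the proof.

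The main obstacle is the uniform bound on $\|{F_{A_t}}_{C^{0,\alpha}_{-2,0;t}}$, since it requires controlling the curvature across all three gluing regions and keeping track of the weight scaling against the blow-up of $F_{s(A)}$ near the exceptional divisors in the ALE fibres. This is essentially bookkeeping already done implicitly in \cref{proposition:pregluing-estimate-without-tilde}, but it has to be extracted and stated without the cancellation provided by wedging with the leading-order $\psi^P_t$.
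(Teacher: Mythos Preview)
Your proposal is correct and follows essentially the same approach as the paper: split $\tilde{\psi}^N_t = \psi^N_t + (\tilde{\psi}^N_t - \psi^N_t)$, use \cref{proposition:pregluing-estimate-without-tilde} for the first summand, and combine the uniform bound $\|{F_{A_t}}_{C^{0,\alpha}_{-2,0;t}} \leq c$ with \cref{proposition:equation:torsion-free-hoelder-difference} for the second. The paper records the curvature bound as a one-line observation referring back to the three regions, whereas you spell out the region-by-region reasoning in more detail; otherwise the arguments are the same.
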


\begin{proof}
 First, observe that
 \begin{align}
 \label{equation:pregluing-corollary-curvature-boundedness}
  \|{
   F_{A_t}
  }_{C^{0,\alpha}_{-2,0;t}}
  \leq
  c.
 \end{align}
 This follows from estimating $F_{A_t}$ separately on the three regions from the proof of \cref{proposition:pregluing-estimate-without-tilde}.
 Then
 \begin{align*}
  \|{
   * (F_{A_t} \wedge \tilde{\psi}^N_t)
  }_{C^{0,\alpha}_{-2,0;t}}
  &\leq
  \|{
   * (F_{A_t} \wedge \psi^N_t)
  }_{C^{0,\alpha}_{-2,0;t}}
  +
  \|{
   * (F_{A_t} \wedge (\tilde{\psi}^N_t-\psi^N_t))
  }_{C^{0,\alpha}_{-2,0;t}}
  \\
  &\leq
  \|{
   * (F_{A_t} \wedge \psi^N_t)
  }_{C^{0,\alpha}_{-2,0;t}}
  +
  \|{
   F_{A_t}
  }_{C^{0,\alpha}_{-2,0;t}}
  \|{
   \tilde{\psi}^N_t-\psi^N_t
  }_{C^{0,\alpha}_{0,0;t}}
  \\
  &\leq
  c(t+t^{1/18})
  \leq
  ct^{1/18}
 \end{align*}
 where we used \cref{proposition:pregluing-estimate-without-tilde} to estimate the first summand in the last step, and \cref{equation:pregluing-corollary-curvature-boundedness,equation:torsion-free-hoelder-difference} to estimate the second summand in the last step.
\end{proof}

As promised, we now turn to the special case of resolutions of $T^7/\Gamma$, rather than general $G_2$-orbifolds.
We get a better pregluing estimate here, which is due to the following two facts:
first, we get a better estimate for $* (F_{A_t} \wedge \psi^N_t)$ on the resolution of $T^7/\Gamma$, because near the associative, $A_t$ is close to $s(A)$, which is close to being a $G_2$-instanton with respect to $\psi^P_t$, and \cref{proposition:psi-N-psi-P-comparison-on-T7} says that $\psi^N_t-\psi^P_t$ is small.
Second, the difference $\tilde{\psi}^N_t-\psi^N_t$ is smaller on resolutions of $T^7/\Gamma$ than in the general case.

\begin{corollary}
\label{corollary:pregluing-estimate-on-T7}
 Let $N_t$ be the resolution of $T^7/\Gamma$ from \cref{corollary:kummer-construction-simplified-torsion-free-estimate}.
 Then there exists $c>0$ such that for all $t \in (0,T)$ we have
 \begin{align}
 \label{equation:pregluing-estimate-on-T7}
  \|{
   * (F_{A_t} \wedge \tilde{\psi}^N_t)
  }_{C^{0,\alpha}_{-2,0;t}}
  \leq
  c t^2.
 \end{align}
\end{corollary}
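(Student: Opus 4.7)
The plan is to rerun the argument of \cref{proposition:pregluing-estimate} step by step, substituting at each place the stronger input that is available in the $T^7/\Gamma$ setting. Splitting as before,
\[
 \|{*(F_{A_t}\wedge \tilde{\psi}^N_t)}_{C^{0,\alpha}_{-2,0;t}}
 \leq
 \|{*(F_{A_t}\wedge \psi^N_t)}_{C^{0,\alpha}_{-2,0;t}}
 +
 \|{F_{A_t}}_{C^{0,\alpha}_{-2,0;t}}\cdot\|{\tilde{\psi}^N_t-\psi^N_t}_{C^{0,\alpha}_{0,0;t}},
\]
so it suffices to show each summand is bounded by $ct^2$. The curvature factor satisfies $\|{F_{A_t}}_{C^{0,\alpha}_{-2,0;t}}\leq c$ by \cref{equation:pregluing-corollary-curvature-boundedness}, so the second summand reduces to showing $\|{\tilde{\psi}^N_t-\psi^N_t}_{C^{0,\alpha}_{0,0;t}}\leq ct^2$.

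For the first summand, I would walk through the three-region argument of \cref{proposition:pregluing-estimate-without-tilde}. Inspection shows that the only place where the bound $\mathcal{O}(t)$ (rather than $\mathcal{O}(t^2)$) was actually produced is the term $\|{F_{A_t}\wedge(\psi^N_t-\psi^P_t)}_{C^{0,\alpha}_{-2,0;t}}$ in both the inner region ($r_t\leq 2t$, see \cref{equation:pregluing-first-region-psi-differences}) and the middle region ($2t\leq r_t\leq R/2$, see \cref{equation:pregluing-estimate-second-region-psi-N-minus-psi-P-curvature}). Both of those estimates used the general bound $\|{\psi^N_t-\psi^P_t}_{C^{0,\alpha}_{2,0;t}}\leq ct^{-1}$ from \cref{proposition:psi-n-psi-p-estimate}. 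Replacing that by the $T^7/\Gamma$-bound $\|{\psi^N_t-\psi^P_t}_{C^{0,\alpha}_{2,0;t}}\leq ct^4$ from \cref{proposition:psi-N-psi-P-comparison-on-T7} promotes both terms from $\mathcal{O}(t)$ to $\mathcal{O}(t^6)$. The outer region $R/2\leq r_t\leq R$ is unchanged and already gives $\mathcal{O}(t^2)$. Hence $\|{*(F_{A_t}\wedge\psi^N_t)}_{C^{0,\alpha}_{-2,0;t}}\leq ct^2$.

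For the torsion estimate $\|{\tilde{\psi}^N_t-\psi^N_t}_{C^{0,\alpha}_{0,0;t}}\leq ct^2$, I would use \cref{corollary:kummer-construction-simplified-torsion-free-estimate}. That result gives the stronger global bound $\|{\tilde{\varphi}^N_t-\varphi^N_t}_{C^{1,\alpha/2}}\leq ct^{3/2-\alpha/2}$ together with the weighted estimate $\|{\eta^t}_{C^{2,\alpha/2}_{\beta;t}}\leq ct^{7/2-\beta}$ for the correcting $2$-form. Setting $\chi=\tilde{\varphi}^N_t-\varphi^N_t = \d\eta^t$, \cref{proposition:Theta-estimates} expresses $\tilde{\psi}^N_t-*\varphi^N_t$ as $-T(\chi)-F(\chi)$, with $T$ linear and $F$ quadratically controlled by $|\chi|$, $|\nabla\chi|$, and the torsion of $\varphi^N_t$; the remaining piece $*\varphi^N_t-\psi^N_t$ is separately $\mathcal{O}(t^2)$ in the right weighted norm by the explicit construction in \cref{equation:varphi-on-joyce-karigiannis}-\cref{equation:psi-on-joyce-karigiannis}. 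Feeding the $C^{2,\alpha/2}_{\beta;t}$-control on $\eta^t$ into these pointwise and Hölder estimates and converting to the norm $C^{0,\alpha}_{0,0;t}$ then produces the desired $ct^2$ bound.

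The main obstacle is this last conversion from Joyce-style to weighted Hölder estimates: one must check that the bounds from \cref{proposition:Theta-estimates} are genuinely compatible with the weighted seminorm $w_{-\alpha,0;t}(x,y)\,|f(x)-f(y)|/d(x,y)^\alpha$ in the Eguchi-Hanson region, where distances and the weight both scale with $t$. Concretely, one imitates the scaling/covering argument of \cite[Proposition 4.20]{Walpuski2013a} (which produced \cref{proposition:equation:torsion-free-hoelder-difference} with exponent $1/18$ from the weaker Joyce estimate) but now with the sharpened input $ct^{5/2-\alpha/2}$ of \cref{corollary:kummer-construction-simplified-torsion-free-estimate}; a careful bookkeeping of the weight exponents, together with the fact that $\beta$ may be chosen in an open interval, should then suffice to close the final $ct^2$ bound.
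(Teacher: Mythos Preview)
Your argument for the first summand is exactly the paper's: rerun \cref{proposition:pregluing-estimate-without-tilde} and replace the general bound \cref{proposition:psi-n-psi-p-estimate} by \cref{proposition:psi-N-psi-P-comparison-on-T7} in \cref{equation:pregluing-first-region-psi-differences} and \cref{equation:pregluing-estimate-second-region-psi-N-minus-psi-P-curvature}, yielding $\|{*(F_{A_t}\wedge\psi^N_t)}_{C^{0,\alpha}_{-2,0;t}}\leq ct^2$.

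For the second summand you work much harder than necessary. The paper simply invokes \cref{corollary:kummer-construction-simplified-torsion-free-estimate}, which already contains the Hölder estimate $\|{\tilde\varphi^N_t-\varphi^N_t}_{C^{0,\alpha/2}}\leq ct^{5/2-\alpha/2}$; passing to $\tilde\psi^N_t-\psi^N_t$ via $\Theta$ is routine, and the paper records the conclusion as $\|{\tilde\psi^N_t-\psi^N_t}_{C^{0,\alpha}_{0,0;t}}\leq ct^{5/2}$ for $\alpha$ small. Your ``main obstacle'' is not one: since $w_{0,0;t}\equiv 1$ and the seminorm weight $w_{-\alpha,0;t}=(t+r_t)^\alpha$ (resp.\ $r_t^\alpha$) is uniformly bounded, the weighted norm $C^{0,\alpha}_{0,0;t}$ is dominated by the unweighted $C^{0,\alpha}$ norm, so no rescaling or covering argument in the style of \cite[Proposition~4.20]{Walpuski2013a} is needed here. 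Your route through \cref{proposition:Theta-estimates} and the $\eta^t$ bound would also work, but it re-derives what \cref{corollary:kummer-construction-simplified-torsion-free-estimate} already states.
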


\begin{proof}
We first prove
 \begin{align}
 \label{equation:pregluing-estimate-on-T7-without-tilde}
  \|{
   * (F_{A_t} \wedge \psi^N_t)
  }_{C^{0,\alpha}_{-2,0;t}}
  \leq
  c t^2.
 \end{align}
as in \cref{proposition:pregluing-estimate-without-tilde}, the only difference being that \cref{equation:psi-n-psi-p-estimate-on-T7} in \cref{equation:pregluing-first-region-psi-differences,equation:pregluing-estimate-second-region-psi-N-minus-psi-P-curvature} gives a factor of $t^2$ rather than $t$, yielding \cref{equation:pregluing-estimate-on-T7-without-tilde}.
For small enough $\alpha \in (0,1)$ we have that
\begin{align}
\label{equation:torsion-free-perturbation-size-on-T7}
 \|{
  \tilde{\psi}^N_t-\psi^N_t
 }_{C^{0,\alpha}_{0,0;t}}
 \leq
 ct^{5/2}
\end{align}
by \cref{corollary:kummer-construction-simplified-torsion-free-estimate}.
Taking \cref{equation:pregluing-estimate-on-T7-without-tilde,equation:torsion-free-perturbation-size-on-T7} together gives \cref{equation:pregluing-estimate-on-T7} as in the proof of \cref{proposition:pregluing-estimate}.
\end{proof}

\subsection{Linear Estimates}
\label{subsection:linear-estimates}

We now arrived in the second step of the three step process of (1) constructing an approximate solution, (2) estimating the linearisation of the instanton equation, and (3) perturbing the approximate solution to a genuine solution.
The estimate in question is \cref{proposition:inverse-operator-estimate-xy-norm}.
It makes use of the norms
$\|{
   \cdot
  }_{\mathfrak{X}_t}$
and 
$\|{
   \cdot
  }_{\mathfrak{Y}_t}$
that are defined in \cref{subsubsection:stating-the-estimate}.

The idea of the proof is this:
near the resolution locus of the associative $L$, the linearisation of the instanton equation is approximately equal to the linearisation of the Fueter equation.
Deformations of the approximate solution and deformations of the Fueter section live in different spaces, so some work will need to go into making this statement precise.

Over the course of \cref{subsubsection:the-model-operator,subsubsection:schauder-estimate,subsubsection:estimate-of-eta-a} we work out an estimate for the linearised operator modulo deformations of the approximate instanton that come from deformations of the Fueter section.
This estimate is given in \cref{proposition:crucial-proposition}.
We use a Schauder estimate for the linearised operator, which is given in section \cref{subsubsection:schauder-estimate}, together with analysis on the local models $\R^3 \times {\XEH}$ and $\R^3 \times \C^2/\{ \pm 1\}$, which is explained in \cref{subsubsection:the-model-operator}.

So we have estimates for the linearised operator on instanton deformations that come from deformations of the Fueter section from \cref{subsubsection:comparison-with-the-fueter-operator} and on the other instanton deformations from \cref{subsubsection:estimate-of-eta-a}.
In \cref{subsubsection:cross-term-estimates,subsubsection:proof-of-linear-estimate-proposition} we combine both and complete the proof of \cref{proposition:crucial-proposition}.

\subsubsection{Stating the Estimate}
\label{subsubsection:stating-the-estimate}

In the previous section, we constructed a connection $A_t \in \mathscr{A}(E_t)$.
The linearisation of the $G_2$-instanton equation together with the Coulomb gauge condition is
 \begin{align*}
  L_t:=L_{A_t}:
  (\Omega^0 \oplus \Omega^1)(M,\Ad E)
  & \rightarrow 
  (\Omega^0 \oplus \Omega^1)(M,\Ad E)
  \\
  \begin{pmatrix}
   \xi \\ a
  \end{pmatrix}
  & \mapsto
  \begin{pmatrix}
   0 & \d^*_{A_t} \\
   \d_{A_t} & *(\tilde{\psi}^N_t \wedge \d_{A_t})
  \end{pmatrix}
  \begin{pmatrix}
   \xi \\ a
  \end{pmatrix},
 \end{align*}
cf. \cref{eqution:g2-instanton-equation-made-elliptic}.
We introduce the following notation for the constant part and the quadratic part of the $G_2$-instanton equation:
for $\underline{a}=(\xi,a) \in (\Omega^0 \oplus \Omega^1)(N_t, \Ad E_t)$ define $e_t$ as well as $Q_t(\underline{a}) \in \Omega^0(N_t, \Ad E_t)$ via
\begin{align}
\label{equation:g2-instanton-equation}
\begin{split}
 &*(F_{A_t+a} \wedge \tilde{\psi}^N_t)+\d _{A_t+a}\xi
 \\
 &\;\;\;\;\;\;\;\;\;\;\;\;\;\;\;\;\;\;\;\;=
 \underbrace{
 *(F_{A_t} \wedge \tilde{\psi}^N_t)
 }_{=:e_t}
 +
 *(\d _{A_t} a \wedge \tilde{\psi}^N_t) + \d_{A_t} \xi
 +
 \underbrace{
 \frac{1}{2}
 *([a \wedge a] \wedge \tilde{\psi}^N_t)+[\xi,a]
 }_{=:Q_t(\underline{a})}.
 \end{split}
\end{align}
In this section we will study the operator $L_t$ and derive an estimate for the operator norm of the inverse of $L_t$.
This operator norm will be taken with respect to the complicated norms $\|{ \cdot}_{\mathfrak{X}}$ and $\|{ \cdot}_{\mathfrak{Y}}$, taken from \cite[Section 8]{Walpuski2017}, which we will explain now.

We need a way to decompose elements in $\Omega^1(N_t,\Ad E_t)$ into a part coming from a section of $s^*(V \mathfrak{M})$, which is nonzero only near the gluing area, and a rest:

\begin{definition}
\label{definition:pi-iota-splitting}
The section $s$ gives rise to a connection $s(A) \in \mathscr{A}(s(E))$ by \cref{proposition:section-gives-rise-to-connection}.
 A section $f \in \Gamma(s^*V \mathfrak{M})$ analogously defines an element in $T_{s(A)} \mathscr{A}(s(E))=\Omega^1(P, \Ad s(E))$, say $i_*f$.
 Use this to define
 \begin{align}
  \begin{split}
   \iota_t: \Gamma(s^* V \mathfrak{M}) & \rightarrow \Omega^1(N_t,\mathfrak{g}_{E_t})
   \\
   f & \mapsto \chi^+_t \cdot i_*f.
  \end{split}
 \end{align}
 Further define
 $\pi_t
   :
   \Omega^1(N_t,\Ad E_t) \rightarrow \Gamma(s^* V \mathfrak{M})$
 for $a \in \Omega^1(N_t,\Ad E_t)$ and $x \in L$ by
 \begin{align}
  (\pi_t a )(x)
  :=
  \sum_{\kappa}
  \int_{P_x}
  \< a, \iota_t \kappa \>_{g^P_t} 
  \vol_{g^P_t|_{P_x}}
  \cdot \kappa,
 \end{align}
 where $\kappa$ runs through an orthonormal basis of $(V\mathfrak{M})_{s(x)}$ with respect to the inner product $\< \iota_t \cdot, \iota_t \cdot \>_{g_P^t}$.
 Here the integral is taken with respect to the metric induced by $\varphi^P_t$ restricted to $P_x$.
 Let further $\overline{\pi}_t:= \iota_t \pi_t$ and
 $
  \eta_t
  :=
  \Id - \overline{\pi}_t
 $.
\end{definition}

The following proposition states that $\iota_t$ and $\pi_t$ are bounded operators.
The proof of these estimates is similar to the proof of \cite[Proposition 6.4]{Walpuski2017}.

\begin{proposition}
\label{proposition:iota-pi-bounds}
 For $l \leq -1$ and $\delta \in \R$ such that $l-\alpha +\delta > -3$ and $l+\delta < -1$ there is a constant $c >0$ such that for all $t \in (0,T)$ we have
 \begin{align*}
  \|{
   \iota_t f
  }_{C^{0,\alpha}_{l,\delta;t}}
  &\leq
  c t^{-1-l}
  \|{
   f
  }_{C^{0,\alpha}}
  \text{ and}
  \\
  \|{
   \pi_t a
  }_{C^{0,\alpha}}
  &\leq
  c t^{1+l-\alpha}
  \|{
   a
  }_{C^{0,\alpha}_{l,\delta;t}(V_{[0,R),t})}.
 \end{align*}
\end{proposition}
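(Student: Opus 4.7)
The plan is to exploit the pointwise decay of tangent vectors to the framed ASD moduli space on Eguchi-Hanson space, translate this decay through the rescaling $g^P_t \approx t^2 g^{\mathrm{EH}}$ which compresses the fibres, and then verify the weighted Hölder estimates by direct computation in the two regimes $r_t \leq \sqrt t$ and $r_t > \sqrt t$ of the weight $w_{l,\delta;t}$. For the $\pi_t$ bound I will use that $\pi_t$ is, up to a factor, the $L^2$-adjoint of $\iota_t$ with respect to the fibrewise $g^P_t$ inner product, so that its bound is obtained by a Cauchy–Schwarz-type fibre integration against the pointwise bound on $\iota_t\kappa$.

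For $\iota_t$: given $f\in\Gamma(s^*V\mathfrak M)$, the value $(i_*f)(x)$ is a harmonic representative of a tangent vector to the framed moduli space, so by \cref{proposition:ale-asd-kernel-cokernel} we get $|(i_*f)(x)|_{g^{\mathrm{EH}}} \leq c(1+\check r)^{-3}|f(x)|$ on each fibre $P_x$. Since $1$-forms rescale as $|\cdot|_{g^P_t} = t^{-1}|\cdot|_{g^{\mathrm{EH}}}$ and $\check r = r_t/t$, this converts to
\[
|i_*f|_{g^P_t} \leq c\, t^2 (t+r_t)^{-3}|f|,
\]
and the support of $\iota_t f$ lies inside $\{r_t\leq R\}$ by $\chi^+_t$. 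Multiplying by $w_{l,\delta;t}$ and splitting into the zones $r_t\leq t$, $t\leq r_t\leq\sqrt t$, $\sqrt t\leq r_t\leq R$, the worst contribution in each zone is $O(t^{-1-l})$: the assumption $l+\delta>-3$ (which follows from $l-\alpha+\delta>-3$) makes the exponent on $r_t/t$ nonpositive so the supremum is attained at $r_t\sim t$ where the weight is $\approx t^{-l}$ and $|i_*f|_{g^P_t}\approx t^{-1}$. The Hölder piece uses the analogous decay bound for $\nabla(i_*f)$, the covariant variation of $s$ along $L$ from \cref{definition:moduli-bundle-cov-derivative}, and the cut-off bounds from \cref{proposition:cut-off-norms}.

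For $\pi_t$: write
\[
(\pi_t a)(x) = \sum_\kappa \left(\int_{P_x}\langle a,\iota_t\kappa\rangle_{g^P_t}\mathrm{vol}_{g^P_t|_{P_x}}\right)\kappa,
\]
and bound each coefficient by $\int_{P_x}|a|_{g^P_t}|\iota_t\kappa|_{g^P_t}\mathrm{vol}_{g^P_t|_{P_x}}$. Inserting $|a|_{g^P_t}\leq w_{l,\delta;t}^{-1}\|a\|_{L^\infty_{l,\delta;t}}$, the pointwise bound on $|i_*\kappa|_{g^P_t}$, and $\mathrm{vol}_{g^P_t|_{P_x}}=t^4\mathrm{vol}_{g^{\mathrm{EH}}}$, the fibre integral becomes one in $\check r$ that converges at infinity precisely because of the hypothesis $l+\delta<-1$ (the integrand decays like $\check r^{-l-\delta-3}\cdot\check r^3\,d\check r$ for large $\check r$). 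A direct calculation yields $\|\pi_t a\|_{L^\infty}\leq c\,t^{1+l}\|a\|_{L^\infty_{l,\delta;t}}$. For the $C^{0,\alpha}$ upgrade one subtracts the integrals over two fibres $P_x,P_y$ identified via parallel transport along $L$, uses the Hölder regularity of $a$ with respect to the unweighted metric and the smooth variation of the fibre data; converting between weighted and unweighted $C^{0,\alpha}$ norms on a fibre of scale $t$ produces the loss $t^{-\alpha}$, giving the announced $t^{1+l-\alpha}$.

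The main obstacle will be the Hölder part of the $\pi_t$ estimate: comparing fibre integrals over two nearby points of $L$ requires a careful trivialisation of the fibrewise data along $L$ and tracking how the weight $w_{l,\delta;t}$ transforms under a base-point translation by distance $d(x,y)$. The interplay of the two regimes of $w_{l,\delta;t}$ at $r_t\sim \sqrt t$ is where both hypotheses $l+\delta<-1$ and $l-\alpha+\delta>-3$ are sharply used, and it is the $r_t\sim t$ scale that produces the explicit $t^{-\alpha}$ factor from the metric rescaling; the rest of the proof amounts to bookkeeping the powers of $t$ in each regime.
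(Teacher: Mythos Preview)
Your approach is essentially the same as the paper's: use the $(1+\check r)^{-3}$ decay from \cref{proposition:ale-asd-kernel-cokernel}, rescale by $g^P_t = t^2 g^P_1$, and compute the weighted integrals zone by zone. Two small points deserve care. First, a sign slip: the fibre integrand behaves like $\check r^{l+\delta-3}\cdot \check r^3\,d\check r$, not $\check r^{-l-\delta-3}\cdot \check r^3\,d\check r$; your conclusion that $l+\delta<-1$ is what gives convergence is nevertheless correct. Second, and more substantively, you do not mention that the basis $\{\kappa\}$ is orthonormal for the $t$-\emph{dependent} inner product $\langle\iota_t\cdot,\iota_t\cdot\rangle_{g^P_t}$, which forces $\|\kappa\|_{L^2,g^P_1}\sim t^{-1}$ and $\|\kappa\|_{L^\infty,g^P_1}\sim t^{-1}$; these two extra factors of $t^{-1}$ are what convert the raw integral bound $ct^{3+l}$ (which is what your Cauchy--Schwarz argument actually yields against a unit-$g^P_1$ vector) into the claimed $ct^{1+l}$. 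The paper makes this step explicit; without it the ``direct calculation'' does not close.
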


\begin{proof}
 The proof of the first inequality is the same as the proof of \cite[Proposition 6.4]{Walpuski2017}.
 
 To prove the second inequality, note that by \cref{proposition:ale-asd-kernel-cokernel} we have for $x \in L, \kappa \in (V \mathfrak{M})_{s(x)}$
 \[
  |i_* \kappa|_{g^P_1}
  \leq
  c_{\kappa} (1+\check{r})^{-3}
 \]
 for a constant $c_{\kappa}$ depending on $x \in L$ and on $\kappa$.
 Because $(V \mathfrak{M})_{s(x)}$ is a finite-dimensional vector space we can take $c=\max_{\|{\kappa}_{L^2,g^P_1}=1}c_\kappa$ to get the estimate 
 \begin{align}
 \label{equation:decay-of-kappa-in-g-P-1}
  |i_* \kappa|_{g^P_1}
  \leq
  c (1+\check{r})^{-3}
  \|{
   \kappa
  }_{g^P_1,L^2}
 \end{align}
 for a constant $c$ independent of $\kappa$.
 By compactness of $L$, we can assume $c$ to also be independent of $x \in L$.
 By measuring in $g^P_t$ instead of $g^P_1$ we get from \cref{equation:decay-of-kappa-in-g-P-1}:
 \begin{align}
 \label{equation:decay-of-kappa-in-g-P-t}
  |i_* \kappa|_{g^P_t}
  =
  t^{-1}
  |i_* \kappa|_{g^P_1}
  \leq
  c t^2(t+t\check{r})^{-3}
  \|{
   \kappa
  }_{g^P_1,L^2}.
 \end{align}
 For some interval $J \subset \R$ and $x \in L$ we denote
  $P_{x,J}
  :=
  \left\{
   u \in P_x
   :
   \check{r}(u) \in J
  \right\}$
 and similarly for $(\nu /\{ \pm1 \})_{x,J}$.
 By abuse of notation we write $\vol_{g^P_t}$ for $\vol_{g^P_t|_{P_x}} \in \Omega^4(P_x)$ and similarly for $\vol_{g^\nu_t}$.
 \begin{align}
 \label{equation:iota-integral-estimate}
 \begin{split}
  \int_{P_x}
  \< a, \chi_t^+ \cdot i_* \kappa \>_{g^P_t}
  \vol_{g^P_t}
  &\leq
  \int_{P_x}
  |a|_{g^P_t}
  |\chi_t^+ \cdot i_* \kappa|_{g^P_t}
  \vol_{g^P_t}
  \\
  &\leq
  c
  \int_{P_{x,\left[ 0, 1 \right]}}
  \frac{t^2}{(t+t\check{r})^3}
  w_{l,\delta;t}^{-1}
  \vol_{g^P_t}
  \;
  \|{
   a
  }_{L^\infty_{l,\delta;t},g^P_t}
  \|{
   \kappa
  }_{L^2,g^P_1}
  \\
  &\quad\quad +
  c
  \int_{P_{x,\left[ 1, R t^{-1} \right]}}
  \frac{t^2}{(t+t\check{r})^3}
  w_{l,\delta;t}^{-1}
  \vol_{g^P_t}
  \;
  \|{
   a
  }_{L^\infty_{l,\delta;t},g^P_t}
  \|{
   \kappa
  }_{L^2,g^P_1}
  \\
  &\leq
  c
  \vol_{g^P_t}(P_x,[0,1])
  \cdot
  t^{l-1}
  \;
  \|{
   a
  }_{L^\infty_{l,\delta;t},g^P_t}
  \|{
   \kappa
  }_{L^2,g^P_1}
  \\
  &\quad\quad +
  c
  \int_{(\nu/\{\pm 1\})_{x,\left[ 0, R t^{-1} \right]}}
  \frac{t^2}{(t+t\check{r})^3}
  w_{l,\delta;t}^{-1}
  \vol_{g^\nu_t}
  \;
  \|{
   a
  }_{L^\infty_{l,\delta;t},g^P_t}
  \|{
   \kappa
  }_{L^2,g^P_1}
  \\
  &\leq
  c
  t^{l+3}
  \;
  \|{
   a
  }_{L^\infty_{l,\delta;t},g^P_t}
  \|{
   \kappa
  }_{L^2,g^P_1}
  \\
  &\quad\quad +
  c
  \int_0^{\sqrt{t}}
  t^{2-\delta}
  (t+r)^{l+\delta-3} r^3 \d r
  \cdot
  \|{
   a
  }_{L^\infty_{l,\delta;t},g^P_t}
  \|{
   \kappa
  }_{L^2,g^P_1}
  \\ &\;\;\;\;\;\;
  + c
  \int_{\sqrt{t}}^{R}
  t^2 r^{l-\delta} (t+r)^{-3} r^3 \d r 
  \cdot
  \|{
   a
  }_{L^\infty_{l,\delta;t},g^P_t}
  \|{
   \kappa
  }_{L^2,g^P_1}.
  \end{split}
 \end{align}
 Here we used \cref{equation:decay-of-kappa-in-g-P-t} in the second step.
 In the third step, we switched from integrating over $P_{x,[1,Rt^{-1}]}$ to integrating over $\nu_{x,[1,Rt^{-1}]}$.
 We could do this because $t \check{r}$ on $P$ corresponds to the radius function $r$ on $\nu$, and $g^P_t|_{P_{x,[1,Rt^{-1}]}}-\rho^*g^\nu_t|_{P_{x,[1,Rt^{-1}]}} \rightarrow 0$ measured in $g^\nu_t$ as $t \rightarrow 0$ by \cref{equation:phi-nu-tilde-phi-nu-difference,equation:phi-P-tilde-phi-nu-tilde-difference}.
 The latter implies that we can change $\vol_{g^P_t}$ to $\vol_{g^\nu_t}$.
 We used the definition of $w_{l,\delta;t}$ and changing into sphere coordinates in the fourth step.
 
 We now treat the two integrals separately.
 \begin{align}
 \label{equation:iota-integral-estimate-first}
 \begin{split}
  \int_0^{\sqrt{t}}
  (t+r)^{l+\delta-3} r^3 \d r
  &=
  \left[
  (r+t)^{\delta+l}
  \left(
  -
  \frac{3t}{\delta+l}
  -
  \frac{t^3}{(-2+\delta+l)(r+t)^2}
  \right.\right.
  \\
  &\quad\quad
  \left.\left.
  +
  \frac{3t^2}{(-1+\delta+l)(r+t)}
  +
  \frac{r+t}{1+\delta+l}
  \right)
  \right]_0^{\sqrt{t}}
  \\
  &\leq
  c
  (
  t^{\delta+l+1}
  +
  t^{\delta/2+l/2+1/2}
  )
  \\
  &
  \leq
  c t^{\delta+l+1}
  ,
  \end{split}
  \intertext{where we used a computer algebra system to compute the integral in the first step and used $\delta+l+1<0$ in the third step. %
  For the second integral we find that}
  \label{equation:iota-integral-estimate-second}
  \begin{split}
  \int_{\sqrt{t}}^R
  r^{l-\delta} (t+r)^{-3} r^3 \d r
  &
  \leq
  \int_{\sqrt{t}}^R
  r^{l+1-\delta} \d r
  \\
  &\leq
  \left[
   r^{l+1-\delta}
  \right]^R_{\sqrt{t}}
  \\
  &\leq
  t^{l}\cdot t^{-l/2-\delta/2-1/2}\cdot t^1
  +
  c
  \\
  &\leq
  ct^{l+1}
 \end{split}
 \end{align}
 where we used the fact that $-l-\delta-1>0$ to estimate the first summand in the last step, and the fact that $l \leq -1$ to estimate the second summand in the last step.
 
 Combining \cref{equation:iota-integral-estimate,equation:iota-integral-estimate-first,equation:iota-integral-estimate-second} we get
 \begin{align}
 \label{equation:integral-estimate-combined}
  \int_{P_x}
  \< a, \chi_t \cdot i_* \kappa \>_{g^P_t}
  \vol_{g^P_t}
  &\leq
  c t^{3+l}
  \|{
   a
  }_{L^\infty_{l,\delta;t}}
  \|{
   \kappa
  }_{L^2,g^P_1}.
 \end{align}
 
 If $\kappa_1,\kappa_2 \in (V \mathfrak{M}_t)_{s(x)}$, then
 \begin{align}
 \label{equation:inner-product-rescaling-factor}
 \begin{split}
  \< \chi^+_t \cdot i_* \kappa_1, \chi^+_t \cdot i_* \kappa_2 \>_{L^2,g^P_t}
  & \sim
  \< i_* \kappa_1, i_* \kappa_2 \>_{L^2,g^P_t}
  \\
  & \sim
  t^2 \< i_* \kappa_1, i_* \kappa_2 \>_{L^2,g^P_1},
 \end{split}
 \end{align}
 where $\sim$ means comparable uniformly in $t$.
 Here, in the second step we used the fact that $\vol_{g^P_t|_{P_x}}=t^4 \vol_{g^P_1|_{P_x}}$ and $\< \kappa_1(y), \kappa_2(y) \>_{g^P_t}=t^{-2} \< \kappa_1(y), \kappa_2(y) \>_{g^P_1}$ for $y \in P_x$.
 \Cref{equation:inner-product-rescaling-factor} implies that if $\kappa$ has unit length with respect to the inner product $\< \iota_t \cdot , \iota_t \cdot \>_{g^P_t}$, then
 \begin{align}
 \label{equation:L2-norm-on-P-fibres-estimate}
  \|{
   \kappa
  }_{L^2,g^P_1}
  \leq
  c
  t^{-1}.
 \end{align}
 Because $\|{ \cdot }_{L^2,g^P_1}$ and $\|{ \cdot }_{L^\infty,g^P_1}$ are norms on a finite-dimensional vector space, they are equivalent, and thus
 \begin{align}
 \label{equation:L-infinity-norm-on-P-fibres-estimate}
  \|{
   \kappa
  }_{L^\infty,g^P_1}
  \leq
  c
  t^{-1}.
 \end{align}
 Combining \cref{equation:integral-estimate-combined,equation:L2-norm-on-P-fibres-estimate,equation:L-infinity-norm-on-P-fibres-estimate} and recalling the definition of $\pi_t$ from \cref{definition:pi-iota-splitting} gives
 \begin{align*}
  \|{
   \pi_t a
  }_{L^\infty}
  &\leq
  \left|
  \sum_{\kappa}
  \int_{P_x}
  \< a, \iota_t \kappa \>_{g^P_t} 
  \vol_{g^P_t|_{P_x}}
  \right|
  \cdot 
  \|{
   \kappa
  }_{L^\infty,g^P_1}
  \\
  &\leq  
  c t^{1+l}
  \|{
   a
  }_{L^\infty_{l,\delta;t}}.
 \end{align*}
 The estimate for the $\|{\cdot }_{C^{0,\alpha}}$ Hölder norm follows analogously.
\end{proof}

We are now ready to define the norms which we will use to prove estimates for the operator $L_t$:

\begin{definition}
\label{definition:X-Y-norm-definition}
Denote by $\mathfrak{X}_t$ and $\mathfrak{Y}_t$ the Banach spaces $C^{1,\alpha}(N_t,(\Lambda^0 \oplus \Lambda^1) \tensor \Ad E_t)$ and $C^{0,\alpha}(N_t,(\Lambda^0 \oplus \Lambda^1) \tensor \Ad E_t)$ equipped with the norms
\begin{align}
 \begin{split}
  \|{
   \underline{a}
  }_{\mathfrak{X}_t}
  &:=
  t^{-\delta/2}
  \|{
   \eta_t \underline{a}
  }_{C^{1,\alpha}_{-1,\delta;t}}
  +
  t
  \|{
   \pi_t \underline{a}
  }_{C^{1,\alpha}}
 \;\;\;
 \text{ and}
 \\
 \|{
   \underline{a}
  }_{\mathfrak{Y}_t}
  &:=
  t^{-\delta/2}
  \|{
   \eta_t \underline{a}
  }_{C^{0,\alpha}_{-2,\delta;t}}
  +
  t
  \|{
   \pi_t \underline{a}
  }_{C^{0,\alpha}}
 \end{split}
\end{align}
respectively.
\end{definition}

Using these norms, we can now state the main result of this section:

\begin{proposition}
 \label{proposition:inverse-operator-estimate-xy-norm}
 Let $N_t$ be the resolution of $T^7/\Gamma$ from \cref{section:torsion-free-structures-on-the-generalised-kummer-construction}.
 Let $s$ be the Fueter section and $\theta$ be the $G_2$-instanton used in the construction of $A_t$ (cf. \cref{proposition:approximate-solution}).
 If $s$ is infinitesimally rigid and $\theta$ is infinitesimally rigid and irreducible, then there exists a constant $c>0$ which is independent of $t$ such that for small enough $t$ and all $\underline{a} \in (\Omega^0 \oplus \Omega^1)(N_t,\Ad E_t)$:
 \begin{align}
  \|{
   \underline{a}
  }_{\mathfrak{X}_t}
  \leq
  c
  \|{
   L_t \underline{a}
  }_{\mathfrak{Y}_t}.
 \end{align}
\end{proposition}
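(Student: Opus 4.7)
The plan is to separate $\underline{a}$ into a \emph{Fueter-like} component $\overline{\pi}_t \underline{a} = \iota_t(\pi_t \underline{a})$ and a \emph{transverse} component $\eta_t \underline{a} = \underline{a} - \overline{\pi}_t \underline{a}$, estimate each one in terms of $L_t$ applied to itself, and then absorb the cross-terms. The two estimates live at different geometric scales, which is precisely why the norms $\|\cdot\|_{\mathfrak{X}_t}$ and $\|\cdot\|_{\mathfrak{Y}_t}$ carry different weights on the two pieces, and why one needs the sharper torsion-free estimate from \cref{corollary:kummer-construction-simplified-torsion-free-estimate} in the $T^7/\Gamma$ case.

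\textbf{Step 1 (Transverse piece, via the model operator).} For the component $\eta_t \underline{a}$, I would compare $L_t$ with its two natural model operators: on the scale $t$ near $L$, the product model operator on $\R^3 \times \XEH$ obtained by pulling back via $(\cdot t)$ together with the linearised instanton operator at the pulled-back connection $s(A)$; and away from $L$, the operator on $Y/\langle\iota\rangle$ linearised at $\theta$. Using the rigidity hypotheses on $s$ and $\theta$, together with \cref{theorem:tangent-space-of-moduli,proposition:ale-asd-kernel-cokernel}, the model operators are invertible on the space of sections $L^2$-orthogonal to the infinitesimal Fueter deformations (which is exactly the image of $\iota_t$ in the model), yielding model Schauder/weighted estimates of the form
\[
t^{-\delta/2}\|\eta_t \underline{a}\|_{C^{1,\alpha}_{-1,\delta;t}} \;\le\; c\, t^{-\delta/2}\|L_t \eta_t \underline{a}\|_{C^{0,\alpha}_{-2,\delta;t}}
\]
for $\delta\in(-1,0)$ and $\alpha$ small. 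The standard way to get this from the model estimates is a contradiction/rescaling argument: if the estimate fails, one extracts a sequence $(t_i, \underline{a}_i)$ with $\|\eta_{t_i}\underline{a}_i\|_{C^{1,\alpha}_{-1,\delta;t_i}} = 1$ and $\|L_{t_i}\eta_{t_i}\underline{a}_i\|_{C^{0,\alpha}_{-2,\delta;t_i}} \to 0$, and produces a non-trivial limit on either the bubble $\R^3\times\XEH$ (decaying with the correct rate thanks to $\delta\in(-1,0)$) or on $Y/\langle\iota\rangle$. Rigidity of $s$ and of $\theta$ rules out both, giving a contradiction. This is essentially the analogue of \cite[Proposition 8.\!*\!*]{Walpuski2017}, adapted to the Eguchi--Hanson rather than flat $\C^2/\{\pm 1\}$ setting; the presence of a nontrivial moduli tangent space on $\XEH$ is exactly what is quotiented out by $\eta_t$.

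\textbf{Step 2 (Fueter-like piece, via $\mathfrak{F}$).} For $\overline{\pi}_t\underline{a}$, I want to show that after projecting $L_t(\iota_t f)$ back to $\Gamma(s^*V\mathfrak{M})$ via $\pi_t$, the operator is close to the linearised Fueter operator $D_s \mathfrak{F}$. The definition of $\iota_t$ in \cref{definition:pi-iota-splitting} together with \cref{proposition:section-gives-rise-to-connection}(iii) shows that $*(F_{s(A)+\iota_t f}\wedge \psi^P_t)$ has no $\iota_t$-contribution to first order except through the Fueter operator acting on $f$; the remaining error comes from replacing $\psi^P_t$ by $\tilde\psi^N_t$, and this is controlled by \cref{proposition:psi-N-psi-P-comparison-on-T7,proposition:equation:torsion-free-hoelder-difference}, giving a schematic identity
\[
\pi_t \circ L_t \circ \iota_t \;=\; D_s \mathfrak{F} + \mathcal{O}(t^{1+\epsilon})
\]
on $\Gamma(s^*V\mathfrak{M})$ in the relevant norms. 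Because $s$ is infinitesimally rigid, $D_s\mathfrak{F}$ is invertible on a compact base $L$, so for $t$ small this composition is invertible, giving
$t\|\pi_t\underline{a}\|_{C^{1,\alpha}} \le c\, t\|\pi_t L_t \iota_t (\pi_t \underline{a})\|_{C^{0,\alpha}}$, which in turn is bounded by $c(\|L_t\underline{a}\|_{\mathfrak{Y}_t}+\|L_t\eta_t\underline{a}\|_{\mathfrak{Y}_t})$ via \cref{proposition:iota-pi-bounds}.

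\textbf{Step 3 (Cross-term estimates and combination).} The main obstacle is to verify that the cross-terms $\pi_t L_t \eta_t$ and $\eta_t L_t \iota_t$ really are small enough to be absorbed when one combines Steps 1 and 2. These involve (a) the commutator of $L_t$ with $\chi^+_t$, which is supported in the annular gluing region and is controlled by \cref{proposition:cut-off-norms} together with \cref{proposition:iota-pi-bounds}; and (b) the fact that an element of the form $\iota_t f$ is \emph{not} exactly a solution of the fibrewise ASD linearisation, but only up to terms involving horizontal derivatives of $f$ on $L$ and the error $\psi^N_t-\psi^P_t$, bounded via \cref{proposition:psi-N-psi-P-comparison-on-T7}. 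The $t$-weights in $\|\cdot\|_{\mathfrak{X}_t}$ and $\|\cdot\|_{\mathfrak{Y}_t}$ (namely $t^{-\delta/2}$ on $\eta_t$ versus $t$ on $\pi_t$) are designed precisely so that these cross-terms gain a positive power of $t$. Concretely, a term of the shape $\pi_t(L_t \eta_t \underline{a})$ is bounded using \cref{proposition:iota-pi-bounds} by $c t^{1+(-2)-\alpha} \|L_t \eta_t \underline{a}\|_{C^{0,\alpha}_{-2,\delta;t}}$, and the excess power of $t$ makes this lower order compared to the $t^{-\delta/2}$ weight, for $\alpha\ll|\delta|$. Putting Steps 1--3 together and absorbing the small cross-terms on the left-hand side yields the claimed estimate. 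The hardest point is really Step 1: making the rescaling/contradiction argument rigorous with the correct weighted Hölder norms (in particular ensuring that bubbles and body limits inherit the right decay and do not escape to a reducible limit, which is where irreducibility of $\theta$ is used to rule out a constant limit on the body).
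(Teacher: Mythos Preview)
Your three-step decomposition (transverse piece via blowup, Fueter piece via $d_s\mathfrak{F}$, cross-terms) is exactly the structure of the paper's proof, and the ingredients you cite line up with \cref{proposition:crucial-proposition}, \cref{proposition:pi-L-iota-inequality}, and \cref{proposition:cross-term-estimates}.

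One genuine omission in your Step~1: the contradiction/rescaling argument for the transverse piece does \emph{not} reduce to only two limiting models. In the paper's proof of \cref{proposition:crucial-proposition} there is a third case, the neck region where $r_{t_i}(x_i)\to 0$ but $t_i^{-1}r_{t_i}(x_i)\to\infty$. Here neither the $\R^3\times\XEH$ bubble nor the body $Y/\langle\iota\rangle$ is the correct model; instead one rescales to land on $\R^3\times(\C^2/\{\pm 1\}\setminus\{0\})$ with the flat limiting connection $A_0$ (cf.\ \cref{proposition:blowup-comparison-nu}), and the contradiction comes from Liouville's theorem after a further rescaling by $|z_i|$ (with several subcases depending on whether $\sqrt{t_i}\,|z_i|$ tends to $0$, a finite constant, or $\infty$). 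This neck analysis is where the two-sided weight function $w_{l,\delta;t}$ really earns its keep, and skipping it leaves the argument incomplete. Apart from this, your outline is faithful to the paper; note also that the paper wraps the final combination of Steps~1--3 in a top-level contradiction argument rather than a direct estimate, but this is a cosmetic difference.
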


Unfortunately, we are restricted to the case where $N_t$ is a resolution of $T^7/\Gamma$.
The reason for this is that in this case we have improved control over the $G_2$-structure $\tilde{\varphi}^N_t$ as proved in \cref{proposition:psi-N-psi-P-comparison-on-T7,corollary:kummer-construction-simplified-torsion-free-estimate}.
The proof of the proposition extends over the rest of this section.

\subsubsection{Comparison with the Fueter Operator}
\label{subsubsection:comparison-with-the-fueter-operator}

Given an element $v \in \Gamma(s^* V\mathfrak{M})$ one may do two different things to it:
either embed it into $\Omega^1(N_t, \Ad E_t)$ first, and then apply $L_t$.
Or apply the linearised Fueter operator first, and then embed it into $\Omega^1(N_t, \Ad E_t)$.
It will turn out that the two are the same, up to a small error.
In \cite{Walpuski2017}, Fueter sections into a moduli bundle of ASD-instantons on $\R^4$ were considered, and the following proposition was proved in that setting.
In this article, ASD-instantons on ${\XEH}$ are considered, but the proof works essentially the same way.
That said, we do need that
$\tilde{\psi}^N_t-\psi^P_t$
is small.
This is true on resolutions of $T^7/\Gamma$ by \cref{proposition:psi-N-psi-P-comparison-on-T7,corollary:kummer-construction-simplified-torsion-free-estimate} but not proved for general resolutions of $G_2$-orbifolds.
Consequently, we only know the following two propositions to hold on resolutions of $T^7/\Gamma$.

\begin{proposition}[Proposition 8.26 in \cite{Walpuski2017}]
\label{proposition:fueter-comparison-theorem}
 Let $N_t$ be the resolution of $T^7/\Gamma$ from \cref{section:torsion-free-structures-on-the-generalised-kummer-construction}.
 There exists a constant $c>0$ such that for all $t \in (0,T)$ and all $v \in \Gamma(s^* V\mathfrak{M})$ the following estimate holds:
 \begin{align}
  \|{
   L_t \iota_t v - \iota_t \d_s \mathfrak{F} v
  }_{C^{0,\alpha}_{-2,0;t}}
  \leq
  c t^2 \|{ v }_{C^{1,\alpha}}.
 \end{align}
\end{proposition}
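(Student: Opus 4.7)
Write $b := \iota_t v = \chi^+_t \cdot i_* v$ and split
\begin{align*}
L_t b - \iota_t \d_s \mathfrak{F} v
&= \bigl(L_t^P b - \iota_t \d_s \mathfrak{F} v\bigr) + \bigl(L_t - L_t^P\bigr) b,
\end{align*}
where $L_t^P$ denotes the analogous linearised $G_2$-instanton-plus-Coulomb operator built from $(s(A),\psi^P_t)$ on the bundle $s(E)\to P$. The strategy is (a) to identify the model term $L_t^P b$ fibrewise as the Fueter operator, up to cutoff errors, and (b) to control the model discrepancy $L_t-L_t^P$ using the estimates already available for $A_t-s(A)$, $\psi^N_t-\psi^P_t$, and $\tilde{\psi}^N_t-\psi^N_t$. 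The improved control over the torsion-free $G_2$-structure on $T^7/\Gamma$ is essential for (b).

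For (a), decompose $\Lambda^\bullet T^*P$ via the horizontal/vertical bi-grading induced by the connection from \cref{subsection:g2-structures-on-the-resolution}. On each fibre $P_x$, $i_*v|_{P_x}$ represents the tangent vector $v(x)\in T_{s(x)} M$, and by \cref{theorem:tangent-space-of-moduli} therefore lies in the kernel of the fibrewise operator $\delta_{s(A)|_{P_x}}$; this kills the purely $(0,2)$ components of $L_t^P(i_*v)$. By the characterisation of the tautological connection in \cref{proposition:tautological-bundle-properties}, the mixed $(1,1)$ component of $\d_{s(A)}(i_*v)$ is the horizontal covariant derivative $\nabla v$, and pairing it with the $(1,1)$-part $\sum_{\text{cyclic}} \sigma^*(e_j\wedge e_k)\wedge \check{\omega}_i$ of $\psi^P_t$ and applying the same linear-algebra identity used at the end of the proof of \cref{proposition:section-gives-rise-to-connection}(iii) yields exactly $i_*(\mathfrak{F} v)$ on the region $\{\chi^+_t\equiv 1\}$. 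The cutoff derivative $\d \chi^+_t$ is supported in $R/2\leq r_t\leq R$ where $|i_*v|=O(r_t^{-3})$ by \cref{proposition:ale-asd-kernel-cokernel}; after paying the weight $(t+r_t)^{-2}$, this remainder contributes $O(t^2)\|v\|_{C^{1,\alpha}}$ in the $C^{0,\alpha}_{-2,0;t}$-norm.

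For (b), the difference $L_t-L_t^P$ combines a change of reference connection and a change of $4$-form. The zero-th-order commutator with $A_t-s(A)$ is controlled by \cref{proposition:pregluing-summands-norms}, which gives an $O(t^2)$ contribution after combining with $b$ through the multiplicative estimate of \cref{proposition:norm-weights-basic-estimate}. For the $4$-form, split $\tilde{\psi}^N_t-\psi^P_t=(\tilde{\psi}^N_t-\psi^N_t)+(\psi^N_t-\psi^P_t)$: the first piece is $O(t^{5/2})$ in $C^{0,\alpha}_{0,0;t}$ by \cref{equation:torsion-free-perturbation-size-on-T7}, and the second is $O(t^4)$ in $C^{0,\alpha}_{2,0;t}$ by \cref{proposition:psi-N-psi-P-comparison-on-T7}. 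Paired with $\d_{A_t} b$ (which is uniformly bounded in $C^{0,\alpha}_{-2,0;t}$), each produces an $O(t^2)\|v\|_{C^{1,\alpha}}$ contribution in $C^{0,\alpha}_{-2,0;t}$ via \cref{proposition:norm-weights-basic-estimate}. Summing the contributions from (a) and (b) gives the claimed bound.

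\textbf{Main obstacle.} The subtle point is the precise fibre-wise identification in (a): one must verify by careful bookkeeping in the bi-grading that the Fueter operator emerges exactly and that the $(2,0)$ and $(0,2)$ pieces genuinely drop out under contraction with $\psi^P_t$ rather than leaving spurious first-order terms. The essential analytic input for (b) is the improved estimate \cref{proposition:psi-N-psi-P-comparison-on-T7}, which is only available on $T^7/\Gamma$; the general bound \cref{proposition:psi-n-psi-p-estimate} yields only $O(t^{-1})$ and would be useless here, which is exactly why the Fueter comparison in this form is restricted to the Generalised Kummer setting.
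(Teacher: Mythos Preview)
Your proposal is correct and follows essentially the same approach as the paper. The paper does not give its own proof but defers to \cite{Walpuski2017}, noting only that the argument carries over provided $\tilde{\psi}^N_t-\psi^P_t$ is small, which holds on resolutions of $T^7/\Gamma$ by \cref{proposition:psi-N-psi-P-comparison-on-T7} and \cref{corollary:kummer-construction-simplified-torsion-free-estimate}; these are precisely the ingredients you use in part (b), and your part (a) reproduces the fibre-wise identification already seen in the proof of \cref{proposition:section-gives-rise-to-connection}.
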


The following proposition is then a simple consequence.
It essentially provides the estimate for the inverse of $L_t$ on the space $\Im \overline{\pi}_t \subset \Omega^1(N_t, \Ad E_t)$.

\begin{proposition}
 \label{proposition:pi-L-iota-inequality}
 Let $N_t$ be the resolution of $T^7/\Gamma$ from \cref{section:torsion-free-structures-on-the-generalised-kummer-construction}.
 If $s$ is infinitesimally rigid, then there exists a constant $c>0$ such that for all $t \in (0,T)$ and all $v \in \Gamma(s^* V \mathfrak{M})$ the following estimate holds:
 \begin{align}
  \|{
   v
  }_{C^{1,\alpha}}
  \leq
  c
  \|{
   \pi_t L_t \iota_t v
  }_{C^{0,\alpha}}.
 \end{align}
\end{proposition}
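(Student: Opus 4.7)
The plan is to transfer the standard elliptic a priori estimate for the linearised Fueter operator $d_s\mathfrak{F}$ on the compact associative $L$ to the operator $\pi_t L_t \iota_t$ via Proposition~\ref{proposition:fueter-comparison-theorem} and Proposition~\ref{proposition:iota-pi-bounds}. Because $s$ is infinitesimally rigid, $\ker d_s \mathfrak{F}=0$; since $d_s\mathfrak{F}$ is a self-adjoint first order elliptic Dirac-type operator on the closed manifold $L$, it has index zero, so injectivity forces invertibility. Schauder estimates on $L$ then yield a constant $c_0>0$, independent of $t$, with
\[
\|v\|_{C^{1,\alpha}}\leq c_0\,\|d_s\mathfrak{F}\,v\|_{C^{0,\alpha}}\quad\text{for all } v\in\Gamma(s^*V\mathfrak{M}).
\]

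The second observation is that $\pi_t\iota_t=\mathrm{Id}$. Fix $x\in L$ and expand $v(x)\in(V\mathfrak{M})_{s(x)}$ in the basis $\{\kappa_i\}$ of $(V\mathfrak{M})_{s(x)}$ that is orthonormal with respect to $\langle\iota_t\cdot,\iota_t\cdot\rangle_{g^P_t}$. Since $(\iota_t v)(x,y)=\chi_t^+(y)\,i_*(v(x))(y)$ depends only on the vector $v(x)$ for fixed $x$, the integrals in the definition of $\pi_t$ read off exactly the coefficients of $v(x)$ in this basis, giving $(\pi_t\iota_t v)(x)=v(x)$.

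Next I would apply $\pi_t$ to the comparison estimate. Proposition~\ref{proposition:fueter-comparison-theorem} yields $\|L_t\iota_t v-\iota_t d_s\mathfrak{F}v\|_{C^{0,\alpha}_{-2,0;t}}\leq ct^2\|v\|_{C^{1,\alpha}}$. Proposition~\ref{proposition:iota-pi-bounds} applies to this with $l=-2$ and $\delta=0$ (the hypotheses $l\leq-1$, $l-\alpha+\delta=-2-\alpha>-3$ and $l+\delta=-2<-1$ hold for small $\alpha$), and combined with $\pi_t\iota_t=\mathrm{Id}$ it gives
\[
\|\pi_t L_t\iota_t v-d_s\mathfrak{F}v\|_{C^{0,\alpha}}\leq c\,t^{1-\alpha}\|v\|_{C^{1,\alpha}}.
\]

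Finally, chaining the a priori estimate with the triangle inequality produces
\[
\|v\|_{C^{1,\alpha}}\leq c_0\|d_s\mathfrak{F}v\|_{C^{0,\alpha}}\leq c_0\bigl(\|\pi_t L_t\iota_t v\|_{C^{0,\alpha}}+c\,t^{1-\alpha}\|v\|_{C^{1,\alpha}}\bigr),
\]
so for $t$ small enough that $c_0 c\,t^{1-\alpha}\leq \tfrac{1}{2}$ the error term is absorbed on the left and the claimed inequality follows. There is no serious obstacle in this argument: the entire strategy is fixed by the infinitesimal rigidity hypothesis; the only minor care needed is to verify that the weighted norm in which Proposition~\ref{proposition:fueter-comparison-theorem} is formulated sits in the range of weights to which Proposition~\ref{proposition:iota-pi-bounds} applies, which is checked above.
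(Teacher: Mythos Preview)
Your proof is correct and follows essentially the same approach as the paper: both use infinitesimal rigidity to get the a~priori estimate $\|v\|_{C^{1,\alpha}}\leq c\|d_s\mathfrak{F}v\|_{C^{0,\alpha}}$, invoke $\pi_t\iota_t=\mathrm{Id}$, and combine \cref{proposition:fueter-comparison-theorem} with \cref{proposition:iota-pi-bounds} (at $l=-2$, $\delta=0$) to compare $d_s\mathfrak{F}v$ with $\pi_tL_t\iota_t v$ up to an error of order $t^{1-\alpha}$ that is absorbed for small $t$. Your write-up in fact supplies a bit more justification (the index-zero argument for invertibility of $d_s\mathfrak{F}$ and the explicit check of the weight hypotheses) than the paper does.
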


\begin{proof}
We have
\begin{align*}
  \|{
   v
  }_{C^{1,\alpha}}
  &\leq
  c
  \|{
   \d_s \mathfrak{F} v
  }_{C^{0,\alpha}}
  \\
  &=
  c
  \|{
   \pi_t \iota_t \d_s \mathfrak{F} v
  }_{C^{0,\alpha}}
  \\
  &\leq
  c
  \left(
  \|{
   \pi_t L_t \iota_t v
  }_{C^{0,\alpha}}
  +
  \|{
   \pi_t (L_t \iota_t v- \iota_t \d_s \mathfrak{F} v)
  }_{C^{0,\alpha}}
  \right)
  \\
  &\leq
  c
  \left(
  \|{
   \pi_t L_t \iota_t v
  }_{C^{0,\alpha}}
  +
  t^{1-\alpha}
  \|{
   v
  }_{C^{0,\alpha}}
  \right),
\end{align*}
where we used the fact that $s$ is infitesimally rigid in the first step,
and \cref{proposition:iota-pi-bounds,proposition:fueter-comparison-theorem} in the last step.
For small $t$, we can then absorb the factor $  t^{1-\alpha}
  \|{
   v
  }_{C^{0,\alpha}}$ into the left hand side.
\end{proof}

\begin{remark}
 Apart from the connection to \cite{Walpuski2017}, the situation in this subsection is also very similar to, but more complicated than, the situation in \cite[Propositions 4.29 and 4.35]{Platt2020}.
\end{remark}

\subsubsection{The Model Operators on $\R^3 \times {\XEH}$ and $\R^3 \times \C^2/\{\pm 1\}$}
\label{subsubsection:the-model-operator}

As before, let ${\XEH}$ be the Eguchi-Hanson space.
To prove the estimate in \cref{proposition:inverse-operator-estimate-xy-norm}, we will compare the operator $L_t$ with the linearised instanton equation in the model case of a pulled back ASD instanton on $\R^3 \times {\XEH}$.

\textbf{Properties of the Model Operator}

Let $A$ be a finite energy ASD instanton on a $G$-bundle $E$ over ${\XEH}$.
The infinitesimal deformations of $A$ are then governed by the operator $\delta_A$ from \cref{equation:asd-instanton-linearisation}.
Denote by $p_{\XEH}: \R^3 \times {\XEH} \rightarrow {\XEH}$ the projection onto the second factor.
By a slight abuse of notation we denote the pullbacks of $A$ and $E$ to $\R^3 \times {\XEH}$ under $p_{\XEH}$ by $A$ and $E$ as well.

Denote by $L_A$ be the linearised $G_2$-instanton operator from \cref{equation:g2-instanton-linearisation}.
We can define the map $( \; \cdot \;)^{\sharp} \lrcorner \varphi : p_{\R^3}^* T^* \R^3 
  \overset{\simeq}{\rightarrow}
  p_{\XEH}^* \Lambda^+T^*{\XEH}$, which takes a $1$-form, dualises it, and plugs it into the product $G_2$-structure $\varphi$ from \cref{equation:product-g2-structure}.
 It maps $\d x_i$ to $-\omega_i$.
 Using it, we can relate $\delta_A$ and $L_A$ as follows:

\begin{proposition}[Proposition 2.70 in \cite{Walpuski2013}]
\label{proposition:G2-linearisation-for-pullbacks}
 Under the identification
 \begin{align*}
  ( \; \cdot \;)^{\sharp} \lrcorner \varphi: p_{\R^3}^* T^* \R^3 & 
  \overset{\simeq}{\rightarrow}
  p_{\XEH}^* \Lambda^+T^*{\XEH}
 \end{align*}
 and accordingly
 \begin{align*}
  \Omega^0 \oplus \Omega^1 (\R^3 \times {\XEH}, \Ad E)
  \simeq
  \Omega^0
  (
  \R^3 \times {\XEH},
  p_{\XEH}^*
  [
   (\R \oplus \Lambda^+T^*{\XEH} \oplus T^*{\XEH}) \tensor \Ad E
  ])
 \end{align*}
 the operator $L_A$ can be written as $L_A=F+D_A$ where
 \begin{align*}
  F(\xi, \omega, a)
  =
  \sum_{i=1}^3
  (-
  \< \partial_i \omega, \omega_i \>,
  \partial_i \xi \cdot \omega_i,
  I_i \partial_i a
  )
  \;\;\;
  \text{ and }
  \;\;\;
  D_A
  =
  \begin{pmatrix}
   0&\delta_A \\
   \delta_A^*&0
  \end{pmatrix}.
 \end{align*}
 Moreover,
 \begin{align}
 \label{equation:L*L-equals-Laplacian-plus-something}
  L_A^* L_A
  =
  \Delta_{\R^3}
  +
  \begin{pmatrix}
   \delta_A \delta_A^* &
   \\
   & \delta_A^* \delta_A
  \end{pmatrix}.
 \end{align}
\end{proposition}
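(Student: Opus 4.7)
The plan is a direct computation in local coordinates, exploiting the product structure of $\R^3 \times \XEH$ and the fact that $A$ is pulled back from $\XEH$. First I would fix coordinates $(x_1,x_2,x_3)$ on $\R^3$ and use the Hyperkähler triple $(\omega_1,\omega_2,\omega_3)$ and associated complex structures $(I_1,I_2,I_3)$ on $\XEH$. The splitting $T^*(\R^3 \times \XEH) = p_{\R^3}^* T^*\R^3 \oplus p_{\XEH}^* T^*\XEH$ decomposes any $a \in \Omega^1(\R^3 \times \XEH, \Ad E)$ as $a = \sum_i b_i\,\d x_i + c$ with $b_i \in \Omega^0(\Ad E)$ and $c \in \Omega^0(p_{\XEH}^* T^*\XEH \tensor \Ad E)$. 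Under $(\cdot)^\sharp \lrcorner \varphi$ one has $\d x_i \mapsto -\omega_i$, so the $b_i$ piece corresponds to $\omega = -\sum_i b_i \omega_i \in \Lambda^+ T^*\XEH$; this is the identification in the statement, and it is an isometry once the metric on $\Lambda^+ T^*\XEH$ is normalised accordingly. Because $A$ is pulled back we have $\d_A = \d_{\R^3} + \d_{A,\XEH}$, so every term in $L_A$ splits cleanly into an $\R^3$-derivative part and an $\XEH$-derivative part.

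Next I would compute the two components of $L_A(\xi,a)$ separately. For $\d_A^* a$ one finds $\d_A^* a = -\sum_i \partial_i b_i + \d_{A,\XEH}^* c$, and the first sum is the scalar $-\sum_i \<\partial_i \omega, \omega_i\>$ while the second is the $\Omega^0$-component of $\delta_A c$. For $\d_A \xi + *(\psi \wedge \d_A a)$ I would split $\d_A \xi = \sum_i (\partial_i \xi)\,\d x_i + \d_{A,\XEH}\xi$, expand $\d_A a$ into its $(2,0)+(1,1)+(0,2)$ pieces according to the bigrading, wedge with $\psi = \vol_{\XEH} - \sum_{(ijk) \text{ cyclic}} \omega_i \wedge \d x_{jk}$, and apply the Hodge star. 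Collecting the terms, the $\R^3$-derivatives assemble into the zeroth-order-coefficient bundle map $F$ — the three pieces being pairing with $\omega_i$ on the $\omega$-slot, multiplication by $\omega_i$ on the $\xi$-slot, and the action of $I_i$ on the $a$-slot — while the $\XEH$-derivatives assemble into the block anti-diagonal $D_A$. The key computational point is that $*(\psi \wedge \cdot)$ restricted to $2$-forms purely in the $\XEH$-direction produces exactly the self-dual projection on $\XEH$, which is why $\d_{A,\XEH}^+$ (and hence $\delta_A$) appears in $D_A$.

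For the Weitzenböck identity I would write $L_A = F + D_A$ and expand
\[
L_A^* L_A = F^*F + F^*D_A + D_A^*F + D_A^*D_A.
\]
Since $\delta_A$ and $\delta_A^*$ are formal adjoints, $D_A$ is self-adjoint and $D_A^*D_A = D_A^2$ is the claimed block-diagonal operator. Writing $F = \sum_i F_i \partial_i$ with constant bundle-map coefficients $F_i$ built from $\omega_i$ and $I_i$, the quaternionic identities for the Hyperkähler triple on $\XEH$ translate, under the isometric identification set up in the first step, into the Clifford-type relations $F_i^* F_j + F_j^* F_i = 2 \delta_{ij} \Id$, which give $F^*F = \Delta_{\R^3}$. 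The cross terms vanish: since $A$ is pulled back, $\partial_i$ commutes with $D_A$, so $F^*D_A + D_A^*F = \sum_i (D_A F_i - F_i^* D_A)\partial_i$, and the identities $F_i^* D_A = D_A F_i$ reduce to a handful of Kähler identities on $\XEH$ (for instance $\d_{A,\XEH}^*(I_i a) = \<\omega_i, \d_{A,\XEH}^+ a\>$ for $a \in \Omega^1(\XEH,\Ad E)$), all of which follow from each $\omega_i$ being parallel and self-dual. The main obstacle is the careful bookkeeping of signs, normalisation factors and projections in the derivation of $L_A = F + D_A$; once that formula is in hand, the Weitzenböck identity reduces to the purely algebraic Clifford/Kähler checks just described.
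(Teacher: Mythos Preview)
The paper does not prove this proposition; it is quoted verbatim as Proposition 2.70 from \cite{Walpuski2013} and used as a black box. Your proposed direct computation---splitting $\d_A$ along the product, tracking the bigrading of $\d_A a$ against the explicit form of $\psi$, and then deriving the Weitzenb\"ock formula from the Clifford relations $F_i^*F_j + F_j^*F_i = 2\delta_{ij}$ together with the K\"ahler identities that kill the cross terms---is exactly how this result is established in the reference, and your outline is correct.
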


We define the following weighted norms on the model spaces $\R^3 \times {\XEH}$ and $\R^3 \times (\C^2/\{ \pm 1\})$:

\begin{definition}
\label{definition:model-norms}
For $\beta \in \R$, let
\begin{align*}
  w_{\beta}
  : \R^3 \times {\XEH} & \rightarrow \R
  &
  w_{\beta}
  : \R^3 \times (\C^2/\{ \pm 1\}) & \rightarrow \R  
  \\
  x &\mapsto (1+\check{r}(x))^{-\beta}
  &
  x &\mapsto (1+d(x,0))^{-\beta}
\end{align*}
and define the weighted Hölder norms $\|{\cdot }_{C^{0,\alpha}_{\beta}}$ as in \cref{definition:hoelder-norms-for-gauge-with-cases}, but using this new weight function.
\end{definition}

We then have the following result:

\begin{proposition}[Proposition 2.74 in \cite{Walpuski2013}]
\label{proposition:constant-in-r3-direction}
 Let $\tilde{X}$ be an ALE space.
 Let $\beta \in (-3,0)$.
 Then $\underline{a} \in C^{1,\alpha}_\beta$ is in the kernel of $L_I: C^{1,\alpha}_\beta \rightarrow C^{0,\alpha}_{\beta-1}$ if and only if it is given by the pullback of an element of the $L^2$ kernel of $\delta_I$ to $\R^3 \times \tilde{X}$.
\end{proposition}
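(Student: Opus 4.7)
The plan. The ``if'' direction is immediate: given $u$ in the $L^2$ kernel of $\delta_A$, \cref{proposition:ale-asd-kernel-cokernel}(i) yields $|\nabla^k u| = O(\check{r}^{-3-k})$, so the pullback $\underline{a} := p_{\tilde{X}}^* u$ lies in $C^{1,\alpha}_\beta$ for every $\beta \in (-3,0)$. Since $\underline{a}$ is independent of $x \in \R^3$, the term $F\underline{a}$ in the decomposition $L_A = F + D_A$ of \cref{proposition:G2-linearisation-for-pullbacks} vanishes, while the block form of $D_A$ together with $\delta_A u = 0$ gives $D_A \underline{a} = 0$, hence $L_A \underline{a} = 0$.

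For the converse, let $\underline{a} \in \ker L_A \cap C^{1,\alpha}_\beta$. The strategy has two stages. First, show $\underline{a}$ is independent of $x \in \R^3$. Second, once $\underline{a}(x,y) = u(y)$ is established, the equation $L_A \underline{a} = 0$ reduces to $D_A u = 0$, which by the block form splits as $\delta_A a = 0$ together with $\delta_A^*(\xi,\omega) = 0$ for the components $u = (\xi,\omega,a)$. Because $\beta < 0$ forces $u \to 0$ at infinity, \cref{proposition:ale-asd-kernel-cokernel}(ii) forces $(\xi,\omega) = 0$, while (i) delivers the polynomial decay of $a$, placing $a$ in the $L^2$ kernel of $\delta_A$.

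The crux is the first stage. The natural tool is the Weitzenböck identity $L_A^* L_A = \Delta_{\R^3} + \Delta_A$ from \cref{equation:L*L-equals-Laplacian-plus-something}, by which $\underline{a}$ satisfies $(\Delta_{\R^3} + \Delta_A)\underline{a} = 0$ with both summands formally non-negative. Since $\beta < 0$, $\underline{a}$ is uniformly bounded on $\R^3 \times \tilde{X}$, so for each $y$ defines a tempered distribution in $x$ whose Fourier transform $\hat{\underline{a}}(\xi,y)$ formally obeys $(|\xi|^2 + \Delta_A)\hat{\underline{a}} = 0$. For $\xi \neq 0$ the fibrewise operator $|\xi|^2 + \Delta_A$ is strictly positive on sections of $\tilde{X}$ that decay at infinity, hence has no nontrivial solution; so $\hat{\underline{a}}$ is supported at $\xi = 0$, meaning $\underline{a}$ is polynomial in $x$, and boundedness forces it to be constant.

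The main obstacle is making this Fourier/positivity step rigorous in weighted Hölder spaces. The section $\underline{a}$ is not globally $L^2$ on $\R^3 \times \tilde{X}$: there is no $\R^3$-decay at all, so a naive cutoff pairing $\langle L_A^* L_A \underline{a},\chi_R^2\underline{a}\rangle = 0$ fails because the $R^3$ volume growth in $\R^3$ dominates the $R^{-2}$ from $|\nabla\chi_R|^2$, so the boundary error in fact grows with $R$. To sidestep this I would exploit translation invariance: $\partial_{x_i} \underline{a} \in \ker L_A \cap C^{0,\alpha}_{\beta-1}$, and iterating yields $\partial^k \underline{a} \in \ker L_A \cap C^{0,\alpha}_{\beta-k}$ with arbitrarily good fibre decay. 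For $k$ large enough that $2(\beta - k) + 3 < -1$ with an additional margin to beat the $R^3$ volume, the weighted pairing applied to $\partial^k \underline{a}$ becomes controllable and forces $\partial^k \underline{a} = 0$; combined with the pointwise bound on $\underline{a}$, this gives constancy in $x$, completing the reduction to the ALE analysis of step two.
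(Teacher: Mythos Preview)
Your overall architecture is right, and the ``if'' direction together with the second stage (reducing $D_A\underline{a}=0$ to \cref{proposition:ale-asd-kernel-cokernel}) is fine. The gap is in the first stage, precisely at the point you flag and then try to repair.

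You correctly observe that the naive energy identity on $\R^3\times\tilde X$ with a cutoff $\chi_R$ in the $\R^3$ variable produces an error of order $R^{-2}\cdot\operatorname{vol}(B_{2R}\setminus B_R)\cdot\sup_x\|\underline a(x,\cdot)\|_{L^2(\tilde X)}^2\sim R$, which diverges. Your proposed fix is to pass to $v=\partial_x^k\underline a$; this does place $v(x,\cdot)$ in $L^2(\tilde X)$ once $\beta-k<-2$, but it changes nothing about the $\R^3$ side: the bound $\sup_x\|v(x,\cdot)\|_{L^2(\tilde X)}^2\le C$ is still merely a constant, and the error term is still $\sim R^{-2}\cdot R^3\cdot C=CR$. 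Fibre decay and $\R^3$ volume are decoupled, so no ``additional margin'' in the fibre weight can beat the $R^3$ growth. The condition $2(\beta-k)+3<-1$ you write down is the $\tilde X$-integrability condition and is irrelevant to the $\R^3$ error.

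What actually closes the argument (and what the cited reference does) is to replace the global energy estimate by a pointwise Liouville/maximum-principle step once you are fibrewise in $L^2$. From $L_A^*L_A=\Delta_{\R^3}+\Delta_A$ one gets, for $v=\partial_x^k\underline a$ with $\beta-k<-2$, that $g(x):=\|v(x,\cdot)\|_{L^2(\tilde X)}^2$ satisfies
\[
\textstyle\sum_i\partial_i^2 g \;=\; 2\|\nabla_x v\|_{L^2(\tilde X)}^2+2\langle\Delta_A v,v\rangle_{L^2(\tilde X)}\;\ge\;0.
\]
Projecting onto the spectral subspace $[\epsilon,\infty)$ of $\Delta_A$ (which commutes with $\partial_x$) yields $\sum_i\partial_i^2 g_\epsilon\ge 2\epsilon\,g_\epsilon$ for the bounded non-negative function $g_\epsilon=\|P_{[\epsilon,\infty)}v\|^2$; the strong maximum principle then forces $g_\epsilon\equiv 0$ for every $\epsilon>0$, so $v(x,\cdot)\in\ker\Delta_A$ for all $x$, whence $\Delta_{\R^3}v=0$. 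Now $v$ is bounded and harmonic in $x$, hence constant in $x$; since $v=\partial_x^k\underline a$, this makes $\underline a$ a bounded polynomial in $x$, hence constant. Your differentiation trick is a genuine ingredient of this argument (to reach fibrewise $L^2$), but it must be followed by the spectral-projection/maximum-principle step rather than another cutoff integration.
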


\textbf{Comparison with $L_t$}

We now explain two maps $s^P$ and $s^\nu$:
the first for "zooming into" the resolution locus of the associative $L$, the second for "zooming into" the gluing region of $N_t$.
Fix a point $y \in L$, a scaling parameter $d \in \mathbb{Z}$, a gluing parameter $t \in (0,T)$, and two positive real numbers $\epsilon_1, \epsilon_2$ defining the scale of the region into which to zoom in.

Let
\begin{align*}
V_{\epsilon_1, \epsilon_2;t}^P(y)
&:=
\{
 x \in P:
 \sigma(x) \in \Im (\exp_y|_{(-\epsilon_1,\epsilon_1)^3}),
 \check{r}(x)t < \epsilon_2
\} \subset P,
\\
U_{\epsilon_1/t, \epsilon_2/t;t}^P
&:=
\{
 (x,z) \in \R^3 \times {\XEH}:
 x \in (-\epsilon_1/t,\epsilon_1/t)^3,
 \rho(z) < \epsilon_2/t
\}.
\end{align*}
Here we implicitly used an identification $T_yL \simeq \R^3$ to have $\exp_y$ acting on $(-\epsilon_1,\epsilon_1)^3$.
Choose this identification so that it maps the orthonormal basis $e_1(y),e_2(y),e_3(y) \in T^*_y L$ from \cref{subsection:g2-structures-on-the-resolution} to the standard basis $\d x_1, \d x_2, \d x_3 \in \Lambda^1((\R^3)^*)$.
Fix an element $f \in \Fr_y$ of the unitary frame bundle of $\nu$ around $y \in L$.
It induces an isometry ${\XEH} \simeq P_y$, and assume that $f$ is chosen so that $\tilde{\omega}_i$ is sent to $\check{\omega}_i|_{P_y}$ under this map for $i \in \{1,2,3\}$.
Then, for small $\epsilon_1$, we define
\begin{align}
\label{equation:P-blowup-map-without-scaling}
\begin{split}
 E^P:U_{\epsilon_1/t, \epsilon_2/t;t}^P & \rightarrow V_{\epsilon_1, \epsilon_2;t}^P(y)
 \\
 (x,z) &\mapsto
 \mathcal{P}_{s \mapsto \exp_y(stx)}
 (f(z))
 \in P.
\end{split}
\end{align}
Here, $s \mapsto \exp_y(stx)$ denotes the unique shortest geodesic from $y$ to $\exp(tx)$ in $L$, and $\mathcal{P}_{s \mapsto \exp_y(stx)}$ denotes parallel transport in $P$ with respect to $\breve{H}$ along this curve, cf. the paragraph before \cref{definition:varphi-P-t}.
For $\epsilon_1$ small enough, this is a diffeomorphism.
The reason for this definition is the following:
because of our choices of identifications $T_y L \simeq \R^3$ and $P_y \simeq {\XEH}$ we have that $(E^P)^*(\varphi^P_t)(0,z)$ is the standard $G_2$-structure on $\R^3 \times {\XEH}$ for all $z \in {\XEH}$, cf. \cref{definition:varphi-P-t}.
Let $a$ be a tensor field of valence $(p,q)$, i.e. in index notation $p$ lower indices and $q$ upper indices, on $V_{\epsilon_1, \epsilon_2;t}^P(y)$.
We then define
\begin{align}
\label{equation:s-blowup-rescaling-map-definition}
s^P(a)
:=
s^{P, \epsilon_1, \epsilon_2}_{d,y; t}(a)
:=
t^{d+p-q} (E^P)^*a,
\end{align}
which is a tensor on $U_{\epsilon_1/t, \epsilon_2/t;t}$.
The point of this is the following proposition:

\begin{proposition}
\label{proposition:blowup-comparison}
 There are constants $c>0$ and $\epsilon>0$ such that for small $t$ the following holds:
 for all $\epsilon_1, \epsilon_2 \in (0, \epsilon)$ and for all $\underline{a} \in (\Omega^0 \oplus \Omega^1)(N_t,E_t)$:
 \begin{align}
 \label{equation:blowup-comparison-tensor-scaling}
  \|{
   s_{d,t;y}^{P,\epsilon_1, \epsilon_2} \underline{a}
   }
   _{L^\infty_{l + \delta}(U_{\epsilon_1/t, \epsilon_2/t;t}^P)}
  &\sim
  t^{d+l}
  \|{
   \underline{a}
   }
   _{L^\infty_{l,\delta;t}(V_{\epsilon_1, \epsilon_2}^P(y))}
   ,\\
   \label{equation:blowup-comparison-tensor-scaling-higher-norms}
  \|{
   s_{d,t;y}^{P,\epsilon_1, \epsilon_2} \underline{a}
   }
   _{C^{k,\alpha}_{l + \delta}(U_{\epsilon_1/t, \epsilon_2/t;t}^P)}
  &\sim
  t^{d+l}
  \|{
   \underline{a}
   }
   _{C^{k,\alpha}_{l,\delta;t}(V_{\sqrt{t},\sqrt{t}}^P(y))},
 \end{align}
 where $\sim$ means comparable independently of $t$.
 Furthermore, using the Hyperkähler isomorphism $P_y \simeq {\XEH}$ induced by $f$, we can view the connection $s(A)$ over $P_y$ as a connection over ${\XEH}$, denoted by $f_*(s(y))$.
 Then
 \begin{align}
 \label{equation:blowup-comparison-instanton-linearisation}
   \|{
    L_t \underline{a}
    -
    \left( 
     s_{2,t;y}^{P,\sqrt{t},\sqrt{t}}
    \right)
    ^{-1}
    L_{p_{\XEH}^* f_*(s(y))}
    s_{1,t;y}^{P,\sqrt{t},\sqrt{t}}
    \underline{a}
    }
    _{C^{0,\alpha}_{-2,\delta;t}(V_{\sqrt{t},\sqrt{t}}^P(y))}
    &\leq
    c \sqrt{t}
    \|{
     \underline{a}
     }_{C^{1,\alpha}_{-1,\delta;t}(V_{\sqrt{t},\sqrt{t}}^P(y))}.
 \end{align}
\end{proposition}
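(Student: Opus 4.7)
The plan is to verify the first two claims as bookkeeping about how the weight function $w_{l,\delta;t}$ and the metric $g^P_t$ rescale under the blow-up map $E^P$, and then to prove the operator estimate (iii) by decomposing the difference into three concrete error terms, each controlled using facts already in the paper. For (i) and (ii), the region $V^P_{\sqrt{t},\sqrt{t}}(y)$ sits entirely inside $r_t \leq \sqrt{t}$, so the weight reduces to $w_{l,\delta;t} = t^{\delta}(t+r_t)^{-l-\delta}$; under $E^P$ we have $r_t \circ E^P \sim t\check{r}$ and $(E^P)^* g^P_t = t^2 g_{\mathrm{model}} + O(t^3)$, where $g_{\mathrm{model}}$ is the flat product Hyperk\"ahler metric on $\R^3 \times \XEH$. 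For a $(p,q)$-tensor, pullback by $E^P$ changes the norm by $t^{p-q}$, which combined with the rescaling factor $t^{d+p-q}$ in $s^P$ and the weight identity $w_{l,\delta;t} \circ E^P \sim t^{-l}(1+\check{r})^{-l-\delta}$ yields the claimed overall factor $t^{d+l}$. The $C^{k,\alpha}$ version additionally uses that the Levi-Civita connections of $(E^P)^* g^P_t$ and $g_{\mathrm{model}}$ differ by $O(t)$, so the parallel transports defining the H\"older difference quotients agree up to errors absorbed in the relation $\sim$.

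For the operator estimate (iii), I would decompose
\[
 L_t \underline{a} - (s^{P}_{2,t;y})^{-1}\, L_{p_{\XEH}^* f_*(s(y))}\, s^{P}_{1,t;y}\underline{a}
 = \mathrm{(I)} + \mathrm{(II)} + \mathrm{(III)},
\]
where $\mathrm{(I)}$ captures the difference $A_t - s(A)$ (entering via $[A_t - s(A),\,\cdot\,]$ in the curvature and Coulomb-gauge terms), $\mathrm{(II)}$ captures the horizontal variation of $s(A)$ along $L$, i.e. the difference between $s(A)$ on the actual fibre $P_{\exp_y(tx)}$ and its frozen fibrewise pullback $p_{\XEH}^* f_*(s(y))$; and $\mathrm{(III)}$ captures the difference between $\tilde{\psi}^N_t$ on $N_t$ and the standard product coassociative $4$-form on $\R^3 \times \XEH$. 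Term $\mathrm{(I)}$ is supported on $\{t \leq r_t \leq 2t\}$, i.e. $\check{r} \leq 2$ after rescaling; by \cref{proposition:pregluing-summands-norms,proposition:cut-off-norms,proposition:norm-weights-basic-estimate} its contribution is bounded by $ct^{2}\|\underline{a}\|_{C^{1,\alpha}_{-1,\delta;t}}$, which is absorbed into $O(\sqrt{t})$. Term $\mathrm{(III)}$ splits as $(\tilde{\psi}^N_t - \psi^P_t) + (\psi^P_t - \psi_{\mathrm{model}})$: the first summand is $O(t^{2})$ in the weighted H\"older norm by \cref{proposition:psi-N-psi-P-comparison-on-T7} and \cref{proposition:equation:torsion-free-hoelder-difference}, while the second measures the variation of the triple $\check{\omega}^I, \check{\omega}^J, \check{\omega}^K$ along $L$ through $\breve{H}$ and vanishes at $x = 0$ by the choice of frame $f$, so after Taylor expansion it contributes $O(t)$ in $L^\infty$ on the blown-up region.

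The dominant error, and the main obstacle, comes from $\mathrm{(II)}$. The pulled-back connection $(E^P)^* s(A)$ at a point $(x,z)$ in the model region differs from $(E^P)^*(p_{\XEH}^* f_*(s(y)))$ by the horizontal variation of $s(A)$ along the curve $r \mapsto \exp_y(rtx)$, which by Taylor expansion and the definition of $\nabla s$ in \cref{definition:moduli-bundle-cov-derivative} has size at most $c\,t\,|x|\,\|\nabla s\|_{L^\infty}$. Since $|x| \leq 1/\sqrt{t}$ throughout $V^P_{\sqrt{t},\sqrt{t}}(y)$, this yields an $L^\infty$ bound of order $\sqrt{t}$; pushing through the commutator with $\underline{a}$, the rescalings $s^P_{1,t;y}$ and $(s^P_{2,t;y})^{-1}$, and the norm identities (i) and (ii), the resulting contribution is exactly $c\sqrt{t}\,\|\underline{a}\|_{C^{1,\alpha}_{-1,\delta;t}(V_{\sqrt{t},\sqrt{t}}^P(y))}$. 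The main difficulty is keeping the bookkeeping of rescaling factors consistent between the $\xi$ and $a$ components of $\underline{a}$ (which have different tensor valences $(0,0)$ and $(1,0)$) and between the input and output of $L_t$; the power $\sqrt{t}$ is ultimately dictated by the $L$-diameter of $V^P_{\sqrt{t},\sqrt{t}}(y)$, which is why the region size $\sqrt{t}$ appears in its very definition.
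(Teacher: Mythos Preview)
Your approach is essentially the paper's: rescaling bookkeeping for the first two claims, and for the operator estimate the paper establishes the exact identity on the central fibre $P_y$ (your term $\mathrm{(II)}$ vanishes there), bounds the commutator $[A_t-s(A),\,\cdot\,]$ on $P_y$ (your term $\mathrm{(I)}$), and then Taylor-expands $s$, $g^P_t$, and $\underline{a}$ off the fibre while comparing $g^P_t$ with $\tilde g^N_t$ (your terms $\mathrm{(II)}$ and $\mathrm{(III)}$). Your decomposition is a more explicit rendering of the same idea; the dominant $\sqrt{t}$ indeed comes from the $L$-diameter of $V^P_{\sqrt t,\sqrt t}(y)$.

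Two small corrections. First, term $\mathrm{(I)}$ is not supported on $\{t\le r_t\le 2t\}$: on $\{2t\le r_t\le \sqrt t\}$ one has $A_t-s(A)=b$, not zero. This does not affect the estimate, since $\|b\|_{C^{0,\alpha}_{1,0;t}}\le ct^2$ still gives a contribution well below $\sqrt t$ on the region, but the support claim is false. Second, for term $\mathrm{(III)}$ you invoke \cref{proposition:psi-N-psi-P-comparison-on-T7}, which is specific to resolutions of $T^7/\Gamma$; the proposition is stated in general, and the paper instead uses the pointwise expansion \cref{equation:psi-n-psi-p-pointwise-estimate} together with \cref{proposition:equation:torsion-free-hoelder-difference}. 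On the region $r_t\le\sqrt t$ the former gives $|\psi^N_t-\psi^P_t|=\mathcal O(t)$ pointwise, which suffices here; your $O(t^2)$ is only available in the $T^7/\Gamma$ case. Similarly, your bound $O(t)$ for $\psi^P_t-\psi_{\mathrm{model}}$ should be $O(\sqrt t)$, since the Taylor variation along $L$ is of size $t|x|\le\sqrt t$.
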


\begin{proof}
 We first prove \cref{equation:blowup-comparison-tensor-scaling}:
 for $(0,z) \in U_{\epsilon_1/t, \epsilon_2/t;t}$ the map $\d _{(0,z)}E^P$ 
 (cf. \cref{equation:P-blowup-map-without-scaling}) is an isometry for the metric $t^2(g_{\R^3} \oplus g_{(1)})$ on $T_{(0,z)} (\R^3 \times {\XEH})$ and the metric on $T_{E^P(0,z)}P$ induced by $g^P_t$.
 Because of the scaling factor $t^{d+p-q}$ from \cref{equation:s-blowup-rescaling-map-definition} we have that
 \begin{align}
 \label{equation:rescaling-pointwise-norm-equality}
  |s_{d,t;y}^{P,\epsilon_1,\epsilon_2} \underline{a} (0,z)|_{g_{\R^3} \oplus g_{(1)}}
  =
  t^d
  |\underline{a} (E^P(0,z))|_{g^P_t}.
 \end{align}
 The map $E^P$ is not, in general, an isometry away from this one point, as $\exp_y$ need not be an isometry.
 Thus, \cref{equation:rescaling-pointwise-norm-equality} need not hold in points different from $(0,z)$.
 However, using Taylor expansions in a neighbourhood of $y$ in $L$ for $\underline{a}$ and $g^P_t$ we get
 \begin{align*}
  \|{
   s_{d,t;y}^{P,\epsilon_1,\epsilon_2} \underline{a}
   }
   _{L^\infty_{l + \delta}(U_{\epsilon_1/t,\epsilon_2/t;t})}
  &\sim
  t^{d+l}
  \|{
   \underline{a}
   }
   _{L^\infty_{l,\delta;t}(V_{\epsilon_1,\epsilon_2}(y)),g^P_t}.
 \end{align*}
 Now \cref{equation:psi-n-psi-p-pointwise-estimate,proposition:equation:torsion-free-hoelder-difference} give the claim for the metric $\tilde{g}^N_t$ instead of $g^P_t$, which is \cref{equation:blowup-comparison-tensor-scaling}.
 \Cref{equation:blowup-comparison-tensor-scaling-higher-norms} is proved analogously.
 
 Now to prove \cref{equation:blowup-comparison-instanton-linearisation}:
 as in \cref{equation:rescaling-pointwise-norm-equality}, we see that for $x \in P_y$, $\check{r}(x)<1/\sqrt{t}$,
 \begin{align}
 \label{equation:pointwise-operator-applied-zoomed-in}
  L_{s(A)} \underline{a}(x)
    -
    \left(
    \left( 
     s_{2,t;y}^{P,\sqrt{t},\sqrt{t}}
    \right)
    ^{-1}
    L_{p_{\XEH}^* f_*(s(y))}
    s_{1,t;y}^{P,\sqrt{t},\sqrt{t}}
    \underline{a}
    \right)
    (x)
  =0.
 \end{align}
 And $A_t-s(A)=\mathcal{O}(1)$ on $P_y$, so
 \begin{align}
 \label{equation:pointwise-operator-applied-zoomed-in-with-weights}
 \begin{split}
 &\quad
 \|{
  L_t \underline{a}
    -
    \left(
    \left( 
     s_{2,t;y}^{P,\sqrt{t},\sqrt{t}}
    \right)
    ^{-1}
    L_{p_{\XEH}^* f_*(s(y))}
    s_{1,t;y}^{P,\sqrt{t},\sqrt{t}}
    \underline{a}
    \right)
   }_{C^{0,\alpha}_{-2,\delta;t}(\{u \in P_y: \check{r}(u)<1/\sqrt{t} \} )}
   \\&
   \leq
   c
   \|{
     [A_t-s(A), \underline{a}]
   }_{C^{0,\alpha}_{-2,\delta;t}(\{u \in P_y: \check{r}(u)<1/\sqrt{t} \})}
   \\&
  \leq
  c
   \|{
     \underline{a}
   }_{C^{0,\alpha}_{-1,\delta;t}(\{u \in P_y: \check{r}(u)<1/\sqrt{t} \})}
   \|{
     A_t-s(A)
   }_{C^{0,\alpha}_{-1,0;t}(\{u \in P_y: \check{r}(u)<1/\sqrt{t} \})}
    \\&
  \leq
  c \sqrt{t}
   \|{
     \underline{a}
   }_{C^{0,\alpha}_{-1,\delta;t}(\{u \in P_y: \check{r}(u)<1/\sqrt{t} \})}
    \\&
  \leq
  c \sqrt{t}
   \|{
     \underline{a}
   }_{C^{1,\alpha}_{-1,\delta;t}(\{u \in P_y: \check{r}(u)<1/\sqrt{t} \})}
     \end{split}
 \end{align}
 where in the third step we used $A_t-s(A)=\mathcal{O}(1)$ to estimate the second factor as $\sqrt{t}$.
 This was possible because the weight function is bounded by $\sqrt{t}$ on $\{u \in P_y: \check{r}(u)<1/\sqrt{t} \}$. 
 
 \Cref{equation:blowup-comparison-instanton-linearisation} now follows from using Taylor expansions for $\underline{a}$, $g^P_t$, and $s$ around $y$, and comparing $g^P_t$ and $\tilde{g}^N_t$ as in the proof of \cref{equation:blowup-comparison-tensor-scaling}.
\end{proof}

We now define $s^{\nu}$:
let $\epsilon_1>0$, $\epsilon_2 > \epsilon_3 > 0$, and
\begin{align*}
V_{\epsilon_1, \epsilon_2, \epsilon_3;t}^\nu(y)
&:=
\{
 x \in \nu/\{ \pm 1\}:
 \sigma(x) \in \Im (\exp_y|_{(-\epsilon_1,\epsilon_1)^3}),
 \epsilon_3 < r(x) < \epsilon_2
\},
\\
U_{\epsilon_1/t, \epsilon_2/t, \epsilon_3/t;t}^\nu
&:=
\{
 (x,z) \in \R^3 \times \C^2/\{ \pm 1\}:
 x \in (-\epsilon_1/t,\epsilon_1/t)^3,
 \epsilon_3/t < |\rho(z)| < \epsilon_2/t
\}.
\end{align*}

Just as in the definition of $V_{\epsilon_1, \epsilon_2;t}^P$, we implicitly used an identification $T_yL \simeq \R^3$ so that $e^i$ is sent to $\d x^i$ for $i \in \{1,2,3\}$.
Recall also the frame $f$ that sends $\tilde{\omega}_i$ to $\check{\omega}_i|_{P_y}$ for $i \in \{1,2,3\}$ under the isometry ${\XEH} \simeq P_y$ induced by $f$.
We see from \cref{equation:hyperkaehler-commutative-diagram} that $\omega_i$ is sent to $\hat{\omega}_i|_{\nu_y}$ under the isometry $\C^2/\{\pm 1\} \simeq (\nu/\{\pm 1\})_y$ induced by $f$.
For small $\epsilon_1,\epsilon_2,\epsilon_3$, the map
\begin{align}
\label{equation:nu-blowup-map-without-scaling}
\begin{split}
 E^\nu:U_{\epsilon_1/t, \epsilon_2/t, \epsilon_3/t;t}^\nu & \rightarrow V_{\epsilon_1, \epsilon_2, \epsilon_3;t}^\nu(y)
 \\
 (x,z) &\mapsto
 \mathcal{P}^\nu_{s \mapsto \exp_y(tsx)}
 (f(z))
 \in \nu/\{\pm 1\}
\end{split}
\end{align}
is a diffeomorphism, where $\mathcal{P}^\nu$ denotes parallel transport in $\nu$ with respect to the connection $\tilde{\nabla}^\nu$ from \cref{proposition:jk-normal-bundle-choices}.
Because of our choices of identifications $T_y L \simeq \R^3$ and $(\nu/\{\pm 1\})_y \simeq \C^2/\{\pm 1\}$ we have that $(E^P)^*(\varphi^\nu_t)(0,z)$ is the standard $G_2$-structure on $\R^3 \times \C^2/\{\pm 1\}$, for all $z \in \C^2/\{ \pm 1 \}$, cf. \cref{definition:varphi-nu-t}.
We now define $s^\nu$ just as we defined $s^P$ in \cref{equation:s-blowup-rescaling-map-definition}, only exchanging $E^P$ for $E^\nu$:
for a tensor field $a$ of valence $(p,q)$ on $V_{\epsilon_1, \epsilon_2, \epsilon_3;t}^\nu(y)$ set
\begin{align}
\label{equation:s-blowup-rescaling-map-definition}
s^\nu(a)
:=
s^{\nu, \epsilon_1, \epsilon_2, \epsilon_3}_{d,y; t}(a)
:=
t^{d+p-q} (E^\nu)^*a.
\end{align}

In the following we use the norms from \cref{definition:model-norms}.
So, the notation $C^{0,\alpha}_0$ does not mean zero boundary condition, but means that the weight function appears with powers of $0$ and $0+\alpha$ in the two summands of the definition $\|{\cdot}_{C^{0,\alpha}_0}$.
We have the following analogue of \cref{proposition:blowup-comparison}:

\begin{proposition}
\label{proposition:blowup-comparison-nu}
 There are constants $c>0$ and $\epsilon>0$ such that for small $t$ the following holds:
 for all $\epsilon_1, \epsilon_2 \in (0, \epsilon)$, $\epsilon_3 \in (t, \epsilon)$ and for all $\underline{a} \in (\Omega^0 \oplus \Omega^1)(N_t,E_t)$:
 \begin{align}
 \label{equation:blowup-comparison-tensor-scaling-nu}
  \|{
   w^\nu_{l,\delta;t}
   s_{d,t;y}^{\nu,\epsilon_1, \epsilon_2, \epsilon_3}
   \underline{a}
   }
   _{L^\infty_{0}(U_{\epsilon_1/t, \epsilon_2/t, \epsilon_3/t;t}^\nu)}
  &\sim
  t^{d+l}
  \|{
   \underline{a}
   }
   _{L^\infty_{l,\delta;t}(V_{\epsilon_1, \epsilon_2, \epsilon_3}^\nu(y))}
   ,\\
   \label{equation:blowup-comparison-tensor-scaling-higher-norms-nu}
  \|{
  w^\nu_{l,\delta;t}
   s_{d,t;y}^{\nu,\epsilon_1, \epsilon_2, \epsilon_3}
   \underline{a}
   }
   _{C^{k,\alpha}_{0}(U_{\epsilon_1/t, \epsilon_2/t, \epsilon_3/t;t}^\nu)}
  &\sim
  t^{d+l}
  \|{
   \underline{a}
   }
   _{C^{k,\alpha}_{l,\delta;t}(V_{\epsilon_1, \epsilon_2, \epsilon_3}^\nu(y))}
   ,
 \end{align}
 where $\sim$ means uniformly comparable in $t$ and
 \begin{align*}
  w^\nu_{l,\delta;t}
  =
  \begin{cases}
   r^{-l-\delta}
   & \text{ if }
   r \leq 1/\sqrt{t}
   \\
   r^{-l+\delta}t^{\delta}
   & \text{ if }
   r > 1/\sqrt{t}.
  \end{cases}
 \end{align*}
 Furthermore, using the Hyperkähler isomorphism $P_y \simeq {\XEH}$ induced by $f$, we can view the connection $s(A)$ over $P_y$ as a connection over ${\XEH}$.
 By \cref{equation:asymptotic-at-infinity-condition,equation:ALE-moduli-definitions}, this connection is asymptotic to a flat connection, say $A_0$, on the orbifold $\C^2/\{ \pm 1\}$ with monodromy representation $\rho$.
 Then
 \begin{align}
 \label{equation:blowup-comparison-instanton-linearisation-nu}
 \begin{split}
 &\quad
   \|{
    L_t \underline{a}
    -
    \left( 
     s_{2,t;y}^{\nu,\epsilon_1, \epsilon_2, \epsilon_3}
    \right)
    ^{-1}
    L_{p_{\C^2}^* A_0}
    s_{1,t;y}^{\nu,\epsilon_1, \epsilon_2, \epsilon_3}
    \underline{a}
    }
    _{C^{0,\alpha}_{-2,\delta;t}(V_{\epsilon_1, \epsilon_2, \epsilon_3}^\nu(y))}
    \\
    &\leq
    c (\epsilon_1 + \epsilon_2 + (t/\epsilon_3)^2 )
    \|{
     \underline{a}
     }_{C^{1,\alpha}_{-1,\delta;t}(V_{\epsilon_1, \epsilon_2, \epsilon_3}^\nu(y))},
 \end{split}
 \end{align}
 where $p_{\C^2}:\R^3 \times \C^2/\{\pm 1\} \rightarrow \C^2/\{\pm 1\}$ denotes the projection onto the second factor.
\end{proposition}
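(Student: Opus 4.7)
The plan is to follow the template of the proof of \cref{proposition:blowup-comparison}, replacing the chart $E^P$ centered on the $P$-side of the gluing region with the chart $E^\nu$ centered on the $\nu$-side, where $N_t$ is canonically identified with $\nu/\{\pm 1\}$ away from the exceptional divisor. I first establish the norm equivalences \cref{equation:blowup-comparison-tensor-scaling-nu,equation:blowup-comparison-tensor-scaling-higher-norms-nu}, then the operator estimate \cref{equation:blowup-comparison-instanton-linearisation-nu}.

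For the norm equivalences, the map $E^\nu$ is by construction parallel transport along geodesics in $L$ followed by $\tilde\nabla^\nu$-horizontal transport. Its differential at the fibre over $\{0\} \times \C^2/\{\pm 1\}$ intertwines the flat model metric with $g^\nu_t$ on the image, modulo the rescaling $(\cdot t)$ built into $g^\nu_t$; the factor $t^{d+p-q}$ in $s^\nu_{d,t;y}$ exactly compensates for this rescaling on tensors of valence $(p,q)$. The weight $w^\nu_{l,\delta;t}$ in the statement is defined so that its pullback by $E^\nu$ matches the restriction of $w_{l,\delta;t}$ to $V^\nu_{\epsilon_1,\epsilon_2,\epsilon_3;t}(y)$, up to the uniform factor $t^{-l-d}$ appearing on the right-hand side of \cref{equation:blowup-comparison-tensor-scaling-nu}. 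Taylor-expanding $\exp_y$ in the $L$-direction and $g^\nu$ radially, as in \cref{proposition:jk-normal-bundle-choices}, introduces only bounded multiplicative errors of order $\epsilon_1 + \epsilon_2$, and \cref{proposition:psi-N-psi-P-comparison-on-T7,proposition:equation:torsion-free-hoelder-difference} let me replace $g^\nu_t$ by $\tilde g^N_t$ at the same cost.

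For the operator estimate, the difference $L_t \underline a - (s^\nu_2)^{-1} L_{p_{\C^2}^* A_0} s^\nu_1 \underline a$ is controlled pointwise by $|A_t - p_{\C^2}^* A_0| \cdot |\underline a| + |\tilde \psi^N_t - \psi_{\text{flat}}| \cdot |\nabla \underline a| + |\nabla(\tilde \psi^N_t - \psi_{\text{flat}})| \cdot |\underline a|$, where $\psi_{\text{flat}}$ denotes the flat product $G_2$-structure on $\R^3 \times \C^2/\{\pm 1\}$. The $G_2$-structure difference contributes $O(\epsilon_1)$ from the Taylor expansion of $\varphi^\nu$ in the $L$-direction at $y$, and $O(\epsilon_2)$ from the radial error $\varphi^\nu - \tilde\varphi^\nu = O(t^2 r^2)$ in \cref{equation:phi-nu-tilde-phi-nu-difference}, together with the auxiliary corrections in \cref{proposition:xi-estimates,proposition:v-estimates,theorem:jk-correction-theorem} and the torsion-free perturbation from \cref{proposition:equation:torsion-free-hoelder-difference}. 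For the connection difference I decompose $A_t = \underline{A_\infty} + \chi_t^- b + \chi_t^+ \sigma$ as in \cref{proposition:approximate-solution} and use the identification $\underline{A_\infty} \leftrightarrow p_{\C^2}^* A_0$ implicit in \cref{definition:pullback-connection-from-L,proposition:section-gives-rise-to-connection}; the $b$-term contributes $O(\epsilon_2)$ via \cref{proposition:pregluing-summands-norms} and the constraint $r_t \leq \epsilon_2$, while the $\sigma$-term contributes the dominant $(t/\epsilon_3)^2$, obtained by combining the ALE-type decay $\|{ \sigma }_{C^{0,\alpha}_{-3,0;t}} \leq ct^2$ from \cref{proposition:pregluing-summands-norms} with the rescaling $s^\nu_1$ and the weight $w^\nu$ evaluated at the inner boundary $r \sim \epsilon_3/t$ of the model domain.

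Combining these pointwise bounds through the weighted Hölder analogue of \cref{proposition:norm-weights-basic-estimate} then yields \cref{equation:blowup-comparison-instanton-linearisation-nu}. The main technical obstacle is verifying that the $\sigma$-contribution sums to exactly $(t/\epsilon_3)^2$ and not a weaker bound: this requires careful bookkeeping of the $t^2$ prefactor from \cref{proposition:pregluing-summands-norms}, the $r_t^{-3}$ spatial decay, the $t^2$-scaling factor of $s^\nu_2$, and the weight on the model side. A secondary issue is that the cutoff functions $\chi_t^\pm$ may be nonconstant on part of $V^\nu$ when $\epsilon_3 \sim t$ or $\epsilon_2 \sim R$, so the terms $\d\chi_t^\pm \cdot (\sigma + b)$ must also be handled; this is straightforward using the bounds from \cref{proposition:cut-off-norms}, absorbing into the same error terms just as in the proof of \cref{proposition:pregluing-estimate-without-tilde}.
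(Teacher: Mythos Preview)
Your proposal is correct and covers all the required pieces, but it organises the operator estimate \cref{equation:blowup-comparison-instanton-linearisation-nu} differently from the paper, and the difference is worth noting.

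The paper does not decompose $A_t - p_{\C^2}^*A_0$ directly via the gluing formula $A_t=\underline{A_\infty}+\chi_t^-b+\chi_t^+\sigma$. Instead it factorises through the ASD connection on the fibre: it first restricts to the annulus in $P_y$, recycles the estimate \cref{equation:pointwise-operator-applied-zoomed-in-with-weights} from \cref{proposition:blowup-comparison} with the $s^P$-chart to bound the difference between $L_t$ and the model operator $L_{p_{\XEH}^* f_*(s(y))}$ by $c\epsilon_2$, and then separately bounds $L_{p_{\XEH}^* f_*(s(y))}-L_{p_{\XEH}^*A_0}$ using only the moduli-space decay $f_*(s(y))-A_0=\mathcal{O}(\check r^{-2})$ from \cref{equation:ALE-moduli-definitions}, which at the inner radius $\check r=\epsilon_3/t$ yields the $(t/\epsilon_3)^2$. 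The extension off $P_y$ and the passage from $g^\nu_t$ to $\tilde g^N_t$ are then handled exactly as you describe.

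Your route obtains the same $(t/\epsilon_3)^2$ from $\sigma$ via \cref{proposition:pregluing-summands-norms} and the product rule \cref{proposition:norm-weights-basic-estimate}; this is equivalent, since on the fibre $P_y$ the $1$-form $\sigma=s(A)-\overline{A_\infty}$ is precisely $f_*(s(y))-A_0$ under the identification. Your bookkeeping for the cutoff terms and the $b$-contribution is fine and in fact sharper than needed. The paper's version is slightly more economical because it reuses \cref{proposition:blowup-comparison} wholesale rather than reopening the pregluing estimates, but your direct decomposition is a perfectly valid alternative.
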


\begin{proof}
 \Cref{equation:blowup-comparison-tensor-scaling-nu,equation:blowup-comparison-tensor-scaling-higher-norms-nu} are proved as in \cref{proposition:blowup-comparison}.
 
 We now prove \cref{equation:blowup-comparison-instanton-linearisation-nu}.
 Adapting \cref{equation:pointwise-operator-applied-zoomed-in-with-weights} to the area $\{u \in P_y : \epsilon_3/t < \check{r}(u) < \epsilon_2/t \}$ we get
 \begin{align}
 \label{equation:pointwise-operator-applied-zoomed-in-with-weights-nu}
 \begin{split}
 &\quad
 \|{
  L_t \underline{a}
    -
    \left(
    \left( 
     s_{2,t;y}^{P,\epsilon_1,\epsilon_2}
    \right)
    ^{-1}
    L_{p_{\XEH}^* f_*(s(y))}
    s_{1,t;y}^{P,\epsilon_1,\epsilon_2}
    \underline{a}
    \right)
   }_{C^{0,\alpha}_{-2,\delta;t}(\{u \in P_y : \epsilon_3/t < \check{r}(u) < \epsilon_2/t \})}
   \\&
  \leq
  c \epsilon_2
  \|{
     \underline{a}
     }_{C^{1,\alpha}_{-1,\delta;t}(\{u \in P_y : \epsilon_3/t < \check{r}(u) < \epsilon_2/t \})}.
     \end{split}
 \end{align}
 
 We have $\|{ p_{\XEH}^* f_*(s(y)) - A_0 }_{C^{0,\alpha}_{0;0}} = \mathcal{O}((\rho\circ p_{\XEH})^{-2})$ by \cref{equation:ALE-moduli-definitions} and the fact that we use $\delta =-2$ in our definition of moduli space (cf. \cref{proposition:moduli-independent-of-regularity}).
 Thus, for $x \in P_y$ with $\epsilon_3/t < \check{r}(x)t < R$,
 \begin{align}
 \label{equation:zooming-in-asd-flat-limit-comparison}
    \left|
    \left( 
     s_{2,t;y}^{P,\sqrt{t},\sqrt{t}}
    \right)
    ^{-1}
    \left[
    L_{p_{\XEH}^* f_*(s(y))}
    -
    L_{p_{\XEH}^* A_0}
    \right]
    s_{1,t;y}^{P,\sqrt{t},\sqrt{t}}
    \underline{a}
    \right|_{\tilde{g}^N_t}
    (x)
  	\leq
  	c(t/\epsilon_3)^2.
 \end{align}
 Combining \cref{equation:pointwise-operator-applied-zoomed-in-with-weights-nu,equation:zooming-in-asd-flat-limit-comparison} we get the desired \cref{equation:blowup-comparison-instanton-linearisation-nu} on $P_y \cap V^\nu_{\epsilon_1,\epsilon_2,\epsilon_3}(y)$. 
 \Cref{equation:blowup-comparison-instanton-linearisation-nu} then follows like \cref{equation:blowup-comparison-instanton-linearisation} by taking Taylor expansions around $y$, and this time comparing $g^\nu_t$ and $\tilde{g}^N_t$ using 
 \cref{equation:phi-nu-tilde-phi-nu-difference,proposition:xi-estimates,proposition:equation:torsion-free-hoelder-difference,proposition:psi-n-psi-p-estimate}.
\end{proof}

\subsubsection{Schauder Estimate}
\label{subsubsection:schauder-estimate}

On $Y/\<\iota\>$ we have the estimate
\[
  \|{
   \underline{a}
   }_
   {C^{1,\alpha}}
  \leq
  c
  \left(
   \|{
    L_\theta \underline{a}
   }_
   {C^{0,\alpha}}
   +
   \|{
    \underline{a}
   }_{L^\infty}
  \right)
\]
from standard elliptic theory, e.g. \cite[Section H]{Besse1987}.
With some additional work, we get an estimate for weighted norms on $\R^3 \times {\XEH}$ (see \cite[Proposition 8.15]{Walpuski2017}), and can then glue these two estimates together to obtain:

\begin{proposition}[Proposition 8.15 in \cite{Walpuski2017}]
\label{proposition:schauder-estimate}
 There exists $c >0$ such that for all $t \in (0,T)$ the following estimate holds:
 \begin{align}
  \|{
   \underline{a}
   }_
   {C^{1,\alpha}_{-1,\delta;t}}
  \leq
  c
  \left(
   \|{
    L_t \underline{a}
   }_
   {C^{0,\alpha}_{-2,\delta;t}}
   +
   \|{
    \underline{a}
   }_{L^\infty_{-1,\delta;t}}
  \right).
 \end{align}
\end{proposition}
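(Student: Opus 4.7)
The plan is to derive the global weighted estimate by patching together ordinary first-order elliptic Schauder estimates applied on suitably rescaled local charts, using the scale invariance of the weighted Hölder norms. The operator $L_t$ is self-adjoint elliptic of first order by \cref{proposition:g2-instanton-equation-elliptic}, so on any Riemannian ball of uniformly bounded geometry one has the classical interior estimate
\[
 \|{u}_{C^{1,\alpha}(B_{r/2})} \le c\bigl(\|{L_t u}_{C^{0,\alpha}(B_r)} + \|{u}_{L^\infty(B_r)}\bigr)
\]
with a constant depending only on ellipticity constants and uniform bounds on the coefficients, see e.g.\ the appendix of \cite{Besse1987}.

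First I would cover $N_t$ by three families of open sets of bounded multiplicity (independently of $t$): (i) a fixed bulk region $\{r_t \ge R_0\}$ on which $L_t$ is a fixed bounded perturbation of $L_\theta$ on $Y/\<\iota\>$, to which classical Schauder estimates apply directly; (ii) balls of radius proportional to $r_t(y)$ centred at points $y$ with $\sqrt{t} \le r_t(y) \le R_0$, which via the map $s^{\nu}$ from \eqref{equation:s-blowup-rescaling-map-definition} are identified with balls of uniformly bounded size in $\R^3 \times \C^2/\{\pm 1\}$; and (iii) neighbourhoods $V^P_{\sqrt{t},\sqrt{t};t}(y)$ for $y \in L$, identified by $s^P$ with bounded regions in $\R^3 \times \XEH$. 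On charts of types (ii) and (iii), \cref{proposition:blowup-comparison,proposition:blowup-comparison-nu} assert two crucial things: the weighted norms $\|{\cdot}_{C^{1,\alpha}_{-1,\delta;t}}$ and $\|{\cdot}_{C^{0,\alpha}_{-2,\delta;t}}$ pull back, up to constants independent of $t$, to ordinary weighted Hölder norms on fixed-size regions of the model space; and the pulled-back operator $L_t$ differs from a fixed model operator on $\R^3 \times \XEH$ (resp.\ $\R^3 \times \C^2/\{\pm 1\}$) by an operator whose coefficients are $O(\sqrt{t})$ in the relevant Hölder norm. Applying the ordinary Schauder estimate on the model chart to the fixed limiting operator, absorbing the small perturbation into the left-hand side for $t$ small, and transporting the resulting inequality back via $s^P$ or $s^\nu$ then yields the desired local estimate on each chart, with a constant independent of $t$.

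Summing these local estimates over the cover with bounded multiplicity produces the global weighted estimate, because the weight $w_{l,\delta;t}$ varies by bounded ratios across each single chart by construction. The main obstacle is the transition annulus $r_t \sim \sqrt{t}$, where the formula for $w_{l,\delta;t}$ changes between the ALE-type regime $t^\delta(t+r_t)^{-l-\delta}$ and the cone-type regime $r_t^{-l+\delta}$: the covering must be arranged so that no chart crosses this threshold with a bad ratio, and one must verify that the two regimes of the weight function are comparable on the overlap between charts of types (ii) and (iii). A subsidiary point is that the Schauder constants on the model spaces $\R^3 \times \XEH$ and $\R^3 \times \C^2/\{\pm 1\}$ must be uniform on bounded regions; this follows from bounded geometry together with the fact that the model connections under consideration (the pulled-back ASD instanton $p_{\XEH}^* f_*(s(y))$ and the flat limit $p_{\C^2}^* A_0$) vary in a compact family as $y$ ranges over $L$.
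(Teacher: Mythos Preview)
Your proposal is correct and follows essentially the same approach as the paper's sketch: the paper notes that one has the unweighted Schauder estimate on $Y/\<\iota\>$ from standard elliptic theory, obtains a weighted estimate on the model $\R^3\times\XEH$ (citing \cite[Proposition 8.15]{Walpuski2017}), and then glues these together. Your three-region cover (bulk, neck via $s^\nu$, core via $s^P$) is a slightly more explicit variant of the same patching argument, and your use of \cref{proposition:blowup-comparison,proposition:blowup-comparison-nu} to transfer the local estimates and absorb the $O(\sqrt t)$ perturbation is exactly the mechanism one needs; the worry about the transition at $r_t\sim\sqrt t$ is handled by the fact that the two branches of $w_{l,\delta;t}$ are uniformly comparable there.
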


\subsubsection{Estimate of $\eta_t \underline{a}$}
\label{subsubsection:estimate-of-eta-a}

The following proposition is the crucial ingredient in the construction of solutions to the instanton equation:

\begin{proposition}
\label{proposition:crucial-proposition}
 There exists a constant $c>0$ independent of $t$ such that for $t$ small enough and for all $\underline{a} \in (\Omega^0 \oplus \Omega^1)(N_t, \Ad E_t)$ the following estimate holds:
 \begin{align}
  \|{
   a
   }_{L^\infty_{-1,\delta;t}}
  \leq
  c
  \left(
   \|{
    L_t \underline{a}
    }_{C^{0,\alpha}_{-2,\delta;t}}
   +
   \|{
    \overline{\pi}_t \underline{a}
    }_{L^\infty_{-1,\delta;t}}
  \right).
 \end{align}
\end{proposition}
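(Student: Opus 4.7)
The plan is a blow-up argument by contradiction, using the Schauder estimate of the preceding proposition to promote pointwise bounds on rescaled forms to $C^{1,\alpha}$ bounds and then extract subsequential limits. Suppose the estimate fails: then there exist sequences $t_n \to 0$ and $\underline{a}_n = (\xi_n, a_n)$ with $\|{a_n}_{L^\infty_{-1,\delta;t_n}} = 1$, $\|{L_{t_n} \underline{a}_n}_{C^{0,\alpha}_{-2,\delta;t_n}} \to 0$, and $\|{\overline{\pi}_{t_n} \underline{a}_n}_{L^\infty_{-1,\delta;t_n}} \to 0$. Choose points $x_n \in N_{t_n}$ at which $a_n$ essentially attains its weighted $L^\infty$ norm. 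I would split the analysis into three subsequential regimes according to the behaviour of $r_{t_n}(x_n)$, in each case producing a non-trivial subsequential limit of an appropriate rescaling of $\underline{a}_n$ solving a model equation, and then ruling that limit out.

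In the bulk regime $r_{t_n}(x_n) \geq c > 0$ the weights are uniformly bounded, $N_{t_n}$ converges smoothly to $Y/\<\iota\>$ away from $L$, and Schauder together with Arzelà--Ascoli delivers a non-zero limit $\underline{a}_\infty$ on $Y/\<\iota\>$ satisfying $L_\theta \underline{a}_\infty = 0$; infinitesimal rigidity and irreducibility of $\theta$ force $\underline{a}_\infty = 0$, a contradiction. In the Eguchi--Hanson regime $r_{t_n}(x_n)/t_n$ bounded, pick $y_n \in L$ close to the projection of $x_n$ and apply the pull-back $s^P_{1,t_n;y_n}$; by the blow-up comparison proposition, the rescaled operator converges to the model $L_{p_{\XEH}^* f_* s(y_\infty)}$, and the rescaled forms converge on compact subsets of $\R^3 \times \XEH$ to a non-trivial limit $\underline{a}_\infty$ in the kernel of this model. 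The proposition on $\R^3$-invariance identifies $\underline{a}_\infty$ with the pull-back of an $L^2$-kernel element of $\delta_{f_* s(y_\infty)}$, i.e.\ with a tangent vector to $M$ at $s(y_\infty)$. But these are precisely the deformations captured by $\overline{\pi}_{t_n}$, and the assumed smallness of $\|{\overline{\pi}_{t_n} \underline{a}_n}_{L^\infty_{-1,\delta;t_n}}$, transferred through the rescaling using the bounds on $\pi_t$ and $\iota_t$, forces $\underline{a}_\infty = 0$.

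The intermediate regime $r_{t_n}(x_n) \to 0$ with $r_{t_n}(x_n)/t_n \to \infty$ is treated by the pull-back $s^\nu$ with annular scales $\epsilon_1,\epsilon_2,\epsilon_3$ chosen so that $r_{t_n}(x_n)$ sits inside the rescaled annulus. The corresponding blow-up comparison proposition yields a non-zero limit $\underline{a}_\infty$ on $\R^3 \times (\C^2/\{\pm 1\} \setminus \{0\})$ solving the flat model $L_{p_{\C^2}^* A_0} \underline{a}_\infty = 0$, where $A_0$ is the flat asymptotic limit of the ASD instanton. Reducing once more by $\R^3$-invariance, the problem becomes that of producing a non-zero kernel element of $\delta_{A_0}$ on the flat cone $\C^2/\{\pm 1\}$ with the decay/growth admissible under the weighted norm; because $A_0$ is flat and $\delta_{A_0}$ is scale invariant on the cone, separation of variables and a homogeneity argument rule out any such element.

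The main obstacle will be this intermediate case: the scales must be calibrated so that (i) $r_{t_n}(x_n)$ lands in the rescaled annulus, (ii) the error in the blow-up comparison proposition tends to zero, and (iii) the weight function $w_{-1,\delta;t_n}$ behaves compatibly under the pull-back across the transition at $r = \sqrt{t}$ between its two regimes. A further subtlety is that only the $L^\infty$, not the $C^{0,\alpha}$, norm of $\overline{\pi}_{t_n} \underline{a}_n$ is assumed small, so in the Eguchi--Hanson case the contradiction must be extracted pointwise at $x_n$ rather than in norm, and one must also control the $\Omega^0$-component $\xi_n$ using the Coulomb-gauge equation packaged inside $L_{t_n}$ in order to apply Schauder to the full pair $\underline{a}_n$.
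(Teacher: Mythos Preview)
Your overall architecture is exactly the paper's: contradiction, Schauder to promote to $C^{1,\alpha}$ bounds, three regimes, and in each regime extract a subsequential limit solving a model equation that is forced to vanish. Cases 1 and 2 (bulk and Eguchi--Hanson) are handled precisely as in the paper.

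The intermediate (neck) case has a genuine gap. After applying the paper's $s^\nu$ (which rescales the $\C^2/\{\pm 1\}$ direction by $1/t$), the image $z_n$ of $x_n$ satisfies $|z_n| \sim r_{t_n}(x_n)/t_n \to \infty$. So Arzel\`a--Ascoli on compacta of $\R^3 \times (\C^2/\{\pm 1\}\setminus\{0\})$ does \emph{not} by itself give a non-zero limit: the mass escapes to infinity. The paper fixes this with a \emph{second} rescaling, by $|z_n|$, bringing $z_n/|z_n|$ to the unit sphere; only then does one get a non-trivial limit $\underline{b}^*$. This second rescaling interacts with the two-regime weight function $w_{-1,\delta;t}$ across $r=\sqrt{t}$, forcing a further sub-case split according to whether $\sqrt{t_n}\,|z_n|$ tends to $0$, a finite constant, or $\infty$ --- a complication you allude to in your last paragraph but do not resolve.

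Second, your endgame ``separation of variables and a homogeneity argument'' is too vague and not what the paper does. After the double rescaling the limit $\underline{b}^*$ lives on $\R^3\times(\C^2\setminus\{0\})$ (pulling back through the $\{\pm 1\}$ quotient), satisfies the flat model equation, and is bounded by the weight estimate; one extends $\underline{b}^*$ across $\R^3\times\{0\}$ as a distribution, uses elliptic regularity plus an $L^p$ bootstrap to get a uniform $L^\infty$ bound, invokes $\R^3$-invariance with $\tilde X=\C^2$, observes via \cref{equation:L*L-equals-Laplacian-plus-something} that the resulting form on $\C^2$ is harmonic and bounded, and concludes $\underline{b}^*=0$ by Liouville. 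A pure scale-invariance argument on the punctured cone does not automatically supply the extension over the origin that Liouville requires.
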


\begin{proof}
 Assume not, then there exist $t_i \rightarrow 0$ and $\underline{a}_i$ such that
 \begin{align}
  \label{equation:norm-a-i-constant-1}
  \|{
  \underline{a}_i
  }
  _{L^\infty_{-1,\delta;t_i}}
  &\equiv
  1,
  \\
  \label{equation:L-a_i-to-0}
  \lim_{i \rightarrow \infty}
   \|{
    L_{t_i} \underline{a}
    }
    _{C^{0,\alpha}_{-2,\delta;t_i}}
  &= 0,
  \\
  \label{equation:pi-a_i-to-0}
  \lim_{i \rightarrow \infty}
   \|{
   \overline{\pi}_{t_i} \underline{a}
   }
   _{L^\infty_{-1,\delta;t_i}}
  &= 0.
 \end{align}
 It follows from \cref{proposition:schauder-estimate} that
 \begin{align}
  \label{equation:sequence-bounded-in-c-1-alpha}
  \|{
   \underline{a}_i
   }_{C^{1,\alpha}_{-1,\delta;t}}
   \leq
   c.
 \end{align}
 Let $x_i \in N_{t_i}$ such that
 \begin{align}
 \label{equation:contradiction-proof-nonzero-property}
  w_{-1,\delta;t}(x_i)
  \left| \underline{a}_i \right| (x_i)
  =1.
 \end{align}
 Without loss of generality we can assume to be in one of three following cases, and we will arrive at a contradiction in each of them.
 
 \textbf{Case 1.}
 ``$\underline{a}_i$ goes to zero near $L$ and on the neck'', i.e. if $z_i \in N_{t_i}$ such that $r_{t_i}(z_i)\rightarrow 0$, then $w_{-1,\delta;t}(z_i) \left| \underline{a}_i \right| (z_i) \rightarrow 0$.
 
 Without loss of generality, the sequence $(x_i)$ accumulates away from $L$, i.e. $\lim_{i \rightarrow \infty} r_{t_i} (x_i) >0$
 (see \cref{figure:blow-up-1-gauge}).
 
 \begin{figure}[htbp]
  \begin{center}
   \includegraphics[width=10cm]{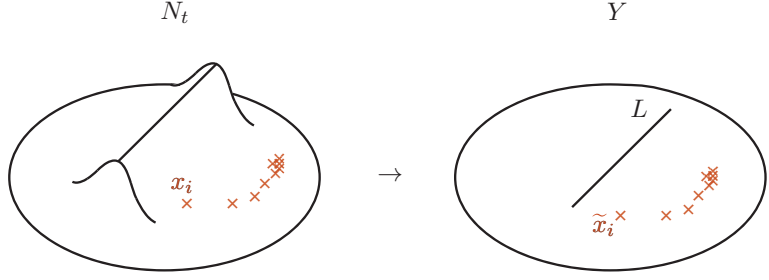}
  \end{center}
  \caption{Blowup analysis away from the associative is reduced to the analysis of $\theta$ on $Y$.}
  \label{figure:blow-up-1-gauge}
 \end{figure}
 
 Without loss of generality assume that $x_i \rightarrow x^* \in Y/\< \iota \>$, where we used that $(Y \setminus L)/\< \iota \> \subset N_{t_i}$, cf. \cref{definition:joyce-karigiannis-N-t}. 
 Now, using a diagonal argument and the Arzelà–Ascoli theorem, we find that a subsequence of $\underline{a}_i$ converges to a limit $\underline{a}^* \in \Omega^1((Y \setminus L)/\< \iota \>, \Ad E_0)$ in $C^{1,\alpha/2}_{\loc}$.
 Denote by $\pi_\iota: Y \rightarrow Y/\< \iota \>$ the quotient map, and denote by $\tilde{x}_i$ an arbitrary lift of $x_i$, i.e. $\pi_\iota(\tilde{x}_i)=x_i$.
 By passing to a subsequence we still have $\tilde{x}_i \rightarrow \tilde{x}^*$ for some $\tilde{x}^* \in Y$.
 Denote also $\tilde{\underline{a}}^*:= \pi_\iota^* \underline{a}^* \in (\Omega^0 \oplus \Omega^1)(\Ad E_0|_{Y \setminus L})$.
 
 \Cref{equation:L-a_i-to-0} implies that this limit satisfies $L_\theta \tilde{\underline{a}}^*=0$ on $Y \setminus L$.
 We can extend $\tilde{\underline{a}}^*$ to all of $Y$ as a distribution, and we find that then $L_\theta \tilde{\underline{a}}^*=0$ on $Y$ in the sense of distributions.
 By elliptic regularity, e.g. \cite[Theorem 6.33]{Folland1995}, we have that $\tilde{\underline{a}}^*$ is smooth.
 
 Lastly, we note that \cref{equation:contradiction-proof-nonzero-property} implies $\tilde{\underline{a}}^*(\tilde{x}^*) \neq 0$.
 By assumption, $\theta$ is infinitesimally rigid and irreducible, which is a contradiction.
 
 \textbf{Case 2.}
 ``The sequence does not go to zero near $L$'', i.e. there exists $y_i \in N_{t_i}$ such that $t_i^{-1}  r_{t_i}(y_i)$ is bounded, but $w_{-1,\delta;t}(y_i) \left| \underline{a}_i \right| (y_i) \nrightarrow 0$.
 
 Without loss of generality assume that this is the sequence $(x_i)$, i.e. $\lim_{i \rightarrow \infty} t_i^{-1} r_{t_i} (x_i) < \infty$
 (see \cref{figure:blow-up-2-gauge}).
 
 \begin{figure}[htbp]
  \begin{center}
   \includegraphics[width=10cm]{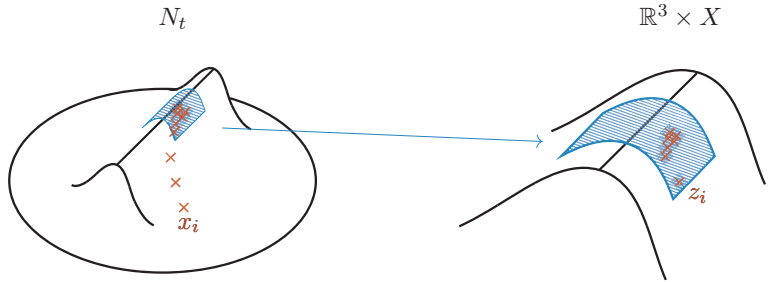}
  \end{center}
  \caption{Blowup analysis near the associative is, by means of the map $s^P$, reduced to the analysis of the pull-back of the ASD instanton defined by $s(\sigma(y^*))$ to $\R^3 \times {\XEH}$.}
  \label{figure:blow-up-2-gauge}
 \end{figure}
 
 For $\underline{a}_i=(\xi_i,a_i) \in (\Omega^0 \oplus \Omega^1)(N_t, \Ad E_t)$, let
 \begin{align*}
  \underline{b}_i
  :=
  \left(
  s_{1,\sigma(x_i);t_i}^{P,\sqrt{t_i},\sqrt{t_i}}(\xi_i),
  s_{1,\sigma(x_i);t_i}^{P,\sqrt{t_i},\sqrt{t_i}}(a_i)
  \right).
 \end{align*}
 \Cref{proposition:blowup-comparison} then gives
 \begin{align*}
  \|{
   \underline{b}_i
  }_{C^{1,\alpha}_{-1+\delta}(U^P_{1/\sqrt{t_i},1/\sqrt{t_i}})}
  \leq c
  \text{ and }
  \lim_{i \rightarrow \infty}
  \|{
   L_{p_{\XEH}^* f_* s(\sigma(x_i))}
   \underline{b}_i
   }_{C^{0,\alpha}_{-2+\delta}}
   =0.
 \end{align*}
 Without loss of generality we can assume $\sigma(x_i)\rightarrow y^* \in L$.
 By a diagonal argument and the Arzelà–Ascoli theorem, we have $\underline{b}_i \rightarrow \underline{b}^* \in (\Omega^0 \oplus \Omega^1)(\R^3 \times {\XEH}, \Ad p_{\XEH}^*f_* s(\sigma(y^*)))$ in $C^{1,\alpha/2}_{\loc}$, satisfying $L_{p_{\XEH}^*f_*s(\sigma(y^*))} \underline{b}^*=0$.
 \Cref{proposition:constant-in-r3-direction} implies that $\underline{b}^*=p_{\XEH}^* \underline{c}$, for some $\underline{c} \in \Ker \delta_{f_*s(\sigma(y^*)} \subset \Omega^1({\XEH}, f_*s(\sigma(y^*)))$.
 (Here, $\delta$ is the linearisation of the ASD equation as defined in \cref{equation:asd-instanton-linearisation}.)
 \Cref{equation:pi-a_i-to-0} then implies that $\underline{c}=0$.
 
 This contradicts \cref{equation:contradiction-proof-nonzero-property} as follows:
 denote by $(z_i) \subset \R^3 \times {\XEH}$ the sequence corresponding to $(x_i)$ under the map $s_{1,t_i;\sigma(x_i)}^{\sqrt{t},1/\sqrt{t}}$.
 Then $(z_i)$ is a bounded sequence, as the $\R^3$-coordinate of all $z_i$ is $0$, and the ${\XEH}$-coordinates are bounded by the assumption that $\lim_{i \rightarrow \infty} t_i^{-1} r_{t_i} (x_i) < \infty$.
 Thus we can assume without loss of generality that $z_i \rightarrow z^* \in \R^3 \times {\XEH}$, and find that
 \begin{align*}
  w(z^*)^{1-\delta}
  \left| \underline{b}^*(z^*) \right|
  =
  \lim_{i \rightarrow \infty}
  w^\nu_{l,\delta;t}(z_i)^{1-\delta}
  \left| \underline{b}_i(z_i) \right|
  \geq
  \frac{1}{c}
 \end{align*}
 by \cref{proposition:blowup-comparison}, which is a contradiction to $\underline{b}^*=0$.
 
 \textbf{Case 3.}
 ``The sequence does not go to zero on the neck'', i.e. there exists $y_i \in N_{t_i}$ such that $r_{t_i}(y_i) \rightarrow 0$, $t_i^{-1} r_{t_i}(y_i) \rightarrow \infty$, but $w_{-1,\delta;t}(y_i) \left| \underline{a}_i \right| (y_i) \nrightarrow 0$.
 
 Assume without loss of generality that this is the sequence $(x_i)$, i.e. $\lim_{i \rightarrow \infty} t_i^{-1} r_{t_i}(x_i) =\infty$ and $\lim_{i \rightarrow \infty} r_{t_i}(x_i)=0$
 (see \cref{figure:blow-up-3-gauge}).
 
 \begin{figure}[htbp]
  \begin{center}
   \includegraphics[width=10cm]{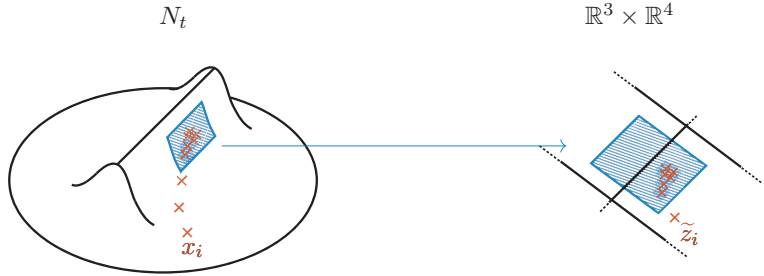}
  \end{center}
  \caption{Blowup analysis in the neck region is reduced to the analysis of the flat $G_2$-instanton defined on the pull-back of the framing at infinity defined by $s(\sigma(y^*))$ to $\R^3 \times \R^4$.}
  \label{figure:blow-up-3-gauge}
 \end{figure}
 
 Let 
 \begin{itemize}
 \item 
 $\epsilon_2^{(i)}$ such that $\epsilon_2^{(i)} \rightarrow 0$ and $\epsilon_2^{(i)}/ r_{t_i}(x_i) \rightarrow \infty$,
 
 \item
 $\epsilon_3^{(i)}$ such that $\epsilon_3^{(i)}/r_{t_i}(x_i) \rightarrow 0$ and $\epsilon_3^{(i)}/t_i \rightarrow \infty$.
 \end{itemize} 
 To ease notation, we write $\epsilon_2$ instead of $\epsilon_2^{(i)}$ and $\epsilon_3$ instead of $\epsilon_3^{(i)}$ in what follows.
 As before, write $\underline{a}_i=(\xi_i,a_i) \in (\Omega^0 \oplus \Omega^1)(N_t, \Ad E_t)$, and
 set
 \begin{align*}
  \underline{b}_i
  :=
  \left(
   \zeta_i,
   b_i
  \right)
  :=
  \left(
  s
  ^{\nu,\sqrt{t_i}, \epsilon_2, \epsilon_3}
  _{1,\sigma(x_i);t_i}
  (\xi_i)
  ,
  s
  ^{\nu,\sqrt{t_i}, \epsilon_2, \epsilon_3}
  _{1,\sigma(x_i);t_i}
  (a_i)
  \right)
 \end{align*}
 and denote by $(z_i)$ the sequence in $\R^3 \times \C^2/\{\pm 1\}$ corresponding to $(x_i)$ under the map 
 $s
  ^{\nu,\sqrt{t_i}, \epsilon_2, \epsilon_3}
  _{1,\sigma(x_i);t_i}$.
 \Cref{equation:contradiction-proof-nonzero-property} implies
 \begin{align}
 \label{equation:contradiction-proof-nonzero-property-case-3}
  |\underline{b}_i(z_i)| \cdot w(z_i) > c,
 \end{align}
 \cref{proposition:blowup-comparison-nu,equation:sequence-bounded-in-c-1-alpha} imply that
 \begin{align}
 \label{equation:sequence-bounded-in-c-1-alpha-case-3}
  \|{
   w^\nu_{l,\delta;t}
   s_{d,t;y}^{\nu,\epsilon_1, \epsilon_2, \epsilon_3}
   \underline{a}
   }
   _{C^{1,\alpha}_{0}(U_{1/\sqrt{t}, \epsilon_2/t, \epsilon_3/t;t}^\nu)}
   \leq
   c,
 \end{align}
 \cref{proposition:blowup-comparison-nu,equation:L-a_i-to-0} imply that
 \begin{align}
 \label{equation:L-a_i-to-0-part-3}
  \|{
   w^\nu_{l,\delta;t}
   L_{p_{\XEH}^* A_0}
    s_{1,t;y}^{\nu,\epsilon_1, \epsilon_2, \epsilon_3}
    \underline{a}
   }
   _{C^{1,\alpha}_{0}(U_{1/\sqrt{t}, \epsilon_2/t, \epsilon_3/t;t}^\nu)}
   \rightarrow
   0
   \text{ as }
   i \rightarrow \infty.
 \end{align}
 We will now conclude the argument as in case 2.
 The only difference is that, as it stands, the points $z_i$ tend to infinity.
 Because of this, we cannot directly conclude that a limit of $\underline{b}_i$ would be non-zero.
 That is why we rescale by $|z_i|$ first.
 By passing to a subsequence we can assume without loss of generality to be in case 3.1 or 3.2 as below:
 
 \textbf{Case 3.1.:}
 $|z_i| \leq 1/\sqrt{t_i}$.
 In this case let
 \begin{align}
  \tilde{\underline{b}}_i
  :=
  (\tilde{\zeta}_i,\tilde{b}_i)
  :=
  \left(
   |z_i|^{1-\delta}
   (\cdot |z_i|)^*\zeta_i,
   |z_i|^{-\delta}
   (\cdot |z_i|)^*b_i
  \right).
 \end{align}
 \Cref{equation:contradiction-proof-nonzero-property-case-3} implies $|\tilde{\underline{b}}_i(z_i/|z_i|)|\cdot r^{1-\delta}(z_i/|z_i|)=|\tilde{\underline{b}}_i(z_i/|z_i|)| > c$, and
 \cref{equation:sequence-bounded-in-c-1-alpha-case-3} implies that on the sets $B^3(0, 1/\sqrt{t}) \times [B^4(0, \epsilon_2 / |x_i|) \setminus B^4(0, \epsilon_3/|x_i|)]$, which exhaust $\R^3 \times (\C^2/\{\pm 1\} \setminus \{0\})$, we have:
 \begin{align}
 \label{equation:sequence-bounded-in-c-1-alpha-case-3-1}
  \|{
  \begin{cases}
  \tilde{\underline{b}}_i r^{1-\delta}
  & \text{ if }
  r \leq 1/(\sqrt{t} \cdot |z_i|)
  \\
  \tilde{\underline{b}}_i r^{1+\delta}t^\delta |z_i|^{2\delta}
  & \text{ if }
  r > 1/(\sqrt{t} \cdot |z_i|).
  \end{cases}
  }_{C^{1,\alpha}_0(B^3(0, 1/\sqrt{t}) \times [B^4(0, \epsilon_2 / |x_i|) \setminus B^4(0, \epsilon_3/|x_i|)])}
  &\leq
  c.
 \end{align}
 Here is how to arrive at the exponents of the weight function for $\tilde{\zeta}_i$ in the area $\{(u,v) \in \R^3 \times \C^2/\{\pm 1\}: r(v) > 1/(\sqrt{t} \cdot |z_i|) \}$:
 \begin{align*}
  \tilde{\zeta}_i r^{1+\delta}t^\delta |z_i|^{2\delta}
  &=
  (\cdot |z_i|)^*\zeta_i
  |z_i|^{1+\delta}
  r^{1+\delta}
  t^\delta
  \\
  &=
  (\cdot |z_i|)^*
  \left[ 
  \zeta_i r^{1+\delta} t^{\delta}
  \right],
 \end{align*}
 and $\zeta_i r^{1+\delta} t^{\delta}$ was bounded by \cref{equation:sequence-bounded-in-c-1-alpha-case-3}.
 The exponents of the weight function on the area $\{(u,v) \in \R^3 \times \C^2/\{\pm 1\}: r(v) > 1/(\sqrt{t} \cdot |z_i|) \}$ and also for the $1$-form part $\tilde{b}_i$ are computed analogously and precisely give \cref{equation:sequence-bounded-in-c-1-alpha-case-3-1}.
 Now, because of \cref{equation:sequence-bounded-in-c-1-alpha-case-3-1}, we can use the Arzelà-Ascoli theorem and a diagonal sequence argument to extract a limit $\underline{b}^*$ on $\R^3 \times (\C^2/\{\pm 1\} \setminus \{0\})$.
 We denote the pullback under the quotient map $\R^3 \times (\C^2 \setminus \{0\}) \rightarrow \R^3 \times (\C^2/\{\pm 1\} \setminus \{0\})$ by the same symbol and end up with a tensor $\underline{b}^*$ on $\R^3 \times (\C^2 \setminus \{0\})$.
 Again, by passing to a subsequence we can assume without loss of generality that we are in one of the following two cases:
 
 \textbf{Case 3.1.1:}
 $\sqrt{t_i} |z_i| \rightarrow 0$ as $i \rightarrow \infty$.
 
 In this case, the area $\{u \in \R^3 \times \C^2/\{\pm 1\}: r(u) > 1/(\sqrt{t} \cdot |z_i|) \}$ disappears as $i \rightarrow \infty$, and from \cref{equation:sequence-bounded-in-c-1-alpha-case-3-1} we get the estimate
 \begin{align}
 \label{equation:sequence-bounded-in-c-1-alpha-case-3-1-1}
  \|{
   \underline{b}^*
   r^{1-\delta}
  }_{C^{1,\alpha/2}_0(\R^3 \times (\R^4 \setminus \{0\}))}
  \leq
  c.
 \end{align}
 The element $\underline{b}^*$ defines a distribution on all of $\R^3 \times \C^2$ and is smooth by elliptic regularity, e.g. \cite[Theorem 6.33]{Folland1995}.
 We also get an $L^\infty$-bound for $\underline{b}^*$ as in the proof of \cite[Proposition 8.7]{Walpuski2013a}:
 away from $\R^3 \times \{0\}$, this is given by \cref{equation:sequence-bounded-in-c-1-alpha-case-3-1-1}.
 To see that $\underline{b}^*$ does not blow up in the $\R^3$-direction near $\R^3 \times \{0\}$, consider any $y \in \R^3 \times \{0\}$.
 Let $1 < p < -4/(-1+\delta)$, then $\|{\underline{b}^*}_{L^p(B_1(y))} \leq c$, independent of $y$, by \cref{equation:sequence-bounded-in-c-1-alpha-case-3-1-1}.
 So, by elliptic regularity $\|{\underline{b}^*}_{L_m^p(B_1(y))} \leq c$ for any $m \in \mathbb{N}$, and by the Sobolev embedding we have $\|{\underline{b}^*}_{L^\infty} \leq c$, where all of these estimates were independent of $y$.
 
 Thus, by \cref{proposition:constant-in-r3-direction} applied to the case $\tilde{X}=\C^2$, we get that $\underline{b}^*$ is independent of the $\R^3$-direction.
 Because of \cref{equation:L*L-equals-Laplacian-plus-something} we have that $\underline{b}^*$ is the pullback of a harmonic form of mixed degree (in degrees $0$ and $1$) on $\C^2$.
 So, $\underline{b}^*$ is harmonic and bounded on $\C^2$ by \cref{equation:sequence-bounded-in-c-1-alpha-case-3-1-1}, therefore vanishes by Liouville's theorem.
 That contradicts \cref{equation:contradiction-proof-nonzero-property-case-3}.
 
 \textbf{Case 3.1.2:}
 $\sqrt{t_i} |z_i| \rightarrow \kappa \in (0,\infty)$ as $i \rightarrow \infty$.
 
 In this case, after passing to a subsequence, \cref{equation:sequence-bounded-in-c-1-alpha-case-3-1} gives the estimate
 \begin{align}
 \label{equation:sequence-bounded-in-c-1-alpha-case-3-1-2}
  \|{
  \begin{cases}
  \underline{b}^*
  r^{1-\delta}
  & \text{ if }
  r \leq 1/\kappa
  \\
  \underline{b}^*
  r^{1+\delta}
  & \text{ if }
  r > 1/\kappa.
  \end{cases}
  }_{C^{1,\alpha}_0(\R^3 \times (\C^2 \setminus \{0\})}
  &\leq
  c.
 \end{align}
 Here is how to obtain this estimate:
 the assumption $\sqrt{t_i} |z_i| \rightarrow \kappa$ implies that $\sqrt{t_i} |z_i| > c$, at least up to a subsequence.
 Thus, we have $t^\delta \cdot |z_i|^{2\delta}<c$, and \cref{equation:sequence-bounded-in-c-1-alpha-case-3-1} becomes
 \begin{align*}
  \|{
  \begin{cases}
  \tilde{\underline{b}}_i r^{1-\delta}
  & \text{ if }
  r \leq 1/(\sqrt{t} \cdot |z_i|)
  \\
  \tilde{\underline{b}}_i r^{1+\delta}
  & \text{ if }
  r > 1/(\sqrt{t} \cdot |z_i|).
  \end{cases}
  }_{C^{1,\alpha}_0(B^3(0, 1/\sqrt{t}) \times [B^4(0, \epsilon_2 / |x_i|) \setminus B^4(0, \epsilon_3/|x_i|)])}
  &\leq
  c.
 \end{align*}
 Here, taking the limit $i \rightarrow \infty$ gives \cref{equation:sequence-bounded-in-c-1-alpha-case-3-1-2}.
 In this case, we arrive at a contradiction as in case 3.1.1.
 
 \textbf{Case 3.2.:}
 $|z_i| > 1/\sqrt{t_i}$.
 In this case let
 \begin{align}
  \tilde{\underline{b}}_i
  :=
  (\tilde{\zeta}_i,\tilde{b}_i)
  :=
  \left(
   t^\delta
   |z_i|^{1+\delta}
   (\cdot |z_i|)^*\zeta_i,
   t^\delta
   |z_i|^{\delta}
   (\cdot |z_i|)^*b_i
  \right).
 \end{align}
 This gives us the following analogue of \cref{equation:sequence-bounded-in-c-1-alpha-case-3-1}:
 \begin{align}
 \label{equation:sequence-bounded-in-c-1-alpha-case-3-2}
  \|{
  \begin{cases}
  \tilde{\underline{b}}_i r^{1-\delta}
  t^{-\delta} |z_i|^{-2\delta}
  & \text{ if }
  r \leq 1/(\sqrt{t} \cdot |z_i|)
  \\
  \tilde{\underline{b}}_i r^{1+\delta}
  & \text{ if }
  r > 1/(\sqrt{t} \cdot |z_i|).
  \end{cases}
  }_{C^{1,\alpha}_0(B^3(0, 1/\sqrt{t}) \times [B^4(0, \epsilon_2 / |x_i|) \setminus B^4(0, \epsilon_3/|x_i|)])}
  &\leq
  c.
 \end{align}
 We can extract a limit $\underline{b}^*$ as in case 3.1 and are, without loss of generality, in one of the following two cases:
 
 \textbf{Case 3.2.1:}
 $\sqrt{t_i} \cdot |z_i| \rightarrow \infty$ as $i \rightarrow \infty$.
 In this case we have the estimate
 \begin{align}
 \label{equation:sequence-bounded-in-c-1-alpha-case-3-2-1}
  \|{
   \underline{b}^*
   r^{1+\delta}
  }_{C^{1,\alpha/2}_0(\R^3 \times (\R^4 \setminus \{0\}))}
  \leq
  c
 \end{align}
 and arrive at a contradiction as in case 3.1.1.
 
 \textbf{Case 3.2.2:}
 $\sqrt{t_i} \cdot |z_i| \rightarrow \kappa \in (0,\infty)$ as $i \rightarrow \infty$.
 In this case we have exactly \cref{equation:sequence-bounded-in-c-1-alpha-case-3-1-2} and can conclude the proof as in case 3.1.2. 
\end{proof}

\subsubsection{Cross-term Estimates}
\label{subsubsection:cross-term-estimates}

In the beginning of \cref{subsection:linear-estimates} we explained the idea for the proof of the linear estimate.
Namely, we want to separately consider two parts of the linearisation of the instanton equation:
the first part near the resolution locus of the associative $L$, which is approximately equal to the linearisation of the Fueter equation.
The second part is the linearised operator modulo deformations of the Fueter section.
These parts were estimated in \cref{subsubsection:comparison-with-the-fueter-operator,subsubsection:estimate-of-eta-a}.

However, it is not true that the linearised instanton operator neatly decomposes as a sum of these two operators.
We can take a deformation of the Fueter section, apply $L_t$ to it, and then project it onto the part that does \emph{not} come from a deformation of the Fueter section.
In an ideal world, $L_t$ near the resolution locus of the associative is exactly equal to the linearisation of the Fueter equation and the result is $0$.
In reality, we do not have that the result is $0$, but we have that it is small.
That is \cref{equation:cross-term-1}.
There is also, roughly speaking, the converse of this, which is \cref{equation:cross-term-2}.

Like the results from \cref{subsubsection:comparison-with-the-fueter-operator}, this proposition has been proved in a slightly different setting in \cite{Walpuski2017}.
Again, the proof given therein carries over to our situation if we only have that
$\tilde{\psi}^N_t-\psi^P_t$
is small, which is true on resolutions of $T^7/\Gamma$ by \cref{proposition:psi-N-psi-P-comparison-on-T7,corollary:kummer-construction-simplified-torsion-free-estimate}.

\begin{proposition}[Proposition 8.29 in \cite{Walpuski2017}]
\label{proposition:cross-term-estimates}
 Let $N_t$ be the resolution of $T^7/\Gamma$ from \cref{section:torsion-free-structures-on-the-generalised-kummer-construction}.
 There exists a constant $c>0$ such that for all $t \in (0,T)$ we have
 \begin{align}
 \label{equation:cross-term-1}
  \|{
   \eta_t L_t \iota_t v
  }_{C^{0,\alpha}_{-2,0;t}}
  &\leq
  c t^{2-\alpha}
  \|{
   v
  }_{C^{1,\alpha}}
 \intertext{%
 as well as}
  \label{equation:cross-term-2}
  \|{
   \pi_t L_t \eta_t \underline{a}
  }_{C^{0,\alpha}}
  &\leq
  c t^{-\alpha}
  \|{
   \eta_t \underline{a}
  }_{C^{1,\alpha}_{-1,0;t}}.
 \end{align}
\end{proposition}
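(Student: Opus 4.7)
The plan rests on a crucial identity that is immediate from the definition of $\pi_t$: because $\pi_t$ is defined by integration against an orthonormal basis with respect to $\<\iota_t \cdot, \iota_t \cdot\>_{L^2(P_x)}$, one has $\pi_t \iota_t = \Id$ fibrewise, and hence both $\eta_t \iota_t = 0$ and $\pi_t \eta_t = 0$. The second of these is the key point: it says that for every $x \in L$ the restriction $(\eta_t \underline{a})|_{P_x}$ is $L^2(P_x)$-orthogonal to $\iota_t w$ for every $w \in (V \mathfrak{M})_{s(x)}$. This fibrewise orthogonality is what drives both estimates.

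For \eqref{equation:cross-term-1}, I would use $\eta_t \iota_t = 0$ to rewrite
\[
 \eta_t L_t \iota_t v \;=\; \eta_t \bigl( L_t \iota_t v - \iota_t \d_s \mathfrak{F} v \bigr),
\]
and then apply \cref{proposition:fueter-comparison-theorem} to bound the bracketed expression by $ct^2 \|{v}_{C^{1,\alpha}}$ in $C^{0,\alpha}_{-2,0;t}$. It remains to control the operator norm of $\eta_t = \Id - \iota_t \pi_t$ on $C^{0,\alpha}_{-2,0;t}$; composing the two inequalities of \cref{proposition:iota-pi-bounds} with $l = -2$, $\delta = 0$ gives $\|{\iota_t \pi_t a}_{C^{0,\alpha}_{-2,0;t}} \leq c \cdot t \cdot t^{-1-\alpha} \|{a}_{C^{0,\alpha}_{-2,0;t}} = c t^{-\alpha} \|{a}_{C^{0,\alpha}_{-2,0;t}}$. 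Multiplying the two bounds yields exactly $ct^{2-\alpha} \|{v}_{C^{1,\alpha}}$.

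For \eqref{equation:cross-term-2}, the naive chain $\|{\pi_t L_t \eta_t \underline{a}}_{C^{0,\alpha}} \leq c t^{-1-\alpha} \|{L_t \eta_t \underline{a}}_{C^{0,\alpha}_{-2,0;t}} \leq c t^{-1-\alpha} \|{\eta_t \underline{a}}_{C^{1,\alpha}_{-1,0;t}}$ loses exactly one factor of $t$ relative to the target, so the fibrewise orthogonality must be exploited. I would expand
\[
 (\pi_t L_t \eta_t \underline{a})(x) \;=\; \sum_\kappa \left( \int_{P_x} \< L_t(\eta_t \underline{a}),\, \iota_t \kappa \>_{g^P_t} \, \vol_{g^P_t|_{P_x}} \right) \kappa
\]
and integrate by parts fibrewise against the vertical part of $L_t$ which, by \cref{proposition:G2-linearisation-for-pullbacks}, is fibrewise the ASD operator $\delta_A$ on $P_x$. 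The resulting main term is $\<\eta_t \underline{a},\, L_t \iota_t \kappa\>_{L^2(P_x)}$; using \cref{proposition:fueter-comparison-theorem} to replace $L_t \iota_t \kappa$ by $\iota_t \d_s \mathfrak{F} \kappa$ up to a $C^{0,\alpha}_{-2,0;t}$ error of size $ct^2$, and then the fibrewise orthogonality to annihilate the leading piece, leaves only an $O(t^2)$ remainder together with a horizontal commutator arising from the fibrewise integration by parts.

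The main obstacle will be handling this horizontal commutator. It is produced by the horizontal summand $F$ in the decomposition of \cref{proposition:G2-linearisation-for-pullbacks} together with derivatives falling on the cut-off $\chi^+_t$; neither piece admits a clean fibrewise integration by parts, and both must be bounded directly. The $t^{-\alpha}$ factor on the right of \eqref{equation:cross-term-2} arises as the price of passing between the $C^{0,\alpha}$ seminorm of $\iota_t \kappa$ and the weighted $C^{1,\alpha}_{-1,0;t}$ norm of $\eta_t \underline{a}$ over the support region of $\chi^+_t$, exactly as in \cite[Proposition 8.29]{Walpuski2017}. The only ingredient in which our setting differs is the replacement of the flat $\psi^P_t$ by $\tilde{\psi}^N_t$, and on resolutions of $T^7/\Gamma$ this replacement is controlled by \cref{proposition:psi-N-psi-P-comparison-on-T7} and \cref{proposition:equation:torsion-free-hoelder-difference}, which furnish precisely the regularity needed for the model commutator computation to go through unchanged.
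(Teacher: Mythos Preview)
The paper gives no proof of its own here: it simply cites \cite[Proposition 8.29]{Walpuski2017} and remarks that the proof carries over because $\tilde{\psi}^N_t-\psi^P_t$ is small on resolutions of $T^7/\Gamma$, which is exactly the adaptation you flag at the end of your sketch. Your argument for \eqref{equation:cross-term-1} via $\eta_t\iota_t=0$, \cref{proposition:fueter-comparison-theorem}, and the $\iota_t\pi_t$ bound from \cref{proposition:iota-pi-bounds} is correct and is the standard route; your sketch for \eqref{equation:cross-term-2} (fibrewise integration by parts, orthogonality $\pi_t\eta_t=0$, then bounding the horizontal and cut-off commutators) is the right strategy and matches the structure of Walpuski's proof, though you correctly identify that the commutator bookkeeping is where the work lies and leave it to the reference, as does the paper.
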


\subsubsection{Proof of Proposition \ref{proposition:inverse-operator-estimate-xy-norm}}
\label{subsubsection:proof-of-linear-estimate-proposition}

\begin{proof}
 Assume that the claim does not hold, and let $t_i \rightarrow 0$, $\underline{a}_i \in (\Omega^0 \oplus \Omega^1)(N_t,\Ad E_t)$ such that $\|{ \underline{a}_i}_{\mathfrak{X}_t}=1$, but $\|{ L_t \underline{a}_i }_{\mathfrak{Y}_t} \rightarrow 0$.
 
 We first prove that
 \begin{align}
 \label{equation:linear-estimate-xy-norm-eta-claim}
 t_i^{-\delta/2} \|{ \eta_{t_i} \underline{a}_i}_{C^{1,\alpha}_{-1,\delta;t_i}} \rightarrow 0.
 \end{align}
 
 We have that
 \begin{align*}
  \|{
   \eta_{t_i} \underline{a}_i
  }_{C^{1,\alpha}_{-1,\delta;t_i}}
  &\leq
  \|{
   L_{t_i} \eta_{t_i} \underline{a}_i
  }_{C^{0,\alpha}_{-2,\delta;t_i}}
  \\
  &\leq
  \|{
   \eta_{t_i}L_{t_i} \eta_{t_i} \underline{a}_i
  }_{C^{0,\alpha}_{-2,\delta;t_i}}
  +
  \|{
   \overline{\pi}_{t_i}L_{t_i} \eta_{t_i} \underline{a}_i
  }_{C^{0,\alpha}_{-2,\delta;t_i}}  
  \\
  &\leq
   \|{
    \eta_{t_i} L_t \underline{a}
   }_{C^{0,\alpha}_{-2,\delta;t_i}}
   +
   \|{
    \eta_{t_i}
    L_{t_i}
    \overline{\pi}_{t_i}
    \underline{a}_i
   }_{C^{0,\alpha}_{-2,\delta;t_i}}
   +
   \|{
    \overline{\pi}_{t_i}
    L_{t_i}
    \eta_{t_i}
    \underline{a}_i
   }_{C^{0,\alpha}_{-2,\delta;t_i}}
   \\
  &\leq
   \|{
    \eta_{t_i} L_t \underline{a}
   }_{C^{0,\alpha}_{-2,\delta;t_i}}
   +
   \|{
    1
   }_{C^{0,\alpha}_{0,\delta;t_i}}
   \|{
    \eta_{t_i}
    L_{t_i}
    \overline{\pi}_{t_i}
    \underline{a}_i
   }_{C^{0,\alpha}_{-2,0;t_i}}
   +
   t^{1-\alpha}
   \|{
    \pi_{t_i}
    L_{t_i}
    \eta_{t_i}
    \underline{a}_i
   }_{C^{0,\alpha}}
   \\
   &\leq
   c
   \left(
    \|{
     \eta_{t_i} L_t \underline{a}
    }_{C^{0,\alpha}_{-2,\delta;t_i}}
    +
    ct^{\delta/2} t^{2-\alpha}
    \|{
     \pi_t \underline{a}_i
    }_{C^{1,\alpha}}
    +
    t^{1-2\alpha}
    \|{
     \eta_{t_i} \underline{a}_i
    }_{C^{1,\alpha}_{-1,0;t}}
   \right)
   \\
   &\leq
   c
   \left(
    \|{
     \eta_{t_i} L_t \underline{a}
    }_{C^{0,\alpha}_{-2,\delta;t_i}}
    +
    \mathcal{O}(t^{\delta/2+1-\alpha})
    +
    \mathcal{O}(t^{1-2\alpha+\delta/2})
   \right)
 \end{align*}
 where we used \cref{proposition:crucial-proposition} in the first step;
 we used $\overline{\pi}_{t_i} + \eta_{t_i}=1$ in the second and third steps;
 \cref{proposition:norm-weights-basic-estimate,proposition:iota-pi-bounds} in the fourth step;
 and \cref{proposition:cross-term-estimates} together with
 $\|{
    1
   }_{C^{0,\alpha}_{0,\delta;t_i}}
   \leq ct^{\delta/2}$
 in the fifth step.
 Multiplying the last line with $t_i^{-\delta/2}$, the last two summands tend to zero as they are bounded by positive powers of $t$.
 The first summand tends to zero by the assumption $\|{ L_t \underline{a}_i }_{\mathfrak{Y}_t} \rightarrow 0$.
 
 It remains to prove that
 \begin{align}
 t_i \|{ \pi_{t_i} \underline{a}_i}_{C^{1,\alpha}} \rightarrow 0.
 \end{align}
 
 We have that
 \begin{align*}
  \lim_{i \rightarrow \infty}
  {t_i}
  \|{
   \pi_{t_i} \underline{a}_i
  }_{C^{1,\alpha}}
  &\leq
  \lim_{i \rightarrow \infty}
  {t_i}
  \|{
   \pi_{t_i} L_{t_i} \iota_{t_i} \pi_{t_i} \underline{a}_i
  }_{C^{0,\alpha}}
  \\
  &\leq
  \lim_{i \rightarrow \infty}
  t_i
  \left(
  \|{
   \pi_t L_t \underline{a}
  }_{C^{0,\alpha}}
  +
  \|{
   \pi_t L_t \eta_t \underline{a}
  }_{C^{0,\alpha}}
  \right)
  \\
  &\leq
  \lim_{i \rightarrow \infty}
  t_i
  \left(
  \|{
   \pi_t L_t \underline{a}
  }_{C^{0,\alpha}}
  +
  ct^{-\alpha}
  \|{
   \eta_t \underline{a}
  }_{C^{1,\alpha}_{-1,0;t}}
  \right).
 \end{align*}
 where we used \cref{proposition:pi-L-iota-inequality} in the first step, $\overline{\pi}_{t_i} + \eta_{t_i}=1$ in the second step, \cref{proposition:cross-term-estimates} in the third step.
 Here, the second summand tends to zero by \cref{equation:linear-estimate-xy-norm-eta-claim}, and the first summand tends to zero by the assumption $\|{ L_t \underline{a}_i }_{\mathfrak{Y}_t} \rightarrow 0$.  
 Altogether, $\|{\underline{a}_i}_{\mathfrak{X}_t} \rightarrow 0$, which is a contradiction.
\end{proof}

\subsection{Quadratic Estimate}
\label{subsection:quadratic-estimate}

We state an estimate for the quadratic form $Q_t$ from \cref{equation:g2-instanton-equation}, where we denote its associated bilinear form by the same symbol.
This statement is taken from \cite{Walpuski2017} and the proof can be directly adapted to our slightly different setting.

\begin{proposition}[Proposition 9.1 in \cite{Walpuski2017}]
\label{proposition:quadratic-estimate}
 There exists a constant $c>0$ such that for $t \in (0,1)$ we have
 \begin{align}
  \begin{split}
   &
   \|{
    \eta_t Q_t(\underline{a}_1,\underline{a}_2)
   }_{C^{0,\alpha}_{-2,\delta;t}}
   \\
   &
   \;\;\;\;\;\;\;\;\;\;\;\;
   \leq
   c t^{-\alpha}
   \left(
    \|{
     \eta_t \underline{a}_1
    }_{C^{0,\alpha}_{-1,\delta;t}}
    \cdot 
    \|{
     \eta_t \underline{a}_2
    }_{C^{0,\alpha}_{-1,\delta;t}}
    +
    \|{
     \eta_t \underline{a}_1
    }_{C^{0,\alpha}_{-1,\delta;t}}
    \cdot 
    \|{
     \pi_t \underline{a}_2
    }_{C^{0,\alpha}}
    \right.
    \\
    &
    \;\;\;\;\;\;\;\;\;\;\;\;
    \;\;\;\;\;\;\;\;
    \left.
    +
    \|{
     \pi_t \underline{a}_1
    }_{C^{0,\alpha}}
    \cdot 
    \|{
     \eta_t \underline{a}_2
    }_{C^{0,\alpha}_{-1,\delta;t}}
    +
    \|{
     \pi_t \underline{a}_1
    }_{C^{0,\alpha}}
    \cdot 
    \|{
     \pi_t \underline{a}_2
    }_{C^{0,\alpha}}
   \right)    
  \end{split}
 \end{align}
 and
 \begin{align}
  \begin{split}
   &t \|{
    \pi_t Q_t(\underline{a}_1,\underline{a}_2)
   }_{C^{0,\alpha}}
   \\
   &
   \;\;\;\;\;\;\;\;\;\;\;\;
   \leq
   c t^{-\alpha}
   \left(
    \|{
     \eta_t \underline{a}_1
    }_{C^{0,\alpha}_{-1,\delta;t}}
    \cdot 
    \|{
     \eta_t \underline{a}_2
    }_{C^{0,\alpha}_{-1,\delta;t}}
    +
    \|{
     \eta_t \underline{a}_1
    }_{C^{0,\alpha}_{-1,\delta;t}}
    \cdot 
    \|{
     \pi_t \underline{a}_2
    }_{C^{0,\alpha}}
    \right.
    \\
    &
    \;\;\;\;\;\;\;\;\;\;\;\;
    \;\;\;\;\;\;\;\;
    \left.
    +
    \|{
     \pi_t \underline{a}_1
    }_{C^{0,\alpha}}
    \cdot 
    \|{
     \eta_t \underline{a}_2
    }_{C^{0,\alpha}_{-1,\delta;t}}
    +
    t
    \|{
     \pi_t \underline{a}_1
    }_{C^{0,\alpha}}
    \cdot 
    \|{
     \pi_t \underline{a}_2
    }_{C^{0,\alpha}}
   \right).
  \end{split}
 \end{align}

\end{proposition}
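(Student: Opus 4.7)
The plan is to exploit the fact that $Q_t$, as defined in \cref{equation:g2-instanton-equation}, is a pointwise bilinear operation on sections of $(\Lambda^0 \oplus \Lambda^1)\tensor \Ad E_t$ whose only non-algebraic ingredient is $\tilde{\psi}^N_t$, which is uniformly bounded in $L^\infty$ (this is immediate from \cref{proposition:equation:torsion-free-hoelder-difference} together with the boundedness of the background $\psi^N_t$). Hence $|Q_t(\underline{a}_1,\underline{a}_2)| \leq c|\underline{a}_1||\underline{a}_2|$ pointwise, and \cref{proposition:norm-weights-basic-estimate} upgrades this to
\[
\|Q_t(\underline{a}_1,\underline{a}_2)\|_{C^{0,\alpha}_{l_1+l_2,\delta_1+\delta_2;t}}
\leq c\|\underline{a}_1\|_{C^{0,\alpha}_{l_1,\delta_1;t}}\|\underline{a}_2\|_{C^{0,\alpha}_{l_2,\delta_2;t}}
\]
for admissible weights.

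Next I would decompose each input via $\underline{a}_i = \eta_t\underline{a}_i + \iota_t \pi_t \underline{a}_i$ and expand $Q_t(\underline{a}_1,\underline{a}_2)$ into the four bilinear cross-terms. For each cross-term I would apply the product estimate above with weight exponents chosen so that $l_1+l_2 = -2$ and $\delta_1+\delta_2 = 2\delta$, namely $l_1 = l_2 = -1$, $\delta_1=\delta_2 = \delta$. The bound $\|\iota_t f\|_{C^{0,\alpha}_{-1,\delta;t}} \leq c\|f\|_{C^{0,\alpha}}$ from \cref{proposition:iota-pi-bounds} (which is the case $l=-1$ and is legal because $-1+\delta < -1$ and $-1-\alpha+\delta > -3$ for our $\delta \in (-1,0)$, $\alpha \ll |\delta|$) then converts every occurrence of $\iota_t \pi_t \underline{a}_i$ into an unweighted bound on $\pi_t \underline{a}_i$, which produces exactly the four products appearing on the right-hand side of the stated inequalities.

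To move from estimates on $Q_t(\underline{a}_1,\underline{a}_2)$ to estimates on $\eta_t Q_t$ and $\pi_t Q_t$ I would again use \cref{proposition:iota-pi-bounds}, this time at weight $l = -2$. Writing $\eta_t = \operatorname{Id} - \iota_t \pi_t$ and composing the two bounds $\|\iota_t f\|_{C^{0,\alpha}_{-2,\delta;t}} \leq ct\|f\|_{C^{0,\alpha}}$ and $\|\pi_t f\|_{C^{0,\alpha}} \leq ct^{-1-\alpha}\|f\|_{C^{0,\alpha}_{-2,\delta;t}}$ yields $\|\eta_t f\|_{C^{0,\alpha}_{-2,\delta;t}} \leq ct^{-\alpha}\|f\|_{C^{0,\alpha}_{-2,\delta;t}}$, which is the origin of the overall $t^{-\alpha}$ factor in the first inequality. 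For the second inequality, the same $\pi_t$ bound applied directly to $Q_t$ produces the $t^{-1-\alpha}$, and after multiplication by the prefactor $t$ we recover the same $t^{-\alpha}$. The extra factor of $t$ in front of the fourth term of the second inequality comes from refining the bilinear estimate on the $\iota_t\pi_t$--$\iota_t\pi_t$ cross-term: one replaces the choice $l_1=l_2=-1$ by taking the $\iota_t$ bound with $l=-2$, $\|\iota_t f\|_{C^{0,\alpha}_{-2,\delta;t}} \leq ct\|f\|_{C^{0,\alpha}}$, on one factor, which ultimately trades an extra $t$ for the fact that both inputs already sit in the small region $\{r_t \leq R\}$.

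The main obstacle is the weight-exponent bookkeeping. The product estimate naturally outputs weight $2\delta$, but the $\iota_t$, $\pi_t$ inequalities of \cref{proposition:iota-pi-bounds} must be applied with $l \leq -1$, $l+\delta < -1$, and $l-\alpha+\delta > -3$ simultaneously, and these constraints must be checked for every admissible weight distribution among the four cross-terms. In practice this forces a case-split according to the two pieces of the piecewise weight $w_{l,\delta;t}$, estimating separately on $\{r_t \leq \sqrt{t}\}$ and $\{r_t > \sqrt{t}\}$; this bookkeeping, rather than any conceptual step, is the technical heart of the proof and is precisely what is carried out in the cited \cite[Proposition 9.1]{Walpuski2017}, the argument of which transfers verbatim to our setting since only pointwise boundedness of $\tilde{\psi}^N_t$ and the bounds of \cref{proposition:norm-weights-basic-estimate,proposition:iota-pi-bounds} enter.
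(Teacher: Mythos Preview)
Your proposal is correct and matches the paper's approach. The paper itself does not give a proof of this proposition at all: it simply states that the result is taken from \cite{Walpuski2017} and that ``the proof can be directly adapted to our slightly different setting.'' Your sketch is therefore already more detailed than what the paper provides, and it correctly identifies the ingredients that make the adaptation go through (pointwise bilinearity of $Q_t$ with uniformly bounded coefficient $\tilde\psi^N_t$, the multiplicativity in \cref{proposition:norm-weights-basic-estimate}, and the $\iota_t/\pi_t$ bounds of \cref{proposition:iota-pi-bounds}).

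One small caveat: your heuristic for the extra factor of $t$ in the last term of the second inequality is not quite right. Simply redistributing the weight exponents $l_1,l_2$ among the two $\iota_t$ factors and the $\pi_t$ always yields $t^{1+l-\alpha}\cdot t^{-1-l_1}\cdot t^{-1-l_2}=t^{-1-\alpha}$, independently of the split. The genuine gain of $t$ comes from the stronger pointwise decay $|\iota_t v|_{g^P_t}\lesssim t\,(t+r_t)^{-3}$ of the embedded deformations (cf.\ \cref{equation:decay-of-kappa-in-g-P-t}), which is sharper than what the weighted norm at $l=-1$ encodes; this is exploited directly in the integral defining $\pi_t$. Since you explicitly defer the bookkeeping to \cite[Proposition 9.1]{Walpuski2017} this is not a gap in your argument, but the mechanism you describe for that particular term is not the operative one.
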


\subsection{Deforming to Genuine Solutions}
\label{subsection:deforming-to-genuine-solutions}

In this subsection we will complete the construction of $G_2$-instantons and show that in two favourable situations the $G_2$-instanton $\theta$ and the Fueter section $s$ can be glued together to a $G_2$-instanton on $N_t$.
The favourable situations are:
\begin{enumerate}
 \item 
 The Fueter section is a section of rigid ASD-instantons (cf. \cref{theorem:instanton-existence}).
 This implies, in particular, that the Fueter section is infinitesimally rigid.
 In this case the map $\pi_t$ from \cref{definition:pi-iota-splitting} is just the zero map, which leads to better estimates of the quadratic part $Q_t$ of the instanton equation.

 \item
 We are in the special situation of \cref{equation:pregluing-estimate-on-T7}, where we resolved the orbifold $T^7/\Gamma$.
\end{enumerate}

The main reason we are confined to these two favourable scenarios is the following:
in \cref{proposition:pregluing-estimate,corollary:pregluing-estimate-on-T7} we proved a pregluing estimate with a good power of $t^{1/18}$ in the general case and a good power of $t^2$ in the case of $T^7/\Gamma$, roughly speaking.
In \cref{proposition:quadratic-estimate} we stated an estimate for the quadratic part of the instanton operator which in particular implies
\begin{align*}
 \|{
  Q_t (\underline{a}_1,\underline{a}_2)
 }_{\mathfrak{Y}}
 \leq
 t^{-2-\alpha-\delta/2}
 \|{
  \underline{a}_1
 }_{\mathfrak{X}}
 \|{
  \underline{a}_2
 }_{\mathfrak{X}}.
\end{align*}
To apply the inverse function theorem, we would need the bad power $t^{-2-\alpha-\delta/2}$ from this estimate to be absorbed by the good power from the pregluing estimate, but the pregluing estimate is only good enough to do this in the case of the orbifold $T^7/\Gamma$.
If the Fueter section is actually the constant section of a rigid ASD-instanton, then we have a better estimate for the quadratic part of the instanton equation.

\begin{theorem}
 \label{theorem:instanton-existence}
 Assume that the section $s$ is given by a rigid ASD-instanton in every point $x \in L$, and assume that the connection $\theta$ used to define the approximate $G_2$-instanton $A_t$ from \cref{proposition:approximate-solution} is infinitesimally rigid.
 
 There exists $c>0$ such that for small $t$ there exists $\underline{a}_t=(a_t, \xi_t) \in C^{1,\alpha}(\Omega^0 \oplus \Omega^1(\Ad E_t))$ such that $\tilde{A}_t := A_t + a_t$ is a $G_2$-instanton.
 Furthermore, $\underline{a}_t$ satisfies $\|{ \underline{a}_t }_{C^{1,\alpha}_{-1,\delta;t}} \leq ct^{1/18}$.
\end{theorem}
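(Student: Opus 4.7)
The plan is to solve the $G_2$-instanton equation rewritten as in \cref{equation:g2-instanton-equation}, i.e.,
\begin{align*}
 L_t \underline{a} + Q_t(\underline{a},\underline{a}) + e_t = 0 \qquad \text{with } e_t := *(F_{A_t} \wedge \tilde{\psi}^N_t),
\end{align*}
by applying Banach's fixed-point theorem to $T(\underline{a}) := -L_t^{-1}\bigl(e_t + Q_t(\underline{a},\underline{a})\bigr)$ on a small closed ball in $C^{1,\alpha}_{-1,\delta;t}$. The decisive simplification provided by the pointwise rigidity of $s$ is that the vertical tangent space $V\mathfrak{M}$ vanishes along $\Im(s)$, so the projections $\pi_t$ and $\overline{\pi}_t$ of \cref{definition:pi-iota-splitting} are identically zero and $\eta_t = \Id$. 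Consequently the composite norms $\|\cdot\|_{\mathfrak{X}_t}, \|\cdot\|_{\mathfrak{Y}_t}$ of \cref{definition:X-Y-norm-definition} collapse, up to a $t$-power rescaling, to ordinary weighted Hölder norms, and the whole argument fits inside the pair $(C^{1,\alpha}_{-1,\delta;t}, C^{0,\alpha}_{-2,\delta;t})$.

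With this reduction in hand, the three required estimates follow directly from results already in the paper. The linear estimate is obtained by combining \cref{proposition:crucial-proposition} (whose error term $\|\overline{\pi}_t \underline{a}\|_{L^\infty_{-1,\delta;t}}$ now vanishes identically) with the Schauder-type bound \cref{proposition:schauder-estimate}, yielding
\begin{align*}
 \|\underline{a}\|_{C^{1,\alpha}_{-1,\delta;t}} \leq c\,\|L_t \underline{a}\|_{C^{0,\alpha}_{-2,\delta;t}}
\end{align*}
with $c$ independent of $t$; the quadratic estimate \cref{proposition:quadratic-estimate} degenerates, because $\pi_t\equiv 0$, to
\begin{align*}
 \|Q_t(\underline{a}_1,\underline{a}_2)\|_{C^{0,\alpha}_{-2,\delta;t}} \leq c\, t^{-\alpha} \|\underline{a}_1\|_{C^{1,\alpha}_{-1,\delta;t}} \|\underline{a}_2\|_{C^{1,\alpha}_{-1,\delta;t}};
\end{align*}
and the pregluing estimate \cref{proposition:pregluing-estimate} converts, via \cref{proposition:norm-weights-basic-estimate} applied to $1 \cdot e_t$, to $\|e_t\|_{C^{0,\alpha}_{-2,\delta;t}} \leq c\, t^{1/18 + \delta/2}$. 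Choosing $R_t := 2c\,t^{1/18+\delta/2}$, I would verify on the ball $B_{R_t}\subset C^{1,\alpha}_{-1,\delta;t}$ that $T$ is a self-map and a strict contraction; the decisive numerical condition is $t^{1/18+\delta/2-\alpha}\to 0$ as $t\to 0$, which holds for small $t$ under the standing assumption $\alpha \ll |\delta| \ll 1$ of \cref{definition:hoelder-norms-for-gauge-with-cases}. The resulting fixed point $\underline{a}_t$ then satisfies the $G_2$-instanton equation, and absorbing the mild weight-conversion loss $t^{\delta/2}$ into the constant (at the cost of reducing the exponent by $|\delta|/2$, which is arbitrarily small) recovers the stated decay.

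The main obstacle, and the reason the hypothesis here is pointwise rigidity rather than the weaker infinitesimal rigidity of \cref{theorem:instanton-existence3}, is precisely this delicate balance of exponents. In the general setting the pregluing error decays only like $t^{1/18}$, which is thin enough to be destroyed by even a small blow-up factor; the $\pi_t$-contribution in the last line of \cref{proposition:quadratic-estimate} carries the unfavourable factor $t\|\pi_t \underline{a}_1\|\cdot\|\pi_t \underline{a}_2\|$ that, reinserted into the $\mathfrak{X}_t$-bookkeeping, cannot be absorbed into $t^{1/18}$ alone. This forces either the stronger pregluing control available on resolutions of $T^7/\Gamma$ (\cref{corollary:pregluing-estimate-on-T7}, which is what \cref{theorem:instanton-existence3} exploits) or the outright elimination of the $\pi_t$-component, which is exactly what the pointwise rigidity assumption achieves.
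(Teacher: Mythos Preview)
Your proposal is correct and follows essentially the same route as the paper: use pointwise rigidity to kill $\pi_t$, combine \cref{proposition:schauder-estimate} and \cref{proposition:crucial-proposition} for the uniform linear estimate, read off the simplified quadratic bound from \cref{proposition:quadratic-estimate}, feed in the pregluing estimate \cref{proposition:pregluing-estimate}, and close with a contraction argument (the paper phrases this via \cref{lemma:banach-space-estimate-lemma} after the substitution $\underline{b}=L_t\underline{a}$, which is equivalent to your fixed-point map on $\underline{a}$). The one step you should make explicit is the existence of $L_t^{-1}$: the linear estimate only gives injectivity, and the paper invokes formal self-adjointness of $L_t$ (\cref{proposition:g2-instanton-equation-elliptic}) to conclude surjectivity.
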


\begin{theorem}
 \label{theorem:instanton-existence3}
 Let $N \rightarrow Y'$ be the resolution of the orbifold $Y'=T^7/\Gamma$ from before.
 Assume that the connection $\theta$ used to define the approximate $G_2$-instanton $A_t$ from \cref{proposition:approximate-solution} is infinitesimally rigid and that $s$ is an infinitesimally rigid Fueter section.
 
 There exists $c>0$ such that for small $t$ there exists an $\underline{a}_t=(a_t, \xi_t) \in C^{1,\alpha}(\Omega^0 \oplus \Omega^1(\Ad E_t))$ such that $\tilde{A}_t := A_t + a_t$ is a $G_2$-instanton.
 Furthermore, $\underline{a}_t$ satisfies $\|{ \underline{a}_t }_{\mathfrak{X}_t} \leq ct^{2-2\alpha}$.
\end{theorem}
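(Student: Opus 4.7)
My plan is to recast the $G_2$-instanton equation in Coulomb gauge relative to $A_t$ as a fixed point problem on the Banach space $(\mathfrak{X}_t, \|\cdot\|_{\mathfrak{X}_t})$ and then apply Banach's contraction mapping principle. By \cref{equation:g2-instanton-equation}, $\tilde{A}_t = A_t + a_t$ together with $\xi_t$ solves the coupled $G_2$-instanton and Coulomb-gauge system if and only if $\underline{a}_t = (\xi_t, a_t)$ satisfies $L_t \underline{a}_t + e_t + Q_t(\underline{a}_t) = 0$. First I would verify that $L_t : \mathfrak{X}_t \to \mathfrak{Y}_t$ is invertible with a uniform-in-$t$ bound: \cref{proposition:inverse-operator-estimate-xy-norm} gives $\|\underline{a}\|_{\mathfrak{X}_t} \leq c \|L_t\underline{a}\|_{\mathfrak{Y}_t}$ for $t$ small, hence injectivity, and since $L_t$ is elliptic and formally self-adjoint on a compact manifold by \cref{proposition:g2-instanton-equation-elliptic}, the standard Fredholm theory then promotes this to surjectivity, yielding a bounded right inverse $L_t^{-1}$ with $\|L_t^{-1}\|_{\mathfrak{Y}_t \to \mathfrak{X}_t} \leq c$. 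The theorem then reduces to finding a fixed point of $\Phi_t(\underline{a}) := -L_t^{-1}(e_t + Q_t(\underline{a}))$ inside a small ball of $\mathfrak{X}_t$.

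\textbf{Estimate of the pregluing source $e_t$.} The second step is to bound $\|e_t\|_{\mathfrak{Y}_t}$ using the improved pregluing estimate available on the Kummer resolution. From \cref{corollary:pregluing-estimate-on-T7} one has $\|e_t\|_{C^{0,\alpha}_{-2,0;t}} \leq ct^2$. A direct comparison of the weight functions $w_{-2,\delta;t}$ and $w_{-2,0;t}$ of \cref{definition:hoelder-norms-for-gauge-with-cases}, valid since $\delta < 0$, yields $\|f\|_{C^{0,\alpha}_{-2,\delta;t}} \leq c t^{\delta/2}\|f\|_{C^{0,\alpha}_{-2,0;t}}$, so the $\eta_t$-contribution $t^{-\delta/2}\|\eta_t e_t\|_{C^{0,\alpha}_{-2,\delta;t}}$ is bounded by $ct^2$. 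For the $\pi_t$-contribution, \cref{proposition:iota-pi-bounds} applied with $l = -2$ gives $\|\pi_t e_t\|_{C^{0,\alpha}} \leq c t^{-1-\alpha}\|e_t\|_{C^{0,\alpha}_{-2,0;t}} \leq c t^{1-\alpha}$, so $t \|\pi_t e_t\|_{C^{0,\alpha}} \leq c t^{2-\alpha}$. Altogether $\|e_t\|_{\mathfrak{Y}_t} \leq c t^{2-\alpha}$, and in particular $\|\Phi_t(0)\|_{\mathfrak{X}_t} \leq c \|e_t\|_{\mathfrak{Y}_t} \leq c t^{2-\alpha}$.

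\textbf{Contraction step and the main obstacle.} Combining \cref{proposition:quadratic-estimate} with the definition of $\mathfrak{X}_t$, which implies $\|\eta_t\underline{a}\|_{C^{0,\alpha}_{-1,\delta;t}} \leq c t^{\delta/2}\|\underline{a}\|_{\mathfrak{X}_t}$ and $\|\pi_t\underline{a}\|_{C^{0,\alpha}} \leq c t^{-1}\|\underline{a}\|_{\mathfrak{X}_t}$, one obtains after tracking the dominant terms the bilinear estimate $\|Q_t(\underline{a}_1, \underline{a}_2)\|_{\mathfrak{Y}_t} \leq c t^{-2-\alpha+|\delta|/2}\|\underline{a}_1\|_{\mathfrak{X}_t}\|\underline{a}_2\|_{\mathfrak{X}_t}$. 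Taking the ball radius $M = c t^{2-\alpha}$, comparable to twice the estimate on $\|\Phi_t(0)\|_{\mathfrak{X}_t}$, both the self-mapping condition $\|e_t\|_{\mathfrak{Y}_t} + c t^{-2-\alpha+|\delta|/2} M^2 \leq M/c$ and the contractivity condition $c t^{-2-\alpha+|\delta|/2} \cdot M \leq 1/2$ reduce to the arithmetic inequality $|\delta|/2 - 2\alpha \geq 0$, i.e.\ $\alpha \leq |\delta|/4$, which is precisely the regime $\alpha \ll |\delta|$ fixed in \cref{definition:hoelder-norms-for-gauge-with-cases}. The main obstacle is exactly this balancing: the quadratic estimate carries the bad factor $t^{-2-\alpha+|\delta|/2}$, and it only gets absorbed because the Kummer pregluing error is quadratically small; the weaker $t^{1/18}$ bound of \cref{proposition:pregluing-estimate} in the general Joyce--Karigiannis case would not suffice to close the iteration, which is what confines this theorem to the $T^7/\Gamma$ setting. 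Under the above compatibility, Banach's fixed point theorem produces a unique $\underline{a}_t = (\xi_t, a_t) \in \mathfrak{X}_t \subset C^{1,\alpha}$ with $\|\underline{a}_t\|_{\mathfrak{X}_t} \leq c t^{2-\alpha} \leq c t^{2-2\alpha}$, and by construction $\tilde{A}_t = A_t + a_t$ is the required $G_2$-instanton.
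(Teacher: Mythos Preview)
Your proposal is correct and follows essentially the same approach as the paper: the same linear estimate (\cref{proposition:inverse-operator-estimate-xy-norm}), the same pregluing bound (\cref{corollary:pregluing-estimate-on-T7}), the same quadratic estimate (\cref{proposition:quadratic-estimate}), and the same contraction-mapping closing argument with the same power balance $t^{2-\alpha}\cdot t^{-2-\alpha-\delta/2}\to 0$ for $\alpha<|\delta|/4$. The only cosmetic difference is that the paper runs the fixed point on $\mathfrak{Y}_t$ via $T_t=Q_t\circ L_t^{-1}$ and invokes \cref{lemma:banach-space-estimate-lemma}, whereas you run it on $\mathfrak{X}_t$ via $\Phi_t=-L_t^{-1}(e_t+Q_t(\cdot))$; you also spell out explicitly the passage from $\|e_t\|_{C^{0,\alpha}_{-2,0;t}}$ to $\|e_t\|_{\mathfrak{Y}_t}$, which the paper's proof cites without detail.
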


The proof of the theorems will use the following lemma:

\begin{lemma}[Lemma 7.2.23 in \cite{Donaldson1990}]
 \label{lemma:banach-space-estimate-lemma}
 Let $X$ be a Banach space and let $T: X \rightarrow X$ be a smooth map with $T(0)=0$.
 Suppose there is a constant $c>0$ such that
 \begin{align*}
  \|{
   Tx-Ty
  }
  \leq
  c
  (
   \|{
    x
   }
   +
   \|{
    y
   }
  )
  \|{
   x-y
  }.
 \end{align*}
 Then if $y \in X$ satisfies $\|{ y } \leq \frac{1}{10c}$, there exists a unique $x \in X$ with $\|{ x } \leq \frac{1}{5c}$ solving
 \begin{align*}
  x+Tx=y.
 \end{align*}
 The unique solution satisfies $\|{ x} \leq 2 \|{ y }$.
\end{lemma}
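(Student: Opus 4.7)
The plan is to reformulate the equation $x + Tx = y$ as a fixed-point problem and apply the Banach contraction mapping theorem. Define $F: X \to X$ by $F(x) := y - Tx$, so that solutions to $x + Tx = y$ correspond exactly to fixed points of $F$. Let $B := \{x \in X : \|x\| \leq 1/(5c)\}$, a closed subset of $X$. I will show that, under the hypothesis $\|y\| \leq 1/(10c)$, the map $F$ sends $B$ into itself and is a strict contraction on $B$.

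First I would verify that $F(B) \subseteq B$. Since $T(0) = 0$, the quadratic hypothesis applied with the pair $(x,0)$ yields $\|Tx\| \leq c\|x\|^2$. Hence for $x \in B$,
\begin{equation*}
\|F(x)\| \leq \|y\| + \|Tx\| \leq \frac{1}{10c} + c\|x\|^2 \leq \frac{1}{10c} + \frac{1}{25c} < \frac{1}{5c}.
\end{equation*}
Next I would check the contraction property: for $x, x' \in B$,
\begin{equation*}
\|F(x) - F(x')\| = \|Tx - Tx'\| \leq c(\|x\| + \|x'\|)\|x - x'\| \leq \frac{2}{5}\|x - x'\|.
\end{equation*}
By the Banach fixed-point theorem, $F$ has a unique fixed point $x \in B$, which is the desired solution of $x + Tx = y$.

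Finally I would establish the sharper bound $\|x\| \leq 2\|y\|$ directly from the fixed-point equation. Using $Tx = y - x$ and $\|Tx\| \leq c\|x\|^2$ together with $\|x\| \leq 1/(5c)$, we obtain
\begin{equation*}
\|x\| \leq \|y\| + \|Tx\| \leq \|y\| + c\|x\|^2 \leq \|y\| + \tfrac{1}{5}\|x\|,
\end{equation*}
whence $\|x\| \leq \tfrac{5}{4}\|y\| \leq 2\|y\|$. Uniqueness within the ball of radius $1/(5c)$ already follows from the contraction argument; alternatively, if $x_1, x_2$ both lie in $B$ and solve $x + Tx = y$, then $\|x_1 - x_2\| = \|Tx_2 - Tx_1\| \leq \tfrac{2}{5}\|x_1 - x_2\|$, forcing $x_1 = x_2$.

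There is no real obstacle here: the result is a clean application of the Banach fixed-point theorem, and the only mild subtlety is choosing the radius $1/(5c)$ of the invariant ball so that both the self-map condition and the contraction condition hold simultaneously with the given data $\|y\| \leq 1/(10c)$. The numerical constants $1/(5c)$, $1/(10c)$, and $2$ could all be adjusted; the chosen values are convenient for downstream applications in the gluing construction.
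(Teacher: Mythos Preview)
Your proof is correct and is exactly the standard contraction-mapping argument one expects here. The paper itself does not supply a proof of this lemma; it simply quotes it from \cite{Donaldson1990}, so there is nothing in the paper to compare your argument against beyond noting that your approach is the canonical one underlying the cited reference.
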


\begin{proof}[Proof of \cref{theorem:instanton-existence}]
 In the case that $s$ is a section of rigid ASD instantons, we have that the projection map $\pi_t$ is zero.
 Therefore, \cref{proposition:schauder-estimate,proposition:crucial-proposition} give
 \begin{align}
 \label{equation:linear-estimate-for-pointwise-rigid}
  \|{
   \underline{a}
   }_
   {C^{1,\alpha}_{-1,\delta;t}}
  \leq
  c
   \|{
    L_t \underline{a}
   }_
   {C^{0,\alpha}_{-2,\delta;t}}.
 \end{align}
 This means that
 \[L_t: C^{1,\alpha}((\Omega^0 \oplus \Omega^1)(N_t, \Ad E_t)) \rightarrow C^{0,\alpha}((\Omega^0 \oplus \Omega^1)(N_t, \Ad E_t))\]
 is injective.
 Because $L_t$ is formally self-adjoint, it is also bijective.
 Denote its inverse by $L_t^{-1}$.
 Furthermore, using $\pi_t=0$, and therefore $\eta_t=\Id$, \cref{proposition:quadratic-estimate} gives
 \begin{align}
 \label{equation:quadratic-estimate-for-pointwise-rigid}
   \|{
    Q_t(\underline{a}_1,\underline{a}_2)
   }_{C^{0,\alpha}_{-2,\delta;t}}
   \leq
   c t^{-\alpha}
    \|{
     \underline{a}_1
    }_{C^{0,\alpha}_{-1,\delta;t}}
    \cdot 
    \|{
     \underline{a}_2
    }_{C^{0,\alpha}_{-1,\delta;t}}.
 \end{align}
 Set $T_t := Q_t \circ L_t^{-1}$.
 We then have
 \begin{align*}
  \|{
  T_t(\underline{b}_1)-
  T_t(\underline{b}_2)
  }_{C^{0,\alpha}_{-2,\delta;t}}
  &=
  \|{
  Q(
  L^{-1}\underline{b}_1-L^{-1}\underline{b}_2,
  L^{-1}\underline{b}_1+L^{-1}\underline{b}_2
  )
  }_{C^{0,\alpha}_{-2,\delta;t}}
  \\
  &\leq
  ct^{-\alpha}
  \|{L^{-1}\underline{b}_1-L^{-1}\underline{b}_2}
  _{C^{0,\alpha}_{-1,\delta;t}}
  \|{L^{-1}\underline{b}_1+L^{-1}\underline{b}_2}
  _{C^{0,\alpha}_{-1,\delta;t}}
  \\
  &\leq
  ct^{-\alpha}
  \|{L^{-1}\underline{b}_1-L^{-1}\underline{b}_2}
  _{C^{1,\alpha}_{-1,\delta;t}}
  \|{L^{-1}\underline{b}_1+L^{-1}\underline{b}_2}
  _{C^{1,\alpha}_{-1,\delta;t}}
  \\
  &\leq
  ct^{-\alpha}
  \|{\underline{b}_1-\underline{b}_2}
  _{C^{0,\alpha}_{-2,\delta;t}}
  \left(
  \|{\underline{b}_1}
  _{C^{0,\alpha}_{-2,\delta;t}}
  +
  \|{\underline{b}_1}
  _{C^{0,\alpha}_{-2,\delta;t}}
  \right),
 \end{align*}
 where we used \cref{equation:quadratic-estimate-for-pointwise-rigid} in the first inequality and \cref{equation:linear-estimate-for-pointwise-rigid} in the last inequality.
 
 For $e_t$ we have
 \[
  \|{
   e_t
  }_{C^{0,\alpha}_{-2,0;t}}
  \leq
  c t^{1/18}
 \]
 by \cref{proposition:pregluing-estimate}.
 For small $t$, we have that
 $t^{1/18}< \left( t^{-\alpha+\delta/2} \right)^{-1}$ due to our choices of $\alpha$ and $\delta$ in \cref{definition:hoelder-norms-for-gauge-with-cases}.
 Thus, by applying \cref{lemma:banach-space-estimate-lemma} to the map $T_t$, we get a solution $\underline{b}_t$ to the equation $\underline{b}_t+T_t(\underline{b}_t)=-e_t$ for small $t$, satisfying the estimate $\|{ \underline{b}_t }_{C^{0,\alpha}_{-2,0;t}} \leq ct^{1/18}$.
 
 Letting $\underline{a}_t := L_t^{-1}(\underline{b}_t)$, this means precisely $L_t(\underline{a}_t)+Q_t(\underline{a}_t)=-e_t$, so $\tilde{A}_t=A_t+a_t$ is a $G_2$-instanton, and $\underline{a}_t$ satisfies $\|{ \underline{a}_t }_{C^{1,\alpha}_{-1,\delta;t}} \leq ct^{1/18}$ by \cref{equation:linear-estimate-for-pointwise-rigid}, which proves the claim.
\end{proof}

\begin{proof}[Proof of \cref{theorem:instanton-existence3}]
 As in the proof of \cref{theorem:instanton-existence}, set $T_t := Q_t \circ L_t^{-1}$.
 Then
 \begin{align*}
  &
  \|{
   T_t(\underline{b}_1)-T_t (\underline{b}_2)
  }_{\mathfrak{Y}_t}
  \\
  &=
  \|{
   Q(
   L^{-1}\underline{b}_1-L^{-1}\underline{b}_2, 
   L^{-1}\underline{b}_1+L^{-1}\underline{b}_2
   )
  }_{\mathfrak{Y}_t}
  \\
  &=
  t^{-\delta/2}
  \|{
   \eta_t
   Q(
   L^{-1}\underline{b}_1-L^{-1}\underline{b}_2, 
   L^{-1}\underline{b}_1+L^{-1}\underline{b}_2
   )
  }_{C^{0,\alpha}_{-2,\delta;t}}
  \\
  &
  \;\;\;\;\;\;
  +
  t
  \|{
   \pi_t
   Q(
   L^{-1}\underline{b}_1-L^{-1}\underline{b}_2, 
   L^{-1}\underline{b}_1+L^{-1}\underline{b}_2
   )
  }_{C^{0,\alpha}}
  \\
  &\leq
  c
  t^{-\alpha-\delta/2}
   \left(
    \|{
     \eta_t L^{-1}(\underline{b}_1-\underline{b}_2)
    }_{C^{0,\alpha}_{-1,\delta;t}}
    \cdot 
    \|{
     \eta_t L^{-1}(\underline{b}_1+\underline{b}_2)
    }_{C^{0,\alpha}_{-1,\delta;t}}
    \right.
    \\
    &
    \;\;\;\;\;\;\;\;\;\;\;\;
    \;\;\;\;\;\;\;\;
    \left.
    +
    \|{
     \eta_t L^{-1}(\underline{b}_1-\underline{b}_2)
    }_{C^{0,\alpha}_{-1,\delta;t}}
    \cdot 
    \|{
     \pi_t L^{-1}(\underline{b}_1+\underline{b}_2)
    }_{C^{0,\alpha}}
    \right.
    \\
    &
    \;\;\;\;\;\;\;\;\;\;\;\;
    \;\;\;\;\;\;\;\;
    \left.
    +
    \|{
     \pi_t L^{-1}(\underline{b}_1-\underline{b}_2)
    }_{C^{0,\alpha}}
    \cdot 
    \|{
     \eta_t L^{-1}(\underline{b}_1+\underline{b}_2)
    }_{C^{0,\alpha}_{-1,\delta;t}}
    \right.
    \\
    &
    \;\;\;\;\;\;\;\;\;\;\;\;
    \;\;\;\;\;\;\;\;
    \left.
    +
    \|{
     \pi_t L^{-1}(\underline{b}_1-\underline{b}_2)
    }_{C^{0,\alpha}}
    \cdot 
    \|{
     \pi_t L^{-1}(\underline{b}_1+\underline{b}_2)
    }_{C^{0,\alpha}}
   \right)
   \\
   &
    \;\;\;\;\;\;
   +
  c
  t^{-\alpha}
   \left(
    \|{
     \eta_t L^{-1}(\underline{b}_1-\underline{b}_2)
    }_{C^{0,\alpha}_{-1,\delta;t}}
    \cdot 
    \|{
     \eta_t L^{-1}(\underline{b}_1+\underline{b}_2)
    }_{C^{0,\alpha}_{-1,\delta;t}}
    \right.
    \\
    &
    \;\;\;\;\;\;\;\;\;\;\;\;
    \;\;\;\;\;\;\;\;
    \left.
    +
    \|{
     \eta_t L^{-1}(\underline{b}_1-\underline{b}_2)
    }_{C^{0,\alpha}_{-1,\delta;t}}
    \cdot 
    \|{
     \pi_t L^{-1}(\underline{b}_1+\underline{b}_2)
    }_{C^{0,\alpha}}
    \right.
    \\
    &
    \;\;\;\;\;\;\;\;\;\;\;\;
    \;\;\;\;\;\;\;\;
    \left.
    +
    \|{
     \pi_t L^{-1}(\underline{b}_1-\underline{b}_2)
    }_{C^{0,\alpha}}
    \cdot 
    \|{
     \eta_t L^{-1}(\underline{b}_1+\underline{b}_2)
    }_{C^{0,\alpha}_{-1,\delta;t}}
    \right.
    \\
    &
    \;\;\;\;\;\;\;\;\;\;\;\;
    \;\;\;\;\;\;\;\;
    \left.
    +
    t
    \|{
     \pi_t L^{-1}(\underline{b}_1-\underline{b}_2)
    }_{C^{0,\alpha}}
    \cdot 
    \|{
     \pi_t L^{-1}(\underline{b}_1+\underline{b}_2)
    }_{C^{0,\alpha}}
   \right)
  \\
  &\leq
  c
  t^{-\alpha-2-\delta/2}
   \left(
    t^{-\delta}
    \|{
     \eta_t L^{-1}(\underline{b}_1-\underline{b}_2)
    }_{C^{0,\alpha}_{-1,\delta;t}}
    \cdot 
    \|{
     \eta_t L^{-1}(\underline{b}_1+\underline{b}_2)
    }_{C^{0,\alpha}_{-1,\delta;t}}
    \right.
    \\
    &
    \;\;\;\;\;\;\;\;\;\;\;\;
    \;\;\;\;\;\;\;\;
    \left.
    +
    t^{1-\delta/2}
    \|{
     \eta_t L^{-1}(\underline{b}_1-\underline{b}_2)
    }_{C^{0,\alpha}_{-1,\delta;t}}
    \cdot 
    \|{
     \pi_t L^{-1}(\underline{b}_1+\underline{b}_2)
    }_{C^{0,\alpha}}
    \right.
    \\
    &
    \;\;\;\;\;\;\;\;\;\;\;\;
    \;\;\;\;\;\;\;\;
    \left.
    +
    t^{1-\delta/2}
    \|{
     \pi_t L^{-1}(\underline{b}_1-\underline{b}_2)
    }_{C^{0,\alpha}}
    \cdot 
    \|{
     \eta_t L^{-1}(\underline{b}_1+\underline{b}_2)
    }_{C^{0,\alpha}_{-1,\delta;t}}
    \right.
    \\
    &
    \;\;\;\;\;\;\;\;\;\;\;\;
    \;\;\;\;\;\;\;\;
    \left.
    +
    t^2
    \|{
     \pi_t L^{-1}(\underline{b}_1-\underline{b}_2)
    }_{C^{0,\alpha}}
    \cdot 
    \|{
     \pi_t L^{-1}(\underline{b}_1+\underline{b}_2)
    }_{C^{0,\alpha}}
   \right)
   \\
   &\leq
   c
   t^{-\alpha-2-\delta/2}
   \|{
    L^{-1}(b_1-b_2)
   }_{\mathfrak{X}_t}
   \|{
    L^{-1}(b_1+b_2)
   }_{\mathfrak{X}_t}
   \\
   &\leq
   c
   t^{-\alpha-2-\delta/2}
   \|{
    b_1-b_2
   }_{\mathfrak{Y}_t}
   \|{
    b_1+b_2
   }_{\mathfrak{Y}_t}
   \\
   &\leq   
   c
   t^{-\alpha-2-\delta/2}
   \|{
    b_1-b_2
   }_{\mathfrak{Y}_t}
   \left(
   \|{
    b_1
   }_{\mathfrak{Y}_t}
   +
   \|{
    b_2
   }_{\mathfrak{Y}_t}
   \right).
 \end{align*}
 Here we used \cref{proposition:quadratic-estimate} in the third step, and \cref{proposition:inverse-operator-estimate-xy-norm} in the second to last step.
 
 We have
 \begin{align*}
  \|{
   e_t
  }_{\mathfrak{Y}_t}
  &\leq
  c t^{2-\alpha},
 \end{align*}
 by \cref{corollary:pregluing-estimate-on-T7}.
 Applying \cref{lemma:banach-space-estimate-lemma} as in the proof of \cref{theorem:instanton-existence} shows the claim. 
\end{proof}

\section{Examples}
\label{section:examples}

\subsection{Examples on the Resolution of $T^7/\Gamma$}
\label{subsection:examples-on-t7-mod-gamma}

In \cite{Joyce1996}, many examples of finite groups $\Gamma$ acting on $T^7$ and resolutions of the resulting $G_2$-orbifolds $T^7/\Gamma$ are explained.
In \cite{Walpuski2013a}, $G_2$-instantons on these resolutions were constructed.
These examples can immediately be reproduced using \cref{theorem:instanton-existence3} by choosing locally constant Fueter sections.
However, \cref{theorem:instanton-existence3} is more general in two ways.
\begin{enumerate}
\item
It allows non-constant Fueter sections, as long as they are rigid.
However, we found no example of such Fueter sections.

\item
It allows $\theta$ to have non-trivial monodromy along $L$.
Previously, no such examples have been constructed.
Making use of \cref{theorem:instanton-existence3}, we now explain a large number of examples in the simplest case of the Generalised Kummer Construction.
\end{enumerate}
Consider the group $\Gamma=\< \alpha,\beta,\gamma \>$ acting on $T^7$ defined by
\begin{align}
\label{equation:alpha-beta-gamma-t7}
\begin{split}
\alpha: 
(x_1,\dots,x_7) & \mapsto
(x_1,x_2,x_3,-x_4,-x_5,-x_6,-x_7),
\\
\beta: 
(x_1,\dots,x_7) & \mapsto
(x_1,-x_2,-x_3,x_4,x_5,\frac{1}{2}-x_6,-x_7),
\\
\gamma: 
(x_1,\dots,x_7) & \mapsto
(-x_1,x_2,-x_3,x_4,\frac{1}{2}-x_5,x_6,\frac{1}{2}-x_7).
\end{split}
\end{align}
The singular set $S \subset T^7/\Gamma$ consists of $12$ copies of $T^3$.
Let $p: \R^7 \rightarrow T^7/\Gamma$ be the quotient map.
Then, $p^{-1}(T^7/\Gamma \setminus S) \subset \R^7$ is a universal cover of $T^7/\Gamma \setminus S$ and we can identify the orbifold fundamental group of $T^7/\Gamma$ with the deck transformations of $p^{-1}(T^7/\Gamma \setminus S)$, namely
\[
 \pi_1^{\orb}(T^7/\Gamma)
 =
 \pi_1(T^7/\Gamma \setminus S)
 =
 \<
 \alpha, \beta, \gamma, \tau_1, \dots, \tau_7
 \>,
\]
where $\alpha, \beta, \gamma$ are the maps from \cref{equation:alpha-beta-gamma-t7} but viewed as maps on $\R^7$ by abuse of notation, and $\tau_i$ is a translation by $1$ in the $i$-th coordinate of $\R^7$ for each $i \in \{1, \dots, 7\}$.
Let
\[
a=\diag(1,-1,-1), \quad
b=\diag(-1,1,-1), \quad
c=\diag(-1,-1,1), \quad
\mathbb{Z}_2^2 \cong \<a,b,c\> \subset \SO(3).
\]
A representation
$\rho : \pi_1^{\orb}(T^7/\Gamma) \rightarrow \SO(3)$
induces a flat connection $\theta$ on a bundle $E_0$ over $T^7/\Gamma$.
We can define $\rho$ by sending the $10$ generators $\alpha, \beta, \gamma, \tau_1, \dots, \tau_7$ of $\pi_1(T^7/\Gamma \setminus S)$ to arbitrary elements in $\<a,b,c\>$.

For any such choice we can carry out our pre-gluing construction as follows.
Let $A_0$ be the product connection on the trivial $\SO(3)$-bundle over Eguchi-Hanson space and $M_0$ its moduli space, which is just a single point.
Then, for a $T^3 \subset T^7/\Gamma$ in the fixed point set of an element in $\Gamma \setminus \{\Id\}$ which is mapped to $\Id \in \SO(3)$, the bundle 
\[
\Fr \times (E_0|_{T^3}) \times _{\U(2) \times G} M_0
\]
over $T^3$ has as its fibre a single point, so there exists a parallel section, which is in particular a Fueter section.

Likewise, let $A_{0,1}$ be the ASD instanton over ${\XEH}$ from \cref{proposition:gocho-asd-instanton}.
 This is defined on a $\U(1)$-bundle and we view it as a reducible $\SO(3)$-connection, and denote its moduli space by $M_{0,1}$.
 This connection has non-trivial holonomy $\rho_{0,1}:\mathbb{Z}_2 \rightarrow \SO(3)$ at infinity, thus $G_{\rho_{0,1}} \subsetneq G$.
 For each copy of $T^3$ fixed by an element in $\Gamma \setminus \{\Id\}$ which is mapped to a non-identity element in $\SO(3)$ we find that
\[
\Fr \times (E_0|_{T^3}) \times _{\U(2) \times G_{\rho_{0,1}}} M_{0,1}
\]
 is again a bundle whose fibre is a single point, so we again have a Fueter section.
 
 We chose a moduli bundle of connections over the singularities coming from $\alpha$, $\beta$, and $\gamma$ matching the monodromy of $\theta$ given by $\rho$.
 For example, if $\rho(\alpha)=\Id$, we chose the moduli bundle of trivial connections $\Fr \times (E_0|_{T^3}) \times _{\U(2) \times G} M_0$ over $\fix(\alpha)$.
 Because of this, the compatibility condition from \cref{assumption:gluing-topological-compatibility} is satisfied.
 
 If $\theta$ is irreducible and infinitesimally rigid, then \cref{theorem:instanton-existence3} guarantees the existence of an irreducible $G_2$-instanton with structure group $\SO(3)$ on the resolution of $T^7/\Gamma$.
 We have the following criterion to check if $\theta$ is irreducible and/or rigid:
 
 \begin{proposition}[Proposition 9.2 in \cite{Walpuski2013a}]
 \label{proposition:criterion-for-flat-connection-irreducible-rigid}
  A flat connection $\theta$ on a $G$-bundle $E$ over a flat $G_2$-orbifold $Y_0$ corresponding to a representation $\rho: \pi_1^{\orb}(Y_0) \rightarrow G$ is irreducible (resp. unobstructed or, equivalently, infinitesimally rigid) if and only if the induced representation of $\pi_1^{\orb}(Y_0)$ on $\mathfrak{g}$ (resp. $\R^7 \otimes \mathfrak{g}$) has no non-zero fixed vectors.
  
  Here, the action on $\mathfrak{g}$ is $\rho$ composed with the adjoint representation, and the action on $\R^7$ is given by identifying $\R^7 \cong T_x(T^7/\Gamma)$ for any fixed $x \in T^7/\Gamma$ and then acting by parallel transport with respect to the Levi-Civita connection of the flat metric.
 \end{proposition}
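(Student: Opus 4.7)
The approach uses the classical Bochner principle: on a flat $G_2$-orbifold with a flat connection $\theta$, the self-adjoint elliptic operator $L_\theta$ of \cref{equation:g2-instanton-linearisation} reduces to the twisted rough Laplacian, so its kernel consists of parallel sections, which in turn are classified by monodromy-fixed vectors in the fibre.

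For the irreducibility half, I would use the standard fact that $\theta$ is irreducible iff its infinitesimal gauge stabiliser is trivial, i.e.\ $\ker\bigl(d_\theta\colon \Omega^0(\Ad E)\to \Omega^1(\Ad E)\bigr)=0$. Pulling back to the orbifold universal cover trivialises the flat bundle $\Ad E$ compatibly with $d_\theta$, and a $\pi_1^{\orb}(Y_0)$-equivariant constant section is precisely an $\Ad\circ\rho$-fixed vector in $\mathfrak{g}$. This gives both directions of the first assertion.

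For the rigidity half I would first establish a Weitzenböck identity of the form
\[
L_\theta^{*}L_\theta \;=\; L_\theta^{2}\;=\;\nabla_\theta^{*}\nabla_\theta \qquad \text{on } (\Omega^0\oplus\Omega^1)(\Ad E).
\]
Expanding $L_\theta^{2}$ yields the twisted Hodge Laplacian on mixed-degree forms plus curvature contractions; the flatness of the metric on $Y_0$ kills the Ricci terms produced by the $\Omega^1$ part, while $F_\theta = 0$ kills every contribution containing the curvature of the connection. The cleanest way to check this is to pull back to the orbifold universal cover, where $Y_0$ becomes an open subset of $(\R^7,\varphi_0)$ and $\theta$ becomes the product connection after a gauge transformation; the identity then reduces to the model computation of \cref{proposition:G2-linearisation-for-pullbacks}, in the form of \cref{equation:L*L-equals-Laplacian-plus-something} with ``$\Delta_{\R^3}$'' replaced by $\Delta_{\R^7}$ and the ALE factor suppressed. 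Since $Y_0$ is compact, $\nabla_\theta^{*}\nabla_\theta\underline{a}=0$ forces $\nabla_\theta\underline{a}=0$, so $\ker L_\theta$ is the space of parallel sections of $(\R\oplus T^{*}Y_0)\otimes\Ad E$. Basepoint evaluation then identifies this kernel with the $\pi_1^{\orb}(Y_0)$-fixed subspace of $(\R\oplus\R^7)\otimes\mathfrak{g}$, where $\pi_1^{\orb}$ acts on $\R^7$ by parallel transport of the flat Levi-Civita connection (exactly as in the statement) and on $\mathfrak{g}$ by $\Ad\circ\rho$. The $\R$ summand recovers the irreducibility kernel of the previous paragraph, while the $\R^7\otimes\mathfrak{g}$ summand is the space of honest infinitesimal $G_2$-instanton deformations of $\theta$ modulo Coulomb gauge. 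Infinitesimal rigidity is therefore equivalent to the vanishing of the latter, and ``unobstructed'' collapses to the same condition because the self-adjoint $L_\theta$ has cokernel equal to its kernel.

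The main obstacle is the Weitzenböck identity: one must show by direct computation that every curvature term in the expansion of $L_\theta^{2}$ contains either $F_\theta$ or the Riemann tensor of $Y_0$, and therefore vanishes. Once this identification is in place, the remaining steps — the bijection between parallel sections of a flat bundle and monodromy-fixed vectors of the fibre, and the split of $\ker L_\theta$ into a reducibility summand and a genuine-deformation summand — are purely formal.
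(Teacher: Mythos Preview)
The paper does not supply a proof of this proposition; it is merely quoted from \cite{Walpuski2013a} and used as a black box in \cref{subsection:examples-on-t7-mod-gamma}. There is therefore nothing in the paper to compare your argument against.

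That said, your approach is the standard one and is correct. The key computation is indeed the Weitzenböck identity $L_\theta^{2}=\nabla_\theta^{*}\nabla_\theta$ on $(\Omega^0\oplus\Omega^1)(\Ad E)$, which holds because every curvature term in the expansion involves either $F_\theta$ or the Riemann tensor of the flat orbifold and so vanishes. Once that is in place, compactness forces kernel elements to be parallel, and the correspondence between parallel sections of a flat bundle and monodromy-invariant vectors in the fibre finishes the argument. Your identification of the $\R$-summand with the reducibility obstruction and the $\R^7\otimes\mathfrak{g}$-summand with genuine deformations is exactly right, as is the observation that self-adjointness of $L_\theta$ makes ``unobstructed'' and ``infinitesimally rigid'' coincide.
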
 

In our case, one checks that there are no non-zero vectors in $\so(3)$ or $\R^7 \otimes \so(3)$ fixed by $\pi_1^{\orb}(T^7/\Gamma)$, if at least two of the elements $\tau_1, \dots, \tau_7$ are sent to different non-identity elements in $\<a,b,c\>$.
Thus, the connection $\theta$ is irreducible and infinitesimally rigid in this case by \cref{proposition:criterion-for-flat-connection-irreducible-rigid}.
The number of flat connections which do \emph{not} satisfy this condition is 
\[4^3 \cdot (2^7 \cdot 3 -2).\]
Here, we got the factor $4^3$ for the choice of different values for $\rho$ on $\alpha, \beta,\gamma$.
The term $2^7 \cdot 3$ is the number of choices of values for $\rho$ on $\tau_1, \dots, \tau_7$ contained in $\{\Id, a\}$ or $\{\Id, b\}$ or $\{ \Id, c\}$.
However, we triple counted the choice $\rho(\tau_1)=\dots=\rho(\tau_7)=\Id$, so overall we get the factor $(2^7 \cdot 3 -2)$.
Thus, we have $4^{10}-4^3 \cdot (2^7 \cdot 3 -2)=1024128$ flat connections to which \cref{theorem:instanton-existence3} can be applied, and we get this many $G_2$-instantons on the resolution of $T^7/\Gamma$.

Among these, there are $4^7-(2^7 \cdot 3-2)$ choices for $\rho$ giving rise to a flat $G_2$-instanton on the resolved manifold, namely when the $\alpha, \beta, \gamma$ are all sent to the identity.
There are $1 \cdot 4^7$ choices for $\rho$ in which $\alpha, \beta, \gamma$ are all sent to the identity.
These contain $1 \cdot (2^7 \cdot 3-2)$ cases in which $\tau_1, \dots, \tau_7$ are \emph{not} sent to two different non-identity elements, and these cases had already been removed in the previous paragraph.
Thus, we are left with
\[
 4^{10}-4^3 \cdot (2^7 \cdot 3 -2)
 -
 1 \cdot 4^7
 +
 1 \cdot (2^7 \cdot 3-2)
 =1008126
\]
novel, non-flat examples of $G_2$-instantons on the resolution of $T^7/\Gamma$.

 Some of these instantons will be \emph{essentially the same}.
 That is, they agree under a smooth bijective bundle map covering an isometry preserving the $G_2$-structure of the resolution of $T^7/\Gamma$. 
 This is similar to the case of different resolution data for a $G_2$-orbifold giving rise to diffeomorphic $G_2$-manifolds, cf. \cite[Corollary 12.5.5]{Joyce2000}.
 The isometry group of $T^7$ is $\Isom(\Z^7) \ltimes \R^7/\Z^7$, where $\Isom(\Z^7)$ is the isometry group of the lattice $\Z^7 \subset \R^7$ and $\R^7/\Z^7$ is the group of translations acting on $T^7$.
 One checks with computer aid that the subgroup preserving the flat $G_2$-structure is a group $H \ltimes \R^7/\Z^7$, where $H \subset \Isom(\Z^7)$ is a group with $1344$ elements.
 (The code is published in \cite{Platt2022}.)
 The automorphism group of the $G_2$-orbifold $T^7/\Gamma$ can be identified with the automorphisms $f: T^7 \rightarrow T^7$ with the property 
\[
 \text{for all }
 x \in T^7
 \text{ and for all }
 g_1 \in \Gamma
 \text{ there exists }
 g_2 \in \Gamma
 \text{ such that }
 g_2 f(g_1 x)=f(x).
\] 
 Using this definition, one checks again with computer aid that $\Aut(T^7/\Gamma)=K \ltimes \left\{0, \frac{1}{2} \right\}^7$, where
 \begin{align*}
  K = \langle &
  (\cdot 1, \cdot 1, \cdot 1, \cdot (-1), \cdot (-1), \cdot (-1), \cdot (-1)),
  (\cdot 1, \cdot (-1), \cdot (-1), \cdot 1, \cdot 1, \cdot (-1), \cdot (-1)),
  \\
  &
  (\cdot (-1), \cdot 1, \cdot (-1), \cdot 1, \cdot (-1), \cdot 1, \cdot (-1)) \rangle.
 \end{align*}
 Here, the group elements of $K$ act on $T^7$ through multiplication of coordinates by $1$ or $(-1)$.
 Each automorphism is covered by four bundle maps corresponding to the four elements of the group $\< a,b,c \> \subset \SO(3)$.
 Thus, there are at least $246 < 1008126/(|K| \cdot 4)$ different flat connections that are pairwise not essentially the same, giving rise to just as many non-flat $G_2$-instantons that are pairwise not essentially the same.
 
  \begin{corollary}
 \label{corollary:g2-instantons-on-resolution-of-T7}
  Let $\Gamma$ act on $T^7$ as defined in \cref{equation:alpha-beta-gamma-t7} and let $N'_t$ denote the one parameter family of resolutions of $T^7/\Gamma$ from \cref{section:torsion-free-on-resolution-of-T7}.
  Then, for $t$ small enough, there are at least $246$ non-flat, irreducible $G_2$-instantons with structure group $\SO(3)$ over $N'$ with the property that no one of them is mapped onto another one of them under a smooth bijective bundle map covering an isometry $N' \rightarrow N'$ that preserves the $G_2$-structure on $N'$.
 \end{corollary}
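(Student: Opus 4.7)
The approach is to produce a large family of flat $G_2$-instantons $\theta$ on $T^7/\Gamma$ via representations $\rho:\pi_1^{\orb}(T^7/\Gamma)\to\SO(3)$, pair each with a matching Fueter section built fibrewise from the known rigid ASD-instantons on $\XEH$, apply \cref{theorem:instanton-existence3} to perturb the resulting approximate solution to a genuine $G_2$-instanton on $N'_t$, and finally count the orbits of the symmetry group acting on these solutions. The Fueter sections will be locally constant: over each $T^3$ in the singular set $S$, depending on whether the generator of $\Gamma$ fixing that $T^3$ is sent by $\rho$ to $\Id\in\SO(3)$ or to a nontrivial element of $\langle a,b,c\rangle$, pick the constant section of either the trivial ASD-instanton or the Gocho $\U(1)\subset\SO(3)$-instanton from \cref{proposition:gocho-asd-instanton}. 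In either case the relevant moduli space is a single point, so the section is trivially infinitesimally rigid as a Fueter section, and the compatibility condition in \cref{assumption:gluing-topological-compatibility} is automatic by construction.

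Next, to invoke \cref{theorem:instanton-existence3} I must verify that $\theta$ is irreducible and infinitesimally rigid. This is handled uniformly by \cref{proposition:criterion-for-flat-connection-irreducible-rigid}: it suffices that the induced $\pi_1^{\orb}(T^7/\Gamma)$-representations on $\so(3)$ and on $\R^7\otimes\so(3)$ have no nonzero fixed vectors. A direct case analysis on the generators $\tau_1,\dots,\tau_7$ (composed with the adjoint representation) shows that this holds whenever at least two of the $\rho(\tau_i)$ are distinct non-identity elements of $\langle a,b,c\rangle$. The number of $\rho$ that fail this condition is exactly $4^3\cdot(2^7\cdot 3-2)$ (choices for $\rho(\alpha),\rho(\beta),\rho(\gamma)$ times the count of "bad" choices for $(\rho(\tau_i))$, subtracting $2$ to correct for triple-counting of the all-identity choice). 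This leaves $4^{10}-4^3\cdot(2^7\cdot 3-2)=1024128$ admissible representations, each producing a $G_2$-instanton $\tilde A_t$ on $N'_t$ by \cref{theorem:instanton-existence3}.

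To isolate the \emph{non-flat} examples, I will subtract the representations where $\rho(\alpha)=\rho(\beta)=\rho(\gamma)=\Id$, since only these lead to globally flat solutions after gluing. Combining with the earlier count yields $4^{10}-4^3\cdot(2^7\cdot 3-2)-4^7+(2^7\cdot 3-2)=1008126$ non-flat, irreducible, infinitesimally rigid representations giving rise to non-flat $G_2$-instantons. To pass to the number of essentially distinct instantons, I identify the group of symmetries of $N'_t$ that preserve the $G_2$-structure as a quotient of $\Aut(T^7/\Gamma)$; concretely, using the computation (verified with computer assistance as in \cite{Platt2022}) that $\Aut(T^7/\Gamma)=K\ltimes\{0,\tfrac12\}^7$ with $|K|=24$, each automorphism is covered by $|\langle a,b,c\rangle|=4$ bundle maps, giving a group of order $|K|\cdot 2^7\cdot 4$ acting on the set of representations. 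Dividing the count $1008126$ by this order and rounding down yields the stated lower bound of $246$.

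\textbf{Main obstacle.} The routine ingredients (the criterion in \cref{proposition:criterion-for-flat-connection-irreducible-rigid}, the existence theorem \cref{theorem:instanton-existence3}, the Gocho instanton) all apply directly; the genuine work is in the finite group-theoretic bookkeeping. The hardest step is verifying the \emph{lower} bound on orbit count: one must rule out accidental identifications under the $\Aut$-action, which is why the argument only delivers $246\le 1008126/(|K|\cdot 4)$ rather than equality. A subtler potential pitfall is confirming that distinct $\rho$ really produce instantons that are not gauge-equivalent on $N'_t$ via a gluing-hidden symmetry; this is controlled by the fact that the monodromy of $\tilde A_t$ around loops in $N'_t\setminus\text{(gluing region)}$ recovers $\rho$ itself, so non-conjugate $\rho$ yield non-equivalent $\tilde A_t$.
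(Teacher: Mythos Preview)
Your approach is exactly the paper's: build flat connections from representations $\pi_1^{\orb}(T^7/\Gamma)\to\langle a,b,c\rangle\subset\SO(3)$, use the single-point moduli spaces of the trivial and Gocho instantons over $\XEH$ as (constant) Fueter sections, check irreducibility and rigidity of $\theta$ via \cref{proposition:criterion-for-flat-connection-irreducible-rigid}, apply \cref{theorem:instanton-existence3}, and then count orbits under $\Aut(T^7/\Gamma)$ together with the four global gauge transformations from $\langle a,b,c\rangle$. The intermediate counts $1024128$ and $1008126$ agree with the paper.

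There is, however, a numerical slip: $|K|=24$ is incorrect. The group $K$ is generated by three commuting sign-flip involutions (in fact the linear parts of $\alpha,\beta,\gamma$), so $|K|=8$ and hence $|\Aut(T^7/\Gamma)|=|K|\cdot 2^{7}=1024$. With your stated value the divisor would be $24\cdot 128\cdot 4=12288$, giving $\lfloor 1008126/12288\rfloor=82$, not $246$; with the correct $|K|=8$ one obtains $1008126/(1024\cdot 4)\approx 246.12$, which is precisely where the bound $246$ comes from. Apart from this arithmetic error the argument is the same as the paper's.
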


\subsection{An example Coming from a Stable Bundle}
\label{section:stable-bundle-example}

\subsubsection{Review of the Resolution of $(T^3 \times \text{K3})/\Gamma$}
\label{subsubsection:branched-double-cover}

Recall the $G_2$-manifold constructed in \cite[Section 7.3]{Joyce2017}:
consider the sextic
\[
 C=
 \{
  [z_0,z_1,z_2] \in \CP^2:
  z_0^6+z_1^6+z_2^6=0
 \}
 \subset \CP^2
\]
and let $\pi: X \rightarrow \CP^2$ be the double cover of $\CP^2$ branched over $C$.
Then $X$ is a complex K3 surface with a Hyperkähler triple of Kähler forms $\omega^I, \omega^J, \omega^K$, cf. \cite[Example 1.3]{Huybrechts2016}.
On $X$ we can define the following two maps:
first, the map $\alpha: X \rightarrow X$ which swaps the two sheets of the branched cover.
Second, there are two lifts $X \rightarrow X$ of the complex conjugation map $\sigma: \CP^2 \rightarrow \CP^2$.
One of these two lifts acts freely on $X$, the other one does not.
Denote the lift that does not act freely on $X$ by $\beta: X \rightarrow X$, which has $\fix (\beta)=\pi^{-1}(\RP^2) \simeq S^2$.
The Hyperkähler triple $\omega^I, \omega^J, \omega^K$ can be chosen to satisfy
\begin{align*}
 \alpha^* \omega^I&=\omega^I,
 &
 \alpha^* \omega^J&=-\omega^J,
 &
 \alpha^* \omega^K&=-\omega^K,
 \\
 \beta^* \omega^I&=-\omega^I,
 &
 \beta^* \omega^J&=\omega^J,
 &
 \beta^* \omega^K&=-\omega^K.
\end{align*}
Let $\alpha, \beta$ act on $T^3$ via
\[
 \alpha(x_1,x_2,x_3)
 =
 (x_1,-x_2,-x_3),
 \quad
 \beta(x_1,x_2,x_3)
 =
 \left(
  -x_1,x_2,\frac{1}{2}-x_3
 \right).
\]
Denote $\Gamma = \< \alpha, \beta \>$.
Then $\alpha, \beta: T^3 \times X \rightarrow T^3 \times X$ preserve the product $G_2$-structure $\varphi$ on $T^3 \times X$ defined by equation \cref{equation:product-g2-structure}.
Furthermore, $\fix(\alpha)=4 \cdot( S^1 \times C)$, $\fix(\beta)=4 \cdot(S^1 \times S^2)$, where the $S^2$-factors are the double cover of $\fix(\sigma)=\RP^2 \subset \CP^2$.
Therefore, $L=\fix(\alpha) \cup \fix(\beta)$ admits a nowhere vanishing harmonic $1$-form, namely the parallel $1$-form in the $S^1$-direction of each component.
Thus, this orbifold is of the type considered in \cref{section:review-of-the-manifold-construction} and its resolution $N_t \rightarrow (T^3 \times X)/ \Gamma$ admits a $1$-parameter family of $G_2$-structures with small torsion, inducing metrics $g_t$, which can be perturbed to torsion-free $G_2$-structures inducing metrics $\tilde{g}_t$.

\subsubsection{A Connection on the Orbifold $(T^3 \times \text{K3})/\Gamma$ coming from a Stable Bundle}
\label{subsubsection:connection-on-orbifold-coming-from-stable-bundle}

The tangent bundle $E$ of $\CP^2$ is a complex vector bundle of rank $2$, which has an associated $\SO(3)=\PU(2)$-bundle $F$.
The Levi-Civita connection on $E$ is a Hermite-Einstein connection and induces an ASD instanton on $F$, denoted by $A$.
We denote the standard Kähler structure on $\CP^2$ by $(J, g=g_{\text{FS}}, \omega)$, where $g_{\text{FS}}$ is the Fubini-Study metric.
The pullback $\pi^*A$ is then an ASD instanton on the bundle $\pi^*F$ over $(X,\pi^* g)$, but it need not be ASD with respect to the Calabi-Yau metric on $X$.
We will show in \cref{proposition:pullback-of-TCP2-is-stable-for-calabi-yau} that $\pi^*F$ also carries an instanton with respect to the Calabi-Yau metric.

\begin{proposition}[Lemma 9.1.9 in \cite{Donaldson1990}]
\label{proposition:pullback-of-TCP2-is-stable}
 The bundle $\pi^* E$ is stable with respect to $\omega$.
\end{proposition}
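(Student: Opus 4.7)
The plan is to derive stability of $\pi^* E$ from the classical fact that $E=T\CP^2$ is stable on $(\CP^2,\omega)$, using the deck involution $\alpha:X\to X$ of the branched double cover to play off sub-line-sheaves against their translates. First I would recall that $T\CP^2$ is $\omega$-stable (a standard check: any line subsheaf $L_0\subset T\CP^2$ has $\deg L_0\leq 1<\tfrac{3}{2}=\tfrac12\deg T\CP^2$, since otherwise $L_0$ would yield a holomorphic foliation on $\CP^2$ of degree at least $2$, contradicting $c_1(\CP^2)=3H$ and the Bogomolov-type inequalities, or directly by the Euler sequence).

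Now suppose for contradiction that $\pi^*E$ is not $\pi^*\omega$-stable. Then there exists a rank-$1$ saturated subsheaf $L\subset\pi^*E$ with $2\deg_{\pi^*\omega}L\geq\deg_{\pi^*\omega}\pi^*E=2\deg_\omega E$. Since $\alpha^*\pi^*E=\pi^*E$, the subsheaf $\alpha^*L$ is likewise destabilizing. Split into two cases according to whether $L$ is $\alpha$-invariant.

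In the invariant case $\alpha^*L=L$, descent along the branched cover produces a (saturated) rank-$1$ subsheaf $L_0\subset E$ with $\pi^*L_0=L$, and since $\deg_{\pi^*\omega}\pi^*L_0=2\deg_\omega L_0$, the destabilizing inequality descends to $2\deg_\omega L_0\geq\deg_\omega E$, contradicting stability of $E$. In the non-invariant case, the natural map $L\oplus\alpha^*L\to\pi^*E$ is generically injective, so it is injective as a sheaf map and $\det(\pi^*E)$ receives an inclusion from $L\otimes\alpha^*L$; comparing first Chern classes gives $\deg L+\deg\alpha^*L\leq\deg\pi^*E$, i.e.\ $2\deg L\leq 2\deg_\omega E$, and combined with the destabilizing inequality this forces equality $\deg L=\deg_\omega E$. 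Equality in turn forces $L\oplus\alpha^*L\to\pi^*E$ to be an isomorphism (its cokernel is a torsion sheaf of degree zero, hence zero on a smooth surface where the cokernel's support would be a divisor). The resulting $\alpha$-equivariant splitting $\pi^*E\cong L\oplus\alpha^*L$ (with $\alpha$ swapping the two summands) descends to a splitting of $E$ over $\CP^2$ as the direct image of the $\alpha$-fixed subsheaf, exhibiting $E$ as reducible, which again contradicts stability of $T\CP^2$.

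The main obstacle is the non-invariant case: one must carefully handle saturations (so that quotients are torsion-free) and verify that an $\alpha$-equivariant splitting of $\pi^*E$ really does descend to a nontrivial holomorphic splitting of $E$ on $\CP^2$, rather than only to a topological or smooth one. I expect this to follow by examining the $(+1)$-eigenspace of $\alpha$ acting on $\pi_*\pi^*E=E\oplus(E\otimes\mathcal{O}(-\tfrac12 C))$ (the standard decomposition for a double cover branched along $C\in|\mathcal{O}(6)|$), which recovers $E$ and is preserved by the induced splitting; this produces the desired holomorphic line-subbundle of $E$ and closes the contradiction.
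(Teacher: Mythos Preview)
The paper does not supply its own proof here; it cites Donaldson--Kronheimer, Lemma~9.1.9. Your Galois-averaging strategy is standard and Case~1 is fine, but Case~2 has a real gap. An $\alpha$-equivariant splitting $\pi^*E\cong L\oplus\alpha^*L$ in which $\alpha$ \emph{swaps} the summands does \emph{not} descend to a holomorphic splitting of $E$: the $\alpha$-invariant local sections of $L\oplus\alpha^*L$ are exactly the graphs $\{(s,\alpha\cdot s):s\in L\}$, so descent identifies $E$ with $\pi_*L$ as a rank-two bundle rather than producing a line subbundle of $E$. Your suggested repair via the $(+1)$-eigenspace of $\pi_*\pi^*E=E\oplus E(-3)$ hits the same wall --- the decomposition $\pi_*L\oplus\pi_*(\alpha^*L)$ is again swapped by $\alpha$, and its invariant part is the diagonal copy of $\pi_*L$, not a line in $E$.

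The clean way to close Case~2 (and indeed to bypass the case split entirely) is simplicity. The pullback of the Hermite--Einstein connection on $E$ is Hermite--Einstein for $\pi^*\omega$, so $\pi^*E$ is polystable, hence stable iff simple. The projection formula together with $\pi_*\mathcal{O}_X=\mathcal{O}\oplus\mathcal{O}(-3)$ gives
\[
H^0(X,\End\pi^*E)=H^0(\CP^2,\End E)\oplus H^0\bigl(\CP^2,(\End E)(-3)\bigr)=\C\oplus 0,
\]
the first summand by stability of $E=T\CP^2$ and the second since $\End E$ is slope-semistable of degree~$0$. So $\pi^*E$ is simple and no nontrivial decomposition $L\oplus\alpha^*L$ can exist.
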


\begin{corollary}
\label{proposition:pullback-of-TCP2-is-stable-for-calabi-yau}
 The bundle $\pi^* E$ is stable with respect to the Calabi-Yau Kähler form $\omega^I$.
\end{corollary}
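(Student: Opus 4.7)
The plan is to exploit the fact that slope-stability of a coherent sheaf on a complex manifold depends only on the cohomology class of the Kähler form. First I would recall that the slope
\[
 \mu_\omega(\mathcal{F})
 =
 \frac{1}{\rank(\mathcal{F})}
 \int_X c_1(\mathcal{F}) \wedge \omega
\]
depends on $\omega$ only through its de Rham class $[\omega] \in H^2(X,\R)$, while the notion of coherent subsheaf (and of $c_1$) is determined by the complex-analytic structure alone. Consequently, stability with respect to $\omega_1$ and stability with respect to $\omega_2$ coincide whenever $[\omega_1]=[\omega_2]$ and both are Kähler forms for the same underlying complex structure on $X$.

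Next I would identify the complex structure corresponding to $\omega^I$. The prescribed transformation laws $\alpha^*\omega^I=\omega^I$ and $\beta^*\omega^I=-\omega^I$, combined with the fact that $\alpha$ is holomorphic and $\beta$ is anti-holomorphic for the complex structure induced by the double cover $\pi:X\rightarrow\CP^2$, force $I$ to be that double-cover complex structure (possibly up to an irrelevant sign). In particular $\pi^*\omega_{FS}$ is a Kähler form for $(X,I)$, since $\pi$ is $I$-holomorphic and $\omega_{FS}$ is Kähler on $\CP^2$.

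The third step is to arrange that $[\omega^I]=[\pi^*\omega_{FS}]$ in $H^{1,1}(X,I)\cap H^2(X,\R)$. By Yau's solution of the Calabi conjecture, within each Kähler class on the K3 surface $(X,I)$ there is a unique Ricci-flat Kähler representative, and the Hyperkähler triple $(\omega^I,\omega^J,\omega^K)$ is then determined by choosing this Kähler class together with a holomorphic symplectic form. I would verify that the class $[\pi^*\omega_{FS}]$ is compatible with the required $\<\alpha,\beta\>$-symmetries at the level of cohomology: $\pi\circ\alpha=\pi$ gives $\alpha^*[\pi^*\omega_{FS}]=[\pi^*\omega_{FS}]$, and the identity $\sigma^*\omega_{FS}=-\omega_{FS}$ for complex conjugation $\sigma$ on $\CP^2$ gives $\beta^*[\pi^*\omega_{FS}]=-[\pi^*\omega_{FS}]$. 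Uniqueness of the Calabi-Yau representative then promotes these cohomological symmetries to pointwise symmetries of $\omega^I$.

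With such an $\omega^I$ fixed, \cref{proposition:pullback-of-TCP2-is-stable} supplies stability of $\pi^*E$ with respect to $\pi^*\omega_{FS}$, and the cohomological invariance of slope from the first paragraph upgrades this to stability with respect to $\omega^I$, giving the corollary. The only real bookkeeping hazard is ensuring that specialising the Kähler class of $\omega^I$ to $[\pi^*\omega_{FS}]$ is consistent with the Hyperkähler structure and the $\Gamma$-equivariance baked into the construction of $(T^3\times X)/\Gamma$ in \cref{subsubsection:branched-double-cover}; this is handled precisely by the eigenspace computation above, so I do not anticipate any substantive obstacle.
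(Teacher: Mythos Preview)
Your proposal is correct and follows essentially the same approach as the paper: both arguments hinge on the observation that slope, and hence slope-stability, depends on the K\"ahler form only through its de Rham class, together with the fact that $\omega^I$ is obtained via Yau's theorem in the class $[\pi^*\omega_{FS}]$. The paper's proof is terser, simply asserting $\omega^I=\pi^*\omega_{FS}+i\partial\bar{\partial}\phi$ and invoking cohomological invariance of slope, whereas you spell out more carefully why the complex structure $I$ agrees with the double-cover structure and why the choice of K\"ahler class is compatible with the $\Gamma$-action; this extra bookkeeping is not logically required for the corollary itself but is a reasonable sanity check.
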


\begin{proof}[Proof of \cref{proposition:pullback-of-TCP2-is-stable-for-calabi-yau}]
 Denote by $\hat{\omega}=\pi^* \omega$ the pullback of the Kähler form for the Fubini-Study metric on $\CP^2$ to $X$.
 By Yau's proof of the Calabi conjecture we have that $\omega^I = \hat{\omega} + i \del \delbar \phi$ for some $\phi : X \rightarrow \R$.
 In particular, $\omega^I$ and $\hat{\omega}$ are in the same de Rham cohomology class.
 
 By \cref{proposition:pullback-of-TCP2-is-stable}, $\pi^* E$ is stable with respect to $\omega$.
 The Kähler form enters into the definition of stability only through the definition of slope.
 But slopes do not change when switching between $\omega^I$ and $\hat{\omega}$ as they are in the same cohomology class.
 Thus $\pi^* E$ is also stable with respect to $\omega^I$.
\end{proof}

We also have the following:

\begin{corollary}[p. 348 in \cite{Donaldson1990}]
\label{proposition:unique-ASD-on-K3}
 Denote by $\pi_F:F \rightarrow \CP^2$ the $\SO(3)$-bundle over $\CP^2$ from \cref{subsubsection:connection-on-orbifold-coming-from-stable-bundle}.
 Let $\pi: X \rightarrow \CP^2$ be the branched double cover from \cref{subsubsection:branched-double-cover} with Calabi-Yau metric $\hat{g}$.
 Then the bundle
 \begin{align}
 \label{equation:pullback-bundle}
  \hat{F}
  =
  \pi^* F
  =
  \{
   (x,u) \in X \times F
   :
   \pi_F(u)=\pi(x)
  \}
 \end{align}
 admits an infinitesimally rigid and unobstructed ASD instanton $\hat{A}$ with respect to $\hat{g}$.
\end{corollary}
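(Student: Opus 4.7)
The plan is to invoke the Donaldson--Uhlenbeck--Yau theorem on the Calabi--Yau K3 surface $(X,\hat g)$ and then compute the deformation and obstruction spaces via Hodge theory on a surface with trivial canonical bundle.

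First, by \cref{proposition:pullback-of-TCP2-is-stable-for-calabi-yau} the holomorphic rank-$2$ bundle $\pi^*E$ is $\omega^I$-stable. Donaldson--Uhlenbeck--Yau then produces a Hermite--Einstein connection on $\pi^*E$, unique up to unitary gauge, and passing to the associated $\PU(2)=\SO(3)$ bundle $\hat F = \pi^* F$ yields an ASD instanton $\hat A$ with respect to $\hat g$. This handles existence.

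For unobstructedness, the plan is to use the Kobayashi--Hitchin identification of the ASD deformation complex for $\hat F$ with the Dolbeault complex of $\End_0(\pi^*E)$; in particular, the obstruction space $\coker \d_{\hat A}^+$ is identified with $H^2(X,\End_0(\pi^*E))$. Stability of $\pi^*E$ implies simplicity, so $H^0(X,\End_0(\pi^*E))=0$. Serre duality together with the triviality of $K_X$ then gives
\[
 H^2(X,\End_0(\pi^*E)) \;\cong\; H^0(X,\End_0(\pi^*E))^* \;=\; 0,
\]
so $\hat A$ is unobstructed.

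It remains to show that the tangent space $H^1(X,\End_0(\pi^*E))$ vanishes, and this I would extract from the ASD index formula. From $c_1(T\CP^2)=3h$ and $c_2(T\CP^2)=3h^2$ one computes $c_2(\End_0 E)=4c_2(E)-c_1(E)^2 = 12-9 = 3$, whence $\int_{\CP^2} p_1(\mathfrak g_F) = -3$, and pulling back under the degree-two cover gives $\int_X p_1(\mathfrak g_{\hat F}) = -6$. With $b_1(X)=0$ and $b_+(X)=3$, the standard dimension formula for the moduli space of ASD $\SO(3)$-connections on a compact $4$-manifold yields expected dimension
\[
 -2\int_X p_1(\mathfrak g_{\hat F}) - 3(1-b_1+b_+) \;=\; 12-12 \;=\; 0.
\]
Since this equals $h^1 - h^0 - h^2$ and we have already shown $h^0 = h^2 = 0$, this forces $H^1 = 0$, giving infinitesimal rigidity.

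The one point requiring care is to run the Kobayashi--Hitchin identification at the level of the $\PU(2)$ bundle $\hat F$ rather than of the underlying $U(2)$ bundle $\pi^*E$: this amounts to consistently replacing $\End$ by $\End_0$, which strips off the reducible central summand and is exactly what makes the vanishing $H^0(X,\End_0(\pi^*E))=0$ follow from simplicity.
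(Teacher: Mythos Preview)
Your proposal is correct and follows the standard route: existence via the Hitchin--Kobayashi correspondence applied to the $\omega^I$-stable bundle $\pi^*E$, unobstructedness from Serre duality on a K3 surface using simplicity of a stable bundle, and rigidity from the index computation. The numerics check out.

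The paper itself does not give a proof; the result is simply imported from \cite[p.~348]{Donaldson1990}, where exactly this bundle and its moduli dimension are discussed as a worked example. So there is nothing to compare against beyond noting that your argument is precisely the computation that Donaldson--Kronheimer carry out at that reference.
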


Pulling back $(\hat{F}, \hat{A})$ under the projection onto the second factor, $p: T^3 \times X \rightarrow X$, gives a bundle with $G_2$-instanton by \cref{example:pullback-of-asd-is-g2-instanton}.
Denote the bundle by $E_0$ and the connection by $\theta$.
The connection $\hat{A}$ was infinitesimally rigid, and the following proposition, which is proved like \cref{proposition:constant-in-r3-direction}, implies that $\theta$ is infinitesimally rigid:

\begin{proposition}
 Let $I$ be an ASD instanton on a bundle $P$ over a compact $4$-fold $Y$ with deformation operator $\delta_I$.
 Let $p: T^3 \times Y \rightarrow Y$ be the projection onto the second factor.
 Then the $G_2$-instanton $p^*I$ is infinitesimally rigid if and only if $I$ is infinitesimally rigid and unobstructed.
\end{proposition}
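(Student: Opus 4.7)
The plan is to reduce everything to a standard Bochner/integration-by-parts argument on the compact product $T^3\times Y$. I will use the identification from \cref{proposition:G2-linearisation-for-pullbacks} (which, being a pointwise linear-algebra statement, applies equally to $T^3\times Y$ as to $\R^3\times \tilde X$): under
\[
 \Omega^0\oplus\Omega^1(T^3\times Y,\Ad p^*P)
 \;\simeq\;
 \Omega^0(T^3\times Y,\,p^*((\R\oplus \Lambda^+T^*Y\oplus T^*Y)\otimes \Ad P)),
\]
the linearised $G_2$-instanton operator decomposes as $L_{p^*I}=F+D_{I}$, with $F$ purely first-order in the $T^3$-directions and
\[
 D_I=\begin{pmatrix}0&\delta_I\\ \delta_I^*&0\end{pmatrix},\qquad L_{p^*I}^*L_{p^*I}=\Delta_{T^3}+\begin{pmatrix}\delta_I\delta_I^*&0\\ 0&\delta_I^*\delta_I\end{pmatrix}.
\]

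For the implication ``$I$ infinitesimally rigid and unobstructed $\Rightarrow$ $p^*I$ infinitesimally rigid'', I would suppose $\underline{a}\in \ker L_{p^*I}$ and pair the identity $L^*L\underline{a}=0$ with $\underline{a}$ in $L^2(T^3\times Y)$. Since both $T^3$ and $Y$ are closed, integration by parts gives
\[
 0=\|L_{p^*I}\underline{a}\|_{L^2}^2=\int_{T^3\times Y}\left(|\nabla_{T^3}\underline{a}|^2+\langle \delta_I\delta_I^*\underline{a}^{(0,2)},\underline{a}^{(0,2)}\rangle+\langle \delta_I^*\delta_I\underline{a}^{(1)},\underline{a}^{(1)}\rangle\right),
\]
so $\underline{a}$ is $T^3$-invariant and satisfies $\delta_I\underline{a}^{(1)}=0$, $\delta_I^*\underline{a}^{(0,2)}=0$ fibrewise on $Y$. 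By hypothesis both kernels vanish, so $\underline{a}=0$. (Equivalently, one can Fourier-decompose in $T^3$ and note that $4\pi^2|k|^2+D_I^2$ is strictly positive for $k\neq 0$.)

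For the converse, I would check directly that the two relevant kernels on $Y$ embed into $\ker L_{p^*I}$ by pullback. If $a\in\ker\delta_I\subset\Omega^1(Y,\Ad P)$, then $p^*a$ is $T^3$-invariant, so the operator $F$ (which differentiates only in the $T^3$-direction) kills it, while $D_I(0,0,p^*a)=(0,\delta_I a)=0$; hence $p^*a\in\ker L_{p^*I}$, and infinitesimal rigidity of $p^*I$ forces $a=0$. The same argument applied to $p^*(\xi,\omega)$ for $(\xi,\omega)\in\ker\delta_I^*$ shows unobstructedness of $I$.

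The only substantive point is the block form $L^*L=\Delta_{T^3}+D_I^2$, which is the direct analogue of \cref{equation:L*L-equals-Laplacian-plus-something} and whose proof on $T^3\times Y$ is identical to that on $\R^3\times\tilde X$ in \cite{Walpuski2013}; everything else is a short Bochner argument. I therefore do not expect any genuine obstacle — the compactness of $T^3$ (as opposed to the $\R^3$ of \cref{proposition:constant-in-r3-direction}) actually simplifies the argument, since no decay hypothesis or $L^2$-Liouville step is required to conclude $T^3$-invariance.
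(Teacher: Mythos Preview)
Your proof is correct and follows exactly the approach the paper intends: the paper only says the proposition ``is proved like \cref{proposition:constant-in-r3-direction}'', i.e.\ via the Weitzenb\"ock-type identity $L_{p^*I}^*L_{p^*I}=\Delta_{T^3}+D_I^2$ from \cref{proposition:G2-linearisation-for-pullbacks}, and you carry this out in full. Your remark that compactness of $T^3$ replaces the decay/Liouville step needed on $\R^3$ by a direct integration by parts is precisely the simplification the paper has in mind.
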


The gluing theorems \cref{theorem:instanton-existence,theorem:instanton-existence3} require a connection on the orbifold, $(T^3 \times X)/\Gamma$.
The following proposition states that $\theta$ can be viewed as such a connection:

\begin{proposition}
\label{proposition:bundle-lifts-of-alpha-beta}
 There exist lifts 
 $\alpha_0: E_0 \rightarrow E_0$ 
 of $\alpha$
 and 
 $\beta_0: E_0 \rightarrow E_0$ 
 of $\beta$
 such that 
 $\alpha_0^2=\beta_0^2=\Id$, 
 $\alpha_0^* \theta=\beta_0^* \theta=\theta$, 
 $\alpha_0$ being the identity over $\fix(\alpha)$, 
 and $\beta_0$ \emph{not} being the identity over $\fix(\beta)$.
\end{proposition}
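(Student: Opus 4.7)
My plan is to lift $\alpha$ and $\beta$ fibrewise, first to the $\SO(3)$-bundle $F = \PU(T\CP^2) \to \CP^2$, then to $\hat F = \pi^* F$ via the double cover $\pi\colon X \to \CP^2$, and finally to $E_0 = (\pi \circ p)^* F$ via $p\colon T^3 \times X \to X$. The geometric inputs are that $\pi \circ \alpha_X = \pi$ (so $\alpha_X$ is the deck transformation of $\pi$), while $\pi \circ \beta_X = \sigma \circ \pi$, where $\sigma\colon \CP^2 \to \CP^2$ is the complex conjugation.

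Using $\hat F = \{(y,u) \in X \times F : \pi(y) = \pi_F(u)\}$, I first define $\alpha_{\hat F}(y,u) := (\alpha_X(y), u)$, which is an involution of $\hat F$ covering $\alpha_X$ and is the identity on every fibre, in particular over $\fix(\alpha_X) = \pi^{-1}(C)$. For $\sigma$, I construct a natural lift $\sigma_F \colon F \to F$ by letting the complex antilinear map $\sigma_*\colon T\CP^2 \to T\CP^2$ act by conjugation on the rank-three real bundle $\su(T\CP^2)$. A short calculation at a point $p \in \RP^2$ in the complex basis $(e, Je)$ with $e \in T_p\RP^2$ shows that $\sigma_F|_p$ acts on the Pauli basis of $\su(T_p\CP^2)$ as $\diag(1,-1,-1)$, so it is a non-trivial involution in $\SO(3)$. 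Setting $\beta_{\hat F}(y,u) := (\beta_X(y), \sigma_F(u))$ then gives an involutive lift of $\beta_X$ which acts on fibres over $\fix(\beta_X) \subset \pi^{-1}(\RP^2)$ as $\sigma_F|_{\RP^2}$ and is therefore not the identity. Finally, $\alpha_0$ and $\beta_0$ are defined as the induced involutions of $E_0 = p^* \hat F$ obtained by combining these lifts with $\alpha_{T^3}$ and $\beta_{T^3}$.

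It remains to verify $\alpha_0^* \theta = \beta_0^* \theta = \theta$, which reduces to invariance of $\hat A$ under $\alpha_{\hat F}$ and $\beta_{\hat F}$. Because $\alpha_X$ and $\beta_X$ are isometries of $\hat g$ that preserve the orientation of $X$ and the three-dimensional self-dual subspace $\spann(\omega^I, \omega^J, \omega^K) \subset \Omega^2(X)$, the pullbacks $\alpha_{\hat F}^* \hat A$ and $\beta_{\hat F}^* \hat A$ remain ASD on $\hat F$, hence Hermite--Einstein for $\omega^I$. By stability of $\pi^* E$ (\cref{proposition:pullback-of-TCP2-is-stable-for-calabi-yau}) and Kobayashi--Hitchin uniqueness each equals $\hat A$ after a unique gauge correction, and irreducibility of $\hat A$ ensures that the corrected lifts remain involutions (since any square root of the identity preserving $\hat A$ lies in the trivial centre of $\SO(3)$). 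The main obstacle will be to confirm that the gauge corrections respect the prescribed fixed-locus behaviour: the involutive constraint forces any such correction $g$ to satisfy $g(\alpha_X(p))\, g(p) = \Id$, pinning $g(p)$ to an order-two element of $\SO(3)$ at each fixed point, and the hard part will be to exclude a non-trivial value on $\pi^{-1}(C)$. I plan to handle this by choosing an $\alpha_X$-equivariant Hermitian metric on $\pi^* E$ and invoking an equivariant Kobayashi--Hitchin argument to produce an $\alpha_X$-equivariant representative of $\hat A$ whose stabilising gauge is the identity on $\pi^{-1}(C)$, with a parallel analysis pinning the corresponding correction for $\beta_{\hat F}$ so that the $\sigma_F$ contribution on $\fix(\beta_X)$ is not cancelled.
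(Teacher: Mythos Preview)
Your construction of the lifts matches the paper's: the paper packages your $\sigma_F$ as a complex-linear map $\hat\sigma\colon E\otimes\bar E\to E\otimes\bar E$, $v\otimes w\mapsto -\,d\sigma(w)\otimes d\sigma(v)$, and checks on a unitary basis that its restriction to $\mathfrak u_0(E)$ sends the Pauli matrices to $(+,-,-)$ times themselves---exactly your $\diag(1,-1,-1)$ computation. The natural $\hat\alpha(x,u)=(\alpha_X(x),u)$ is identical to yours.

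Where you diverge is the argument for $\hat A$-invariance. The paper does not invoke Kobayashi--Hitchin uniqueness: it observes that $\sigma$ is an isometry of the Fubini--Study metric, so $\hat\sigma$ preserves $\nabla^\otimes$ exactly, hence $\hat\beta$ preserves $\pi^*\nabla^\otimes$ exactly, and then asserts that the induced $\SO(3)$-connection is $\hat A$. Since $\hat A$ was introduced as the Calabi--Yau ASD connection rather than the one coming from $\pi^*\nabla^\otimes$, this last identification is stated without justification; your uniqueness argument is what actually bridges that step. For $\alpha$ your gauge-correction worry dissolves cleanly: the natural lift preserves the pulled-back holomorphic structure on $\pi^*E$, the HYM metric is unique up to a positive scalar, and $\alpha_X^2=\Id$ forces that scalar to be $1$, so $\alpha_{\hat F}^*\hat A=\hat A$ on the nose and the identity-on-$\fix(\alpha)$ property is automatic. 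For $\beta$, once you use that $\hat A$ is irreducible (trivial stabiliser in $\SO(3)$), the gauge correction is unique and the corrected lift is automatically an involution; only the non-triviality over $\fix(\beta_X)$ then needs the extra argument you sketch.
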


This relies on the following construction on $X$:

\begin{proposition}
\label{proposition:X-conjugation-lift}
 There exists a lift $\hat{\beta}: \hat{F} \rightarrow \hat{F}$ of $\beta$ such that $\hat{\beta}^2=\Id$, $\hat{\beta}^* \hat{A}=\hat{A}$, and $\hat{\beta}$ not being the identity over $\fix(\beta)$.
\end{proposition}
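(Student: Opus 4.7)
The plan is to build $\hat\beta$ in two stages: first construct a natural candidate lift $\hat\beta_0$ coming directly from the geometry of $\sigma$ on $\CP^2$ (which automatically has the required non-triviality on $\fix(\beta)$), then correct it by a gauge transformation using the uniqueness of $\hat A$ as an ASD instanton on a stable bundle.

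First I would construct $\hat\beta_0$. Since $\sigma:\CP^2\to\CP^2$ is an isometry of the Fubini--Study metric, its differential $d\sigma$ is a complex-antilinear bundle isomorphism of $E=T\CP^2$ preserving the Hermitian structure, and it descends to an involution $\sigma_F:F\to F$ with $\sigma_F^*A=A$. Define
\[
\hat\beta_0:\hat F\to\hat F,\qquad (x,u)\mapsto(\beta(x),\sigma_F(u)),
\]
which is well defined because $\pi\circ\beta=\sigma\circ\pi$, covers $\beta$, squares to the identity, and preserves $\pi^*A$. For $p\in\fix(\beta)$ one has $\pi(p)\in\RP^2=\fix(\sigma)$, and in a real basis of $T_{\pi(p)}\CP^2=T_{\pi(p)}\RP^2\oplus JT_{\pi(p)}\RP^2$ the map $d\sigma_{\pi(p)}$ is $\diag(1,1,-1,-1)$; the induced conjugation on $\su(E)_{\pi(p)}\cong\R^3$ is $\diag(-1,1,-1)$ in the basis $i\sigma_1,i\sigma_2,i\sigma_3$, a non-identity rotation by $\pi$ in $\SO(3)$. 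Hence $\hat\beta_0|_{\hat F_p}$ is non-trivial for every $p\in\fix(\beta)$.

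Next I would promote $\hat\beta_0$ to a lift preserving $\hat A$. A direct check using $\beta^*\omega^I=-\omega^I$, $\beta^*I=-I$ and $\hat g=\omega^I(\cdot,I\cdot)$ shows $\beta$ is an isometry of $(X,\hat g)$; being antiholomorphic on a complex surface it is also orientation-preserving, so it preserves the ASD equation and $\hat\beta_0^*\hat A$ is again an ASD instanton on $\hat F$. By \cref{proposition:unique-ASD-on-K3} together with the stability of $\pi^*E$ with respect to $\omega^I$ (\cref{proposition:pullback-of-TCP2-is-stable-for-calabi-yau}) and Hitchin--Kobayashi uniqueness, there exists a unique gauge transformation $g$ of $\hat F$ with $\hat\beta_0^*\hat A=g\cdot\hat A$. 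Applying $\hat\beta_0^*$ once more gives $\hat\beta_0^*(g)\cdot g\in\Stab(\hat A)=\{\Id\}$ (the stabiliser is trivial because $\hat A$ is irreducible and $\SO(3)$ has trivial centre), so $\hat\beta_0^*(g)=g^{-1}$. Setting $\hat\beta:=\hat\beta_0\circ g^{-1}$, one has $\hat\beta^*\hat A=\hat A$ immediately, and a short calculation using $\hat\beta_0^2=\Id$ together with $\hat\beta_0^*(g)=g^{-1}$ gives
\[
\hat\beta^2=\hat\beta_0^2\circ\hat\beta_0^*(g^{-1})\circ g^{-1}=g\cdot g^{-1}=\Id.
\]

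The main obstacle is verifying the remaining claim that $\hat\beta$ is non-trivial over every fibre of $\fix(\beta)$: a priori the correction $g^{-1}|_{\hat F_p}$ could cancel the explicit non-trivial action of $\hat\beta_0|_{\hat F_p}$ found above. My preferred route is to produce $g$ equivariantly by running the Donaldson heat flow from $\pi^*A$ to $\hat A$: since $\hat\beta_0$ preserves both the initial connection $\pi^*A$ and the metric/orientation used in the flow, it commutes with the flow, forcing the gauge correction $g$ to be $\hat\beta_0$-equivariant in a way that preserves the pointwise non-triviality on $\fix(\beta)$. Equivalently, one argues that the conditions $\hat\beta^2=\Id$ and $\hat\beta^*\hat A=\hat A$ determine $\hat\beta$ uniquely up to $\Stab(\hat A)=\{\Id\}$, so the unique lift must coincide with $\hat\beta_0$ up to a gauge adjustment that preserves its action along $\fix(\beta)$ at the level of the topological type of the involution on $\hat F|_{\fix(\beta)}$.
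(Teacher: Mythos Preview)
Your construction of $\hat\beta_0$ and the fibrewise computation over $\fix(\beta)$ are correct and coincide with the lift the paper builds: the paper phrases it via the complex-linear map $v\otimes w\mapsto -d\sigma w\otimes d\sigma v$ on $E\otimes\bar E$, which on $\mathfrak{u}_0(E)$ induces exactly your conjugation $M\mapsto\bar M$, giving $\diag(-1,1,-1)$ on the standard basis. The paper, however, stops after checking that this lift preserves the connection on $\hat F$ \emph{induced by $\pi^*\nabla^\otimes$}, i.e.\ the pullback of the Fubini--Study Levi--Civita connection; it does not separately address the Calabi--Yau ASD connection $\hat A$ of \cref{proposition:unique-ASD-on-K3}. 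Your two-stage plan---natural lift, then gauge correction via the unique $g$ with $\hat\beta_0^*\hat A=g\cdot\hat A$ and the cocycle identity $\hat\beta_0^*(g)=g^{-1}$ to force $\hat\beta^2=\Id$---is therefore a genuine refinement of what the paper writes.

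The gap is exactly where you flag it, and neither route you offer closes it as stated. Route~2 is not a proof: there is no gauge-invariant ``topological type of the involution on $\hat F|_{\fix(\beta)}$'' that survives composition with an arbitrary global gauge transformation, and the algebraic constraint $R_p g_p R_p^{-1}=g_p^{-1}$ on $\fix(\beta)$ does allow $g_p=R_p$, which would make $\hat\beta_p=\Id$. Route~1 is the correct strategy but your sentence is too quick: Donaldson's convergence theorem is for the Hermitian Yang--Mills flow on metrics, which uses the holomorphic structure, and $\beta$ is antiholomorphic; the purely Riemannian Yang--Mills gradient flow depends only on the metric but its convergence to $\hat A$ from $\pi^*A$ is not what Donaldson proved. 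To complete the argument you should show directly that $g=\Id$: the antilinear involution $\pi^*(d\sigma)$ on $\pi^*E$ is a real structure compatible with the holomorphic structure (it sends local holomorphic sections $s(z)$ to $\overline{s(\bar z)}$, which is again holomorphic), and one checks that it takes Hermite--Einstein metrics for $\omega^I$ to Hermite--Einstein metrics (the trace-free equation $\Lambda_{\omega^I}F=0$ is insensitive to $\omega^I\mapsto -\omega^I=\beta^*\omega^I$). Uniqueness of the Hermite--Einstein metric then forces it to be invariant, hence so is its Chern connection and the induced $\hat A$; this gives $\hat\beta=\hat\beta_0$ and your explicit non-triviality computation applies unchanged. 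The bookkeeping with antilinear maps and conjugate bundles here is genuinely delicate and should be written out carefully.
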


\begin{proof}
 Denote by $\sigma: \CP^2 \rightarrow \CP^2$ the conjugation map and $E=T\CP^2$ as before.
 We can then view $\d \sigma$ as a complex linear map $E \rightarrow \overline{E}$ covering $\sigma$.
 Define
 \begin{align}
 \label{equation:sigma-hat}
 \begin{split}
  \hat{\sigma}:
  E \tensor \overline{E} & \rightarrow E \tensor \overline{E}
  \\
  v \tensor w
  & \mapsto
  -\d \sigma w \tensor \d \sigma v,
 \end{split}
 \end{align}
 which is a complex linear map covering $\sigma : \CP^2 \rightarrow \CP^2$.
 
 The manifold $\CP^2$ is Kähler-Einstein, so the Levi-Civita connection $\nabla^{\text{LC}}$ on $E$ is a Hermite-Einstein connection.
 The connection $\nabla^{\text{LC}}$ on $E$ induces the product connection $\nabla^{\tensor}$ on $E \tensor \overline{E}$, which is again a Hermite-Einstein connection.
 We have that $\sigma$ is an isometry, so $\nabla^{\tensor}$ is preserved by $\hat{\sigma}$ in the sense of $\hat{\sigma} \circ \sigma^* \nabla^\tensor \circ \hat{\sigma}=\nabla^\tensor$.
 
 Let $\hat{\beta}$ be the lift of $\hat{\sigma}$ to $\pi^* E \tensor \overline{\pi^* E}$, i.e. $\hat{\beta}: \pi^* E \tensor \overline{\pi^* E} \rightarrow \pi^* E \tensor \overline{\pi^* E}$ covering $\beta: X \rightarrow X$ and satisfying
 $p \hat{\beta}=\hat{\sigma}p$, where $p:  \pi^* E \tensor \overline{\pi^* E} \rightarrow E \tensor \overline{E}$ is the obvious projection map.
 Then $\hat{\sigma}^* \nabla^\tensor=\nabla^\tensor$ implies $\hat{\beta}^* (\pi^* \nabla^\tensor)=\pi^* \nabla^\tensor$.
 
 If $p \in \CP^2$ and $(u_1,u_2)$ is a unitary basis of $E_p$, then $(\d \sigma(u_1), \d \sigma(u_2))$ is a unitary basis of $E_{\sigma(p)}$, and writing elements of the trace-free unitary endomorphism bundle $\mathfrak{u}_0(\pi^*E)$ in these bases, we see that $\hat{\beta}$ acts as
 \begin{align*}
  \begin{pmatrix}
   0&1\\
   -1&0
  \end{pmatrix}
  \mapsto
  \begin{pmatrix}
   0&1\\
   -1&0
  \end{pmatrix},
  \quad
  \begin{pmatrix}
   0&i\\
   i&0
  \end{pmatrix}
  \mapsto
  -\begin{pmatrix}
   0&i\\
   i&0
  \end{pmatrix},
  \quad
  \begin{pmatrix}
   i&0\\
   0&-i
  \end{pmatrix}
  \mapsto
  -
  \begin{pmatrix}
   i&0\\
   0&-i
  \end{pmatrix}.
 \end{align*}
 Thus, $\hat{\beta}$ induces a map on $\hat{F}=\SO(\mathfrak{u}_0(\pi^*E))$ that is not the identity over $\fix (\beta)$ and preserves the ASD connection $\hat{A}$ on $\hat{F}$ induced by $\pi^* \nabla^\tensor$.
\end{proof}

\begin{remark}
 This only works because we have a lift of complex conjugation $\sigma: \CP^2 \rightarrow \CP^2$ to $F$ in \cref{proposition:X-conjugation-lift}.
 No lift of $\sigma$ to $E$ exists, because $c_1(\sigma^*E)=-c_1(E)$, so it is important to change from $\U(2)$-bundles to $\SO(3)$-bundles in this example.
\end{remark}

\begin{remark}
 Without the minus sign in \cref{equation:sigma-hat}, $\hat{\beta}$ would not descend to a map on $\SO(\mathfrak{u}_0(\pi^*E))$.
 That is because the map $-\Id: \mathfrak{u}_0(\pi^*E) \rightarrow \mathfrak{u}_0(\pi^*E)$ is orientation reversing, as $\mathfrak{u}_0(\pi^*E)$ has odd rank.
\end{remark}

\begin{proof}[Proof of \cref{proposition:bundle-lifts-of-alpha-beta}]
 The bundle $\hat{F}$ from \cref{equation:pullback-bundle} is the pullback of a bundle $F$ from $\CP^2$ to $X$, thus we have the natural map
 \begin{align*}
  \hat{\alpha}:\hat{F} &\rightarrow \hat{F}
  \\
  (x,u)
  &\mapsto
  (\alpha(x),u)
 \end{align*}
 covering $\alpha: X \rightarrow X$.
 The bundle $E_0$ is the pullback of $\hat{F}$ to $T^3 \times X$, and we can canonically extend the map $\hat{\alpha}$ and the map $\hat{\beta}$ from \cref{proposition:X-conjugation-lift} to $E_0$ and find that they have the required properties.
\end{proof}

Because of \cref{proposition:bundle-lifts-of-alpha-beta}, the connection $\theta$ defines a connection on the orbifold $(T^3 \times \text{K3})/\Gamma$.
The holonomy of $\theta$ around the four $S^1 \times C \subset (T^3 \times X)/\Gamma$ fixed by $\alpha$ is trivial, and the holonomy around the four $S^1 \times S^2$ fixed by $\beta$ has order $2$.

\subsubsection{The Resulting Connection on the Resolution of $(T^3 \times \text{K3})/\Gamma$}

\begin{corollary}
\label{corollary:non-trivial-g2-instanton}
 Let $N_t$ denote the one parameter family of resolutions of $(T^3 \times X)/\Gamma$ from \cref{subsubsection:branched-double-cover}.
 Then, for $t$ small enough, there exists an irreducible $G_2$-instanton with structure group $\SO(3)$ over the resolution $N_t$.
\end{corollary}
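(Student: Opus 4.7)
The plan is to assemble the ingredients needed to invoke \cref{theorem:instanton-existence}: a $G_2$-orbifold, an infinitesimally rigid and irreducible $G_2$-instanton $\theta$ on it, and a Fueter section $s$ over the singular set which is pointwise given by a rigid ASD instanton. The orbifold and the connection $\theta$ on $E_0$ are already produced in \cref{subsubsection:connection-on-orbifold-coming-from-stable-bundle}, and the bundle lifts from \cref{proposition:bundle-lifts-of-alpha-beta} ensure that $\theta$ descends to a genuine connection on the $\SO(3)$-orbifold bundle $E_0 \to (T^3 \times X)/\Gamma$. The singular set is $L = \fix(\alpha) \cup \fix(\beta)$, on which we must exhibit a Fueter section.

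I would choose the Fueter section $s$ componentwise. On each of the four copies of $S^1 \times C$ fixed by $\alpha$, the holonomy of $\theta$ is trivial, so I would take $s$ to be the (unique) constant section into the moduli bundle of the trivial $\SO(3)$-connection on Eguchi-Hanson space; this moduli bundle has one-point fibres, so the section exists, is automatically Fueter, and is pointwise a rigid ASD instanton. On each of the four copies of $S^1 \times S^2$ fixed by $\beta$, \cref{proposition:bundle-lifts-of-alpha-beta} gives holonomy of order two, and I would match this with the Gocho $\U(1)$-instanton $A_{0,1}$ of \cref{proposition:gocho-asd-instanton}, regarded as a reducible $\SO(3)$-connection with asymptotic representation $\rho_{0,1}:\Z_2 \to \SO(3)$ satisfying $\rho_{0,1}(-1) \neq \Id$; this connection is rigid (\cref{proposition:gocho-asd-infinitesimally-rigid}), its moduli bundle again has one-point fibres, and the resulting constant Fueter section is pointwise a rigid ASD instanton. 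The topological compatibility of \cref{assumption:gluing-topological-compatibility} holds by construction, because we chose the asymptotic representations at each component of $L$ precisely to match the holonomy of $\theta$.

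It remains to verify that $\theta$ is infinitesimally rigid and irreducible as a $G_2$-instanton. Infinitesimal rigidity is immediate from \cref{proposition:unique-ASD-on-K3} together with the observation preceding that proposition: the pullback of an unobstructed and infinitesimally rigid ASD instanton under $p: T^3 \times X \to X$ is an infinitesimally rigid $G_2$-instanton. Irreducibility reduces similarly to the irreducibility of $\hat A$, which follows from the stability of $\pi^* E$ proved in \cref{proposition:pullback-of-TCP2-is-stable-for-calabi-yau}: any covariantly constant section of $\Ad E_0$ must be constant in the $T^3$-direction and parallel in the $X$-direction, and the latter is excluded by the simplicity of the stable bundle.

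With all hypotheses in place, \cref{theorem:instanton-existence} produces, for all sufficiently small $t$, a $G_2$-instanton $\tilde A_t = A_t + a_t$ on $N_t$ with $\|{a_t}_{C^{1,\alpha}_{-1,\delta;t}} \leq c t^{1/18}$. The smallness of $a_t$ combined with the irreducibility of the underlying approximate solution $A_t$ (which is glued from irreducible pieces away from the exceptional set) ensures that $\tilde A_t$ is itself irreducible for small $t$. The main obstacle I expect is bookkeeping rather than analysis: one must confirm carefully, component by component, that the lifts $\alpha_0,\beta_0$ produced in \cref{proposition:bundle-lifts-of-alpha-beta} give precisely the holonomy classes matching the chosen ASD model on each singular stratum, so that the gluing data of \cref{assumption:gluing-topological-compatibility} is genuinely satisfied and no further compatibility obstruction arises.
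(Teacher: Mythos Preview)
Your proposal is correct and follows essentially the same approach as the paper: identical choice of Fueter sections (trivial connection on the components of $\fix(\alpha)$, the Gocho $\U(1)$-instanton on the components of $\fix(\beta)$), the same verification of compatibility and rigidity of $\theta$, and application of \cref{theorem:instanton-existence}.

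The only notable difference is in the irreducibility argument for $\tilde{A}_t$. The paper argues that $\tilde{A}_t$ converges to $\theta$ on compact subsets of $(T^3 \times X)/\Gamma \setminus \fix(\Gamma)$ as $t \to 0$, and that $\theta$ has full holonomy $\SO(3)$ because otherwise the Fubini-Study metric on $\CP^2$ would have reduced holonomy; hence $\tilde{A}_t$ has full holonomy for small $t$. Your argument via irreducibility of $A_t$ and smallness of $a_t$ reaches the same conclusion, but your phrase ``glued from irreducible pieces'' is misleading: the ASD model pieces you glue in (the trivial connection and the reducible $\U(1)$-instanton) are themselves reducible. The correct statement is that $A_t$ agrees with the irreducible $\theta$ away from the gluing region, which is what actually drives the conclusion.
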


\begin{proof}
 We make use of the $\alpha$-invariant and $\beta$-invariant connection $\theta$ from \cref{proposition:bundle-lifts-of-alpha-beta} over $(T^3 \times X)/\Gamma$.
 
 Next consider the product connection $A_0$ on the trivial $\SO(3)$-bundle over Eguchi-Hanson space ${\XEH}$.
 Like in \cref{subsection:examples-on-t7-mod-gamma}, we get a constant Fueter section on each connected component of $\fix (\alpha)=4 \cdot (S^1 \times C)$, i.e.
 \[
  S^1 \times C \rightarrow \Fr \times E_0|_{S^1 \times C}\times_{\U(2) \times G} M_0.
 \] 
 Likewise, let $A_{0,1}$ be the ASD instanton over ${\XEH}$ from \cref{proposition:gocho-asd-instanton}.
 As in \cref{subsection:examples-on-t7-mod-gamma}, we get a constant Fueter section on each connected component of $\fix (\beta)=4 \cdot (S^1 \times S^2)$, i.e.
 \begin{align*}
  S^1 \times S^2 & \rightarrow \Fr \times E_0|_{S^1 \times S^2}\times_{\U(2) \times G_{\rho_{0,1}}} M_{0,1}.
 \end{align*}
 By \cref{proposition:bundle-lifts-of-alpha-beta}, the connection $\theta$ and the eight Fueter sections satisfy the necessary compatibility condition from \cref{definition:approximate-solution}.
 Thus, \cref{theorem:instanton-existence} applies and gives a $G_2$-instanton $\tilde{A}_t$ on $N_t$.
 The connections $\tilde{A}_t$ converge to $\theta$ on compact subsets of $(T^3 \times X)/\Gamma \setminus \fix(\Gamma)$ as $t \rightarrow 0$.
 The connection $\theta$ has full holonomy $\SO(3)$, as otherwise the Fubini-Study metric on $\CP^2$ would need to have reduced holonomy.
 Thus, $\tilde{A}_t$ has full holonomy for small $t$ and is therefore irreducible.
\end{proof}

\appendix
\section{Appendix}

\subsection{The Isometry Group of Eguchi-Hanson Space}

The following result is well known, but we were unable to locate a proof in the literature, so we provide it here.

\begin{proposition}
\label{proposition:eguchi-hanson-isometry-group}
 The group of holomorphic isometries of $\XEH$ is isomorphic to $\U(2)/\{\pm 1\}$.
\end{proposition}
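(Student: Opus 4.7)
The plan is to first exhibit the inclusion $\U(2)/\{\pm 1\} \hookrightarrow G$, where $G$ denotes the full group of holomorphic isometries of $\XEH$, and then to prove equality by analyzing the action on the exceptional curve $\mathbb{CP}^1 \subset \XEH$ (the zero section of $T^*\mathbb{CP}^1 \simeq \XEH$) together with its normal bundle.

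First I would construct the action. The group $\U(2)$ acts on $\mathbb{C}^2$ preserving the origin, so descends to an action on $\mathbb{C}^2/\{\pm 1\}$; since $-I \in \U(2)$ acts trivially on the quotient this induces an effective action of $\U(2)/\{\pm 1\}$. By universality of the minimal resolution $\XEH \to \mathbb{C}^2/\{\pm 1\}$, this lifts uniquely to an effective holomorphic action on $\XEH$. That the Eguchi–Hanson metric is preserved follows either from its realisation via the hyperkähler quotient of \cref{definition:eguchi-hanson-hyperkaehler} (where $\U(2)/\{\pm 1\}$ appears as the centraliser of the gauged $\U(1)$, acting by $(A, w) \mapsto A w A^{-T}$ on the second factor etc.) or from its uniqueness as the complete ALE hyperkähler metric on $T^*\mathbb{CP}^1$ asymptotic to $\mathbb{C}^2/\{\pm 1\}$ in its Kähler class.

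Next I would identify $G$ itself. The exceptional $\mathbb{CP}^1$ is the unique compact complex curve of $\XEH$, so it is preserved by every element of $G$. Restriction gives a group homomorphism $\phi : G \to \mathrm{Bihol}(\mathbb{CP}^1)\cap \mathrm{Isom}(\mathbb{CP}^1)$. The induced metric on $\mathbb{CP}^1$ is $\mathrm{PU}(2)$-invariant (because $\mathrm{PU}(2) \subset \U(2)/\{\pm 1\} \subset G$ acts transitively on it), hence equal to the Fubini–Study metric up to scale, so $\mathrm{image}(\phi) = \mathrm{PU}(2)$ and $\phi$ is surjective. Let $K = \ker \phi$. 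For $g \in K$, the differential $dg_p$ at any $p \in \mathbb{CP}^1$ fixes $T_p \mathbb{CP}^1$ and acts on the complex one-dimensional normal space $N_p = T_p^* \mathbb{CP}^1$ by an element of $\U(1)$, giving a smooth function $\theta : \mathbb{CP}^1 \to \U(1)$. Since $g$ is a holomorphic isometry, $\theta$ defines a holomorphic section of the $\U(1)$-frame bundle of the normal bundle $N$; equivalently it is a holomorphic map $\mathbb{CP}^1 \to \U(1) \subset \mathbb{C}^*$, hence constant by the maximum principle. Thus $K$ embeds into $\U(1)$; the fiber rotation subgroup of $\U(2)/\{\pm 1\}$ realises every constant value, so $K = \U(1)$.

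Putting these together, $G$ fits in a short exact sequence
\[
 1 \longrightarrow \U(1) \longrightarrow G \longrightarrow \mathrm{PU}(2) \longrightarrow 1
\]
that splits via the inclusion $\mathrm{PU}(2) \hookrightarrow \U(2)/\{\pm 1\} \subset G$. Because $K = \U(1)$ is abelian and $\mathrm{PU}(2)$ is connected, the conjugation action of $\mathrm{PU}(2)$ on $K$ factors through a discrete group and is therefore trivial, so $G \simeq \U(1) \times \mathrm{PU}(2) \simeq \U(2)/\{\pm 1\}$. The main obstacle is the kernel calculation: ruling out holomorphic isometries that fix the zero section pointwise but twist the cotangent fibers by a non-constant phase. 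Once one recognises the induced datum as a holomorphic map $\mathbb{CP}^1 \to \U(1)$, the maximum principle finishes the argument; the remaining steps are essentially bookkeeping of a Lie group extension.
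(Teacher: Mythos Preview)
Your argument is correct and in fact goes further than the paper's. The paper only exhibits the action: it uses the hyperk\"ahler quotient description $\XEH=\mu^{-1}(\zeta)/\U(1)$ and observes that right multiplication by $\SU(2)$ and left multiplication by $\U(1)$ on $\mathbb{H}^2$ both restrict to $\mu^{-1}(\zeta)$ and commute with the gauge $\U(1)$, giving an effective action of $\SU(2)/\{\pm1\}\times\U(1)/\{\pm1\}\simeq\U(2)/\{\pm1\}$ by isometries. It does not argue that there are no further holomorphic isometries. Your approach instead lifts the $\U(2)/\{\pm1\}$ action from $\C^2/\{\pm1\}$ to the minimal resolution, and then bounds the full group from above by restricting to the exceptional $\CP^1$ and analysing the induced action on its normal bundle.

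What each approach buys: the paper's hyperk\"ahler-quotient construction is exactly what is needed later to lift the action to the principal bundle $\mathcal{R}=\mu^{-1}(\zeta)$ (this is how the paper verifies \cref{assumption:u2-lifts-to-universal-bundle} in the example), whereas your exceptional-curve argument actually proves the stated equality rather than just one inclusion. One small point of phrasing: saying ``$\theta$ defines a holomorphic section of the $\U(1)$-frame bundle of $N$'' is slightly off, since that frame bundle is not trivial. What you really have is that $dg|_N$ is a holomorphic section of $\End(N)\simeq\mathcal{O}_{\CP^1}$, i.e.\ a holomorphic function $\CP^1\to\C$, which by the isometry condition takes values in $\U(1)$ and is therefore constant. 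With that adjustment your kernel computation is clean, and the split-extension argument (using $\Aut(\U(1))\simeq\Z/2$ and connectedness of $\mathrm{PU}(2)$) is fine.
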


\begin{proof}
We use the notation from the description of $\XEH$ as a Hyperkähler reduction from before \cref{definition:eguchi-hanson-hyperkaehler}.
We view $\SU(2)$ embedded in $\mathbb{H}^{2 \times 2}$ as quaternion valued matrices with no $j$ or $k$ components.
Then $\SU(2)$ acts on $\mathcal{M}$ by right multiplication.
This action restricts to $\mu^{-1}(\zeta)$ and commutes with the action of $\U(1)$.
The action is not effective, as $-1 \in \SU(2)$ acts trivially, but the induced action of the quotient group $\SU(2)/\{ \pm 1\} \simeq \SO(3)$ is effective.
Next, let $\SO(2)$ act on $\mathcal{M}$ from the left via
\[
 q_a \mapsto e^{it} \cdot q_a , \quad t \in (0, 2 \pi].
\]
Again, the action restricts to $\mu^{-1}(\zeta)$ and commutes with the action of $\U(1)$, but is not effective as $-1 \in \SO(2)$ acts trivially.
The actions of $\SO(2)/\{\pm 1\}$ and $\SU(2)/\{ \pm 1\}$ commute, as the first group is acting from the left, the second is acting from the right.
We thus get that the group $\SO(2)/\{\pm 1\} \times \SU(2)/\{ \pm 1\}$ acts through isometries on $\XEH$.
Last, one readily confirms that the map
\begin{align*}
U(1)/\{\pm 1\} \times \SU(2)/\{ \pm 1\} &\rightarrow \U(2)/\{ \pm 1\}
\\
[\lambda],[A] & \mapsto [\lambda A]
\end{align*}
is a group isomorphism.
Its inverse is given by $[B] \mapsto ([\sqrt{\det B}],[B/\sqrt{\det B}])$ which is not well-defined as a map $\U(1) \times \SU(2) \rightarrow \U(2)$ but is well-defined after dividing out $\{\pm 1\}$.
\end{proof}

\begin{remark}
 One may also recover the full isometry group of the Eguchi-Hanson space by noticing that there is an additional isometry induced by the map on $\mathcal{M}$ that swaps coordinates, i.e. $\mathcal{M} \rightarrow \mathcal{M}$, $(q_1,q_2) \mapsto (q_2,q_1)$.
 The group of \emph{all} isometries (not necessarily holomorphic) of $\XEH$ is isomorphic to $\SO(3) \times \O(2)$.
 The group of triholomorphic isometries of $\XEH$ is isomorphic to $\SO(3)$.
\end{remark}

\subsection{Rigidity of Finite Subgroups}

Let $G$ be a compact connected Lie group and $\Gamma$ be a finite group.
In \cref{subsubsection:gauge-theory-on-ale} we took $\Gamma$ to be a finite subgroup of $\SU(2)$, thereby acting on $B^4$.
An orbifold $G$-bundle over $B^4/\Gamma$ is a $G$-bundle $P$ over $B^4$ together with a lift of the action of $\Gamma$ to $P$.
In \cref{equation:ALE-moduli-definitions} we extended elements of $G$ to elements of the orbifold gauge group $\mathscr{G}(P)$.
We could do this, because we assumed the lift of $\Gamma$ to act in a standard way on $P$, see \cref{equation:asymptotic-at-infinity-condition2} for the precise statement.
In other words:
we used that up to gauge equivalence, orbifold bundles over $B^4/\Gamma$ are determined by the homomorphism $\Gamma \rightarrow P_0 \simeq G$ induced by the lift of $\Gamma$ to $P$.
The proof of this fact was given in \cref{assumption:trivialisation-at-infinity}, but used that the homomorphism $\Gamma \rightarrow G$ is rigid, in some sense.
We make this rigidity precise here and prove that every finite group in a compact Lie group is rigid.
The proof is taken from \cite{Bader2021}, where also the generalisation to non-compact $G$ is explained.

\begin{definition}
 The set $\Hom(\Gamma,G) \subset G^{ |\Gamma| }$ endowed with the restriction of the product topology on $G^{ |\Gamma| }$ is called the \emph{representation variety}.
\end{definition}

\begin{definition}
 Let $E$ be a $\Gamma$-module.
 A map $b \in \Gamma \rightarrow E$ is called \emph{cocycle} if
 \[
  b(\gamma \delta)
  =
  b(\gamma)+\gamma \cdot b(\delta)
  \text{ for all }
  \gamma,\delta \in \Gamma.
 \]
 We denote the set of cocycles by $Z^1(\Gamma,E)$.
 A map $b \in \Gamma \rightarrow E$ is called \emph{coboundary} if there exists $v \in E$ such that
 \[
  b(\gamma)
  =
  v-\gamma \cdot v
  \text{ for all }
  \gamma \in \Gamma.
 \]
 We denote the set of coboundaries by $B^1(\Gamma,E) \subset Z^1(\Gamma,E)$.
 The \emph{first cohomology of $\Gamma$ with coefficients in $E$ is}
 \[
  H^1(\Gamma,E)
  =
  Z^1(\Gamma,E)/B^1(\Gamma,E).
 \]
\end{definition}

\begin{theorem}[Point 3 in \cite{Weil1964}]
\label{theorem:weils-local-rigidity}
 Fix a group homomorphism $r: \Gamma \rightarrow G$.
 The group $G$ is acting on $\mathfrak{g}$ through the adjoint representation, and together with $r$ this gives $\Gamma$ the structure of a $\Gamma$-module.
 If $H^1(\Gamma, \mathfrak{g})=0$, then there exists a neighbourhood $U \subset \Hom(\Gamma,G)$ of $r$ in which each element is conjugate to $r$, i.e. for all $s \in U$ there exists $g \in G$ such that
 \[
  s
  =
  l_g \circ r_{g^{-1}} \circ r.
 \]
 Here, $l_g,r_{g^{-1}}:G \rightarrow G$ denote left translation and right translation on $G$, respectively.
\end{theorem}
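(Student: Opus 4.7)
The plan is to show that the conjugation orbit of $r$ contains a full neighbourhood of $r$ in $\Hom(\Gamma,G)$. Consider the smooth map $F\colon G\to G^\Gamma$ given by $F(g)(\gamma)=g\,r(\gamma)\,g^{-1}$; its image lies in $\Hom(\Gamma,G)$ and $F(e)=r$. I would begin by computing the two linearisations that enter into the argument. The differential $dF_e\colon\mathfrak{g}\to\mathfrak{g}^\Gamma$ is the coboundary map $X\mapsto\bigl(\gamma\mapsto X-\Ad(r(\gamma))X\bigr)$, with kernel the Lie algebra $\mathfrak{k}$ of the centraliser of $r(\Gamma)$ in $G$ and image exactly $B^1(\Gamma,\mathfrak{g})$. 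On the other hand, linearising the defining relations $s(\gamma)s(\delta)=s(\gamma\delta)$ at $r$ identifies the Zariski tangent space of $\Hom(\Gamma,G)\subset G^\Gamma$ at $r$ with $Z^1(\Gamma,\mathfrak{g})$. The hypothesis $H^1(\Gamma,\mathfrak{g})=0$ is precisely the equality $Z^1=B^1$, which says that the tangent space of the orbit already fills the tangent space of $\Hom(\Gamma,G)$ at $r$.

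To promote this infinitesimal coincidence to a local statement I would pick direct sum decompositions $\mathfrak{g}=\mathfrak{k}\oplus\mathfrak{m}$ and $\mathfrak{g}^\Gamma=Z^1\oplus V$, and consider the smooth map $\Phi\colon\mathfrak{m}\times V\to G^\Gamma$ defined by $\Phi(X,v)(\gamma)=\exp(v(\gamma))\cdot\exp(X)\,r(\gamma)\,\exp(-X)$. Its differential at $(0,0)$ is the linear isomorphism $(X,v)\mapsto dF_e(X)+v$ from $\mathfrak{m}\oplus V$ onto $B^1\oplus V=\mathfrak{g}^\Gamma$, using $B^1=Z^1$. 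By the inverse function theorem $\Phi$ is a local diffeomorphism onto a neighbourhood $U$ of $r$ in $G^\Gamma$, and by construction the orbit $F(G)$ corresponds in these coordinates to the slice $\{v=0\}$. The problem therefore reduces to showing $\Hom(\Gamma,G)\cap U\subset\{v=0\}$.

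The crucial and most delicate step is this last containment, which I would establish by a second application of the implicit function theorem. Define the failure-of-homomorphism map $\Xi\colon\mathfrak{m}\times V\to\mathfrak{g}^{\Gamma\times\Gamma}$ by
\[
 \Xi(X,v)(\gamma,\delta)
 =
 \log\!\bigl(\Phi(X,v)(\gamma)\,\Phi(X,v)(\delta)\,\Phi(X,v)(\gamma\delta)^{-1}\bigr),
\]
and let $\pi_W$ be projection onto the image $W\subset\mathfrak{g}^{\Gamma\times\Gamma}$ of the coboundary operator $\partial\colon u\mapsto\bigl((\gamma,\delta)\mapsto u(\gamma)+\Ad(r(\gamma))u(\delta)-u(\gamma\delta)\bigr)$, taken along a chosen complement. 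A direct computation gives $d\Xi_{(0,0)}(X,v)=\partial v$—the $X$-contribution drops out because $dF_e(X)\in Z^1=\ker\partial$—so the derivative of $\pi_W\circ\Xi$ in the $v$-direction at $(0,0)$ is the linear isomorphism $\partial|_V\colon V\to W$. The implicit function theorem then supplies a unique smooth function $v(X)$ near $X=0$ solving $\pi_W\circ\Xi(X,v(X))=0$; and since $\Phi(X,0)=\exp(X)\,r\,\exp(-X)\in\Hom(\Gamma,G)$ gives $\Xi(X,0)=0$ and hence $\pi_W\circ\Xi(X,0)=0$, uniqueness forces $v(X)\equiv 0$.

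The main obstacle, now addressed, was to produce this uniqueness in spite of the fact that $\Hom(\Gamma,G)$ is not a priori a smooth submanifold; the trick is to project the defining equations onto the subspace $W$ that really is controlled by the cohomological hypothesis. To conclude: any $s\in\Hom(\Gamma,G)\cap U$ has coordinates $(X,v)$ with $\Xi(X,v)=0$, hence $\pi_W\circ\Xi(X,v)=0$, whence $v=0$ by the above uniqueness and $s=\exp(X)\,r\,\exp(-X)$ as required.
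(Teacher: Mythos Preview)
The paper does not give its own proof of this statement; it is quoted as a result from Weil~\cite{Weil1964} and used as a black box (the paper's contribution in this appendix is to combine Weil's theorem with the vanishing $H^1(\Gamma,\mathfrak{g})=0$ for finite $\Gamma$, \cref{corollary:finite-group-has-zero-group-cohom}, to deduce \cref{corollary:rep-variety-components-come-from-conjugation}). There is therefore nothing to compare against.

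For what it is worth, your argument is correct and is essentially Weil's original proof: identify $T_r\Hom(\Gamma,G)$ with $Z^1(\Gamma,\mathfrak{g})$, identify the tangent to the conjugation orbit with $B^1(\Gamma,\mathfrak{g})$, and use the vanishing of $H^1$ to see these coincide; the delicate point---that $\Hom(\Gamma,G)$ may fail to be a submanifold, so one cannot immediately pass from the infinitesimal to the local statement---you handle by the standard trick of projecting the defining relations onto the image of the next coboundary map and invoking uniqueness in the implicit function theorem. Your computation $d\Xi_{(0,0)}(X,v)=\partial v$ is right: writing $\Phi(X,v)(\gamma)=\exp(w(\gamma))r(\gamma)$ with $w=v+dF_e(X)$ to first order, one gets $d\Xi_{(0,0)}=\partial w=\partial v$ since $dF_e(X)\in B^1\subset Z^1=\ker\partial$.
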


\begin{definition}
 Fix $\pi:\Gamma \rightarrow \Aut(E)$.
 An \emph{affine action} of $\Gamma$ on $E$ is a group homomorphism $\phi: \Gamma \rightarrow \Aff(E)$.
 We say that $\pi$ is the \emph{linear part} of the affine action $\phi$ if for all $\gamma \in \Gamma$ there exists $v_0 \in E$ such that
 \[
  \phi(\gamma)(v)
  =
  \pi(\gamma)(v)+v_0
  \text{ for all }
  v \in E.
 \]
\end{definition}

\begin{lemma}[Lemma 2.1 in \cite{Dymarz2016}]
\label{lemma:group-cohomology-vanishing-characterisation}
 The map $\pi: \Gamma \rightarrow \Aut(E)$ endows $\Gamma$ with an $E$-module structure.
 We have $H^1(\Gamma, E)=0$ with respect to this $E$-module structure if and only if every affine action with linear part $\pi$ has a fixed point. 
\end{lemma}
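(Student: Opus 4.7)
The plan is to set up a bijective correspondence between affine actions $\phi:\Gamma\to\Aff(E)$ with linear part $\pi$ and cocycles $b\in Z^1(\Gamma,E)$, and then to identify the existence of a fixed point of $\phi$ with the condition that the associated cocycle is a coboundary. Passing to the quotient $H^1(\Gamma,E)=Z^1(\Gamma,E)/B^1(\Gamma,E)$ will then immediately yield the claimed equivalence.

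\textbf{Step 1: Parametrise affine actions by maps $b:\Gamma\to E$.} Any affine map with linear part $\pi(\gamma)$ has the form $v\mapsto \pi(\gamma)(v)+b(\gamma)$ for a unique $b(\gamma)\in E$, so an affine action $\phi$ with linear part $\pi$ is the same data as a function $b:\Gamma\to E$ via
\[
 \phi(\gamma)(v)=\pi(\gamma)(v)+b(\gamma).
\]
I would then impose the homomorphism condition $\phi(\gamma\delta)=\phi(\gamma)\phi(\delta)$ and expand both sides, obtaining
\[
 \pi(\gamma\delta)(v)+b(\gamma\delta)=\pi(\gamma\delta)(v)+\pi(\gamma)\bigl(b(\delta)\bigr)+b(\gamma),
\]
which simplifies precisely to the cocycle relation $b(\gamma\delta)=b(\gamma)+\gamma\cdot b(\delta)$. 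Thus $\phi\leftrightarrow b$ is a bijection between affine actions with linear part $\pi$ and elements $b\in Z^1(\Gamma,E)$.

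\textbf{Step 2: Translate fixed points into coboundary conditions.} A point $v\in E$ is fixed by $\phi$ iff $\pi(\gamma)(v)+b(\gamma)=v$ for every $\gamma\in\Gamma$, i.e. $b(\gamma)=v-\gamma\cdot v$, which is exactly the statement that $b\in B^1(\Gamma,E)$ with $b=\partial v$. So $\phi$ has a fixed point if and only if the associated cocycle is a coboundary.

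\textbf{Step 3: Conclude.} Combining Steps 1 and 2, ``every affine action with linear part $\pi$ has a fixed point'' is equivalent to ``every $b\in Z^1(\Gamma,E)$ lies in $B^1(\Gamma,E)$'', i.e. $H^1(\Gamma,E)=0$. Conversely, if $H^1(\Gamma,E)\neq 0$, pick any non-coboundary cocycle $b$; the corresponding affine action has no fixed point by Step 2.

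\textbf{Main obstacle.} There is essentially no obstacle: the whole argument is a direct unwinding of definitions. The only place where one must be slightly careful is the bookkeeping in Step 1, making sure that the homomorphism condition on $\phi$ exactly recovers the cocycle condition on $b$ (and in particular that no subtle sign or ordering convention intervenes), and verifying that $b(e)=0$ follows automatically from the cocycle identity applied to $\gamma=\delta=e$, which is needed for $\phi(e)=\Id$.
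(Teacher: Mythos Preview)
Your proof is correct; it is the standard dictionary between affine actions and $1$-cocycles, and nothing more is needed. Note that the paper does not actually prove this lemma---it simply cites it from \cite{Dymarz2016}---so there is no in-paper argument to compare against.
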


\begin{corollary}
\label{corollary:finite-group-has-zero-group-cohom}
 The finite group $\Gamma$ with any $E$-module structure satisfies $H^1(\Gamma,E)=0$.
\end{corollary}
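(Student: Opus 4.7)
The plan is to prove the vanishing by directly producing a fixed point for every affine action of $\Gamma$ on $E$ with prescribed linear part, and then invoke \cref{lemma:group-cohomology-vanishing-characterisation}. Since $E$ is a real vector space (being a Lie algebra in the intended applications) and $\Gamma$ is finite, we may freely average over $\Gamma$: the key observation is that affine maps commute with taking convex combinations, so the center of mass of any orbit must be a fixed point.

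Concretely, fix $\pi \colon \Gamma \to \Aut(E)$ and let $\phi \colon \Gamma \to \Aff(E)$ be any affine action whose linear part is $\pi$. Pick any $v \in E$ (for example $v = 0$) and set
\[
 v_0 \;:=\; \frac{1}{|\Gamma|} \sum_{\gamma \in \Gamma} \phi(\gamma)(v).
\]
For each $\delta \in \Gamma$, because $\phi(\delta)$ is affine and $\sum_\gamma \frac{1}{|\Gamma|} = 1$, we have
\[
 \phi(\delta)(v_0)
 \;=\;
 \frac{1}{|\Gamma|} \sum_{\gamma \in \Gamma} \phi(\delta)\bigl(\phi(\gamma)(v)\bigr)
 \;=\;
 \frac{1}{|\Gamma|} \sum_{\gamma \in \Gamma} \phi(\delta\gamma)(v)
 \;=\;
 v_0,
\]
where in the last step we reindexed the sum using $\gamma \mapsto \delta\gamma$. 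Thus $v_0$ is a fixed point of the affine action $\phi$, and \cref{lemma:group-cohomology-vanishing-characterisation} yields $H^1(\Gamma,E) = 0$.

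There is essentially no obstacle: the only subtlety is the observation that an affine map preserves convex combinations, which is what allows the ordinary group-cohomological averaging (divide by $|\Gamma|$, which is invertible in the coefficients) to be carried out at the level of the affine action rather than at the level of cocycles. This is the standard Maschke-type argument specialised to the cohomological setting, and the statement of \cref{lemma:group-cohomology-vanishing-characterisation} has already been arranged to make the conclusion immediate.
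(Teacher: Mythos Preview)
Your proof is correct and follows essentially the same averaging strategy as the paper: produce a fixed point of an arbitrary affine action by averaging an orbit, then invoke \cref{lemma:group-cohomology-vanishing-characterisation}. In fact your version is slightly more careful than the paper's, which writes $X=\sum_{\delta\in\Gamma}\phi(\delta)(0)$ without the normalising factor $\tfrac{1}{|\Gamma|}$; as you correctly observe, that factor is needed so that the coefficients sum to $1$ and the affine map $\phi(\delta)$ can be pulled through the sum.
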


\begin{proof}
 Let $\phi: \Gamma \rightarrow \Aff(E)$ be an affine action.
 Then the element
 \[
  X:=
  \sum_{\delta \in \Gamma}
  \phi(\delta)(0)
  \in E
 \]
 satisfies $\phi(\gamma)(X)=X$ for all $\gamma \in \Gamma$.
 By \cref{lemma:group-cohomology-vanishing-characterisation} this implies that $H^1(\Gamma, E)=0$.
\end{proof}

\begin{corollary}
\label{corollary:rep-variety-components-come-from-conjugation}
 The representation variety $\Hom(\Gamma,G)$ has finitely many connected components.
 For each connected component $C$ there exists $r \in \Hom(\Gamma,G)$ such that
 \[
  C
  =
  U_r
  :=
  \{
   l_g \circ r_{g^{-1}} \circ r:
   g \in G
  \}.
 \]
\end{corollary}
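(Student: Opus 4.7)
The plan is to combine Weil's local rigidity theorem with the vanishing of $H^1$ for finite groups and the compactness of $\Hom(\Gamma,G)$. Throughout, write $U_r := \{ l_g \circ r_{g^{-1}} \circ r : g \in G \}$ for the conjugation orbit of $r \in \Hom(\Gamma,G)$.

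First I would show that each orbit $U_r$ is both open and connected in $\Hom(\Gamma,G)$. Connectedness is immediate: $U_r$ is the image of the connected group $G$ under the continuous map $g \mapsto l_g \circ r_{g^{-1}} \circ r$. For openness, fix $r' \in U_r$. By \cref{corollary:finite-group-has-zero-group-cohom} applied to the $\Gamma$-module structure on $\mathfrak{g}$ induced by $r'$ and the adjoint representation, we have $H^1(\Gamma,\mathfrak{g}) = 0$. Then \cref{theorem:weils-local-rigidity} produces an open neighbourhood $V \subset \Hom(\Gamma,G)$ of $r'$ in which every element is conjugate to $r'$, hence to $r$. Therefore $V \subset U_r$, and $U_r$ is open.

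Next I would establish finiteness. The representation variety $\Hom(\Gamma,G) \subset G^{|\Gamma|}$ is closed because the defining conditions $s(\gamma\delta) = s(\gamma)s(\delta)$ for all $\gamma,\delta \in \Gamma$ cut out a closed subset. Since $G$ is compact, $G^{|\Gamma|}$ is compact, so $\Hom(\Gamma,G)$ is compact. The distinct orbits $\{U_r\}$ partition $\Hom(\Gamma,G)$ into pairwise disjoint open subsets, so by compactness there are only finitely many of them.

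Finally, with finitely many open orbits partitioning $\Hom(\Gamma,G)$, each orbit is the complement of the union of the others, hence also closed. Combined with connectedness, this shows that each $U_r$ is a connected component, and conversely every connected component arises this way. The only step requiring genuine input is the openness of $U_r$, which rests on Weil's rigidity theorem; the remaining arguments are formal consequences of the compactness of $G$ and the connectedness of $G$.
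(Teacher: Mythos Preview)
Your proof is correct and follows essentially the same approach as the paper: both use compactness of $\Hom(\Gamma,G)$, openness of orbits via \cref{theorem:weils-local-rigidity} and \cref{corollary:finite-group-has-zero-group-cohom}, and closedness of orbits. The only cosmetic difference is that the paper obtains closedness of $U_r$ directly as the continuous image of the compact group $G$, whereas you deduce it from $U_r$ being the complement of finitely many open orbits; both arguments are equally valid.
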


\begin{proof}
 Because $\Gamma$ is finite and $G$ is compact we have that $\Hom(\Gamma,G)$ is compact and therefore has finitely many connected components.
 Fix some $r \in \Hom(\Gamma,G)$.
 Then $U_r$ is compact because it is the image of $G$ under the conjugation map.
 Thus, $U_r$ is closed.
 On the other hand, $U_r$ is open by \cref{theorem:weils-local-rigidity} together with \cref{corollary:finite-group-has-zero-group-cohom}.
 Thus, each connected component of $\Hom(\Gamma,G)$ is of the form $U_r$ for some $r \in \Hom(\Gamma,G)$.
\end{proof}

\subsection{Removable Singularities}

In \cref{definition:psi-function-moduli-space-iso} we defined a map from the moduli space of ASD connections over the Eguchi-Hanson space ${\XEH}$ into the moduli space of ASD connections over the one point compactification of ${\XEH}$.
There, we used that every finite energy ASD connection that is defined over the complement of a point can be extended over this point.
This statement was proved for Yang-Mills connections, not just ASD connections, in \cite{Uhlenbeck1982}.
This is called the \emph{Removable Singularities Theorem}.
Because our map between moduli spaces should be a map between \emph{framed} moduli spaces, we need a version of the Removable Singularities Theorem that respects framings.
This is \cref{proposition:removable-singularities-uniqueness} and we then apply it to our special case of connections over ${\XEH}$ in \cref{corollary:removable-sing-over-ale-limit-at-infinity}.

\begin{theorem}[Theorem 4.1 in \cite{Uhlenbeck1982}, Theorem D.1 in \cite{Freed1991}]
\label{theorem:original-removable-singularities}
 Let $G$ be a compact Lie group and $A$ be a connection on the trivial $G$-bundle over $B^4 \setminus \{0\}$, $A \in \mathscr{A}((B^4 \setminus \{0\}) \times G)$, which is in $L^2_{1,\text{loc}}$ and anti-self-dual with respect to a smooth metric on $B^4$.
 If
 \[
  \int_{B^4 \setminus \{0\}}
  |F(A)|^2
  < \infty,
 \]
 then there exists an injective bundle homomorphism $\xi: (B^4 \setminus \{0\}) \times G \rightarrow B^4 \times G$ and a smooth connection $A' \in \mathscr{A}(B^4 \times G)$ such that $\xi^* A'=A$ over $B^4 \setminus \{0\}$.
\end{theorem}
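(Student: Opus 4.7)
The plan is to follow Uhlenbeck's original strategy, exploiting the conformal invariance of the ASD equation in dimension four. First I would identify $B^4 \setminus \{0\}$, equipped with a metric conformal to the given smooth one, with the half-cylinder $(-\infty, 0] \times S^3$ via $r = e^t$. Under this identification the ASD equation is preserved and the Yang-Mills energy $\int |F_A|^2$ is unchanged, so the hypothesis becomes finite energy on the half-cylinder. A Fubini argument then yields a sequence of slices $\{t_i\} \times S^3$ with $t_i \to -\infty$ on which $\int_{S^3} |F_A|^2 \to 0$, which is the starting point for the gauge analysis.

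Next I would invoke Uhlenbeck's gauge theorem on the successive annular strips $C_i := [t_{i+1}, t_i] \times S^3$: since the energy concentrates in small balls around $0$ (equivalently, decays to $0$ as $t \to -\infty$), for $i$ sufficiently large the curvature on each $C_i$ lies below Uhlenbeck's $\varepsilon$-threshold, so $A$ is gauge equivalent on $C_i$ to a connection $\tilde A_i$ in Coulomb relative gauge with $\|\tilde A_i\|_{L^2_1(C_i)} \le c \|F_A\|_{L^2(C_i)}$. The ASD equation combined with the Coulomb condition is elliptic, so bootstrapping in $L^p_k$ gives $C^{k,\alpha}$-control on $\tilde A_i$ on interior sub-strips. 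The key quantitative step is then to establish exponential decay of the curvature: using the Weitzenböck/Bochner formula for the elliptic operator governing ASD perturbations on the cylinder and the small energy on each $C_i$, one shows $\|F_A\|_{L^2(C_i)} \le K e^{-\lambda |t_i|}$ for some $\lambda > 0$.

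I would then patch the gauges $\tilde A_i$ on consecutive strips into a single gauge on a neighbourhood of $-\infty$: the transition functions $g_i \in \mathscr{G}(C_i \cap C_{i+1})$ satisfy an elliptic equation forced by the two Coulomb conditions, and the exponential decay of $F_A$ shows the $g_i$ converge (after relative adjustment by constants) to a limit gauge transformation at infinity. In this limiting gauge the connection form $A$ is $L^2_1$ on $(-\infty, 0] \times S^3$ with $|A| \to 0$. Transporting back to the punctured ball via the conformal map, this produces a bundle isomorphism $\xi : (B^4 \setminus \{0\}) \times G \to B^4 \times G$ (defined off $0$) together with $\xi^*A' = A$, where $A'$ is a priori only a distributional connection on the trivial bundle over $B^4$ satisfying the ASD equation weakly. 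Finally, elliptic regularity for the ASD equation in Coulomb gauge (the same bootstrap as in Step~2, applied across the origin) upgrades $A'$ to a smooth connection.

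The main obstacle, as in Uhlenbeck's proof, will be the patching step, since naïvely the Coulomb gauges on adjacent strips need not agree even approximately, and controlling the cocycle $\{g_i\}$ requires the simultaneous use of the decay estimate on $F_A$ and the uniform Uhlenbeck estimate with a constant independent of $i$. The decay estimate itself is also delicate: one must rule out a non-trivial flat limit at infinity, which ultimately uses that the linearised operator on $\R \times S^3$ has a spectral gap when acting on connections asymptotic to the trivial flat connection — this is what forces the conclusion that $\xi$ extends the original trivial structure rather than some non-trivial flat bundle, and it is here that the assumption of trivial bundle on $B^4 \setminus \{0\}$ plays its role.
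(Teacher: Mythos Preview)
The paper does not prove this theorem; it is quoted as a black box from \cite{Uhlenbeck1982} and \cite{Freed1991}, so there is no ``paper's own proof'' to compare against. Your outline is a faithful sketch of Uhlenbeck's original argument (conformal change to the cylinder, small-energy Coulomb gauges on annular strips, exponential decay, patching, and elliptic bootstrap across the origin), which is exactly the content of the cited references.
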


\Cref{theorem:original-removable-singularities} asserts existence of an extension over $0$, and the following proposition asserts that this extension is essentially unique up to gauge:

\begin{proposition}
\label{proposition:removable-singularities-uniqueness}
 The data $\xi$ and $A'$ from \cref{theorem:original-removable-singularities} are unique in the following sense:
 if $\xi', \xi'': (B^4 \setminus \{0\}) \times G \rightarrow B^4 \times G$ and $A',A'' \in \mathscr{A}(B^4 \times G)$ are such that $(\xi')^* A'=(\xi'')^* A''=A$, then the map $\xi'' \circ (\xi')^{-1}: (B^4 \setminus \{0\}) \times G \rightarrow (B^4 \setminus \{0\}) \times G$ can be extended to a continuous map $B^4 \times G \rightarrow B^4 \times G$.
\end{proposition}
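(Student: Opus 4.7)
The plan is to reinterpret $\xi'' \circ (\xi')^{-1}$ as a gauge transformation between the two smooth connections $A'$ and $A''$ on $B^4 \times G$, defined a priori only on the punctured ball, and to show that this gauge transformation is Lipschitz near $0$. Since we only need continuity of the extension and $B^4 \setminus \{0\}$ is path-connected (because the dimension is $4 \geq 2$), Lipschitz control will suffice.

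First, I would assume without loss of generality that both $\xi'$ and $\xi''$ cover the inclusion $B^4 \setminus \{0\} \hookrightarrow B^4$, so each is determined by a map $\phi', \phi'' : B^4 \setminus \{0\} \to G$ via $\xi'(x,h) = (x, \phi'(x) h)$ and similarly for $\xi''$. Then the composition $\xi'' \circ (\xi')^{-1}$ is the bundle map associated to $g := \phi'' \cdot (\phi')^{-1} : B^4 \setminus \{0\} \to G$, and extending the composition continuously over $\{0\} \times G$ is equivalent to showing that $g$ extends continuously to $B^4$. From $(\xi')^*A' = (\xi'')^*A'' = A$ one derives, on the punctured ball, the familiar gauge relation
\[
 \d g = g A' - A'' g.
\]
Since $A'$ and $A''$ are smooth everywhere on $B^4$, standard elliptic/ODE bootstrapping (once $g$ is known to lie in some $L^2_{1,\mathrm{loc}}$-class on the punctured ball, which follows from $\xi', \xi''$ being gauge transformations between smooth connections) upgrades $g$ to a smooth map on $B^4 \setminus \{0\}$, and this is where the actual content of the equation above can be used pointwise.

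Now, $g$ takes values in the compact group $G$, which we fix once and for all with a bi-invariant metric, so $|g|$ is uniformly bounded by some constant depending only on $G$. The right-hand side of the gauge equation is therefore bounded by $C(|A'|+|A''|)$, and on any closed ball $\overline{B^4_{r}}$ with $r<1$ the connection forms $A',A''$ are smooth and hence uniformly bounded. Thus $|\d g| \leq M$ on $B^4_r \setminus \{0\}$ for some constant $M$. Because the intrinsic distance on $B^4_r \setminus \{0\}$ agrees with the restricted Euclidean distance in dimension $\geq 2$, this implies that $g$ is $M$-Lipschitz on $B^4_r \setminus \{0\}$. Consequently, for any sequence $x_n \to 0$ the sequence $g(x_n) \in G$ is Cauchy and converges to a single limit independent of the sequence, so $g$ extends continuously to $B^4$.

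The main obstacle I anticipate is the initial regularity of $g$: the theorem provides $\xi',\xi''$ only as injective bundle homomorphisms of relatively weak regularity (coming from an $L^2_{1}$-gauge in Uhlenbeck's theorem), and one must justify that $g$ is regular enough for the pointwise gauge equation above to be meaningful. I would address this by standard elliptic regularity for gauge transformations between smooth connections: given two smooth connections $A',A''$ on $B^4 \setminus \{0\}$ and an $L^2_1$-gauge transformation relating them, Coulomb-gauge bootstrapping (or, alternatively, directly invoking the regularity portion of Uhlenbeck's gauge-fixing theorem applied to the difference $A' - g^{-1}A''g$) yields smoothness of $g$ on $B^4 \setminus \{0\}$. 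Once this is in hand the Lipschitz argument above is essentially immediate, and the continuous extension of $g$ promotes at once to a continuous extension of the bundle map $\xi''\circ (\xi')^{-1}: B^4 \times G \to B^4 \times G$.
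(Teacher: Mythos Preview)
Your proposal is correct and follows essentially the same approach as the paper: view $\xi''\circ(\xi')^{-1}$ as a map $s:B^4\setminus\{0\}\to G$, use the gauge relation to bound $\d s$ in terms of the smooth connections $A',A''$, and conclude that $s$ extends continuously over $0$. The only cosmetic difference is that the paper first normalises so that $A'(0)=A''(0)=0$, obtaining $\d s\to 0$ and then arguing by the mean value theorem, whereas you use boundedness of $A',A''$ directly to get a Lipschitz bound; your treatment of the initial regularity of $g$ is actually more careful than the paper's.
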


\begin{proof}
 We view the connections $A',A''$ on the trivial bundle $B^4 \times G$ as elements in $\Omega^1(B^4 , \mathfrak{g})$, and view the gauge transformation $\xi'' \circ (\xi')^{-1}$ as a map $B^4 \setminus \{0\} \rightarrow G$, denoted by $s$.
 Without loss of generality assume that $A'(0)=A''(0)=0$, which can be arranged by composing $\xi', \xi''$ with a suitable gauge transformation of $B^4 \times G$.
 Then $A''=s^*A'$ on $B^4 \setminus \{0\}$, thus
 \[
  0=
  A''(0)
  =
  \lim_{x \rightarrow 0}
  s^{-1}(x) \d s(x)
 \]
 and by taking norms we see that $\lim_{x \rightarrow 0} \d s(x)=0$.
 This implies that $\lim_{x \rightarrow 0} s(x)$ exists:
 if the limit does not exist, then we have two sequences $x_i, x_i' \rightarrow 0$ such that $\lim_{i \rightarrow \infty} s(x_i) \neq \lim_{i \rightarrow \infty} s(x_i')$.
 Without loss of generality assume that $x_i, x_i'$ can be joined by a line.
 The mean value theorem then gives a sequence $\theta_i \in B^4 \setminus \{0\}$ such that $| \d s(\theta_i)| \rightarrow \infty$, which is a contradiction.
 
 Therefore $\lim_{x \rightarrow 0} s(x)$ exists and defines a continuous map $\overline{s}:B^4 \rightarrow G$, which in turn extends $\xi'' \circ (\xi')^{-1}$.
\end{proof}

Viewing the map $\xi$ from \cref{theorem:original-removable-singularities} as a map $\xi:B^4 \setminus \{0\} \rightarrow G$, the limit $\lim_{x \rightarrow 0} \xi(x)$ does not exist in general.
But in important cases it does, according to the following proposition:

\begin{proposition}
\label{proposition:bounded-implies-xi-limit-exists}
 Under the conditions of \cref{theorem:original-removable-singularities}, assume that $A$ is bounded, viewed as an element in $\Omega^1(B^4 \setminus \{0\}, \mathfrak{g})$.
 Viewing $\xi$ as a map $\xi:B^4 \setminus \{0\} \rightarrow G$, we have that the limit 
 \[
 \lim_{x \rightarrow 0} \xi(x)
 \in G
 \]
exists.
\end{proposition}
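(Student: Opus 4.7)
The idea is that the gauge transformation $\xi$ satisfies a first order ODE whose right-hand side is uniformly bounded as one approaches $0$, which forces $\xi$ to be Lipschitz on $B^4 \setminus \{0\}$ and hence to admit a limit at the origin.

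Concretely, viewing $A'$ and $A$ as $\mathfrak{g}$-valued $1$-forms on $B^4$ and $B^4 \setminus \{0\}$ respectively, the identity $\xi^* A' = A$ reads, for the matrix-valued function $\xi$,
\begin{align*}
 A \;=\; \xi^{-1} A' \xi + \xi^{-1} \d \xi,
\end{align*}
which rearranges to
\begin{align}
\label{equation:xi-ode}
 \d \xi \;=\; \xi A - A' \xi.
\end{align}
The connection $A'$ is smooth on the closed ball, hence there is a constant $C_1$ with $|A'| \leq C_1$ near $0$; by hypothesis there is $C_2$ with $|A| \leq C_2$ on $B^4 \setminus \{0\}$; finally, $\xi$ takes values in the compact group $G$, so $|\xi|$ is bounded by some $C_3$ in any fixed faithful matrix representation. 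Plugging into \eqref{equation:xi-ode} gives a uniform bound $|\d \xi| \leq C$ on $B^4 \setminus \{0\}$.

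The first step where a little care is needed is justifying that this ODE manipulation is legitimate given only the regularity of $\xi$ that Uhlenbeck's theorem supplies. By \cref{theorem:original-removable-singularities} the connection $A'$ is smooth and $\xi$ is $L^2_{2,\loc}$ on $B^4 \setminus \{0\}$, so \eqref{equation:xi-ode} holds in the weak sense; the right-hand side is bounded in $L^\infty_{\loc}(B^4 \setminus \{0\})$ by the estimate above, hence $\xi \in W^{1,\infty}_{\loc}(B^4 \setminus \{0\})$.

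For the final step, fix a small radius $r_0>0$. Any two points $x, y$ with $0 < |x|,|y| \leq r_0$ can be joined inside $B^4 \setminus \{0\}$ by a piecewise linear path of length at most $3|x-y|$ (in dimension $4$ the complement of $\{0\}$ is simply connected and one can always avoid the origin with at most a constant multiplicative loss). Integrating the bound $|\d\xi| \leq C$ along such a path yields
\begin{align*}
 d_G(\xi(x),\xi(y)) \;\leq\; 3 C \, |x - y|,
\end{align*}
so $\xi$ is Lipschitz on the punctured ball $\{0<|x|\leq r_0\}$. Since $G$ is complete, the Cauchy criterion gives $\lim_{x\to 0}\xi(x) \in G$, as claimed. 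The only genuine subtlety is the geometric observation that $B^4 \setminus \{0\}$ is "path-convex up to a constant" near $0$, which is specific to dimension $\geq 2$ and is what allows a pointwise bound on $\d\xi$ to be converted into a modulus of continuity for $\xi$.
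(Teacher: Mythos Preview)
Your proof is correct and follows essentially the same approach as the paper: both arguments use the gauge transformation formula to obtain a uniform bound on $d\xi$, and then deduce that $\xi$ extends continuously to $0$. Your version is in fact more careful about the details (the correct conjugation term in the gauge formula, the regularity of $\xi$, and the path argument avoiding the origin), whereas the paper simply refers back to the mean-value-theorem argument of the preceding proposition.
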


\begin{proof}
 Without loss of generality assume that $A'(0)=0$.
 Then,
 \begin{align}
 \label{equation:removable-singularities-boundedness}
  \xi^*A'(x)=A(x)
  \text{ for all }
  x \in B^4 \setminus \{0\}.
 \end{align}
 Taking norms in \cref{equation:removable-singularities-boundedness} and using $\xi^*A'(x)=\xi^{-1}(x) \d \xi (x)+A'(x)$ we see that $\d \xi$ is bounded on $B^4 \setminus \{0\}$, and we can conclude the proof as in the proof of \cref{proposition:removable-singularities-uniqueness}.
\end{proof}

This can be applied to the case of ASD instantons on ALE manifolds:

\begin{corollary}
\label{corollary:removable-sing-over-ale-limit-at-infinity}
 Let $P$ be a $G$-bundle over ${\XEH}$ and denote by $\mathscr{A}^{\asd, -2}$ the set of ASD-connections on $P$ as in \cref{equation:ALE-moduli-definitions}.
 Let $A_0+a \in \mathscr{A}^{\asd, -2}$, then there exists an orbifold $G$-bundle $P'$ over ${\XEHh}$ together with a connection $A' \in \mathscr{A}(P')$ and an injective bundle homomorphism $\xi:P \rightarrow P'$ such that $\xi^*A'=A_0+a$.
 Denote by $f: B^4/ \Gamma \rightarrow V$ the chart of ${\XEHh}$ around $\infty$ from \cref{proposition:one-point-compactification-of-eguchi-hanson}.
 Fixing a trivialisation of $P$ over $V \setminus \{ \infty \}$ induces a trivialisation of $P'$ over $V$ and we can view $\xi$ as a map $V \setminus \{ \infty \} \rightarrow G$.
 Then the limit $\lim_{x\rightarrow \infty} \xi (x)$, where $\infty \in {\XEHh}$, exists.
\end{corollary}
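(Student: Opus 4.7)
The plan is to reduce the claim to Uhlenbeck's removable singularities theorem, already quoted as Theorem \ref{theorem:original-removable-singularities}, by working in the orbifold chart of Proposition \ref{proposition:one-point-compactification-of-eguchi-hanson} around $\infty \in \XEHh$ and exploiting the conformal invariance of the anti-self-dual equation in dimension four. Uniqueness of the extension, Proposition \ref{proposition:removable-singularities-uniqueness}, will then be used to descend $\Gamma$-equivariantly, and Proposition \ref{proposition:bounded-implies-xi-limit-exists} will take care of the existence of the limit at infinity.

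\textbf{Reduction to a punctured ball.} First I would note that because $\hat{g}$ is conformal to $g$ on $\XEH$ by Proposition \ref{proposition:one-point-compactification-of-eguchi-hanson}, and the ASD condition is conformally invariant in four dimensions, the connection $A_0 + a$ is also anti-self-dual with respect to $\hat{g}$ on $V \setminus \{\infty\}$. Pulling back through the orbifold chart $f: B^4/\Gamma \to V$ and lifting via the $\Gamma$-cover, we obtain a (topologically trivial) $G$-bundle on $B^4 \setminus \{0\}$, compatibly with Proposition \ref{assumption:trivialisation-at-infinity}, carrying a $\Gamma$-invariant ASD connection. The weight $\delta = -2$ guarantees that the Yang-Mills energy $\int |F_A|^2$ is finite: $F_{A_0}$ vanishes near infinity in the chosen trivialization, and both $\d_{A_0} a$ and $[a \wedge a]$ are controlled in $L^2$ by the weighted norm of $a$. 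Theorem \ref{theorem:original-removable-singularities} then produces an injective bundle homomorphism $\tilde{\xi}$ and an extended smooth connection $\tilde{A}'$ over all of $B^4$.

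\textbf{Descending $\Gamma$-equivariantly.} To manufacture an orbifold bundle on $V$, I would argue that the extended data can be chosen $\Gamma$-equivariantly. For each $\gamma \in \Gamma$, both $\tilde{\xi}$ and its $\gamma$-translate extend the same (lifted) ASD connection, so Proposition \ref{proposition:removable-singularities-uniqueness} supplies a gauge transformation over $B^4 \setminus \{0\}$ relating them which extends continuously across $0$. These extensions assemble into a lift of the $\Gamma$-action to the extended bundle, producing an orbifold bundle $P'$ on $V \simeq B^4/\Gamma$; gluing $P'$ to the original $P$ over $\XEHh \setminus \{\infty\}$ yields the sought-after orbifold bundle $P' \to \XEHh$ together with $A'$ and $\xi: P \to P'$. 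I expect the main technical obstacle here to be the verification that the uniquely determined gauge transformations genuinely satisfy the $\Gamma$ cocycle condition and depend measurably on $\gamma$, though both are ultimately formal consequences of Proposition \ref{proposition:removable-singularities-uniqueness}.

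\textbf{Boundary value at $\infty$.} For the last claim, I would apply Proposition \ref{proposition:bounded-implies-xi-limit-exists}. In the fixed trivialization $A_0$ vanishes near $\infty$, so $A_0 + a = a$, and I need only check that $a$ is bounded as a $1$-form on $V \setminus \{\infty\}$ in the orbifold metric $\hat{g}$. The weight $\delta = -2$ together with weighted Sobolev embedding gives $|a|_g = O(r^{-2})$ in the ALE metric, and the conformal rescaling from $g$ to $\hat{g}$ supplies an extra factor of $r^{2}$ on pointwise norms of $1$-forms, so $|a|_{\hat{g}}$ is bounded near $\infty$. Proposition \ref{proposition:bounded-implies-xi-limit-exists} then produces the required limit $\lim_{x \to \infty} \xi(x) \in G$, completing the argument.
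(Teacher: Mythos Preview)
Your proposal is correct and follows the same route as the paper. The paper's proof is extremely terse: it records only the key computation that $a=\mathcal{O}(r^{-2})$ in the ALE metric becomes $a=\mathcal{O}(1)$ in the orbifold metric under the inversion $f$, and then invokes Proposition~\ref{proposition:bounded-implies-xi-limit-exists}. Your ``Boundary value at $\infty$'' paragraph is exactly this argument. The paper leaves the existence of the orbifold extension $(P',A',\xi)$ and the $\Gamma$-equivariant descent entirely implicit, whereas you spell out how Proposition~\ref{proposition:removable-singularities-uniqueness} forces the extensions for different $\gamma\in\Gamma$ to be compatible; this is a genuine clarification rather than a different method.
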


\begin{proof}
 The assumption $A_0+a \in \mathscr{A}^{\asd, -2}$ means that $a=\mathcal{O}(r^{-2})$, measured in the ALE metric.
 By inspecting how the inversion $f$ acts on $1$-forms, we find that $a=\mathcal{O}(1)$, measured in the orbifold metric, and \cref{proposition:bounded-implies-xi-limit-exists} gives the claim.
\end{proof}

\addcontentsline{toc}{section}{References}

\bibliographystyle{alpha}
\bibliography{g2-inst-on-resolutions-library.bib}

\begin{thebibliography}{MNSE21}

\bibitem[Bad21]{Bader2021}
Uri Bader.
\newblock Which {L}ie groups have finitely many conjugacy classes of subgroups
  of fixed isomorphism type?
\newblock MathOverflow, 2021.
\newblock URL: https://mathoverflow.net/q/401777 (version: 2021-08-15).

\bibitem[Ber55]{Berger1955}
Marcel Berger.
\newblock Sur les groupes d'holonomie homog\`ene des vari\'et\'es \`a connexion
  affine et des vari\'et\'es riemanniennes.
\newblock {\em Bull. Soc. Math. France}, 83:279--330, 1955.

\bibitem[Bes87]{Besse1987}
Arthur~L. Besse.
\newblock {\em Einstein manifolds}, volume~10 of {\em Ergebnisse der Mathematik
  und ihrer Grenzgebiete (3) [Results in Mathematics and Related Areas (3)]}.
\newblock Springer-Verlag, Berlin, 1987.

\bibitem[Cal54]{Calabi1954}
Eugenio Calabi.
\newblock The space of kahler metrics.
\newblock In {\em Proc. Int. Congress Math. Amsterdam}, volume~2, pages 206--7,
  1954.

\bibitem[Cal57]{Calabi1957}
Eugenio Calabi.
\newblock On {K}\"{a}hler manifolds with vanishing canonical class.
\newblock In {\em Algebraic geometry and topology. {A} symposium in honor of
  {S}. {L}efschetz}, pages 78--89. Princeton University Press, Princeton, N.
  J., 1957.

\bibitem[DK90]{Donaldson1990}
S.~K. Donaldson and P.~B. Kronheimer.
\newblock {\em The geometry of four-manifolds}.
\newblock Oxford Mathematical Monographs. The Clarendon Press, Oxford
  University Press, New York, 1990.
\newblock Oxford Science Publications.

\bibitem[Don83]{Donaldson1983}
S.~K. Donaldson.
\newblock An application of gauge theory to four-dimensional topology.
\newblock {\em J. Differential Geom.}, 18(2):279--315, 1983.

\bibitem[DS11]{Donaldson2011}
Simon Donaldson and Ed~Segal.
\newblock Gauge theory in higher dimensions, {II}.
\newblock In {\em Surveys in differential geometry. {V}olume {XVI}. {G}eometry
  of special holonomy and related topics}, volume~16 of {\em Surv. Differ.
  Geom.}, pages 1--41. Int. Press, Somerville, MA, 2011.

\bibitem[DT98]{Donaldson1998}
S.~K. Donaldson and R.~P. Thomas.
\newblock Gauge theory in higher dimensions.
\newblock In {\em The geometric universe ({O}xford, 1996)}, pages 31--47.
  Oxford Univ. Press, Oxford, 1998.

\bibitem[DX16]{Dymarz2016}
Tullia Dymarz and Xiangdong Xie.
\newblock Day's fixed point theorem, group cohomology, and quasi-isometric
  rigidity.
\newblock {\em Groups Geom. Dyn.}, 10(4):1121--1148, 2016.

\bibitem[Fol95]{Folland1995}
Gerald~B. Folland.
\newblock {\em Introduction to partial differential equations}.
\newblock Princeton University Press, Princeton, NJ, second edition, 1995.

\bibitem[FU91]{Freed1991}
Daniel~S. Freed and Karen~K. Uhlenbeck.
\newblock {\em Instantons and four-manifolds}, volume~1 of {\em Mathematical
  Sciences Research Institute Publications}.
\newblock Springer-Verlag, New York, second edition, 1991.

\bibitem[GN92]{Gocho1992}
Toru Gocho and Hiraku Nakajima.
\newblock Einstein-{H}ermitian connections on hyper-{K}\"{a}hler quotients.
\newblock {\em J. Math. Soc. Japan}, 44(1):43--51, 1992.

\bibitem[GN95]{Guenaydin1995}
Murat G\"unaydin and Hermann Nicolai.
\newblock Seven-dimensional octonionic {Y}ang-{M}ills instanton and its
  extension to an heterotic string soliton.
\newblock {\em Phys. Lett. B}, 351(1-3):169--172, 1995.

\bibitem[GRG97]{Gibbons1997}
G.~W. Gibbons, P.~Rychenkova, and R.~Goto.
\newblock Hyper-{K}\"{a}hler quotient construction of {BPS} monopole moduli
  spaces.
\newblock {\em Comm. Math. Phys.}, 186(3):581--599, 1997.

\bibitem[Huy16]{Huybrechts2016}
Daniel Huybrechts.
\newblock {\em Lectures on {K}3 surfaces}, volume 158 of {\em Cambridge Studies
  in Advanced Mathematics}.
\newblock Cambridge University Press, Cambridge, 2016.

\bibitem[Ito85]{Itoh1985}
Mitsuhiro Itoh.
\newblock The moduli space of {Y}ang-{M}ills connections over a {K}\"{a}hler
  surface is a complex manifold.
\newblock {\em Osaka J. Math.}, 22(4):845--862, 1985.

\bibitem[Ito88]{Itoh1988}
Mitsuhiro Itoh.
\newblock Geometry of anti-self-dual connections and {K}uranishi map.
\newblock {\em J. Math. Soc. Japan}, 40(1):9--33, 1988.

\bibitem[JK21]{Joyce2017}
Dominic Joyce and Spiro Karigiannis.
\newblock A new construction of compact torsion-free {$\rm G_2$}-manifolds by
  gluing families of {E}guchi-{H}anson spaces.
\newblock {\em J. Differential Geom.}, 117(2):255--343, 2021.

\bibitem[Joy96]{Joyce1996}
Dominic~D. Joyce.
\newblock Compact {R}iemannian {$7$}-manifolds with holonomy {$G_2$}. {I},
  {II}.
\newblock {\em J. Differential Geom.}, 43(2):291--328, 329--375, 1996.

\bibitem[Joy00]{Joyce2000}
Dominic~D. Joyce.
\newblock {\em Compact manifolds with special holonomy}.
\newblock Oxford Mathematical Monographs. Oxford University Press, Oxford,
  2000.

\bibitem[KN90]{Kronheimer1990}
Peter~B. Kronheimer and Hiraku Nakajima.
\newblock Yang-{M}ills instantons on {ALE} gravitational instantons.
\newblock {\em Math. Ann.}, 288(2):263--307, 1990.

\bibitem[Kro89]{Kronheimer1989}
P.~B. Kronheimer.
\newblock A {T}orelli-type theorem for gravitational instantons.
\newblock {\em J. Differential Geom.}, 29(3):685--697, 1989.

\bibitem[LO18]{Lotay2018}
Jason~D. Lotay and Goncalo Oliveira.
\newblock {$ SU(2)^2$}-invariant {$G_2$}-instantons.
\newblock {\em Math. Ann.}, 371(1-2):961--1011, 2018.

\bibitem[LO20]{Lotay2020}
Jason~D. Lotay and Goncalo Oliveira.
\newblock {$G_2$}-instantons on noncompact {$G_2$}-manifolds: results and open
  problems.
\newblock In {\em Lectures and surveys on {${G_2}$}-manifolds and related
  topics}, volume~84 of {\em Fields Inst. Commun.}, pages 287--317. Springer,
  New York, 2020.

\bibitem[Loc87]{Lockhart1987}
Robert Lockhart.
\newblock Fredholm, hodge and liouville theorems on noncompact manifolds.
\newblock {\em Transactions of the American Mathematical Society},
  301(1):1--35, 1987.

\bibitem[MNSE21]{Menet2015}
Gr\'{e}goire Menet, Johannes Nordstr\"{o}m, and Henrique~N. S\'{a}~Earp.
\newblock Construction of {$G_2$}-instantons via twisted connected sums.
\newblock {\em Math. Res. Lett.}, 28(2):471--509, 2021.

\bibitem[Nak90]{Nakajima1990}
Hiraku Nakajima.
\newblock Moduli spaces of anti-self-dual connections on ale gravitational
  instantons.
\newblock {\em Inventiones mathematicae}, 102(1):267--303, Dec 1990.

\bibitem[Pac13]{Pacini2013}
Tommaso Pacini.
\newblock Desingularizing isolated conical singularities: uniform estimates via
  weighted {S}obolev spaces.
\newblock {\em Comm. Anal. Geom.}, 21(1):105--170, 2013.

\bibitem[Pla20]{Platt2020}
Daniel Platt.
\newblock Improved estimates for {$G_2$}-structures on the generalised kummer
  construction, 2020.

\bibitem[Pla22]{Platt2022}
Daniel Platt.
\newblock T7-gamma-automorphisms.
\newblock Github repository, 2022.
\newblock URL: https://github.com/danielplatt/T7-Gamma-automorphisms.

\bibitem[RC98]{ReyesCarrion1998}
Ram\'{o}n Reyes~Carri\'{o}n.
\newblock A generalization of the notion of instanton.
\newblock {\em Differential Geom. Appl.}, 8(1):1--20, 1998.

\bibitem[SEW15]{SaEarp2015}
Henrique~N. S\'a~Earp and Thomas Walpuski.
\newblock {$G_2$}-instantons over twisted connected sums.
\newblock {\em Geom. Topol.}, 19(3):1263--1285, 2015.

\bibitem[SW17]{Salamon2017}
Dietmar~A. Salamon and Thomas Walpuski.
\newblock Notes on the octonions.
\newblock In {\em Proceedings of the {G}\"{o}kova {G}eometry-{T}opology
  {C}onference 2016}, pages 1--85. G\"{o}kova Geometry/Topology Conference
  (GGT), G\"{o}kova, 2017.

\bibitem[Tau82]{Taubes1982}
Clifford~Henry Taubes.
\newblock Self-dual {Y}ang-{M}ills connections on non-self-dual
  {$4$}-manifolds.
\newblock {\em J. Differential Geometry}, 17(1):139--170, 1982.

\bibitem[Uhl82]{Uhlenbeck1982}
Karen~K. Uhlenbeck.
\newblock Removable singularities in {Y}ang-{M}ills fields.
\newblock {\em Comm. Math. Phys.}, 83(1):11--29, 1982.

\bibitem[Wal13a]{Walpuski2013a}
Thomas Walpuski.
\newblock {$G_2$}-instantons on generalised {K}ummer constructions.
\newblock {\em Geom. Topol.}, 17(4):2345--2388, 2013.

\bibitem[Wal13b]{Walpuski2013}
Thomas Walpuski.
\newblock {\em Gauge theory on $G_2$-manifolds}.
\newblock PhD thesis, Imperial College London, 2013.

\bibitem[Wal16]{Walpuski2016}
Thomas Walpuski.
\newblock {$G_2$}-instantons over twisted connected sums: an example.
\newblock {\em Math. Res. Lett.}, 23(2):529--544, 2016.

\bibitem[Wal17]{Walpuski2017}
Thomas Walpuski.
\newblock {$G_2$}-instantons, associative submanifolds and {F}ueter sections.
\newblock {\em Comm. Anal. Geom.}, 25(4):847--893, 2017.

\bibitem[Wei64]{Weil1964}
Andr\'{e} Weil.
\newblock Remarks on the cohomology of groups.
\newblock {\em Ann. of Math. (2)}, 80:149--157, 1964.

\bibitem[Yau77]{Yau1977}
Shing~Tung Yau.
\newblock Calabi's conjecture and some new results in algebraic geometry.
\newblock {\em Proc. Nat. Acad. Sci. U.S.A.}, 74(5):1798--1799, 1977.

\bibitem[Yau78]{Yau1978}
Shing~Tung Yau.
\newblock On the {R}icci curvature of a compact {K}\"{a}hler manifold and the
  complex {M}onge-{A}mp\`ere equation. {I}.
\newblock {\em Comm. Pure Appl. Math.}, 31(3):339--411, 1978.

\end{thebibliography}

\end{document}